\newtheorem{theorem}{Theorem}[section]
\newtheorem{proposition}[theorem]{Proposition}
\newtheorem{corollary}[theorem]{Corollary}
\newtheorem{lemma}[theorem]{Lemma}
\newtheorem{sub-lemma}[theorem]{Sub-Lemma}
\theoremstyle{remark}
\newtheorem{remark}[theorem]{Remark}
\newcommand{\mdlo}[1]{\left| #1\right|}
\def\tom{\tau^\omega}
\def\vp{\varphi}
\def\C{\mathcal{C}}
\def\L{\mathcal{L}}
\def\F{\mathcal{F}}
\def\P{\mathcal{P}}
\def\Q{\mathcal{Q}}
\def\NN{\mathbb{N}}
\def\RR{\mathbb{R}}
\def\TT{\mathbb{T}}
\def\ZZ{\mathbb{Z}}
\def\DD{\mathbb{D}}
\def\SS{\mathbb{S}}
\def\O{\Omega}
\DeclareMathOperator{\diam}{diam}
\DeclareMathOperator{\dist}{dist}
\DeclareMathOperator{\lip}{Lip}
\DeclareMathOperator{\Leb}{Leb}
\DeclareMathOperator{\leb}{Leb}
\DeclareMathOperator{\diff}{Diff}
\newcommand{\qand}{\quad\text{and}\quad}
\let\eps=\varepsilon
\def\w{\omega}
\def\E{\mathcal{E}}
\def\RR{{\mathbb R}}
\def\NN{{\mathbb N}}
\newcommand{\Dom}{\Delta_\omega}
\newcommand{\Domo}{\Delta_{\omega, 0}}
\newcommand{\bl}{\tilde\lambda}
\begin{document}
\title[Almost sure rates of mixing for partially hyperbolic attractors]{Almost sure rates of mixing\\ for partially hyperbolic attractors}
\author{Jos\'e F. Alves}
\author{Wael Bahsoun}
\author{Marks Ruziboev}
\address{Centro de Matem\'atica da Universidade do Porto, Rua do Campo Alegre 687, 4169-007 Porto, Portugal}
\email{jfalves@fc.up.pt}
\address{Department of Mathematical Sciences, Loughborough University,
Loughborough, Leicestershire, LE11 3TU, UK}
\email{W.Bahsoun@lboro.ac.uk}
\address{Faculty of Mathematics, University of Vienna,
Oskar-Morgnstern Platz, 1
1090, Austria}
\email{marks.ruziboev@univie.ac.at}
\thanks{JFA was partially supported by CMUP (UID/MAT/00144/2013) and the project PTDC/MAT-CAL/3884/2014 funded by Funda\c{c}\~ao para a Ci\^encia e a Tecnologia (FCT) Portugal with national (MEC) and European structural funds through the program FEDER, under the partnership agreement PT2020.  JFA, WB and MR would like to thank The Leverhulme Trust for supporting their research through the Visiting Professorship VP2-2017-004 and the Research Grant RPG-2015-346. MR would like to thank the Austrian Science Fund (FWF): M2816 Meitner Grant.}
\keywords{Partial hyperbolicity, random dynamical systems, decay of correlations.}
\subjclass{Primary 37A05, 37C10, 37E05}

\begin{abstract}
We introduce random towers to study almost sure rates of correlation decay for random partially hyperbolic attractors. Using this framework, we obtain abstract results on almost sure exponential, stretched exponential and polynomial correlation decay rates. We then apply our results to small random perturbations of  Axiom~A   attractors, small perturbations of derived from Anosov partially hyperbolic systems and to solenoidal attractors  with  random intermittency.
\end{abstract}

\date{\today}
\maketitle
\tableofcontents
\section{Introduction}\label{setup}
One of the most fundamental questions in ergodic theory of chaotic dynamical systems is that of the rate of correlation decay with respect to a physical measure.
For deterministic chaotic systems there are two popular techniques to obtain such rates. The first one is based on functional analytic techniques that require strong hyperbolic properties of the underlying system (see the recent breakthrough \cite{BDL18} and references therein). The other method, which is based on Markov extensions \cite{Y98} and coupling techniques  \cite{Y99, D00}, has been applied to obtain correlation decay rates  \cite{ADL17, BLV03, C04, G06} and probabilistic limit theorems \cite{G05, MN08, MN09} for different classes of deterministic chaotic systems that admit a `weak' form of hyperbolicity.

Recently, there has been a significant interest in obtaining probabilistic limit theorems for random dynamical systems \cite{ANV15, ALS09, BB16, DFG19, DFG18, DFG18a, DH20, H20, HK18, SSV21}, related time dependent systems \cite{Blu17, HL19, HS19,L18a} and for stochastic flows \cite{DKK04, BBPS19}. As in the case of deterministic dynamical systems, some knowledge on the correlation decay rate of the underlying system is required to obtain probabilistic limit laws for random dynamical systems. However, despite the pioneering work of \cite{BBM02, Bu99}, which was followed by \cite{LV18}, and the recent progress made in \cite{BBR19} on almost sure correlation decay rates for random endomorphisms, there are no general tools on the almost sure correlation decay rates for random dynamical systems where the constituent maps are partially hyperbolic diffeomorphisms. 

The purpose of this work is to provide, for the first time, a tool that can be used to obtain almost sure correlation decay rates for \emph{random attractors} \cite{A98, CF94, LQ95} that exhibit a `weak' form of hyperbolicity. Indeed, we introduce random hyperbolic towers and show that such towers can be used to study pathwise statistics on random \emph{partially hyperbolic attractors}. We obtain abstract results on the almost sure rate of correlation decay and apply these results to small random perturbations of Axiom~A attractors and derived from Anosov (DA) partially hyperbolic attractors, and to solenoidal attractors with random intermittency. To the best of our knowledge, even in the particular classical case of Axiom~A attractors there are no such statements in the literature and existing work~\cite{DFG19,S11a}, albeit using different methods, only cover the specific case of Anosov diffeomorphisms, and hyperbolic maps with common expanding and contracting directions \cite{B95a, B95b}. Moreover, other than the solenoidal attractors with random intermittency that we present in this paper, our technique can be applied to random versions of deterministic systems considered in \cite{H99,ZH19, PSS20} that admit only polynomial rates of mixing. In this regards, the interesting goal will not only be to obtain polynomial rates, but also to discover whether one obtains similar to what we get in this paper for the solenoid with random intermittency: the fastest mixing fibre map dominates the almost sure mixing rate of the random system. Since the random versions of the families considered in \cite{ZH19,PSS20} require lengthy arguments, we only include the solenoid example, along with the examples of Axiom~A and DA attractors, in this paper and leave the other mentioned applications as a problem in this direction of research.

Let $M$ be a smooth compact Riemannian manifold of finite dimension. Denote by $\diff^{1+}(M)$ the set of $C^{1}$ diffeomorphisms whose derivative is H\"older endowed with the $C^1$ topology. Given  $\mathcal F\subset \diff^{1+}(M)$, consider ${\bf p}$ a Borel probability measure with compact support $\mathbb B\subset \mathcal F$. 
Let  $\Omega = \mathbb{B}^{\mathbb Z}$ and $P= {\bf p}^{\mathbb Z}$. Then the left  shift map  $\sigma:\Omega\to \Omega$ preserves $P$. 
Also, notice that every $\omega\in\Omega$ is a bi-infinite sequence of diffeomorphisms $\omega=(\dots, f_{-1}, f_0, f_1, \dots)$.
We consider the random dynamical system
\[
 f_\omega^0=\text{Id},  \quad  f_\omega^1=f_\omega=f_0, \quad f_\omega^n=f_{\sigma^{n-1}\omega}\circ\cdots\circ f_\omega \quad \text{and} \quad f^{-n}_\omega=(f_\omega^n)^{-1},\text{ for } n\ge 0.
 \]
We call  a family of Borel probability measures $\{\mu_\omega\}_{\omega\in\Omega}$ on $M$  \emph{equivariant} if
 $$(f_\omega)_\ast\mu_\omega=\mu_{\sigma\omega}$$ for $P$ almost all $\omega\in\Omega$. We call
 $\{\mu_\omega\}_{\omega\in\Omega}$ \emph{physical} if for $P$-almost all $\omega\in\Omega$, the set   of points $x\in M$ such that
\[
\frac{1}{n}\sum_{i=0}^{n-1}\delta_{f^n_{\sigma^{-n}\omega}(x)} \xrightarrow[\text{}]{\text{weakly}}\mu_\omega 
\]
has positive Lebesgue measure. This  set  is called the \emph{basin} of $\mu_\omega$ and denoted $B(\mu_\omega)$.

In this paper we study the existence of equivariant families of physical measures  $\{\mu_\omega\}_{\omega\in\Omega}$ and their statistical properties.
In particular,  we consider 
random  correlations
 \begin{equation}\label{dycorr}
\begin{split}
\mathcal C_n(\varphi, \psi,\mu_\omega)=
\int (\varphi\circ f^{n}_\omega)\psi d\mu_\omega-\int \varphi d\mu_{\sigma^n\omega}\int \psi d\mu_{\omega},
\end{split}
\end{equation}
for observables $\varphi, \psi:  M \to \mathbb R$.  
 Let $\mathcal F_1$ and $\mathcal F_2$ be   spaces of observables on $  M$ and let $\{\rho_n\}_{n\in\mathbb N}$ be a sequence of positive numbers such that $\lim_{n\to \infty}\rho_n=0$. We say that the random dynamical system  admits ``quenched" decay of  \emph{correlations} with rate $\rho_n$ if 
for $P$-almost all $\omega\in\Omega$ and any  $\varphi\in \mathcal F_1$, $\psi \in \mathcal F_2$, there are constants $C_\omega$, measurable in $\omega$, and $C_{\varphi, \psi}$ such that  
\begin{equation}\label{decorr}
\left|\mathcal C_n(\varphi, \psi,\mu_\omega)\right|\le C_\omega C_{\varphi, \psi}\rho_n.
\end{equation}


\subsection{Hyperbolic product structures}\label{se.hps}
Let $\leb$ denote Lebesgue measure on Borel subsets of $M$. Given a submanifold $\gamma\subset M$ we use $ \text{Leb}_\gamma$ to denote the volume induced by the restriction of the Riemannian metric on~$\gamma$.  We say that  an embedded disk $\gamma\subset M$ is  a stable manifold at fibre $\omega$ if $\dist(f^n_\omega(x), f^n_\omega(y))\to 0$ as $n\to \infty$ for every $x, y\in\gamma$.  Similarly, an embedded disk $\gamma\subset M$ is called an  unstable manifold at $\omega$ if $\dist(f^{-n}_{\sigma^{-n}\omega}(x), f^{-n}_{\sigma^{-n}\omega}(y))\to0$ as
$n\to\infty$ for every $x,y\in\gamma$. 

Notice that if $\gamma$ is a stable manifold at $\omega$ then $f_\omega(\gamma)$ is a stable manifold at $\sigma\omega$.  
 Let  $\text{Emb}^1(D^u, M)$, where $D^u$ is a unit disk of $\RR^{d}$, be the space of $C^1$ embeddings from
$D^u$ into $M$. We say that $\Gamma^u_\omega=\{\gamma^u_\omega\}$ is a
\emph{continuous family of $C^1$ unstable manifolds at $\omega$} if there is a
compact set $K^s$, and a map
$\Phi^u_\omega\colon K^s\times D^u\to M$ such~that
\begin{itemize}
\item  $\gamma^u_\omega=\Phi^u_\omega(\{x\}\times D^u)$ is an unstable
manifold;
\item  $\Phi^u_\omega$ maps $K^s\times D^u$ homeomorphically onto its image; \item[$\bullet$]  $x\mapsto \Phi^u_\omega\vert(\{x\}\times D^u)$ is a
continuous map from $K^s$ to $\text{Emb}^1(D^u, M)$.
\end{itemize}
Continuous families of $C^1$ stable manifolds are defined similarly.
 We say that $\Lambda_\omega\subset M$ has a \emph{product
structure} if  there exist a continuous family of unstable
manifolds $\Gamma^u_\omega=\{\gamma^u_\omega\}$ and a continuous family of
stable manifolds $\Gamma^s_\omega=\{\gamma^s_\omega\}$ such that
\begin{itemize}
    \item $\Lambda_\omega=(\cup \gamma^u_\omega)\cap(\cup\gamma^s_\omega)$;
    \item $\dim \gamma^u_\omega+\dim \gamma^s_\omega=\dim M$;
    \item each  $\gamma^s_\omega$ meets each $\gamma^u_\omega$  in exactly one point;
    \item stable and unstable manifolds meet transversally with angles bounded
    away from~0 by a constant  independent of $\omega$. 
\end{itemize}
 We call a product structure measurable if $\omega\mapsto\Lambda_\omega$ 
 is measurable. Here measurability is understood in the sense of \cite[Section~3]{CF94}.

Let $\Lambda_\omega\subset M$ be a set with a product structure,  whose   defining families are $\Gamma^s_\omega$ and $\Gamma^u_\omega$. A subset $\Lambda_{0, \omega}\subset \Lambda_\omega$ is called an $s$-subset if $\Lambda_{0, \omega}$ also has a  product structure, and its defining families, $\Gamma_{0, \omega}^s$ and $\Gamma_{0, \omega}^u$, can be chosen with $\Gamma_{0, \omega}^s\subset\Gamma^s_\omega$ and $\Gamma_{0, \omega}^u=\Gamma^u_\omega$;  $u$-subsets are defined similarly. Given $x\in\Lambda_\omega$, let $\gamma^{*}(x)$ denote the element of $\Gamma^{*}_\omega$ containing $x$, for $*=s,u$. For each $n\ge 1$ we let $(f^n_\omega)^u$ denote the restriction of the map $f^n_\omega$ to unstable manifolds $\gamma^{u}_{\omega}$ and $\det D^uf^n_\omega$ denote   the Jacobian of $D^uf^n_\omega$.

The random dynamical system admits a \emph{hyperbolic product structure} on $\{\Lambda_\omega\}_{\omega\in\Omega}$ if for almost every $\omega\in\Omega$ the set $\Lambda_\omega$ has a  product structure and properties  (P$_0$)-(P$_4$) below hold:
\begin{itemize} 
\item[(P$_0$)]  \emph{Detectable}:   $\Leb_{\gamma_\omega}(\Lambda_\omega\cap \gamma_\omega)>0$  for some $\gamma_\omega\in\Gamma^u_\omega$.
\item[(P$_1$)] \emph{Markov}: there is a partition $\mathcal{P}_\omega=\{\Lambda_{i, \omega}\}$ of  $\Lambda_\omega$  into $s$-subsets such  that
   \begin{enumerate}
 	\item $\Leb_{\gamma_\omega}\left((\Lambda_\omega\setminus\cup\Lambda_{i, \omega})\cap\gamma_\omega\right)=0$ on each 	$\gamma_\omega\in\Gamma^u_\omega$;
	 \item For each $\Lambda_{i, \omega}$ there is    $ R_{i, \omega}\in\NN$ with  $\omega\to R_{i,\omega}$  measurable such that $f^{R_{i, \omega}}(\Lambda_{i, \omega})$ is a $u$-subset and for all $x\in \Lambda_{i, \omega}$
        $$
        f^{R_{i, \omega}}_\omega(\gamma^s_\omega(x))\subset \gamma^s_{\sigma^{R_{i, \omega}}\omega}(f^{R_{i, 		\omega}}_\omega(x))  \quad\text{and} \quad       
        f^{R_{i, \omega}}_\omega(\gamma^u_\omega(x))\supset \gamma^u_{\sigma^{R_{i, \omega}}\omega}(f^{R_{i, 		\omega}}_\omega(x)).
        $$
    \end{enumerate}
\end{itemize}
This   allows us to introduce the random return time $R_\omega: \Lambda_\omega\to \NN$ by  $R_\omega|_{\Lambda_{i, \omega}}=R_{i, \omega}$, and the random induced map $f^R_\omega:\Lambda_\omega\to \cup_{n\ge 1}\Lambda_{\sigma^n\omega}$ by $f^{R_\omega}_\omega|_{\Lambda_{i, \omega}}=f^{R_{i, \omega}}_\omega$. 
For  $x\in\Lambda_\omega$, we define a sequence $(\tau_{  \omega,i})_i$ of return times 
\begin{equation}\label{oms}
\tau_{\omega, 1}(x)=R_\omega(x), \quad  \tau_{\omega, i+1}(x)=\tau_{\omega, i}(x)+ R_{\sigma^{\tau_{\omega, i}}\omega}\circ f^{\tau_{\omega, i}}_\omega(x).
\end{equation} 
%
We also define a \emph{separation time} $s_\omega (x,y)$ for $x,y\in\Lambda_\omega$ as the smallest  $n\ge 0$ such that  $(f_\omega^{R_\omega})^n(x)$ and $(f_\omega^{R_\omega})^n(y)$  lie in distinct elements of $\mathcal P_{\sigma^{\tau_\omega, n}\omega}$. Note that $s_\omega$ is constant on stable leaves.
In the properties below we consider constants $C>0$ and
$0<\beta<1$   independent of $\omega\in\Omega$ and we assume for almost all  $\omega\in\Omega$ the following are satisfied: 
\begin{itemize}
\item[(P$_2$)] \emph{Contraction on stable leaves}:  for  all $\gamma^s\in\Gamma^s_\omega$, all $x,y\in\gamma^s$ and all $ n\in \mathbb N$ we have
$$\dist(f^n_\omega(y),f^n_\omega(x))\le C \beta^n.$$
\item[(P$_3$)] \emph{Expansion and  distortion on unstable leaves}: for all $ \Lambda_{i, \omega}\in\mathcal P_\omega$  and all $x, y\in\Lambda_{i, \omega}$ with $y\in\gamma_\omega^u(x)$ we have 
    \begin{enumerate}
	\item 
$\|D^u(f_\omega^{R_\omega})^{-1}(x)\|\le \beta$;
         \item 
	$\log\frac{\det D^uf_\omega^{R_{ \omega}}(x)}{\det D^uf_\omega^{R_{ \omega}}(y)}\le 
	C\beta^{s (f_\omega^{R_{ \omega}}(x),f_\omega^{R_{ \omega}}( y))}$. 
   \end{enumerate}
\item[(P$_4$)] \emph{Regularity of the   foliations}: given   $\gamma_\omega, \gamma_\omega'\in \Gamma_\omega^u$, define $\Theta_\omega:\gamma_\omega\cap\Lambda_\omega \to \gamma_\omega'\cap \Lambda_\omega$  as $\Theta_\omega(x)=\gamma^s_\omega(x)\cap \gamma_\omega'$; we assume  
	\begin{enumerate}
	 \item  $\Theta_\omega$ is absolutely continuous and 
	\[
	 \frac{d([\Theta_\omega]_\ast\Leb_{\gamma_\omega})}{d\Leb_{\gamma_\omega'}}(x)=\prod_{i=0}^\infty\frac{\det D^uf_{\sigma^i\omega}(f^i_\omega(x))}{\det D^uf_{\sigma^i\omega}(f^i_\omega(\Theta^{-1}_\omega(x)))}.
	 \]
	\item Denoting $\rho_\omega $ the density in the previous item, we have for all  $x, y\in \gamma_\omega\cap \Lambda_\omega$
	$$\log\dfrac{\rho_\omega(x)}{\rho_\omega(y)}\le C\beta^{s(x, y)}.$$ 
	\end{enumerate}
\end{itemize}
\subsection{Tower extensions} We now introduce our main tool to study statistical properties of systems satisfying conditions {(P$_0$)}-{(P$_4$)}. We define a \emph{random tower} for almost every $\omega$ as
\begin{equation}\label{tower}
\Delta_\omega=\left\{(x, \ell) \mid
x\in  \Lambda_{ \sigma^{-\ell}\omega} , 0\le \ell\le R_{\sigma^{-\ell}\omega}(x)-1\right \}
\end{equation} 
and the random tower map $F_\omega: \Delta_\omega\to \Delta_{\sigma\omega}$ by 
\begin{equation}
F_\omega(x, \ell)=\begin{cases}
(x, \ell+1), &\quad\text{if}\quad \ell+1<R_{\sigma^{-\ell}\omega}(x), \\
(f^{\ell+1}_{\sigma^{-\ell}\omega}x, 0), &\quad\text{if}\quad \ell+1=R_{\sigma^{-\ell}\omega}(x).
\end{cases}
\end{equation}
Let  $\Delta_{\omega, \ell}=\{(x, \ell)\in\Delta_\omega\}$ be  the $\ell$th level of the tower, which is a copy of $\{x\in\Lambda_\omega\mid R_{\sigma^{-\ell}\omega}(x)>\ell\}$. 
The partition $\P_\omega$ of $\Lambda_\omega=\Domo $ naturally lifts to a partition $\Q^\omega=\{\Delta_{\omega, \ell, i}\}$ of $\Delta_\omega$ for almost every $\omega\in\Omega$. Define a sequence of partitions $\{\Q_n^\omega\}$ as follows:
\begin{equation}\label{Qn}
\mathcal Q_0^\omega=\Q^\omega \text{ and } \Q_n^\omega=\bigvee_{i=0}^{n-1}F_{\omega}^{-i}\Q^{\sigma^i\omega} \text{ for  } n\ge 1.
\end{equation}
We shall denote by $Q_{n}(z)$ the element of $\Q_n^\omega$ containing the point $z\in\Dom$. For almost every $\omega\in \Omega$ and $(x, \ell)\in \Dom $ we define tower projections $\pi_\omega:\Dom\to M$  as $\pi_\omega(x, \ell)= f^\ell_{\sigma^{-\ell}\omega}(x)$. Then $\pi_\omega$ is a semi-conjugacy,  i.e. $\pi_{\sigma\omega}\circ F_\omega =f_\omega\circ\pi_\omega$.  Let 
\begin{equation}\label{deltak}
\delta_{\sigma^k\omega, k}=\sup_{A\in \mathcal Q_{2k}^\omega}\diam(\pi_{\sigma^k\omega}(F^k_\omega(A))).
\end{equation}
By definition for any $k\ge 1 $ and $Q\in \Q_{2k}^\omega$ we have 
$$
\diam(\pi_{\sigma^k\omega} F^k_\omega(Q))\le \delta_{\sigma^k\omega, k}.
$$

\begin{remark}\label{re.exdelta}
In two of our applications (Axiom A and DA attractors)  the systems admit a stronger expansion than the one stated  in (P$_3$)(1), namely for all $\omega$ and  for all $ \Lambda_{i, \omega}\in\mathcal P_\omega$  and all $x\in\Lambda_{i, \omega}$ we have
$$\|D^u(f_\omega^{R_\omega-j})^{-1}(x)\|\le \beta^{j},\quad\text{for all   $0\le j\le R_\omega$}.$$
Therefore, in situations like this, it is straightforward to verify that $\delta_{\sigma^k\omega, k}$ decays exponentially fast in $k$, with constants not depending on $\omega$. However, this is not the case of the solenoid with intermittency presented in Subsection~\ref{se.solemio}.
\end{remark}

\subsection{Statements of abstract results}\label{gs} 
We say a hyperbolic product structure admits \emph{uniformly summable tails} if there exists $C>0$ so that for almost every $\omega\in\Omega$ we have
 $\gamma_{\sigma^{-n}\omega}\in\Gamma_{\gamma_{\sigma^{-n}\omega}}^u$ for which 
 \begin{equation}\label{eq:unie}
 \sum_{n\ge0} \Leb_{\gamma_{\sigma^{-n}\omega}}\{R_{\sigma^{-n}\omega} >n\}\le C.
 \end{equation}
 The uniform summability condition above is needed to prove the existence of an equivariant family of measures.
 
 \begin{theorem}\label{existence} 
Every  random dynamical system   with a measurable hyperbolic product structure and uniformly summable tails admits a family of equivariant physical measures. 
\end{theorem}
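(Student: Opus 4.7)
The plan is to construct an $F$-equivariant family of probability measures $\{\tilde\mu_\omega\}$ on the random tower and then push down to $M$ via the tower projection, setting $\mu_\omega:=(\pi_\omega)_*\tilde\mu_\omega$. Since $\pi_{\sigma\omega}\circ F_\omega = f_\omega\circ\pi_\omega$, equivariance on the manifold follows immediately from equivariance on the tower:
\[
(f_\omega)_*\mu_\omega = (f_\omega\circ\pi_\omega)_*\tilde\mu_\omega = (\pi_{\sigma\omega}\circ F_\omega)_*\tilde\mu_\omega = (\pi_{\sigma\omega})_*\tilde\mu_{\sigma\omega} = \mu_{\sigma\omega}.
\]
Thus essentially all the work is in the tower construction.

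To build $\tilde\mu_\omega$, I would fix, measurably in $\omega$, a reference probability measure $m_\omega$ on $\Delta_\omega$ obtained by combining $\Leb_{\gamma_\omega}$ on unstable leaves of $\Gamma^u_\omega$ with the conditional probability in the stable direction dictated by the product structure. Property~(P$_0$) ensures this is nontrivial on the base $\Lambda_\omega$, and the uniform summable tails assumption gives $\sup_\omega m_\omega(\Delta_\omega)<\infty$, so normalization is uniformly under control. I would then consider $\mu^n_\omega := (F^n_{\sigma^{-n}\omega})_* m_{\sigma^{-n}\omega}$ together with its Ces\`aro averages $\bar\mu^N_\omega := \tfrac{1}{N}\sum_{n=0}^{N-1}\mu^n_\omega$. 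The pushforward identity $(F_\omega)_*\mu^n_\omega=\mu^{n+1}_{\sigma\omega}$ gives $(F_\omega)_*\bar\mu^N_\omega - \bar\mu^N_{\sigma\omega}\to 0$ weakly as $N\to\infty$, so any weak-$*$ accumulation point yields the desired equivariant $\tilde\mu_\omega$.

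The main obstacle will be to extract such accumulation points \emph{simultaneously and measurably in $\omega$}. The hyperbolicity hypotheses provide uniform control: expansion~(P$_3$)(1) and distortion~(P$_3$)(2) prevent collapse or blow-up of densities along unstable disks; regularity of the unstable foliation~(P$_4$) makes densities on distinct unstable leaves comparable; and stable contraction~(P$_2$) makes the dependence on the stable coordinate asymptotically irrelevant. Combined with the uniformly summable tails these should give tightness of $\bar\mu^N_\omega$ uniformly in $\omega$, with enough regularity either to run a measurable selection argument in a separable space of measures on the tower, or, more constructively, to show that iterates of an appropriate random transfer operator converge pointwise in $\omega$ to an equivariant density.

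To finish, I need physicality of $\mu_\omega$. Observe that $\pi_\omega(\Delta_\omega)$ contains the saturation of $\gamma_\omega\cap\Lambda_\omega$ by the stable leaves of $\Gamma^s_\omega$. Absolute continuity of the stable holonomy from~(P$_4$) turns the positive $\Leb_{\gamma_\omega}$-measure of $\gamma_\omega\cap\Lambda_\omega$ guaranteed by~(P$_0$) into positive Lebesgue measure in $M$. By~(P$_2$), for every $y$ on the stable leaf of $x$ we have $\dist(f^n_\omega y,f^n_\omega x)\to 0$, so $x$ and $y$ share empirical distribution limits; combining this with a Birkhoff-type equidistribution statement for $\tilde\mu_\omega$-typical points on the tower (which reduces to the quotient tower where the induced dynamics is essentially expanding) gives $\Leb(B(\mu_\omega))>0$, establishing that $\mu_\omega$ is physical.
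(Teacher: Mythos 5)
Your overall strategy is sound and close in spirit to the paper's, but it differs in one structural choice and leaves two genuine gaps.

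\textbf{Where you differ.} You propose to Ces\`aro-average pushforwards $(F^n_{\sigma^{-n}\omega})_*m_{\sigma^{-n}\omega}$ of the full tower map $F_\omega$. The paper instead works on the base $\Lambda_\omega$ with the induced return map $\tilde F_\omega$, constructs an equivariant family $\{\tilde\nu_\omega\}$ for $\tilde F_\omega$ by averaging pushforwards of the reference measure $m^0_\omega$ through return branches (Proposition~\ref{exist.mu.induced}), and only afterwards spreads $\tilde\nu_\omega$ over the levels to get $\nu_\omega$ (this is where uniform summability enters, to keep the tower measure finite). Also, the reference measure you propose (``$\Leb_{\gamma_\omega}$ on unstable leaves combined with the conditional probability in the stable direction'') is not the right one: the paper needs the natural measures $m_\gamma$ with density $\hat\rho_\omega$ from (P$_4$), precisely because these are the measures that the stable holonomy maps into each other (equation~\eqref{eq.teta}); with a generic Lebesgue-on-leaves reference measure the Jacobian of $\tilde F_\omega$ need not be constant along stable leaves, and the quotient construction and physicality argument break.

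\textbf{The two gaps.} First, you flag the measurability/selection issue but do not resolve it. In the paper this is done by a concrete diagonal extraction: for a fixed $\omega$ one extracts a subsequence $n_k$ along which $\tilde\nu^{n_k}_{\sigma^\ell\omega}$ converges for every $\ell\in\mathbb Z$, producing a coherent equivariant family along each $\sigma$-orbit. A measurable-selection invocation would need to be carried out; as written your argument has a hole here. Second, and more seriously, your physicality argument conflates positive Lebesgue measure of the set $\pi_\omega(\Delta_\omega)$ with positive Lebesgue measure of the basin $B(\mu_\omega)$. Positivity of $\Leb(B(\mu_\omega))$ hinges on the SRB property: the conditional measures of $\tilde\nu_\omega$ (hence of $\mu_\omega$) on unstable leaves are absolutely continuous with densities bounded above and below. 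Weak-$*$ limits do not automatically inherit density bounds; the paper proves this explicitly via the martingale convergence argument at the end of Proposition~\ref{exist.mu.induced}, using the uniform distortion estimate~\eqref{eq:density_reg} for the approximating densities $\rho^j_{\gamma_\omega}$. You gesture at bounded densities via (P$_3$)--(P$_4$) but never close this loop; without it, the phrase ``$\tilde\mu_\omega$-typical'' has no bite on Lebesgue measure, because the limit measure could a priori be singular on unstable leaves. Only after absolute continuity of conditionals is established do the stable contraction (P$_2$) and the absolute continuity of the holonomy (P$_4$) upgrade a $\tilde\nu_\omega$-full measure set on an unstable leaf to a positive Lebesgue measure set in $M$.
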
  

 To obtain  quenched correlation decay  rates for the equivariant physical measures  we need more information on the tail of return times. 
We say a hyperbolic product structure $\{\Lambda_\omega\}_{\omega\in\Omega}$ is \emph{aperiodic} if there are $N_0\in\mathbb N$ and 
$\{t_i\in\mathbb Z_+\mid i=1, 2, ..., N_0\}$  such that for almost every  $\omega\in\Omega$ there exists  $\gamma\in \Gamma^u_\omega$ such that   gcd$\{t_i\}=1$ and 
$\Leb_{\gamma}\{x\in\Lambda_\omega\mid R_\omega(x)=t_i\}>0$.
Let $\eta\in (0,1]$ and  $C^\eta(M)$ be  the set of H\"older continuous functions of exponent\footnote{When $\eta=1$ we denote the space by $C^{\text{Lip}}$; i.e., the space of Lipschitz continuous functions.} $\eta$ on $M$. 

\begin{theorem}\label{exrates}
Consider a  random dynamical system with a measurable aperiodic  hyperbolic product structure and uniformly summable tails. 
\begin{enumerate} 
 \item If there are $C, c>0$  such that for almost every $\omega$ and some  $\gamma_\omega\in\Gamma^u_\omega$  we have  
	 \begin{equation*}
	\Leb_{\gamma_\omega}\{R_\omega>n\}\le C e^{-cn},
	\end{equation*} 	
then there are $C', c'>0$ such that  for any $\varphi, \psi\in C^\eta(M)$ there is   $C_{\varphi, \psi}>0$ so~that for almost every $\omega$
$$ \left|\mathcal C_n(\varphi, \psi,\mu_\omega)\right|\le C_{\varphi, \psi}  \max\{e^{-c'n},  \delta_{\sigma^{[n/4]}\omega, [n/4]}^\eta\}.$$ 
\item If there are   $a\in(0,1],C, c>0$ and    $n_1:\Omega\to \mathbb N$  with  $P\{n_1>n\} \le C e^{-cn^a}$  such that for almost every $\omega$ and some    $\gamma_\omega\in\Gamma^u_\omega$   for all $n\ge n_1(\omega)$ we have 
	 \begin{equation*}
	\Leb_{\gamma_\omega}\{R_\omega>n\}\le C e^{-cn^a},  
	\end{equation*} 	
then there exist 
$C', c'>0$ and $0<a'\le 1$ such that
for any $\varphi, \psi\in C^\eta(M)$ there is   a constant  $C_{\varphi, \psi}>0$ and   $C_\omega:\Omega\to \RR$ such that $P\{C_\omega>n\}\le C'e^{-c'n^{a'}}$ and for almost every $\omega$
\begin{equation*}	 
	\left|\mathcal C_n(\varphi, \psi,\mu_\omega)\right| \le  C_{\varphi, \psi}\max\{C_\omega e^{-c'n^{a'}},  \delta_{\sigma^{[n/4]}\omega, [n/4]}^\eta\}.
	\end{equation*} 
  Moreover, if $a=1$ then $a'=1$. 
 \end{enumerate}
 \end{theorem}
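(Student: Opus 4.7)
The plan is to lift the correlation problem to the random tower via the semi-conjugacy $\pi_\omega$ and then run a quenched Young-type coupling on the quotient tower $\bar\Delta_\omega$ obtained by collapsing stable leaves, following the scheme introduced in \cite{BBM02} and refined in \cite{BBR19}. By Theorem~\ref{existence} the equivariant family $\{\mu_\omega\}$ lifts to an equivariant family $\{\tilde\mu_\omega\}$ on the tower with $(\pi_\omega)_\ast\tilde\mu_\omega=\mu_\omega$, and writing $\tilde\varphi_n=\varphi\circ\pi_{\sigma^n\omega}$ and $\tilde\psi=\psi\circ\pi_\omega$, the correlation $\mathcal C_n(\varphi,\psi,\mu_\omega)$ becomes a correlation on the tower. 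Property~(P$_2$) together with the H\"older regularity of $\psi$ gives $|\tilde\psi(z)-\tilde\psi(z')|\le\|\psi\|_{C^\eta}\beta^{\ell\eta}$ whenever $z,z'$ are on the same stable leaf at height $\ell$, and the diameter bound \eqref{deltak} gives $|\varphi\circ\pi_{\sigma^k\omega}(w)-\varphi\circ\pi_{\sigma^k\omega}(w')|\le\|\varphi\|_{C^\eta}\delta_{\sigma^k\omega,k}^\eta$ whenever $w,w'\in F_\omega^k(Q)$ for some $Q\in\Q_{2k}^\omega$.

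Choosing $k=[n/4]$ and $m=n-k\ge 3n/4$, the first main step is an approximation on the tower: replace $\tilde\psi$ by its stable-leaf average (which produces an error $\|\psi\|_{C^\eta}\beta^{k\eta}$ at heights $\le k$ plus a tail contribution dominated by the return-time tail hypothesis), and replace $\tilde\varphi_n$, pushed through $k$ iterations, by a function constant on each element of $F_\omega^k(\Q_{2k}^\omega)$ (an error $\|\varphi\|_{C^\eta}\delta_{\sigma^k\omega,k}^\eta$ by \eqref{deltak}). Summing yields a total approximation error
\[
C\,\|\varphi\|_{C^\eta}\|\psi\|_{C^\eta}\max\{\beta^{k\eta},\delta_{\sigma^{k}\omega,k}^{\eta}\},
\]
which accounts for the geometric candidate in the theorem's conclusion. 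The remaining correlation lives on the quotient tower between a symbolic-Lipschitz observable and a test function depending only on $\bar F_\omega^m$.

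The second main step is a quenched Young coupling on the quotient tower. Property~(P$_3$) makes the induced map $\bar F_\omega^R$ a random Gibbs--Markov map with uniformly bounded distortion, and (P$_4$) provides uniformly controlled holonomies. I would pair two reference densities on a common unstable leaf (one coming from $\tilde\mu_\omega$, the other from the test function times $\tilde\mu_\omega$), iterate them under $\bar F^R$, and match a uniform fraction $\varepsilon>0$ of the remaining mass at every joint return landing in a common $u$-subset; the aperiodicity hypothesis, via a gcd argument applied uniformly along the base, ensures such joint returns occur at every sufficiently large time. This produces a random coupling time $T_\omega$ whose tail is dominated by that of $R_\omega$. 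In case~(1), the uniform exponential tail of $R_\omega$ gives $\Leb_{\gamma_\omega}\{T_\omega>m\}\le Ce^{-c_0 m}$, and since $m\ge 3n/4$ the coupling contribution is bounded by $C_{\varphi,\psi}e^{-c'n}$, which combined with the approximation bound closes case~(1) with deterministic constants.

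The main obstacle is case~(2), where the stretched exponential tail of $R_\omega$ is only effective beyond the random time $n_1(\omega)$, so the uniform coupling rate $\varepsilon$ cannot be guaranteed before $n_1(\omega)$. I would absorb this random start-up window into the $\omega$-dependent constant $C_\omega$ of the statement, constructed as a suitable functional of the sequence $\{n_1(\sigma^i\omega)\}_{i\ge 0}$, and exploit the stationarity of $P$ under $\sigma$ together with the tail hypothesis $P\{n_1>n\}\le Ce^{-cn^a}$ and a Borel--Cantelli argument to establish $P\{C_\omega>n\}\le C'e^{-c'n^{a'}}$ for some $0<a'\le a$. The potential exponent loss $a'<a$ arises from converting averaged tails into pointwise almost-sure bounds along the base orbit, and it disappears when $a=1$ because sums of geometric tails remain geometric in the large-deviation regime, giving $a'=1$ as stated. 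Carrying out this bookkeeping uniformly along the base orbit---in particular tracking how the random window $n_1$ interacts with the deterministic coupling rate---is the delicate ingredient, and is precisely the place where the techniques of \cite{BBR19} for random endomorphisms must be substantially adapted to the present partially hyperbolic setting, in which the quotient tower carries a non-trivial fiberwise return-time structure.
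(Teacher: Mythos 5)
Your plan reproduces the paper's overall strategy---lift to the tower via $\pi_\omega$, discretize at $k=[n/4]$, pass to the quotient tower, and run a quenched coupling (Theorem~\ref{thm:QDC}) with the random start-up window absorbed into $C_\omega$---so the architecture is correct. There is, however, a gap in how you approximate $\psi$. Replacing $\tilde\psi$ by its stable-leaf average on $\Delta_\omega$ does \emph{not} give a uniformly small error: by your own estimate the error on a leaf at height $\ell$ is $\|\psi\|_{C^\eta}\beta^{\ell\eta}$, which is $O(1)$ at $\ell=0$, and the low levels carry most of the mass of $\nu_\omega$. The tail hypothesis on $R_\omega$ controls the measure of the \emph{high} levels, not the low ones, so the proposed ``tail contribution dominated by the return-time tail hypothesis'' does not rescue the estimate.

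The correct step is the same one you already apply to $\varphi$: compare $\tilde\psi_\omega\circ F^k_{\sigma^{-k}\omega}$ with the function $\bar\psi_{\sigma^{-k}\omega,k}$ constant on elements of $\Q_{2k}^{\sigma^{-k}\omega}$, at a cost $\|\psi\|_{C^\eta}\delta_{\sigma^{-k}\omega,k}^\eta$ by \eqref{deltak}, and then use the identity \eqref{step3} to turn the problem into one purely on the quotient tower. With that substitution your outline matches the paper's. You should also state explicitly the analogue of Lemma~\ref{Liberta}, namely that the resulting density on the quotient tower lies in $\mathcal F_\beta^+$ with a Lipschitz constant independent of $k$ and $\omega$ and depending on $\varphi,\psi$ only through their H\"older norms; without that uniformity, Theorem~\ref{thm:QDC} does not yield constants $C',c'$ independent of the starting scale $k$, which is essential because $k$ runs with $n$.
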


\begin{theorem}\label{polyrates}
Consider a  random dynamical system with a measurable aperiodic hyperbolic product structure and uniformly summable tails. Assume, there exist $C,c,a>0$,  $b\ge0$ and $d\in(0,1]$ 
 such that for almost every $\omega$ and some $\gamma_\omega\in\Gamma^u_\omega$ 
\begin{enumerate}
\item 
$\displaystyle
\int\Leb_{\gamma_\omega}\{R_\omega=n\}dP(\omega)\le C (\log n)^bn^{-2-a};
$
\item
there exists     $n_1:\Omega\to \mathbb N$  with  $P\{n_1>n\} \le C e^{-c n^d}$  such that   for all $n\ge n_1(\omega)$ 
	 \begin{equation*}
	\Leb_{\gamma_\omega}\{R_\omega>n\}\le C (\log n)^bn^{-1-a}.
	%
	\end{equation*} 	
\end{enumerate}
Then there are $C',c'>0$ and $d'\in(0,1]$ such that for every $\eps>0$ and every $\varphi, \psi\in C^\eta(M)$ there is   $C_{\varphi, \psi}$ (independent of $\eps$) and a random variable $C_\omega:\Omega\to \RR$ with  $P\{C_\omega>n\}\le C'e^{-c'n^{d'}}$  such that 
$  \left|\mathcal C_n(\varphi, \psi,\mu_\omega)\right|\le  C_{\varphi, \psi}\max\{C_\omega n^{-a+\eps},  \delta_{\sigma^{[n/4]}\omega, [n/4]}^\eta\}$ for almost every $\omega$.
\end{theorem}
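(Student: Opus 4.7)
My plan is to lift the correlation to the random tower and then implement a quenched version of Young's coupling argument, in the spirit of Baladi--Benedicks--Maume, Li--Vilarinho, and \cite{BBR19}. By Theorem~\ref{existence} and the construction of the tower $\Dom$, there is an equivariant family $\{\nu_\omega\}$ on the tower, with bounded densities on unstable leaves that are H\"older with respect to the separation time $s$, and satisfying $(\pi_\omega)_\ast \nu_\omega = \mu_\omega$. Writing $\tilde\varphi = \varphi\circ \pi_{\sigma^n\omega}$ and $\tilde\psi = \psi\circ \pi_\omega$, the correlation $\mathcal C_n(\varphi,\psi,\mu_\omega)$ equals the analogous tower correlation for $\tilde\varphi, \tilde\psi$ with respect to $\nu_\omega$. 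With $k=[n/4]$, the first step is to replace $\tilde\psi$ by its conditional expectation $\bar\psi$ on $\Q_{2k}^\omega$; since $\pi_{\sigma^k\omega}\circ F^k_\omega$ sends each atom of $\Q_{2k}^\omega$ to a set of diameter at most $\delta_{\sigma^k\omega,k}$ by~\eqref{deltak}, this approximation costs at most $\|\psi\|_\eta \delta_{\sigma^k\omega,k}^\eta$, which yields exactly the second term in the max.

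For the residual correlation, I would run the coupling between the two tower probabilities $F^k_{\omega,\ast}(\bar\psi\, d\nu_\omega)/(\int\bar\psi\, d\nu_\omega)$ and $\nu_{\sigma^k\omega}$. Both admit, on each unstable leaf, densities which are Lipschitz in the separation time thanks to (P$_3$)(2) and (P$_4$). Iterating the tower map a further $n-k$ steps, at each epoch $j$ at which a $u$-slab returns to the base $\Delta_{\sigma^j\omega,0}$ I subtract a fixed fraction of the minimum of the two densities on that slab; the distortion and foliation regularity make the fraction bounded below uniformly, while aperiodicity together with $\gcd\{t_i\}=1$ forces matching to occur at a positive density of epochs along the $\omega$-orbit. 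This produces a random coupling time $T_\omega$ with $|\mathcal C_n - (\text{approximation error})|\lesssim \|\varphi\|_\infty\|\psi\|_\infty \cdot \mathbb{P}_\omega^{\mathrm{ref}}\{T_\omega>n-k\}$, where $\mathbb P_\omega^{\mathrm{ref}}$ denotes the coupling reference law on $\Dom$.

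The hard part is bounding the tail of $T_\omega$ under the quenched polynomial hypothesis and recovering the stretched exponential tail of $C_\omega$. The idea is a two-step control. First, using the estimate $P\{n_1>n\}\le Ce^{-cn^d}$ and a Borel--Cantelli argument along the shift orbit of $\omega$, I obtain a random $C^{(1)}_\omega$ with $P\{C^{(1)}_\omega>n\}\le C'e^{-c'n^{d'}}$ such that hypothesis~(2) holds pointwise at every fibre $\sigma^j\omega$ for all $j$ and all $m$ exceeding $C^{(1)}_\omega$. Second, the integrated bound~(1) plus the ergodicity of $\sigma$ means that, along the $\omega$-orbit, the average of $\Leb\{R_{\sigma^j\omega}=m\}$ decays like $(\log m)^b m^{-2-a}$; iterating the coupling then forces the unmatched mass to decay as $(\log m)^b m^{-a}$, and the $\eps$-loss in the exponent of the statement absorbs the logarithmic factor uniformly in $\omega$. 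This gives $\mathbb P_\omega^{\mathrm{ref}}\{T_\omega>m\}\le C_\omega m^{-a+\eps}$ with $P\{C_\omega>n\}\le C' e^{-c'n^{d'}}$.

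Combining the two estimates at $k=[n/4]$ yields the claimed bound. The principal delicate point is to verify that all the random constants entering the coupling—both the $C^{(1)}_\omega$ from Borel--Cantelli and the $C_\omega$ from accumulating the matched fraction—are measurable functions of $\omega$ alone, independent of $\varphi,\psi$, and that their tails combine into a single stretched exponential with the same $d'$. A secondary subtlety is that the matching at fibre $\sigma^j\omega$ requires a uniform lower bound on the matched fraction of the two densities, which one extracts from (P$_3$)(2) and (P$_4$) and an $s$-Lipschitz regularisation of the densities at level $k$; this regularisation is the reason for choosing $k$ proportional to $n$ in the splitting.
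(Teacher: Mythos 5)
Your plan follows the same architecture as the paper: lift the correlation to the random tower via $\pi_\omega$, approximate the observables at scale $k=[n/4]$ to pay a $\delta_{\sigma^{[n/4]}\omega,[n/4]}^\eta$ approximation cost, pass to a quotient-tower/coupling statement, and invoke a quenched coupling estimate of \cite{BBR19}/\cite{D15} type in which the constants depend on the observables only through their Lipschitz data (this is exactly the role of Theorem~\ref{thm:QDC}, Lemma~\ref{lem:tailtau}, and Lemma~\ref{lem:main} in the paper). The two-step tail control you describe (Borel--Cantelli along the shift orbit using $P\{n_1>n\}\le Ce^{-cn^d}$, then the integrated bound (1) to control the matched-mass decay) is precisely what produces the random $n_0$ of Theorem~\ref{thm:QDC}(2)(b) and hence the stretched-exponential tail of $C_\omega$.

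There is one point where your write-up is imprecise in a way that would actually fail as stated. You only replace $\psi$ by a $\mathcal Q_{2k}^\omega$-measurable approximation, and the displayed residual bound $\|\varphi\|_\infty\|\psi\|_\infty\,\mathbb P_\omega^{\mathrm{ref}}\{T_\omega>n-k\}$ implicitly assumes a matched pair contributes zero to the correlation as soon as the coupling succeeds. This is false for a general $\varphi\in C^\eta(M)$: the coupled orbits coincide only in the quotient, i.e.\ they are distinct points lying on a common stable leaf, so the difference of $\tilde\varphi\circ F^{n-k}$ evaluated at the two points is of order $\|\varphi\|_{C^\eta}\,\beta^{(n-k-T_\omega)\eta}$, not zero. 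Either you must carry the convolution term $\sum_{j\ge 0}\beta^{j\eta}\,\mathbb P_\omega^{\mathrm{ref}}\{T_\omega\in(n-k-j,\,n-k-j+1]\}$ through the estimate, or --- the cleaner route, and the one the paper takes in \eqref{step1}--\eqref{step3} --- you also discretize $\varphi$ to a $\mathcal Q_{2k}^{\sigma^{n-k}\omega}$-measurable function $\bar\varphi_{\sigma^{n-k}\omega,k}$, at a further cost of $\|\varphi\|_{C^\eta}\delta_{\sigma^{[n/4]}\omega,[n/4]}^\eta$; then $\bar\varphi$ genuinely factors through the quotient tower, matched portions of the two measures contribute identically, and the residual is controlled by the scalar quotient-tower mixing statement. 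Once this is fixed the two arguments coincide.
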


\subsection{Applications}
In this subsection we provide examples of random dynamical systems where our general results above apply. 
\subsubsection{Solenoid  with   intermittency} \label{se.solemio}
Here we present an application of Theorem~\ref{polyrates} to a variation of the classical solenoid attractor on the solid torus. The idea is to replace the uniformly expanding base map by a map with a neutral fixed point. The deterministic case has been treated in~\cite{AP08a}. Here we consider a randomised version of this example.

Let $0<\alpha_0<\alpha_1<1$  be fixed real numbers and let ${\bf p}$ be the normalised Lebesgue measure on $[\alpha_0, \alpha_1]$. For each $\alpha\in [\alpha_0, \alpha_1]$, consider the circle map $T_\alpha: \SS^1\to \SS^1$ defined by 
\[
T_\alpha(x)=\begin{cases} x(1+(2x)^\alpha), & \text{ if }  x\in [0, 1/2); \\ x-2^\alpha(1-x)^{1+\alpha},  &\text{ if }  x\in [1/2, 1). \end{cases}
\]
Let $M=\SS^1\times \DD^2$ denote the solid torus in $\RR^3$, where $\DD^2$ is the unit disk in $\RR^2$, and  consider a skew product  $g_\alpha:M\to M$,
\begin{equation}\label{map}
g_\alpha(x, y, z)=\left(T_\alpha(x), \frac{1}{10}y+\frac{1}{2}\cos x, \frac{1}{10}z+\frac{1}{2}\sin x\right).
\end{equation}
As in the case of classical solenoid (see for example \cite{KH95}) it is easily checked that $g_\alpha$ is a diffeomorphism onto its image for every $\alpha\in [\alpha_0, \alpha_1]$. 

Now let $\Omega=[\alpha_0, \alpha_1]^{\ZZ}$, $P={\bf p}^{\ZZ}$  and $\omega_k$ denote the $k$th component of $\omega\in \Omega$. 
Note that the composition defined as  $g_\omega^n=g_{\omega_{n-1}}\circ\dots\circ g_{\omega_0}$ defines a random dynamical system on $M$ as in Section~\ref{setup}. The proof of the next result is provided in Section~\ref{se.sole}.

\begin{theorem}\label{Appl}
  The random system $\{g_\omega\}_{\omega\in\Omega}$ admits a family~$\{\mu_\omega\}_{\omega\in\Omega}$ of equivariant physical measures. Moreover, for every $\eps>0$ and  almost every $\omega\in\Omega$ there exists a constant $C_\omega>0$ such that for any  $\varphi,\psi\in C^{Lip}(M)$  
$$|\mathcal C_n(\varphi, \psi,\mu_\omega)|\le C_\omega C_{\varphi, \psi}n^{1-1/\alpha_0+\eps},$$
where $C_{\varphi, \psi}$ is a positive constant depending only on $\varphi$ and $\psi$, while  $C_\omega$ satisfies $ P\{C_\omega>n\}\le Ce^{u'n^{v'}}$ for some $u'>0$ and $v'\in(0, 1)$.
\end{theorem}

Note that the quenched decay rate in the above theorem is driven by the fastest mixing map for the set of parameters considered. Obviously such a rate can only hold for almost all $\omega$ and cannot hold for all $\omega$.

\subsubsection{Partially hyperbolic attractors}\label{PHA}
Next we state an auxiliary result, Theorem \ref{randomPH} below, which is our main tool to verify that random perturbations of Axiom A and derived from Anosov attractors admit a random hyperbolic product structure with exponential tails. Theorem \ref{randomPH} extends the results of \cite{AL15} to the random setting. We start by introducing random perturbations of partially hyperbolic attractors with uniformly contracting direction and mostly expanding centre-unstable direction.  To build such structures, we need to assume that the constituent maps of  the random dynamical system belong in some small neighbourhood of a deterministic transitive dynamical system, in particular to assure the  existence of recurrence times. 

Let $f\in \diff^{1+}(M)$ and $U\subset M$ be an open set for which ${f(\bar U)}\subset U$, which we refer to  as a \emph{trapping region}. We  are going to consider a partially hyperbolic situation as in \cite{ABV00}, for which the corresponding attractor $K=\cap_{n\ge0}f^n(U)$ has a dominated splitting
$T_K M =E^{cs}\oplus E^{cu},$
with a uniform contracting direction $E^{cs}$. Let us now introduce these concepts in the random setting.
We   consider  $\mathcal F$  a small neighbourhood of $f$ in the $C^1$ topology and ${\bf p}$ a compactly supported  Borel probability measure whose support $\mathbb B$   is contained in $\mathcal F$ and $f\in\mathbb B$. 
For   $\mathcal F$  sufficiently small, 
we   still have   $f_\omega(\bar U)\subset U$ for   every  $\omega\in\Omega=\mathbb B^{\mathbb Z}$ and $f_\omega\in\mathcal F$.
Defining the random attractor
\begin{equation}\label{eq.randomatt}
K_\omega=\bigcap_{n\ge0}f^n_{\sigma^{-n}\omega}(U)
\end{equation}
and using the $C^1$ persistence of dominated splittings, for each $\omega\in\Omega$  there exists a continuous splitting of  $T_{K_\omega}M=E^{cs}_\omega \oplus E^{cu}_\omega $, which is \emph{$Df_\omega$-equivariant}:   
$$
Df_\omega(x)E^{cs}_\omega(x)=E^{cs}_{\sigma\omega}(f_\omega (x))\qand Df_\omega(x)E^{cu}_\omega(x)=E^{cu}_{\sigma\omega}(f_\omega (x)),
$$  
see Section~\ref{sec:bounded-curvature}. Moreover, this is still  \emph{partially hyperbolic}: there exists $0<\lambda<1$ such that (for some choice of Riemannian metric  on $M$, which is independent of $\omega$) for each $\omega\in\Omega$ and $x\in K_\omega$ we have
 \begin{equation}\label{domination1}
    \|Df_\omega \mid E^{cs}_\omega(x)\| \cdot \|Df^{-1}_\omega \mid E^{cu}_{\sigma\omega}(f_\omega(x))\| <\lambda \qand
    \|Df_\omega \mid E^{cs}_\omega(x)\|< \lambda.
    \end{equation}
To emphasise uniform contraction we will write $E^s$ instead of $E^{cs}$ and say that $K_\omega$ is a partially hyperbolic attractor of the type $E^{s}\oplus E^{cu}$. We say that $f_\omega$ has  \emph{non-uniform expansion} in  the  \emph{centre-unstable direction} $E^{cu}$ 
  on some set  $H_\omega\subset U$ if there exists $c>0$ such that for   every $\omega\in\Omega$ and  $\leb$ almost every $x\in H_\omega$  
\begin{equation}\label{NUE}
 \limsup_{n\to+\infty} \frac{1}{n}
    \sum_{j=1}^{n} \log \|Df^{-1}_{\sigma^{j-1}\omega} \mid E^{cu}_{\sigma^j\omega}(f^{j}_\omega(x))\|<-c.
\end{equation} 
For the specific case of $f$, this  implies that there is a  local  unstable manifold $D\subset K$ such that points in $D$ have  non-uniform expansion  in the centre-unstable direction; see \cite[Theorem~A]{AP08}.
We say that    $\{f_\omega\}_{\omega\in\Omega}$    is \emph{$C^1\!$-close to $f|_D$ on domains $\{D_\omega\}_{\omega\in\Omega}$  of $cu$-nonuniform expansion}, if 
\begin{enumerate}
\item  each  $f_\omega$ is  close to $f$ in the $C^1$ topology;
\item each $f_\omega$ has a  partially hyperbolic  set $K_\omega$ of the type $E^{s}\oplus E^{cu}$  with  a local  unstable manifold $D_\omega\subset K_\omega$  close to $D$ in the $C^1$ topology;
\item each $f_\omega$ is non-uniformly expanding on random orbits along the $E^{cu}_\omega$ direction for $\Leb_{D_\omega}$ almost every  $x\in D_\omega$.
\end{enumerate}
If   \eqref{NUE} holds at  $x\in D_\omega$,  then
the \emph{expansion time} function  
\begin{equation}\label{exptime}
    \mathcal E_\omega(x) = \min\left\{N\ge 1\colon  \frac{1}{n}
\sum_{j=1}^{n} \log \|Df^{-1}_{\sigma^{j-1}\omega} \mid E^{cu}_{\sigma^j\omega}(f^{j}_\omega(x))\| < -c, \quad
\forall\,n\geq N\right\}
\end{equation}
is well defined and finite. The set $\{\mathcal E_\omega>n\}$ plays an important role in the statement of our next result.

\begin{theorem}\label{randomPH}
Let $f\in \diff^{1+}(M)$ have a transitive partially hyperbolic set $K\subset M$ and a local  unstable manifold $D\subset K$. If   $\{f_\omega\}_{\omega\in\Omega}$  is a random perturbation   $C^1\!$-close to $f|_D$ on domains $\{D_\omega\}_{\omega\in\Omega}$ of $cu$-nonuniform expansion,
then there exists $\Lambda_\omega\subset K_\omega$ with a   hyperbolic product structure.  Moreover, 
 if there exist $C,c>0$  and $0< \tau \le 1$  such that 
$\Leb_{D_\omega}\{\E_\omega>n\}=  Ce^{-cn^\tau} $ for all $\omega\in\Omega$, then there exist $C',c'>0$ such that $\Leb_{D_\omega}\{R_\omega>n\}= C'e^{-c'n^\tau}$ for all $\omega\in\Omega$.
\end{theorem}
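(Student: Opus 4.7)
The plan is to adapt the construction of \cite{AL15} (built on the unperturbed scheme in \cite{ABV00}) to the random setting. The argument splits naturally into two halves: first, building $\Lambda_\omega$ with a hyperbolic product structure (verifying (P$_0$)--(P$_4$)), and second, transferring the stretched-exponential tail bound from $\E_\omega$ to $R_\omega$. Throughout, the main task is to ensure that every geometric constant is uniform in $\omega$.

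First I would construct the invariant leaves. Uniform contraction of $E^s$ with rate $\lambda$ holds across all $\omega$ by \eqref{domination1}, so the standard random stable manifold theorem yields a continuous family of stable discs $\{\gamma^s_\omega(x)\}_{x\in K_\omega}$ of radius bounded below by some $\delta_s>0$ independent of $\omega$, which immediately gives~(P$_2$). On $D_\omega$ I would combine \eqref{NUE} with a Pliss-type lemma to obtain, for $\Leb_{D_\omega}$-a.e.\ $x$ and all $n\ge \E_\omega(x)$, a subset of $(c/2)$-hyperbolic times in $\{1,\dots,n\}$ of density at least some $\theta_0>0$ not depending on $\omega$. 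At each such hyperbolic time $n$, the usual backward-contraction argument shows that $f^n_\omega$ maps a neighbourhood of $x$ in $D_\omega$ diffeomorphically onto an $E^{cu}_{\sigma^n\omega}$-tangent disc of uniform inner radius $\delta_u>0$ at $f^n_\omega(x)$, with distortion controlled by $C\beta^{s(\cdot,\cdot)}$; a graph-transform argument then promotes these discs to genuine unstable leaves in $K_{\sigma^n\omega}$.

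Next I would fix a reference point $p$ lying on a dense orbit of the transitive $f$ and, using $C^1$-closeness, a nearby reference point $p_\omega$ with a full-size pair of local $cu$- and $s$-discs. Define $\Lambda_\omega$ as the product $(\bigcup\gamma^u_\omega)\cap(\bigcup\gamma^s_\omega)$ through a small neighbourhood of $p_\omega$. Property (P$_0$) follows from positivity of $\Leb_{D_\omega}$ of the set of points with infinitely many hyperbolic times; (P$_3$) and (P$_4$) follow verbatim from the deterministic arguments applied along each fibre because all distortion and holonomy estimates depend only on $\beta$, $C$, and the opening angle of the splitting, which are uniform in $\omega$. The Markov partition $\P_\omega$ in (P$_1$) is built inductively: at each hyperbolic time $n$ at which the $f^n_\omega$-image of a piece of unstable disc $u$-crosses $\Lambda_{\sigma^n\omega}$, one declares that piece a Markov element with return time $n$, and proceeds by induction on the remaining pieces.

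For the tail estimate, transitivity of $f$ together with $C^1$-closeness guarantees that from any $cu$-disc of radius $\ge\delta_u$ based in $U$, a uniformly bounded number $N_0$ of further iterates (times a geometrically distributed waiting random variable $\tau$ with parameter independent of $\omega$) suffices to $u$-cross $\Lambda$. Consequently
\[
R_\omega(x)\le \E_\omega(x)+\tau(x),
\]
and splitting $\{R_\omega>n\}\subset\{\E_\omega>n/2\}\cup\{\tau>n/2\}$, the hypothesis $\Leb_{D_\omega}\{\E_\omega>n\}=Ce^{-cn^\tau}$ together with the exponential tail of $\tau$ yields $\Leb_{D_\omega}\{R_\omega>n\}\le C'e^{-c'n^\tau}$.

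The main obstacle will be the uniformity in $\omega$ of every geometric ingredient: the inner radii $\delta_s,\delta_u$ of the invariant leaves, the lower bound $\theta_0$ on the density of hyperbolic times after time $\E_\omega$, the size of the reference product set, and the parameter of $\tau$ controlling the time to $u$-cross $\Lambda$. All of these reduce to (i) uniformity of the dominated splitting in a $C^1$-neighbourhood of $f$, and (ii) transitivity of the unperturbed $f$ plus $C^1$-persistence of recurrence, but verifying that no constant silently depends on the orbit of $\omega$ is the most delicate bookkeeping of the proof.
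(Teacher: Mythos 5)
The first half of your proposal --- constructing $\Lambda_\omega$ and verifying (P$_0$)--(P$_4$) via uniform stable manifolds, Pliss hyperbolic times, hyperbolic predisks, and a reference cylinder picked near a point on a dense orbit of $f$ --- is in the same spirit as the paper's Sections~\ref{sec:bounded-curvature}, \ref{s.structure}, \ref{s.product}. (You understate the work in (P$_4$): the paper proves H\"older continuity of $E^{cu}_\omega$ via a graph-transform argument on a bundle over $K_\omega$, then reproves the M\~an\'e absolute-continuity-of-holonomy theorem in the random setting with a sequence of approximating holonomies along return times; ``follows verbatim'' is too quick, though the approach is right.)

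The real gap is the tail estimate, which you resolve by asserting $R_\omega(x)\le \E_\omega(x)+\tau(x)$ with $\tau$ a geometric random variable and then splitting $\{R_\omega>n\}\subset\{\E_\omega>n/2\}\cup\{\tau>n/2\}$. That inequality does not hold, and the reason is precisely what makes this half of the theorem hard. Having a $\alpha$-hyperbolic time at step $m\le \E_\omega(x)$ followed by at most $L$ extra iterates produces a \emph{candidate} $u$-crossing, but the inductive Markov construction must discard candidates that overlap (or land in the satellite/annular neighbourhoods $S_{\omega,n}(\xi)$ of) partition elements already chosen, and must also discard candidates too close to $\partial\Sigma_\omega^0$. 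A point can have a long sequence of hyperbolic times, each of whose predisks is vetoed by these exclusions, so $R_\omega(x)$ is not pointwise controlled by $\E_\omega(x)$ plus a memoryless waiting time. What the paper actually proves is an estimate on $\Leb_{D_\omega}(\Sigma^c_{\omega,n})$: via Lemma~\ref{le.keyinc} one sees that any $x\in\Sigma^c_{\omega,n}$ with $\E_\omega(x)\le n$, away from $\partial\Sigma_\omega^0$, must lie in $\Omega(n)$ of the satellite sets $S_{\omega,t_i}(\Sigma_\omega^0)$ (the set $X_{\omega,n}(k)$), and then one shows $\Leb(X_{\omega,n}(k))$ decays exponentially in $k$ by a combinatorial decomposition $X_{\omega,n}(k)\subset Y_{\omega,k}\cup Z_{\omega,n}(q)$, a generating-series argument for $Y_{\omega,k}$ (Proposition~\ref{pr.Ynk}), a rank argument for $Z^0_{\omega,n}$ (Lemma~\ref{le.Z^0}), and a recursive estimate for $Z_\omega(k,T)$ (Lemma~\ref{le.zkt}), all resting on the separation Lemma~\ref{le.existQ}. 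None of this is captured by ``$\E_\omega$ plus a geometric waiting time'', and without it the stretched-exponential bound on $\Leb_{D_\omega}\{R_\omega>n\}$ is unproved.
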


\subsubsection{Axiom A attractors}
As an application of Theorem~\ref{exrates} and Theorem~\ref{randomPH} we consider the classical  case of topologically mixing uniformly hyperbolic Axiom A  attractors.  Observe that this is a particular case of   partially hyperbolic attractors, as considered in Section~\ref{PHA}. In this case, the random attractors are defined as in~\eqref{eq.randomatt}. 
As in the general  partially hyperbolic setting,  we   consider  $\mathcal N$  a small neighbourhood of $f$ in the $C^1$ topology and $\theta$ a compactly supported  Borel probability measure whose support contains $f$ and   is contained in $\mathcal N$.  
The proof of the next result is given in Subsection~\ref{se.AxA}.


 \begin{theorem}\label{co.DCAxA}
Let $f\in \diff^{1+}(M)$  have a topologically mixing uniformly  hyperbolic attractor $K\subset M$.
If $\{f_\omega\}_{\omega\in\Omega}$ is a small random perturbation of $f$, 
 then it admits a unique family of equivariant physical measures $\{\mu_\omega\}_{\omega\in\Omega}$ supported on the random attractors. Moreover,  there is $c>0$ such that for any $\varphi, \psi\in C^\eta(M)$ there is a constant  $C_{\varphi, \psi}>0$    for which 
$  |\mathcal C_n(\varphi, \psi,\mu_\omega)|\le   C_{\varphi, \psi}e^{-cn }$.
\end{theorem}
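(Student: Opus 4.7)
The plan is to realize this as a special case of the partially hyperbolic setting of Subsection~\ref{PHA}, apply Theorem~\ref{randomPH} to produce a hyperbolic product structure with exponentially small tails, and then invoke Theorem~\ref{exrates}(1) together with Remark~\ref{re.exdelta} to conclude. Throughout, the key simplification is that for Axiom A attractors the dominated splitting $T_K M = E^s \oplus E^u$ is uniformly hyperbolic, so $E^{cu}$ is genuinely $E^u$ and the non-uniform expansion condition \eqref{NUE} holds with a uniform constant at every point of any local unstable disk $D\subset K$, with expansion time $\mathcal E \equiv 1$ (after choosing the metric adapted to the hyperbolic splitting).

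First I would fix $f\in\diff^{1+}(M)$ topologically mixing on $K$ and a small trapping region $U\supset K$. By structural stability of uniformly hyperbolic attractors, for $\mathcal N$ sufficiently small in the $C^1$ topology, every $f_\omega$ with $\omega\in\Omega$ preserves the trapping property and the random attractor $K_\omega$ of \eqref{eq.randomatt} carries a continuous $Df_\omega$-equivariant hyperbolic splitting $E^s_\omega\oplus E^u_\omega$ satisfying \eqref{domination1} with a \emph{uniform} contraction/expansion constant $\lambda<1$ independent of $\omega$. Pick a local unstable disk $D\subset K$ for $f$; by persistence of unstable manifolds (applied fiberwise), each $f_\omega$ admits a local unstable disk $D_\omega\subset K_\omega$ that is $C^1$-close to $D$, along which backward iterates contract at rate $\lambda$. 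Hence $\{f_\omega\}$ is $C^1$-close to $f|_D$ on domains $\{D_\omega\}$ of $cu$-nonuniform expansion in the sense of Subsection~\ref{PHA}, and in fact $\mathcal E_\omega$ is uniformly bounded, so $\Leb_{D_\omega}\{\mathcal E_\omega>n\}=0$ eventually; in particular the hypothesis $\Leb_{D_\omega}\{\mathcal E_\omega>n\}\le Ce^{-cn}$ of Theorem~\ref{randomPH} is trivially satisfied. Theorem~\ref{randomPH} then yields, for almost every $\omega$, a set $\Lambda_\omega\subset K_\omega$ with a measurable hyperbolic product structure and exponential tails $\Leb_{\gamma_\omega}\{R_\omega>n\}\le C'e^{-c'n}$.

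Next I would verify the remaining hypotheses of Theorem~\ref{exrates}(1): uniform summability of the tails is immediate from the uniform exponential bound on $\Leb_{\gamma_{\sigma^{-n}\omega}}\{R_{\sigma^{-n}\omega}>n\}$. For aperiodicity, I use topological mixing of $f$: in the deterministic Axiom A setting one produces integers $t_1,\dots,t_{N_0}$ with $\gcd\{t_i\}=1$ and a set $\{R=t_i\}$ of positive $\Leb_\gamma$-measure on some $\gamma\in\Gamma^u$ (this is the standard argument used to deduce exponential mixing from Young's tower construction). For small random perturbations the return partition and return times depend continuously on the parameters in a controlled way, so the same integers $t_i$ can be used, and by uniformity the positivity of $\Leb_{\gamma_\omega}\{R_\omega=t_i\}$ persists for almost every $\omega$. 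Theorem~\ref{existence} then furnishes the equivariant family $\{\mu_\omega\}$, and Theorem~\ref{exrates}(1) gives
\[
|\mathcal C_n(\varphi,\psi,\mu_\omega)|\le C_{\varphi,\psi}\max\{e^{-c'n},\ \delta_{\sigma^{[n/4]}\omega,[n/4]}^\eta\}.
\]
By Remark~\ref{re.exdelta}, in the Axiom A case $\|D^u(f_\omega^{R_\omega-j})^{-1}\|\le \beta^j$ holds uniformly, so $\delta_{\sigma^k\omega,k}$ decays exponentially in $k$ with constants independent of $\omega$; the maximum in Theorem~\ref{exrates}(1) is thus bounded by $C''e^{-c''n}$ for uniform constants, yielding the claimed exponential bound.

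The step I expect to be the main technical obstacle is the verification of aperiodicity \emph{uniformly in $\omega$}: one needs the integers $t_i$ and the positive-measure sets $\{R_\omega=t_i\}$ to be chosen independently of $\omega$ from a single topological-mixing argument for the unperturbed map. The remaining points (uniqueness of the equivariant family on each $K_\omega$, and the fact that the basin of $\mu_\omega$ covers a full-Lebesgue subset of $U$) follow from standard arguments using the contraction on stable leaves (P$_2$) together with the Markov structure: any two equivariant densities project to the same conditional measures on unstable leaves by (P$_4$), and absolute continuity of the stable foliation extends this to Lebesgue-a.e. point of the basin.
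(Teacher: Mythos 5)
Your proposal matches the paper's proof in Subsection~\ref{se.AxA} essentially step for step: uniform hyperbolicity gives a uniformly bounded expansion time so the hypothesis of Theorem~\ref{randomPH} is trivially satisfied; aperiodicity is deduced from topological mixing of the unperturbed $f$ and persistence of the return domains under small perturbations (this is exactly Remark~\ref{re.VIR}); and the $\delta_{\sigma^k\omega,k}$ term is controlled via Remark~\ref{re.exdelta} before invoking Theorems~\ref{existence} and~\ref{exrates}(1). Your identification of the aperiodicity argument as the delicate point is correct, and your resolution of it is the one the authors use.
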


 
%

\subsubsection{Derived from Anosov}\label{se.exampleDA}

Here we present random perturbations of partially hyperbolic diffeomorphisms 
whose centre-unstable direction is non-uniformly expanding. The deterministic case was introduced in \cite[Appendix~A]{ABV00} as a perturbation (not necessarily small) of an  Anosov diffeomorphism.  We sketch  below the main steps of its construction.


Consider a linear Anosov diffeomorphism $ f_0$ on the $d$-dimensional torus $M=\TT^d$, for some $d\ge 3$, having a  hyperbolic splitting $TM=E^u \oplus E^s$ with $\dim(E^u)\ge2$. Let $V\subset M$ be a small compact domain, such that for  the canonical projection $\pi:\mathbb R^d\to  M$ there exist unit open cubes $K^0,K^1\subset \mathbb R^d$ with $V\subset\pi(K^0)$ and $f_0(V)\subset\pi(K^1)$.  Let $f$ be a diffeomorphism on $M$ such that:
\begin{enumerate}
\item $f$ has invariant cone fields $C^{cu}$ and $C^s$  with small width  containing the unstable bundle $E^u$ and the stable bundle $E^s$ of~$f_0$, respectively, with $Df$ contracting uniformly vectors in $C^s$;
\item $f$ is \emph{$cu$-volume expanding in $M$}: there is $\sigma_1>0$ such that  $|\det(Df|T_x D^{cu})|>~\sigma_1$ for any $x\in M$ and any disk $D^{cu}$ through $x$ whose tangent space is contained in~$C^{cu}$;
\item $f$   is \emph{$cu$-expanding   $M\setminus V$}: there is $\sigma_2<1$ such that $\|(Df|T_x D^{cu})^{-1}\|<\sigma_2$ for $x\in M\setminus V$ and any disks $D^{cu}$ whose tangent space is contained in~$C^{cu}$;
\item $f $ is \emph{not too $cu$-contracting} on $V$: there is a small $\delta_0>0$ such that $\|(Df|T_x D^{cu})^{-1}\|<1+\delta_0$ for any $x\in V$ and any disk $D^{cu}$  whose tangent space is contained in~$C^{cu}$.
\end{enumerate}
For example, if $f_1:\TT^d \to \TT^d$ is a diffeomorphism satisfying items (1), (2) and (4) above and coinciding with $f_0$ outside $V$, then any $f$ in a $C^1$ neighbourhood of $f_1$ satisfies all the conditions (1)-(4). It is not difficult to see that examples of this type can be produced in such a way that the following property holds:
\begin{enumerate}
\item[(5)] $f$ {\em preserves} the  unstable foliation of $f_0$. 
\end{enumerate}
The above construction yields a family   in $\diff^{1+}(M)$ such 
  for   any centre-unstable disk~$D$ we have  $\leb_D\{\mathcal E>n\}$ with exponential decay; see~\cite[Appendix~A]{ABV00}. The existence of physical measures for these maps follows from the results in~\cite{ABV00}.
  Stably ergodic examples of this type with a unique physical measure   are given in~\cite{T04} under a volume hyperbolicity condition. This in particular assures the next property:
  \begin{enumerate}
\item[(6)] $f$ is \emph{transitive} in $M$. 
\end{enumerate}
 Let $\mathcal F$ be a family of maps in $\diff^{1+}(M)$ satisfying properties (1)-(6) above.

In the next theorem we need to take powers of random perturbations since  aperiodicity cannot be a priori assured. We consider the random perturbations $\{f^N_\omega\}_{\omega\in\Omega}$ of $f^N$, with the shift map $\sigma^N$. Accordingly, the notation of correlations in the next theorem is understood for $\{f^N_\omega\}_{\omega\in\Omega}$.
  \begin{theorem}\label{co.Derived}
Let $f\in\mathcal F$  and  $\{f_\omega\}_{\omega\in\Omega}$ be  a small random perturbation of $f$   in~$\mathcal F$. Then there is some $N\in\mathbb N$ such that
 $\{f^N_\omega\}_{\omega\in\Omega}$ admits a   family of equivariant physical measures $\{\mu_\omega\}_{\omega\in\Omega}$. Moreover,  there exists $c>0$ such that for any $\varphi, \psi\in C^\eta(M)$ we have a constant  $C_{\varphi, \psi}>0$    for which 
$ |\mathcal C_{Nn}(\varphi, \psi,\mu_\omega)|\le   C_{\varphi, \psi}e^{-cn }$.
\end{theorem}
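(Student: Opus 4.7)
The plan is to reduce the claim to our general framework: first verify the hypotheses of Theorem~\ref{randomPH} for this class of maps to obtain a random hyperbolic product structure with exponential tails, verify the aperiodicity condition by passing to a suitable iterate, and then conclude with Theorem~\ref{exrates}(1) together with Remark~\ref{re.exdelta} to control the diameter term $\delta_{\sigma^{[n/4]}\omega,[n/4]}^\eta$.

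First I would observe that properties (1)--(4) are formulated in terms of invariant cone fields and pointwise bounds on $Df$, hence they are $C^1$-open. Choosing $\mathcal F\subset \diff^{1+}(M)$ sufficiently small around $f$ and $\mathbb B\subset\mathcal F$ compact with $f\in\mathbb B$, every $f_\omega$ inherits from $f$ the invariant cone fields $C^{cu}$ and $C^s$, uniform $cu$-volume expansion on $M$, uniform $cu$-expansion on $M\setminus V$, and at most $(1+\delta_0)$-contraction on $V$, all with constants that are uniform in $\omega$. Consequently, the random attractor $K_\omega$ defined by \eqref{eq.randomatt} carries a dominated splitting $T_{K_\omega}M=E^s_\omega\oplus E^{cu}_\omega$ with $E^s_\omega$ uniformly contracting, so that $K_\omega$ is partially hyperbolic of type $E^s\oplus E^{cu}$ in the sense of Subsection~\ref{PHA}.

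Next, following the deterministic construction of \cite[Appendix~A]{ABV00} but now carried out fiberwise along random orbits, one shows that for every $\omega$ and Lebesgue-a.e.\ point $x$ in any local $cu$-disk $D_\omega\subset K_\omega$ close to a fixed unstable disk $D\subset K$, the non-uniform expansion condition \eqref{NUE} holds, and moreover the expansion time function $\mathcal E_\omega$ defined in \eqref{exptime} has uniformly exponential tails $\leb_{D_\omega}\{\mathcal E_\omega>n\}\le Ce^{-cn}$ with $C,c$ independent of $\omega$. The key input is a Pliss-type large deviation argument applied to the sequence $\log\|Df^{-1}_{\sigma^{j-1}\omega}\mid E^{cu}_{\sigma^j\omega}(f^j_\omega(x))\|$: points in $V$ contribute at most $\log(1+\delta_0)$ to this sum, points outside $V$ give a definite gain $\log\sigma_2^{-1}$, and the $cu$-volume expansion controls the relative frequencies. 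Since the constants $\sigma_1,\sigma_2,\delta_0$ are uniform over $\mathcal F$, the exponential bound is uniform in $\omega$. This is the main technical point and will be the principal obstacle, as it requires adapting the Pliss argument to the random setting while keeping all constants independent of $\omega$.

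With these exponential tails for $\mathcal E_\omega$ in hand, Theorem~\ref{randomPH} supplies $\Lambda_\omega\subset K_\omega$ with a measurable random hyperbolic product structure whose return time satisfies $\leb_{D_\omega}\{R_\omega>n\}\le C'e^{-c'n}$ uniformly in $\omega$; in particular uniformly summable tails hold. To invoke Theorem~\ref{exrates}(1) we also need aperiodicity. Using transitivity (property (6)) of $f$ and the openness of the topological mixing property in a suitable sense, one can guarantee that for some $N\in\mathbb N$ the iterate $f^N$ admits in $\Lambda$ two return times $t_1,t_2$ with $\gcd(t_1,t_2)=1$, and this gcd condition persists for all sufficiently small perturbations $f_\omega^N$ with the same integers $t_1,t_2$; the extra iterate $f^N$ is needed precisely to realise coprime return values simultaneously for all $\omega$. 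We thus obtain an aperiodic hyperbolic product structure for $\{f^N_\omega\}_{\omega\in\Omega}$ with exponential tails, so Theorem~\ref{exrates}(1) yields for any $\varphi,\psi\in C^\eta(M)$ a constant $C_{\varphi,\psi}>0$ and $c'>0$ with
\[
|\mathcal C_n(\varphi,\psi,\mu_\omega)|\le C_{\varphi,\psi}\max\{e^{-c'n},\delta_{\sigma^{[n/4]}\omega,[n/4]}^\eta\}
\]
for almost every $\omega$. Finally, since the uniform contraction of $E^s_\omega$ and the uniform backward expansion on unstable leaves coming from $f_\omega$ being $C^1$-close to $f$ give the stronger inequality $\|D^u(f_\omega^{R_\omega-j})^{-1}(x)\|\le\beta^j$, Remark~\ref{re.exdelta} applies and $\delta_{\sigma^{k}\omega,k}$ decays exponentially uniformly in $\omega$. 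Taking the maximum with $e^{-c'n}$ and absorbing the rate gives the claimed quenched exponential decay, and uniqueness of the equivariant physical family follows from topological mixing of $f^N$ applied to the basin structure inherited from the tower.
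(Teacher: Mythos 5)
Your overall plan — obtain a hyperbolic product structure via Theorem~\ref{randomPH}, pass to an iterate $f^N$ to get aperiodicity, then invoke Theorem~\ref{exrates}(1) and Remark~\ref{re.exdelta} — matches the paper's schema, but the crucial step where you establish the exponential tail $\leb_{D_\omega}\{\mathcal E_\omega>n\}\le Ce^{-cn}$ takes a genuinely different and more laborious route. You propose to re-do a Pliss-type large-deviation argument fiberwise in the random setting, and you correctly identify this as the principal obstacle. The paper avoids this entirely by exploiting property (5) of the class $\mathcal F$ — namely that each $f\in\mathcal F$ \emph{preserves the unstable foliation of the linear Anosov model $f_0$} — which permits taking $D_\omega=D$ to be one and the same unstable disk for all $\omega$, after which the exponential tail for $\mathcal E_\omega$ follows directly from \cite[Lemma~7.4]{AAV07}. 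Your proposal never uses property (5), so in effect you are trying to prove a stronger, more general statement (one without that foliation hypothesis); that may well be possible by pushing the \cite{ABV00}/Pliss machinery as you sketch, but it is exactly the work that assumption (5) was designed to remove, and you would need to carry it out in detail rather than gesture at it as a single technical point. On the minor points: your claim that the stronger backward contraction of Remark~\ref{re.exdelta} comes "from $f_\omega$ being $C^1$-close to $f$" is slightly off for DA maps, since $f$ itself is not uniformly expanding in the centre-unstable direction on $V$; the inequality $\|D^u(f_\omega^{R_\omega-j})^{-1}(x)\|\le\beta^j$ actually comes from the fact that $R_\omega$ is built from $\alpha$-hyperbolic times (as in the proof of (P$_3$) in Section~\ref{s.product}), which yields precisely this cumulative backward contraction along the return orbit. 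Your handling of aperiodicity via realising coprime return times for some power $f^N$ is consistent with Remark~\ref{re.VIR} and with the paper's closing comment that the power is taken because aperiodicity of the random structure cannot be guaranteed a priori.
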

The proof of Theorem \ref{co.Derived} is provided in Subsection~\ref{se:DA}.

\section{Random hyperbolic towers}\label{se.towers}
Our proofs of Theorems \ref{existence}, \ref{exrates} and \ref{polyrates} are based on considering quotients along stable manifolds to obtain a random Young tower for the quotient dynamics. For this purpose we first introduce measures that are suitable to take quotients along stable manifolds.
\subsection{Natural measures} 
Fix $\hat \gamma_\omega\in \Gamma^u_\omega$. For any   $\gamma_\omega\in \Gamma_\omega^u$ and $x\in \gamma_\omega\cap \Lambda_\omega$ let $\hat x_\omega$ be the intersection point $\gamma^s_\omega(x)\cap \hat\gamma_\omega$. 
Define $\hat \rho_\omega: \gamma_\omega\cap \Lambda_\omega\to \RR $ as 
\[
\hat \rho_\omega(x)=\prod_{i=0}^\infty\frac{\det D^uf_{\sigma^i\omega}(f^i_\omega(x))}{\det D^uf_{\sigma^i\omega}(f^i_\omega(\hat x_\omega))}.
\]
Further, let $m_{\gamma_\omega}$ be the measure defined by 
\[
\frac{dm_{\gamma_\omega}}{d\Leb_{\gamma_\omega}}=\hat \rho_\omega \mathbb{1}_{\gamma_\omega\cap \Lambda_\omega}.
\]
For any $\omega\in\Omega$, $\gamma_\omega'\in \Gamma_\omega^u$  and $x'\in \gamma_\omega'\cap\gamma_\omega^s(x)$ i.e. $x'=\Theta_\omega(x)$, by  (P$_4$)(1) we have 
$$
\frac{\hat\rho_\omega(x')}{\hat\rho_\omega(x)}= \frac{d([\Theta_\omega]_\ast\Leb_{\gamma_\omega})}{d\Leb_{\gamma_\omega'}}(x'),
$$which implies 
$$
\frac{d([\Theta_\omega]_\ast m_{\gamma_\omega})}{ d\Leb_{\gamma_\omega'}}(x')=\hat\rho_\omega(x)\frac{d([\Theta_\omega]_\ast \Leb_{\gamma_\omega})}{ d\Leb_{\gamma_\omega'}}(x')= \hat\rho_\omega(x')
$$
Thus, 
\begin{equation}\label{eq.teta} 
[\Theta_\omega]_\ast m_{\gamma_\omega}=m_{\gamma_\omega'}.
\end{equation}

\begin{lemma}\label{lem:bbdR}
Assume that $f^{R_\omega}_\omega(\gamma\cap \Lambda_\omega)\subset \gamma'$ for $\gamma\in \Gamma_\omega^u$ and $\gamma'\in \Gamma_{\sigma^{R_\omega}\omega}^u$.  Let $Jf^{R_\omega}_\omega$ denote the Jacobian of $f^{R_\omega}_\omega$ with respect to $m_\gamma$ and $m_\gamma'$.  Then 
\begin{enumerate}
\item  $Jf^{R_\omega}_\omega(x)=Jf^{R_\omega}_\omega(y)$ for any $x\in \gamma\cap \Lambda_\omega$ and $y\in \gamma^s_\omega(x)$. 
\item There exits a constant $C_1>0$ such that for any $x, y\in \Lambda_\omega\cap\gamma$
$$
\left|\frac{Jf^{R_\omega}_\omega(x)}{Jf^{R_\omega}_\omega(y)}-1\right|\le C_1\beta^{s(f_\omega^{R_\omega}(x), f_\omega^{R_\omega}(y))}.
$$
\end{enumerate}
\end{lemma}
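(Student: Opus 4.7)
The two items require different ingredients and are handled separately.

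For (1), the plan is to exploit the holonomy invariance~\eqref{eq.teta} of the natural measures. Given $y\in\gamma^s_\omega(x)\cap\Lambda_\omega$, let $\tilde\gamma=\gamma^u_\omega(y)$, and let $\gamma',\tilde\gamma'\in\Gamma^u_{\sigma^{R_\omega}\omega}$ be the unstable leaves through $f^{R_\omega}_\omega(x)$ and $f^{R_\omega}_\omega(y)$. Property~(P$_1$)(2) guarantees that $f^{R_\omega}_\omega$ maps the stable leaf through $x$ into a single stable leaf at fibre $\sigma^{R_\omega}\omega$, giving the intertwining $f^{R_\omega}_\omega\circ\Theta_\omega=\Theta_{\sigma^{R_\omega}\omega}\circ f^{R_\omega}_\omega$ on $\gamma\cap\Lambda_\omega$, where $\Theta_\omega:\gamma\to\tilde\gamma$ and $\Theta_{\sigma^{R_\omega}\omega}:\gamma'\to\tilde\gamma'$ are the stable holonomies. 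Since both push natural measures to natural measures by~\eqref{eq.teta}, the chain rule for Jacobians forces $Jf^{R_\omega}_\omega(x)=Jf^{R_\omega}_\omega(y)$.

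For (2), I would first derive the explicit expression
\[
Jf^{R_\omega}_\omega(x)=\frac{\hat\rho_{\sigma^{R_\omega}\omega}(f^{R_\omega}_\omega(x))}{\hat\rho_\omega(x)}\,\det D^u f^{R_\omega}_\omega(x),
\]
by writing $m_\gamma,m_{\gamma'}$ through their densities with respect to the Riemannian volumes. The ratio $Jf^{R_\omega}_\omega(x)/Jf^{R_\omega}_\omega(y)$ then splits into three factors: (i)~the ratio of unstable determinants, controlled directly by~(P$_3$)(2) as $|\log|\le C\beta^{s(f^{R_\omega}_\omega(x),f^{R_\omega}_\omega(y))}$; (ii)~a ratio of $\hat\rho_\omega$ on $\gamma$; (iii)~a ratio of $\hat\rho_{\sigma^{R_\omega}\omega}$ on $\gamma'$.

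For (ii) and (iii), the pivotal observation is the identity $\hat\rho_\omega(x)\cdot\rho_\omega^{\gamma\to\hat\gamma}(\hat x_\omega)=1$, where $\rho_\omega^{\gamma\to\hat\gamma}$ is the density of the holonomy from $\gamma$ to $\hat\gamma_\omega$ appearing in~(P$_4$)(1); this follows by pairing the two defining infinite products term by term. Applying~(P$_4$)(2) on $\hat\gamma_\omega$ then yields $|\log\hat\rho_\omega(x)/\hat\rho_\omega(y)|\le C\beta^{s(\hat x_\omega,\hat y_\omega)}=C\beta^{s(x,y)}$, the equality of separation times holding because iterating~(P$_1$)(2) shows that $\hat x_\omega, \hat y_\omega$ share the forward itineraries of $x, y$. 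An analogous argument at fibre $\sigma^{R_\omega}\omega$ bounds factor (iii) by $C\beta^{s(f^{R_\omega}_\omega(x),f^{R_\omega}_\omega(y))}$. Combining the three log-bounds, using $s(x,y)=s(f^{R_\omega}_\omega(x),f^{R_\omega}_\omega(y))+1$, and converting from $|\log r|$ to $|r-1|$ via $|r-1|\le 2|\log r|$ for $r$ close to one, yields the claim with $C_1$ depending only on $C$ and $\beta$.

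The main anticipated obstacle is the identification $\hat\rho_\omega(x)=1/\rho_\omega^{\gamma\to\hat\gamma}(\hat x_\omega)$, which is what permits~(P$_4$)(2) to be brought to bear on the natural-measure densities; once this bridge is in place, the remaining work reduces to elementary manipulations of separation times and the structural hypotheses already imposed on the system.
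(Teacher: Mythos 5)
Your proof is correct, and for part~(1) it takes a genuinely different route from the paper. The paper establishes~(1) by writing $Jf^{R_\omega}_\omega$ explicitly as $|\det D^uf^{R_\omega}_\omega(x)|\cdot\hat\rho_{\sigma^{R_\omega}\omega}(f^{R_\omega}_\omega(x))/\hat\rho_\omega(x)$, expanding $\log$ of this as three infinite sums of $\varphi_{\sigma^i\omega}(f^i_\omega(\cdot))$, cancelling term by term, and observing that the result is a function of $\hat x_\omega$ and $\widehat{f^{R_\omega}_\omega(x)}$ alone, so it is constant along stable leaves. You instead use the holonomy invariance $[\Theta_\omega]_\ast m_{\gamma_\omega}=m_{\gamma'_\omega}$ from~\eqref{eq.teta}, the intertwining $f^{R_\omega}_\omega\circ\Theta_\omega=\Theta_{\sigma^{R_\omega}\omega}\circ f^{R_\omega}_\omega$ guaranteed by~(P$_1$)(2), and the chain rule for Jacobians --- since both holonomy Jacobians are $\equiv 1$, the conclusion drops out immediately. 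Your argument is shorter and more conceptual, at the cost of depending on~\eqref{eq.teta}, whereas the paper's series manipulation is self-contained. For part~(2), both you and the paper split $\log(Jf^{R_\omega}_\omega(x)/Jf^{R_\omega}_\omega(y))$ into the determinant ratio plus two $\hat\rho$ ratios and invoke (P$_3$)(2) and (P$_4$)(2); the paper does this tersely, whereas you spell out the intermediate identity $\hat\rho_\omega(x)=1/\rho_\omega^{\gamma\to\hat\gamma}(\hat x_\omega)$ that makes (P$_4$)(2) applicable to $\hat\rho_\omega$, and you also justify $s(\hat x_\omega,\hat y_\omega)=s(x,y)$ via the $s$-saturation of the Markov partition. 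These are exactly the details the paper leaves implicit, so your version is a faithful and somewhat more transparent rendering of the same idea.
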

\begin{proof}
 (1) For $\Leb_\gamma$ almost every $x\in\gamma\cap\Lambda_\omega$ we
 have
  \begin{equation}\label{eq.jaco}
  Jf^{R_\omega}_\omega(x)=\left|\det D^uf^{R_\omega}_\omega(x)\right|\cdot \frac{\hat \rho_{\sigma^{R_\omega}\omega}(f^{R_\omega}_\omega(x))}{\hat \rho_\omega(x)}.
  \end{equation}
 Denoting $\varphi_\omega(x)=\log|\det D^uf_\omega(x)|$, we write  the right hand side of \eqref{eq.jaco}  as 
 
 \begin{align*}
  \sum_{i=0}^{R_\omega-1}\varphi_{\sigma^i\omega}(f^i_\omega(x)) 
   - &\sum_{i=0}^{\infty}\left(\varphi_{\sigma^i\omega}(f^i_\omega(x))- \varphi_{\sigma^i\omega}(f^i_\omega(\hat     x)\right)\\
  + &\sum_{i=0}^{\infty}\left(\varphi_{\sigma^{i+R_\omega}\omega}(f^i_{\sigma^{R_\omega}\omega}(f^{R_\omega}_\omega(x)))- \varphi_{\sigma^{i+R_\omega}\omega}(f^i_{\sigma^{R_\omega}\omega}(\widehat{f^{R_\omega}_\omega(x)})\right)\\
     =\sum_{i=0}^{R_\omega-1}\varphi_{\sigma^i\omega}(f^i_\omega(\hat x))+&
\sum_{i=0}^{\infty}\left(\varphi_{\sigma^{i+R_\omega}\omega}(f^i_{\sigma^{R_\omega}\omega}(f^{R_\omega}_\omega(\hat x)))- \varphi_{\sigma^{i+R_\omega}\omega}(f^i_{\sigma^{R_\omega}\omega}(\widehat{f^{R_\omega}_\omega(x)}))\right).
 \end{align*}
Thus we have shown that $Jf^{R_\omega}_\omega(x)$ can be expressed just in terms of
$\hat x$ and $\widehat{f^{R_\omega}_\omega(x)}$, which is enough for proving the
first part of the lemma.
\smallskip

 (2) It follows from \eqref{eq.jaco} that
  \[
  \log\frac{Jf^{R_\omega}_\omega(x)}{Jf^{R_\omega}_\omega(y)}=\log\frac{\det D(f^{R_\omega}_\omega)^u(x)}{\det
  D(f^{R_\omega}_\omega)^u(y)}+
  \log\frac{\hat\rho_{\sigma^{R_\omega}\omega}(f^{R_\omega}_\omega(x))}{\hat\rho_{\sigma^{R_\omega}\omega}(f^{R_\omega}_\omega(y))}+\log\frac{\hat \rho_\omega(y)}{\hat \rho_\omega(x)}.
  \]
Observing that $s(x,y)> s(f^{R_\omega}_\omega(x),f^{R_\omega}_\omega(y))$ the conclusion follows from (P$_3$) and (P$_4$)(2).
\end{proof}

Notice that the family $\{m_\gamma\}$  introduced in the previous section  defines a measurable system on $\Lambda_\omega$ since $\Gamma_\omega^u$ is a continuous family. Thus, it defines a measure  $m_\omega$ on $\Lambda_\omega$.   We introduce a  measure on $\Delta_\omega$  that we still denote $m_\omega$ by letting $m_\omega\vert\Delta_{\omega, \ell}=m_{\sigma^{-\ell}\omega}\vert\{R_{\sigma^{-\ell}\omega} >\ell\}$. 
We let $JF_\omega$ denote the Jacobian of $F_\omega$ with respect to the measure
$m_\omega$.

\begin{lemma}\label{lem:distFk}
There exists $C_3>0$ such that for almost every $\omega\in \Omega$,  for any $k\ge 1$ and $x, y\in Q\in \Q_{k-1}^\omega$ 
$$
\left|\frac{F^k_\omega(x)}{F^k_\omega(y)}-1\right|\le C_3\beta^{s(F_\omega^k(x), F_\omega^k(y))}.
$$
\end{lemma}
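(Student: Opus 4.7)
The stated distortion bound should be read as an estimate on the Jacobian ratio $|JF^k_\omega(x)/JF^k_\omega(y) - 1|$, since $F^k_\omega$ itself is a map between tower fibres and has no intrinsic scalar ratio to compare; I will assume this is the intended statement. My plan is to decompose $JF^k_\omega$ as a product of one-step Jacobians taken at the return times of $x$ (equivalently of $y$), apply the one-step estimate of Lemma~\ref{lem:bbdR}(2) to each factor, and collapse the resulting sum as a geometric series in $\beta$.

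The key structural observation I will use is that $F_\omega$ acts by $(x, \ell) \mapsto (x, \ell+1)$ on non-top levels of the tower; with respect to the measure $m_\omega$ this forces $JF_\omega \equiv 1$ at non-return iterates and $JF_\omega = Jf^{R_\cdot}_\cdot$ precisely on top levels, where a genuine return occurs. Fixing $x, y \in Q \in \mathcal{Q}_{k-1}^\omega$, the definition of $\mathcal{Q}_{k-1}^\omega$ as a refinement of $F_\omega^{-i}\mathcal{Q}^{\sigma^i\omega}$ for $0 \le i \le k-2$ implies that the two points share the same tower itinerary on $[0, k-1]$; in particular their returns to the base occur simultaneously, at common times $\tau_1 < \cdots < \tau_m$ in $[1, k]$. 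The chain rule then yields
\[
\frac{JF^k_\omega(x)}{JF^k_\omega(y)} = \prod_{j=1}^{m} \frac{Jf^{R_{\omega_j}}_{\omega_j}(x_j)}{Jf^{R_{\omega_j}}_{\omega_j}(y_j)},
\]
where $\omega_j$, $x_j$, $y_j$ denote the fibre and base points entering the $j$-th return.

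I would then apply Lemma~\ref{lem:bbdR}(2) to each factor and take logarithms to obtain
\[
\left|\log\frac{JF^k_\omega(x)}{JF^k_\omega(y)}\right| \le C_1 \sum_{j=1}^{m} \beta^{s(F^{\tau_j}_\omega(x),\, F^{\tau_j}_\omega(y))},
\]
with $s$ extended to the tower via base projection. The main obstacle is the combinatorial bookkeeping establishing
\[
s(F^{\tau_j}_\omega(x), F^{\tau_j}_\omega(y)) \ge s(F^k_\omega(x), F^k_\omega(y)) + (m - j);
\]
this requires unwinding the definition of $s$ as a counter of $\mathcal{P}$-separations under the induced map and exploiting the fact that the $m - j$ subsequent returns in $[\tau_j, k]$ leave $x, y$ in common base partition elements. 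Once this bound is in hand, the sum collapses to a geometric tail dominated by $\frac{C_1}{1-\beta}\,\beta^{s(F^k_\omega(x), F^k_\omega(y))}$, and a uniform bound $|e^t - 1| \le |t| e^{|t|}$ applied after exponentiation delivers the lemma with, for instance, $C_3 = \frac{C_1}{1-\beta}\exp\!\bigl(\frac{C_1}{1-\beta}\bigr)$.
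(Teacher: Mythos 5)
Your proposal is correct and takes essentially the same route as the paper: both exploit that $JF_\omega\equiv 1$ off the base, reduce $JF^k_\omega$ to a product of one-step return-map Jacobians at the common return times of $x,y$, apply Lemma~\ref{lem:bbdR}(2) to each factor, and sum a geometric series using the fact that the separation time decreases by exactly one at each return. Your reading of the lemma as a statement about $JF^k_\omega$ (rather than $F^k_\omega$) is also the intended one.
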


 \begin{proof} Recall that  by the definition of $\tau_{\omega, j}$ we have $(F^{R_\omega})^{j}_\omega=F^{\tau_{\omega, j}}_\omega$.  Also let $\P_{\omega, k}= \vee_{j=0}^{n-1} (F^{\tau_{\omega, j}})^{-1}_{\sigma^{\tau_{\omega, j}} \omega} \P_{\sigma^{\tau_{\omega, i}}\omega}$. 
By Lemma~\ref{lem:bbdR}  for a.e. $\omega$, for all $i\ge 1$ and all $ x,y\in \Delta_{\omega, 0,i}$ holds
\begin{equation}\label{jac}
\mdlo{ \frac{J F^{R_\omega}_\omega(x)}{J  F^{R_\omega}_\omega(y)}-1 } \le  C_1 \beta^{s( F^{
R_\omega}_\omega(x), F^{ R_\omega}_\omega(y))}.
\end{equation}
It follows that there is a constant $C_F>0$, which is independent of $\omega$ such that for all
$n\ge1$ and all $x,y$ belonging to a same element of $\P_{\omega, n}$ 
\begin{equation}\label{jacfrn}
\left| \frac{J(F^{R_\omega}_\omega)^n(x)}{J(F^{R_\omega}_\omega)^n(y)}-1\right| \leq C_F
\beta^{s((F^{R_\omega}_\omega)^n (x),(F^{R_\omega}_\omega)^n(y))},
\end{equation}
By definition if $x$ and $y$ belong to a same element of $\P_{\omega, n}$, then $(F^{R_\omega}_\omega)^j(x)$ and
$(F^{R_\omega}_\omega)^j(y)$ belong to a same element of $\P_{\sigma^{\tau_{\omega, j}}\omega}$ for every
$0\le j<n$. Moreover,

\begin{equation}\label{esse}
s((F^{R_\omega}_\omega)^j(x),(F^{R_\omega}_\omega)^j(y))=s((F^{R_\omega}_\omega)^n(x),(F^{R_\omega}_\omega)^n(y))+(n-j).
\end{equation}
Then by \eqref{jac} and \eqref{esse}  we have 
\begin{align}
&\log\frac{J(F^{R_\omega}_\omega)^n(x)}{J(F^{R_\omega}_\omega)^n(y)} =
\sum_{j=0}^{n-1}\log\frac{JF^{R_\omega}_\omega((F^{R_\omega}_\omega)^j(x))}{JF^{R_\omega}_\omega((F^{R_\omega}_\omega)^j(y))}\nonumber\\
&\le\sum_{j=0}^{n-1}C_1\beta^{s((F^{R_\omega}_\omega)^n(x),(F^{R_\omega}_\omega)^n(y))+(n-j)-1} \le
 C_1\beta^{s((F^{R_\omega}_\omega)^n(x),(F^{R_\omega}_\omega)^n(y))}\sum_{j=0}^{\infty}\beta^j
 \nonumber\\
&\le C_F\beta^{s((F^{R_\omega}_\omega)^n(x),(F^{R_\omega}_\omega)^n(y))},
\label{logo}
\end{align}
where $C_F>0$ depends only on $C_1$ and $\beta$. This implies that
\eqref{jacfrn} holds.

Since $JF_\omega (x) \equiv 1$  for every $x\in \Delta_{\omega, \ell}$ with $\ell >0$,  it follows that $JF^k_\omega(x)=J(F^{R_\omega}_\omega)^n(x')$ and
$JF^k_\omega(y)=J(F^{R_\omega}_\omega)^n(y')$, where $n$ is the number of visits of $x$ and
$y$ to the base  prior to time $k$, and $x', y'$ are the projections of $x,y$ to the zeroth level of  the tower $\Delta_{\omega'}$ for some $\omega'\in\Omega$.  The point $x',y'$ belong to a same element of
$\P_{\omega', n}$ and $s(x,y)=s(x',y')$. Using \eqref{jacfrn}  and  \eqref{jacfrn}  we obtain that 
\begin{eqnarray} \left| \frac{JF^k_\omega(x)}{JF^k_\omega(y)}-1\right| \leq C_F
\beta^{s(F^k_\omega(x),F^k_\omega(y))}. \label{jac5}
\end{eqnarray}
For all $k\ge1$ and all  $x,y\in\Delta_\omega$ belonging to a same element of $\Q_{k-1}^\omega$.
 \end{proof}
 
 \subsection{Equivariant measures}
In this subsection we prove Theorem \ref{existence}.
For $x\in\Lambda_\omega$ we set $\tilde F_\omega(x)=f^{R_\omega(x)}_\omega(x)\in \Lambda_{\sigma^{R_\omega(x)}(\omega)}$. We
refine recursively $\mathcal P_\w$ on $\Lambda_\w$ with the partitions associated to the images of each element of $\mathcal P_\w$: 
$$\mathcal P_\w^{(k)}=\bigvee_{j=0^{\phantom{j}}}^{k}\bigvee_{n\in
L_{\w}^j}(\tilde F_\w^j)^{-1}{\mathcal P}_{\sigma^n\w},$$
 where
$L_{\w}^j=\{n\in\NN_0:\tilde F_\w^j(\Lambda_\w)\cap\Lambda_{\sigma^n\w}\neq\emptyset\}$, and set $\mathcal{P}^{(k),n}_{\w}=\mathcal
P_\w^{(k)}\cap(\tilde F_\w^k)^{-1}(\Lambda_{\sigma^n\w})$, for
$n\in\NN$. Note that if $B\in\mathcal{P}^{(k),n}_{\w}$ then
$\tilde F_\w^k(B)=\Lambda_{\sigma^n\w}$.
Given $B\subset\Lambda_{\omega}$ set
$$(\tilde F^{-1})_{\omega}(B)=\bigsqcup_{n\in\NN}\left\{x\in\Lambda_{\sigma^{-n}
(\omega) } : R_{\sigma^{-n}(\omega)}(x)=n \qand
\tilde F_{\sigma^{-n}(\omega)}(x)\in B\right\},$$
and define $[(\tilde F^{j})^{-1}]_{\omega}(B)$ by induction. Let
$\lambda_{\sigma^{-n}(\omega)}$ be a probability measure on
$\Lambda_{\sigma^{-n}(\omega)}$ and define
\begin{equation*}\label{pseudopf}
{(\tilde F^{j})_{\omega}}^{*}\{\lambda_{\sigma^{-n}(\omega)}\}_{n\in\NN}(B)=\sum_{n\in\NN}
\lambda_{\sigma^{-n}(\omega)}([(\tilde F^{j})^{-1}]_{\omega}(B)\cap\Lambda_{\sigma^{-n}
(\omega) } ).
\end{equation*}
\begin{proposition}\label{exist.mu.induced} For almost every $\omega\in
\Omega$ there exists a probability measure $\tilde\nu_{\sigma^{-n}\w}$ on $\Lambda_{\sigma^{-n}\w}$ such that
${(\tilde F)_\w}^{*}\{\tilde\nu_{\sigma^{-n}(\omega)}\}_{n\in\NN}=\tilde\nu_\w$. Moreover, $\tilde\nu_\omega$ admits absolutely continuous conditional measures on unstable leaves. 
\end{proposition}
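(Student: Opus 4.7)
The plan is to construct $\{\tilde\nu_\omega\}$ as a weak-$*$ accumulation point of Cesàro averages of iterates of the pseudo-pullback operator applied to the normalised natural measures $\tilde m_\omega = m_\omega/m_\omega(\Lambda_\omega)$. Finiteness and uniform boundedness of $m_\omega(\Lambda_\omega)$ in $\omega$ follow from the uniform summability of tails and the definition of $m_\omega$ via the densities $\hat\rho_\omega$. Setting
\[
\tilde\nu_\omega^{(j)} = (\tilde F^j)_\omega^*\{\tilde m_{\sigma^{-n}(\omega)}\}_{n\in\NN} \qand \bar\nu_\omega^{(N)} = \frac{1}{N}\sum_{j=0}^{N-1}\tilde\nu_\omega^{(j)},
\]
induction on $j$ combined with the summability of tails shows each $\tilde\nu_\omega^{(j)}$, and hence each $\bar\nu_\omega^{(N)}$, is a probability measure on $\Lambda_\omega$.

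The key technical input is uniform density control along unstable leaves. The distortion bound established in the proof of Lemma~\ref{lem:distFk} (essentially inequality \eqref{jacfrn}) applies to $\tilde F^j$ on atoms of $\P_\omega^{(j)}$; combined with the holonomy invariance \eqref{eq.teta} of the family $\{m_{\gamma^u_\omega}\}$, this yields that the conditional density of each $\tilde\nu_\omega^{(j)}$ on $\gamma^u_\omega \in \Gamma^u_\omega$ with respect to $m_{\gamma^u_\omega}$ is bounded above and below by a constant depending only on $C_1$ and $\beta$. Weak-$*$ compactness then supplies an accumulation point $\tilde\nu_\omega$ of $\{\bar\nu_\omega^{(N)}\}$, and the telescoping identity
\[
{(\tilde F)_\omega}^*\{\bar\nu_{\sigma^{-n}(\omega)}^{(N)}\}_{n\in\NN} - \bar\nu_\omega^{(N)} = \frac{1}{N}\bigl(\tilde\nu_\omega^{(N)} - \tilde m_\omega\bigr)
\]
has right-hand side vanishing weakly as $N\to\infty$, which delivers equivariance on passage to the limit. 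Absolute continuity of the conditional measures of $\tilde\nu_\omega$ on unstable leaves is inherited from the uniform density bounds.

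The main obstacle I anticipate is joint measurability of $\omega\mapsto\tilde\nu_\omega$, since a naive weak-$*$ extraction yields only an $\omega$-dependent subsequence. I expect to address this by working in a fixed weakly compact convex set of probability measures whose conditional densities on unstable leaves lie in a log-Lipschitz cone determined by $C_1$ and $\beta$; on such a set the pseudo-pullback is continuous and the Cesàro construction is canonical once a measurable reference family is fixed, so measurability of the limit follows from the measurability of $\omega\mapsto\Lambda_\omega$ assumed in Section~\ref{gs}.
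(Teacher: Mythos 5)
Your overall strategy — iterate the pseudo-pullback operator, take Ces\`aro averages, extract a weak-$*$ accumulation point, and get density control from the distortion estimate — is the same as the paper's, but several of the steps you compress hide real gaps.

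First, the claim that each $\tilde\nu_\omega^{(j)}$ is a probability measure is not correct. Evaluating $(\tilde F)_\omega^*\{\tilde m_{\sigma^{-n}\omega}\}$ on $\Lambda_\omega$ gives $\sum_{n}\tilde m_{\sigma^{-n}\omega}\{R_{\sigma^{-n}\omega}=n\}$, which has no reason to equal $1$ for a fixed $\omega$; the pseudo-pullback preserves total mass only on $P$-average, not fibrewise. The paper's argument does not claim probability preservation; it uses Lemma~\ref{lem:bbdR} together with the uniform summability of tails to get a uniform \emph{upper} bound $m_\omega^j\le C_2\,m_\omega^0$, and only at the end does one deal with normalisation (or note an equivariant family of probability measures is obtained a posteriori). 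Your induction would need to be replaced by exactly this kind of bound.

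Second, the telescoping identity is right, but passing it to the limit requires $\bar\nu_{\sigma^{-n}\omega}^{(N_k)}\to\tilde\nu_{\sigma^{-n}\omega}$ for \emph{every} $n\in\NN$ along a \emph{single} subsequence $(N_k)$, because the left-hand side $(\tilde F)_\omega^*\{\bar\nu_{\sigma^{-n}\omega}^{(N)}\}_{n}$ involves the whole past orbit of $\omega$. You identify the difficulty as ``joint measurability,'' but the more immediate issue is this simultaneous convergence, and your proposed fix (working inside a compact convex cone where the pseudo-pullback is continuous) does not resolve it: compactness gives accumulation points at each fibre, not convergence, and there is no uniqueness of Ces\`aro limits without further structure. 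The paper addresses this directly with a diagonal argument along the orbit $\{\sigma^\ell\omega\}_{\ell\in\ZZ}$; some version of that step is unavoidable and should appear explicitly in your proof.

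Third, ``absolute continuity of the conditional measures is inherited from the uniform density bounds'' skips a genuine argument. Uniform bounds on the leafwise densities of the approximants $m_\omega^j$ (resp.\ your $\tilde\nu_\omega^{(j)}$) do not automatically give a disintegration of the limit $\tilde\nu_\omega$ along unstable leaves: one must show that the conditional measures of the \emph{limit} on $\gamma^u_\omega$ are absolutely continuous with density comparable to $m_{\gamma_\omega}$. The paper does this by choosing a refining sequence of finite partitions $\mathcal U_k$ of $\Lambda_\omega$ into $u$-subsets shrinking to leaves, establishing the two-sided bound for conditional expectations with respect to the $\sigma$-algebras $\mathcal F_k$, and then invoking martingale convergence. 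This step should be spelled out in your argument; the density bounds are the hypothesis of that lemma, not its conclusion.
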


\begin{proof}
For each $\omega\in\Omega$, choose  $\gamma_\omega^0\in\Gamma^u_\omega$ and set $m_{\omega}^0=m_{\gamma^0_\omega} $. For $j\geq 1$ define  
$$m_\w^j=\sum_{k=1}^j\sum_{B\in\mathcal{P}^{(k),j}_{\sigma^{-j}(\w)}}({\tilde F^k_{\sigma^{-j}(\w)}})_{*}(m^0_{ {\sigma^{-j}(\w)}}\vert B).$$ 
By Lemma \ref{lem:bbdR}, for any measurable set $A\subset\Lambda_\w$
\begin{eqnarray*}
m_\w^j(A)
&\leq&\sum_{k=1}^j\sum_{B\in\mathcal{P}^{(k),j}_{\sigma^{-j}(\w)}} C_1{m^0_{ \omega}(A)}m_{\sigma^{-j}(\w)}(B)\\
&\leq&C_2m^0_{\omega}(A),
\end{eqnarray*}
where we have used assumption \eqref{eq:unie}. 
Let $$\tilde \nu_\w^n=\frac1n\sum_{j=0}^{n-1}m_\w^j.$$
 Then $\tilde \nu_\w^n$ has an accumulation point, in the weak* topology. Let $\tilde \nu_\w^{n_k}$ be a convergent subsequence. By a diagonal argument we construct along the sequence $\{ \sigma^k \omega \}$, for almost every $\omega\in\Omega$, a convergent subsequence $\{\tilde\nu_{\sigma^\ell\omega}^{n_k}\}$ for every $\ell \in \mathbb Z$.  The limiting measure~$\tilde\nu_\omega$ satisfies the equivariance property; i.e. ${(\tilde F)_\w}^{*}\{\tilde\nu_{\sigma^{-n}(\omega)}\}_{n\in\NN}=\tilde\nu_\w$.
 
 We now show that $\tilde\nu_\omega$ admits absolutely continuous conditional measures on unstable leaves. Let 
 $\rho^j_{ \gamma_\omega}$ be the density of $m_{\omega}^j$ with respect to $m_{ \gamma_\omega}$ on an unstable leaf ${ \gamma_\omega}$. Observe that $\rho^j_{ \gamma_\omega}\equiv0$  or by Lemma \ref{lem:bbdR}
 \begin{equation*}
 \frac{\rho^j_{ \gamma_\omega}(y)}{\rho^j_{ \gamma_\omega}(x)} \le \text{exp}(C_F \beta^{s(x, y)}),
\end{equation*}
for all $x,y\in \gamma_\omega\cap\Lambda_\omega$. This implies that there exists $C_0>0$ such that
\begin{equation}\label{eq:density_reg}
\frac1{C_0}\le {\rho^j_{ \gamma_\omega}(x)}\le C_0
\end{equation}
for all $x\in \gamma_\omega\cap\Lambda_\omega$. Let $(\mathcal U_k)$ be a sequence, where $\mathcal U_k$ is a finite partition of $\Lambda_\omega$ consisting of $u$-subsets with $\mathcal U_1 \prec \mathcal U_2  \prec\cdots$ and $\vee_{i=1}^{\infty}\mathcal U_k$ is a partition of $\Lambda_\omega$ into unstable leaves. Let $U_k\in \mathcal U_k$ 
 containing $ \gamma_\omega$ and shrinking to $ \gamma_\omega$.
Given  $\mathcal O\subset \gamma_\omega$   an open set with $m_{ \gamma_\omega}(\partial\mathcal O)=0$, let  $  S_\mathcal O$ be the $s$-subset of $\Lambda_{\omega}$ corresponding to $\mathcal O$.  By \eqref{eq:density_reg} we have
 $$
\frac{1}{C_0^2}\frac{m_{ \gamma_{\omega}}(  S_{\mathcal O})}{m_{ \gamma_\omega}(\Lambda_\omega)}\le \frac{  m^j_{ \omega}(  U_k\cap  S_{\mathcal O}) }{m^j_{ \omega}( U_k)}
 =\frac{\int\rho^j_{ \gamma_\omega}\cdot \mathbb 1_{  U_k\cap  S_{\mathcal O}}dm_{\gamma_\omega}}{\int\rho^j_{ \gamma_\omega}\cdot \mathbb 1_{  U_k}dm_{ \gamma_\omega}}\le C_0^2\frac{m_{ \gamma_{\omega}}(  S_{\mathcal O})}{m_{ \gamma_\omega}(\Lambda_\omega)}.
 $$
 Consequently,
 $$
\frac{1}{C_0^2}\frac{m_{ \gamma_{\omega}}( S_{\mathcal O})}{m_{\gamma_\omega}(\Lambda_\omega)}\le \frac{\tilde \nu^n_{ \omega}(  U_k\cap  S_{\mathcal O}) }{\tilde \nu^n_{ \omega}(  U_k)}\le C_0^2\frac{m_{ \gamma_{\omega}}(  S_{\mathcal O})}{m_{ \gamma_\omega}(\Lambda_\omega)},
$$
and so
 $$
\frac{1}{C_0^2}\frac{m_{ \gamma_{\omega}}( S_{\mathcal O})}{m_{\gamma_\omega}(\Lambda_\omega)}\le \frac{\tilde \nu_{ \omega}(  U_k\cap  S_{\mathcal O}) }{\tilde \nu_{ \omega}(  U_k)}\le C_0^2\frac{m_{ \gamma_{\omega}}(  S_{\mathcal O})}{m_{ \gamma_\omega}(\Lambda_\omega)}.
$$
Notice that $X_n=  \mathbb E_{\tilde \nu_{ \omega}}(\mathbb 1_{S_{\mathcal O}}|\mathcal F_k)$, where $\mathcal F_k$ is the sigma algebra generated by $\mathcal U_k$, is a martingale. By the Martingale Convergence Theorem
 $$
\frac{1}{C_0^2}\frac{m_{ \gamma_{\omega}}( S_{\mathcal O})}{m_{\gamma_\omega}(\Lambda_\omega)}\le \tilde \nu_{\gamma_\omega}(S_{\mathcal O}) \le C_0^2\frac{m_{ \gamma_{\omega}}(  S_{\mathcal O})}{m_{ \gamma_\omega}(\Lambda_\omega)}.
$$
for almost every $\gamma_\omega$. 
\end{proof}

Notice, by the above construction and the absolute continuity of the holonomy map, that the family of measures $\{\tilde\nu_\omega\}$ are physical. We introduce a  measure on $\Delta_\omega$  that we denote $\nu_\omega$ by letting $\nu_\omega\vert\Delta_{\omega, \ell}=\tilde\nu_{\sigma^{-\ell}\omega}\vert\{R_{\sigma^{-\ell}\omega} >\ell\}$. 

\subsection{Quotient dynamics}\label{s.quotientdynamics}

Let $\bar\Lambda_\omega=\Lambda_\omega/\sim$, where $x\sim y$ if and only if $y\in
\gamma^s_\omega(x)$. This quotient space  gives rise to a quotient tower
$\bar\Delta_\omega$ with levels $\bar \Delta_{\omega, \ell}=\Delta_{\omega, \ell}/\sim$. A
partition of $\bar\Delta_\omega$ into $\bar\Delta_{\omega, 0,i}$, that we denote by
$\bar\P_\omega$, and a sequence $\bar\Q_n^\omega$ of partitions of $\bar\Delta_\omega$
as in \eqref{Qn} are defined in a natural way.

As $f^{R_\omega}_\omega$ takes $\gamma^s$-leaves to $\gamma^s$-leaves and $R_\omega$ has
been defined in such a way that it is constant along the stable manifolds,  the return time $\bar R\colon \bar\Delta_{\omega, 0}\to\NN$, tower map $ \bar
F_\omega\colon\bar\Delta_\omega\to\bar\Delta_{\sigma\omega}$ and the separation time $\bar
s_\omega\colon \bar\Delta_{\omega, 0}\times \bar\Delta_{\omega, 0}\to \NN$ naturally induced by
the corresponding ones defined for $\Delta_{\omega, 0}$ and $\Delta_\omega$ for almost every $\omega$.
We extend the separation time to $\bar\Delta_\omega$ by taking $ \bar s(x,y)= \bar s(x',y')$ if $x$ and $y$ belong to the same $\bar\Delta_{\omega, \ell,i}$, where $x',y'$ are the corresponding elements of $\bar\Delta_{\sigma^{-\ell}\omega, 0,i}$, and  $\bar s(x,y)=0$ otherwise. Since \eqref{eq.teta} holds, for a.e. $\omega\in \Omega$ we define a  measure $\bar m_\omega$ on
$\bar\Delta_\omega$ whose representative  is $m_\gamma$ on each $\gamma\in\Gamma^u_\omega$. We also extend the return time by setting 
$$
\hat R_\omega(x)=\min\{ n> 0 \colon \textrm{ $\bar
F^n_\omega(x)\in\bar\Delta_{\sigma^n\omega 0}$}\}.
$$
 Note that
$\hat R_\omega(x)=\bar R_\omega(x)$ for all $x\in \bar\Delta_{\omega, 0}$, and
$$
\bar m_\omega\{\hat{R}_\omega>n\}=\sum_{\ell>n}\bar m_{\omega}(\bar\Delta_{\omega, \ell})=\sum_{\ell>n}\bar
m_{\sigma^{-\ell}\omega}\{\bar R_{\sigma^{-\ell}\omega}>\ell\}.
$$
Obviously, 
$\bar F_\omega$ satisfies uniform expansion and Markov property.  Let $J\bar F_\omega$ denote the Jacobian of $\bar F_\omega$ with respect to this measure $\bar m_\omega$.  The following lemma shows bounded distortion
\begin{lemma}\label{l.cfbeta} For all $k\ge1$ and all
 $x,y\in\bar\Delta_\omega$ which belong to a same element of $\bar\Q_{k-1}^\omega$
\begin{eqnarray*} \left| \frac{J\bar F^k_\omega(x)}{J\bar F^k_\omega(y)}-1\right| \leq C_F
\beta^{ \bar s(\bar F^k_\omega(x),\bar F^k_\omega(y))}.
\end{eqnarray*}
\end{lemma}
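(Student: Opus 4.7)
The plan is to reduce the claim to Lemma~\ref{lem:distFk} by lifting representatives from the quotient $\bar\Delta_\omega$ to $\Delta_\omega$. The key point is that Lemma~\ref{lem:bbdR}(1) shows $JF^{R_\omega}_\omega$ depends only on the stable equivalence class. Since $F_\omega$ moves trivially up the levels (with $JF_\omega\equiv 1$ there, by definition of $m_\omega$) and $\bar m_\omega$ is the push-down of $m_\omega$ by construction, the Jacobian $JF^k_\omega$ descends to a well-defined function on $\bar\Delta_\omega$ that coincides with $J\bar F^k_\omega$.

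Given $\bar x,\bar y\in\bar\Delta_\omega$ lying in a common atom $\bar Q\in\bar\Q_{k-1}^\omega$, I would first lift to points $x,y\in\Delta_\omega$ that lie in a single atom $Q\in\Q_{k-1}^\omega$ (the atom $Q$ is precisely the preimage of $\bar Q$ under the stable-leaf quotient). Using the absolutely continuous stable holonomy within $Q$ (property (P$_4$)), the representatives may be chosen on a common unstable leaf; this does not change $J\bar F^k_\omega(\bar x)$ or $J\bar F^k_\omega(\bar y)$, and by the above discussion
\[
J\bar F^k_\omega(\bar x)=JF^k_\omega(x),\qquad J\bar F^k_\omega(\bar y)=JF^k_\omega(y).
\]

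Next, I would verify that separation times agree under this choice of lifts. Because $f^{R_\omega}_\omega$ sends stable leaves into stable leaves and the Markov partition $\mathcal P_\omega$ is $s$-saturated, the base partitions on $\bar\Delta$ are precisely the quotients of those on $\Delta$; hence iterates of $x,y$ under $F^k_\omega$ lie in the same element of $\mathcal P_{\sigma^{\tau_{\omega,n}}\omega}$ if and only if their projections do, giving
\[
\bar s(\bar F^k_\omega(\bar x),\bar F^k_\omega(\bar y))=s(F^k_\omega(x),F^k_\omega(y)).
\]

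Applying Lemma~\ref{lem:distFk} to $x,y\in Q$ then yields the desired estimate with the same constant $C_F$. The main (but minor) obstacle is the bookkeeping that $J\bar F^k_\omega$ genuinely equals the pushdown of $JF^k_\omega$: this needs Lemma~\ref{lem:bbdR}(1) to know the values agree regardless of which representative in a stable leaf is chosen, together with the observation that $JF_\omega\equiv 1$ on higher levels so only the return steps contribute non-trivial factors, all of which already descend by construction.
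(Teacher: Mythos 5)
Your argument is essentially identical to the paper's proof, which is a two-line statement: Lemma~\ref{lem:bbdR}(1) shows $JF^{R_\omega}_\omega$ is constant along stable leaves so the Jacobian descends to the quotient, and Lemma~\ref{lem:distFk} then supplies the distortion bound. You have simply spelled out the bookkeeping that the paper leaves implicit (lifting to a common atom of $\Q_{k-1}^\omega$, matching the separation times under the projection, observing $JF_\omega\equiv 1$ above the base). One small remark: the detour through (P$_4$) to place the lifts on a common unstable leaf is unnecessary, since Lemma~\ref{lem:distFk} already holds for arbitrary $x,y$ in a common element of $\Q_{k-1}^\omega$, and the atoms of $\Q_{k-1}^\omega$ are $s$-saturated so any lift works; but this is a harmless extra step, not an error.
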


\begin{proof}
The first item of Lemma~\ref{lem:bbdR} implies that the Jacobian $J\bar F_\omega$ is well
defined with respect to $\bar m_\omega$ for almost every $\omega$. Lemma~\ref{lem:distFk} implies the desired estimates.  
\end{proof}

For   $\omega\in\Omega$ we introduce some  function spaces. First of all we consider 
\begin{equation*}\begin{aligned}
\F^+_\beta=\{\varphi_\omega:\bar\Delta_\omega \to \mathbb R\mid & \exists C_\varphi>0, \forall I_\omega\in \bar\Q_\omega, \,\,\, 
\text{either} \,\, \varphi_\omega|{I_\omega}\equiv 0 \\
&\text{or} \,\,\, \varphi_\omega | {I_\omega}> 0 \,\,\, \text{and} \,\,\, \left| \log \frac{\varphi_\omega(x)}{\varphi_\omega(y)}\right|\le C_\varphi \beta^{\bar s(x, y)}, \forall x, y\in  I_\omega\}.
\end{aligned}
\end{equation*}
Let  $\theta\in(0,1)$ and $\kappa_\omega: \Omega \to \mathbb R_+$ be a random variable with $\inf_{\Omega} \kappa_\omega >0$ such that
\begin{equation}\label{Komega}
P\{\omega\mid \kappa_\omega >n\} \le \theta^n.
\end{equation}
Define the space of random  bounded  functions as 
\begin{equation*}\begin{aligned}
\L^{\kappa_\omega}_\infty=\{\varphi_\omega:\bar\Delta_\omega \to \mathbb R\mid \exists C^\prime_\varphi>0,  \sup_{x\in\bar\Delta_\omega} |\varphi_\omega(x)| \le C_\varphi' \kappa_\omega \}
\end{aligned}
\end{equation*}
and a space of random Lipschitz  functions 
\begin{equation*}\begin{aligned}
\F^{\kappa_\omega}_\beta=\{\varphi_\omega\in \L^{\kappa_\omega}_\infty \mid \exists C_\varphi>0,   |\varphi_\omega(x)-\varphi _\omega(y)| \le C_\varphi \kappa_\omega \beta^{\bar s(x, y)}, \,\, \forall  x, y \in \bar\Delta_\omega\}.
\end{aligned}
\end{equation*}

Note that in our setting $\bar m_\omega(\bar\Delta_\omega)$ is uniformly bounded. To obtain mixing rates we need finer information on the tail of the return times.  Suppose that there exists a random variable $n_1:\Omega\to \NN$  and a decreasing sequence $\{u_n\}_{n\in \NN}$ such that 
\[\begin{cases}  \bar m_\omega\{\bar R>n\}\le u_n \text{ for all } n>n_1(\omega),\\
P\{n_1>n\}\le Ce^{-cn^\theta}.
\end{cases}
\] We have the following theorem.

\begin{theorem}\label{thm:QDC} Let $\bar F_\omega:\bar\Delta_\omega\to \bar\Delta_{\sigma\omega}$ be the quotient tower map. 
\begin{enumerate}
\item $\{\bar F_\omega\}$ admits an equivariant family of measures $\{\bar\nu_\omega\}$ such that  $d\bar\nu_\omega/d\bar m_\omega\in \mathcal F^+_\beta\cap \mathcal F^1_\beta$
 and $\frac{1}{C_0}\le d\bar\nu_\omega/d\bar m_\omega\le C_0$ for some $C_0>0$.
\item Let  be a probability measure $ \lambda_\omega$  on $\bar \Delta_\omega$  with $\varphi={d\lambda}/{dm_\omega}\in \mathcal F_\beta^{\kappa_\omega}\cap \F_\beta^+$.
\begin{enumerate}
\item  If $u_n\le  Ce^{-cn^{\theta}}$,  for some $C,c>0$,  $\theta \in(0, 1]$.  Then there is a random variable  $n_0:\O\to \NN$  such that 
\[\begin{cases}
 \mdlo{(\bar F^{n}_\omega)_\ast\lambda_\omega-\bar\nu_{\sigma^{n}\omega} }\le C' e^{-c'n^{\theta}} \text{ for all }n > n_0(\omega),\\
 P\{ n_0 > n \} \leq C'' n^{-b},
 \end{cases}
 \]
  for some $C', C'',c'>0$, $b>1$.  If $n_1(\omega)$ is uniformly bounded, then for all $n>0$
  $$\mdlo{(\bar F^{n}_\omega)_\ast\lambda_\omega-\bar\nu_{\sigma^{n}\omega} }\le C' e^{-c'n^{\theta}}.$$  
 
\item   If $u_n\le  Cn^{-a}$ and $\int \bar m_\omega\{\bar R_\omega>n \}dP(\omega)\le  Cn^{-a-1}$ for some $C>0$,  $a>1$. Then  there is a random variable  $n_0:\O\to \NN$  such that 
\[\begin{cases}
 \mdlo{(\bar F^{n}_\omega)_\ast\lambda_\omega-\bar\nu_{\sigma^n\omega}} \le C'n^{1+\eps-a} \text{ for all }n > n_0(\omega), \text{ and } \eps>0,\\
 P\{ n_0 > n \} \leq C'' e^{-cn^{\theta}},
 \end{cases}
 \]
  for some $C', C'',c'>0$, $b>1$.
  \end{enumerate}
  Moreover, $c'$ does not depend on $\varphi$ and $C'$ depends only on the Lipschitz constant of  $\varphi$.  Furthermore, the analogous estimates hold for  $\mdlo{(\bar F^{n}_{\sigma^{-n}\omega})_\ast\lambda_{\sigma^{-n}\omega}-\bar\nu_{\omega} }$ in each of the above cases respectively. 
  \end{enumerate}
\end{theorem}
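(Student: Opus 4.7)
My strategy is to adapt the random Young tower approach combining a transfer-operator construction for the invariant density with a random coupling argument, following the framework pioneered in \cite{BBM02} and refined in \cite{LV18, BBR19}. The quotient tower $\bar F_\omega$ has by Lemma~\ref{l.cfbeta} the two essential ingredients (uniformly expanding Markov structure and bounded distortion) together with an aperiodicity hypothesis carried over from the hyperbolic product structure, so both ingredients can be implemented with only random book-keeping added.

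For part (1), I would construct $\{\bar\nu_\omega\}$ by Ces\`aro-averaging pushforwards of the reference measure $\bar m_\omega$ under $\bar F$. Using Lemma~\ref{l.cfbeta} together with uniform summability of tails, the resulting densities are uniformly bounded above and below and satisfy the $\mathcal F^+_\beta$ regularity. A weak-$*$ limit along a diagonal subsequence of the shift orbit of $\omega$ (as in the proof of Proposition~\ref{exist.mu.induced}) yields the equivariant family $\bar\nu_\omega$ with the claimed density bounds $\frac1{C_0}\le d\bar\nu_\omega/d\bar m_\omega\le C_0$ and regularity in $\mathcal F^+_\beta\cap\mathcal F^1_\beta$.

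For part (2), given an initial probability $\lambda_\omega$ with density $\varphi\in\mathcal F^+_\beta\cap\mathcal F^{\kappa_\omega}_\beta$, I would construct a random Markov coupling on $\bar\Delta_\omega\times\bar\Delta_\omega$ of $\lambda_\omega$ with $\bar\nu_\omega$. At each simultaneous return of a pair $(x,y)$ to the base $\bar\Delta_{\omega,0}$ at some time $k$, a fixed fraction of mass is matched on each partition element of $\bar\Q^{\sigma^k\omega}_0$: aperiodicity (gcd of return values equals 1) guarantees, via the standard renewal argument, that simultaneous returns occur with positive $\bar m$-measure in a window of uniformly bounded length $N_0$, and both densities are uniformly bounded below on each such partition element by the $\mathcal F^+_\beta$ property together with the lower bound from (1). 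Matched mass is then transported in lockstep, and the total variation is controlled by
\[
\mdlo{(\bar F^n_\omega)_\ast\lambda_\omega-\bar\nu_{\sigma^n\omega}}\le 2\,\mathbb{P}_\omega\{T>n\},
\]
where $T$ is the coupling time and $\mathbb{P}_\omega$ the coupling measure on the fibre over $\omega$.

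The decay estimate then reduces to bounding the tail of $T$, which I would write as a sum of inter-matching times whose individual tails are governed by $u_n$ beyond time $n_1(\omega)$ and by $\kappa_\omega$ through the initial density. In case (2a), chaining the exponential tail $u_n\le Ce^{-cn^\theta}$ with the exponential tail of $n_1$ and of $\kappa_\omega$ (from \eqref{Komega}) gives a stretched-exponential quenched rate past a random time $n_0(\omega)$; one obtains polynomial tail $P\{n_0>n\}\le C''n^{-b}$ (with $b$ arbitrarily large, at the cost of constants) by a Borel--Cantelli estimate on the events that many consecutive returns fail to match. When $n_1$ is uniformly bounded, no such exceptional set is needed and the bound holds for all $n$. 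In case (2b) the same skeleton applies, but the passage from the annealed tail $\int\bar m_\omega\{\bar R_\omega>n\}dP\le Cn^{-a-1}$ to a quenched bound introduces the $\eps$-loss in the exponent. \textbf{The main obstacle} will be exactly this annealed-to-quenched transfer in the polynomial regime: since only the $P$-average of $\bar m_\omega\{\bar R_\omega>n\}$ is controlled, a Borel--Cantelli argument along the shift orbit is needed to upgrade it to a quenched bound on all but exponentially-rare $\omega$, and the $\eps$-loss together with the exponential tail $P\{n_0>n\}\le C''e^{-cn^\theta}$ emerges from this step. Finally, the symmetric estimate for $\mdlo{(\bar F^n_{\sigma^{-n}\omega})_\ast\lambda_{\sigma^{-n}\omega}-\bar\nu_\omega}$ follows by applying the same coupling with $\omega$ replaced by $\sigma^{-n}\omega$ and using $\sigma$-invariance of $P$.
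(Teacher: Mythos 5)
Your high-level plan---construct $\bar\nu_\omega$ by Ces\`aro averaging of pushforwards of $\bar m_\omega$, then run a random coupling on $\bar\Delta_\omega\times\bar\Delta_\omega$ driven by alternating returns to the base and control $\bigl|(\bar F^n_\omega)_\ast\lambda_\omega-\bar\nu_{\sigma^n\omega}\bigr|$ by the tail of the coupling time---is the same framework the paper uses (via the auxiliary stopping times $\tau_i^\omega$, the simultaneous return time $T_\omega$, and its iterates $T_{i,\omega}$, delegating the heavy tail estimates to the cited works).

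However, there is a genuine gap in the step where you claim a fixed fraction of mass can be matched at each simultaneous return because ``both densities are uniformly bounded below on each such partition element by the $\mathcal F^+_\beta$ property together with the lower bound from (1).'' That claim is false for $\lambda_\omega$: membership in $\mathcal F_\beta^+$ allows the density $\varphi$ to vanish identically on some partition elements, and where it is positive it only controls the oscillation $\log(\varphi(x)/\varphi(y))$, not a uniform lower bound. The bound from item~(1) applies to $d\bar\nu_\omega/d\bar m_\omega$, not to $\varphi=d\lambda_\omega/d\bar m_\omega$, whose size is only controlled by the random variable $\kappa_\omega$. Because of this, the fraction of mass you can match at the \emph{first} few simultaneous returns is not uniform in $\varphi$, and without a fix the coupling time tail would inherit an uncontrolled dependence on $\|\varphi\|_\infty\sim\kappa_\omega$. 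This is precisely the obstruction that the paper's Lemma~\ref{lem:tailtau} is designed to remove: it shows that the matching constant $C_{\tilde\lambda}$ can be fixed as a universal number $\frac{2D+1}{2(D+1)^2}$ after an initial burn-in of $i_0(\tilde\lambda)$ steps, where $i_0$ depends on $\varphi$ only through its Lipschitz constant (this regularisation is achieved by the bounded-distortion mechanism of Lemma~\ref{l.cfbeta}, not a pre-existing lower bound). Your proposal needs this regularisation step inserted before the renewal estimate; as written, the assertion about uniform lower bounds short-circuits the one place where the statement's special hypothesis $\varphi\in\mathcal F_\beta^{\kappa_\omega}\cap\mathcal F_\beta^+$ actually matters.
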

The proof of Theorem \ref{thm:QDC} is given in Subsection~\ref{se.convequi} below. In fact, the proof of item~(1) is similar to the proof of Proposition \ref{exist.mu.induced} and Item~(2) essentially follows from results in \cite{BBR19,BBM02,BBM03, D15} after showing that the constants appearing in the results \cite{BBR19, D15} depend on the observables only through their Lipschitz constants.  

\subsubsection{Convergence to equilibrium}\label{se.convequi}
Now we prove item (2) of Theorem \ref{thm:QDC}. Let $\bar\Delta =\{ (\omega, x) | \omega \in \Omega,  x \in \bar\Delta_\omega \}$. 
Denote by $\bar\Delta \otimes_\omega \bar\Delta $ the relative product over $\Omega$, that is
$\bar\Delta \otimes_\omega \bar\Delta=\{ (\omega, x, x') | \omega \in \Omega, x, x' \in \bar\Delta_\omega\}$.
These are measurable subsets of the appropriate product spaces and naturally carry the measures $P \times \bar m_\omega$  and $P \times \bar m_\omega \times\bar m_\omega$ respectively. 
We can lift the tower map $\bar F$ to a product action on $\bar\Delta \otimes_\omega \bar\Delta$
with the property
$\bar F_\omega \times \bar F_\omega :\bar\Delta_\omega\times \bar\Delta_\omega\to \bar\Delta_{\sigma\omega}\times \bar\Delta_{\sigma\omega}$ by applying $\bar F$ in each of the $x,x'$ coordinates. With respect to this map, we define \emph{auxiliary stopping times} $\tau_1^{\omega} <\tau_2^{\omega} < ... $ to the base  as follows: let $\ell_0$ be a fixed large constant. For $(\omega, x, x') \in \bar\Delta \otimes_\omega \bar\Delta$ set 
\begin{align*}
&\tau_1^\omega(x, x')=\inf\{n\ge \ell_0 \mid  \bar F_\omega^{n}x\in\bar\Delta_{\sigma^n\omega, 0}\};
\\
&\tau_2^\omega(x, x')=\inf\{n\ge \tau_1^\omega(x, x')+\ell_0 \mid  \bar F_\omega^{n}x'\in\bar \Delta_{\sigma^n\omega, 0}\};
\\
&\tau_3^\omega(x, x')=\inf\{n\ge \tau_2^\omega(x, x')+\ell_0 \mid  \bar F_\omega^{n}x\in\bar\Delta_{\sigma^n\omega, 0}\};
\\
&\tau_4^\omega(x, x')=\inf\{n\ge \tau_3^\omega(x, x')+\ell_0 \mid  \bar F_\omega^{n}x'\in\bar \Delta_{\sigma^n\omega, 0}\};
\end{align*}
and so on, with the action alternating between $x$ and $x'$.  Notice that for odd $i$'s the first (resp. for even $i$'s the second) coordinate of $(\bar F_\omega\times \bar F_\omega)^{\tau_i^\omega}(x, x')$ makes a return to $\bar\Delta_{\sigma^{\tau_i^\omega}\omega, 0}$. Let $i\ge 2$ be the smallest integer such that $(\bar F_\omega\times\bar F_\omega)^{\tau_i^\omega}(x, x')\in \bar \Delta_{\sigma^{\tau_i^\omega}\omega, 0}\times  \bar \Delta_{\sigma^{\tau_i^\omega}\omega, 0}$. Then we define the \emph{stopping time} $T_\omega$ by
$$T_\omega(x, x')=\tau_i^\omega(x, x').$$

Next define a sequence of partitions $\xi_1^\omega \prec \xi_2^\omega \prec \xi_3^\omega \prec ... $  of $\bar\Delta_\omega\times\bar\Delta_\omega$  so that $\tau_i^\omega$ is constant on the elements of $\xi_j^\omega$ for all $i\le j,$ $i, j\in\mathbb N$.  Given a partition $\bar{\mathcal Q}$ of $\bar\Delta_\omega$ we write $\bar{\mathcal Q}(x)$ to denote the element of $\bar{\mathcal Q}$ containing
$x$.  With this convention, we let

$$
\xi_1^\omega(x, x')=\left(
 \bigvee_{k=0}^{\tau_{1}^\omega-1}\bar F^{-k}_{\omega}\bar\P_{\sigma^k\omega}
\right)(x)\times \bar\Delta_\omega.
$$

Letting $\pi:\bar\Delta_\omega\times\bar\Delta_\omega\to\bar\Delta_\omega$ be the projection to the first coordinate, we define

$$
\xi_2^\omega(x, x')=\pi\xi_1^\omega(x, x')\times \left(
 \bigvee_{k=0}^{\tau_{2}^\omega-1}\bar F^{-k}_\omega\bar\P_{\sigma^k\omega}
\right)(x').
$$
Let $\pi'$ be the projection onto the second coordinate. We define $\xi_3^\omega$ by refining the partition on the first coordinate, and so on. If $\xi_{2i}^\omega$ is defined then we define $\xi_{2i+1}^\omega$ by refining each element of $\xi_{2i}^\omega$ in the first coordinate so that $\tau_{2i+1}^\omega$ is constant on each element of $\xi_{2i+1}^\omega$. Similarly $\xi_{2i+2}^\omega$ is  defined by refining each element of $\xi_{2i+1}^\omega$ in the second coordinate so that $\tau_{2i+1}^\omega$ is constant on each new partition element. 
Now we define a partition $\hat\P_\omega$ of $\bar\Delta_\omega\times\bar \Delta_\omega$ such that $T_\omega$ is constant on its element.  For definiteness suppose that $i$ is even and choose $\Gamma\in\xi_i^\omega$ such that $T_\omega|_{\Gamma}>\tau_{i-1}^\omega$. By construction $\Gamma =A\times B$ such that $\bar F^{\tau_i^\omega}(B)=\bar\Delta_{\sigma^{\tau_i^\omega}, 0}$  and $\bar F^{\tau_i^\omega}A$ is spread around $\bar\Delta_{\sigma^{\tau_i^\omega}\omega}$. We refine $A$ into countably  many pieces and choose those parts which are mapped onto the corresponding base at time $\tau_{i}^\omega$. Note
that $\{T_\omega=\tau_i^\omega\}$ may not be measurable with respect to $\xi_{i}^\omega.$ However,  since $\tau_{i+1}^\omega\ge \ell_0+\tau_{i}^\omega$ and $\xi_{i+1}^\omega$ is defined by dividing $A$ into pieces where $\tau_{i+1}^\omega$ is constant, $\{T_\omega=\tau_i^\omega\}$ is measurable with respect to $\xi_{i+1}^\omega.$ 

\begin{lemma}\label{lem:tailtau} Let $\bar\lambda_\omega$ and $\bar\lambda'_\omega$ be two probability measures on $\{\bar\Delta_\omega\}$ with densities $\bar\vp, \bar\vp'\in\F_\beta^+\cap \L^{\kappa_\omega}_\infty$. 
 Let $\tilde \lambda = \bar\lambda_\omega \times \bar\lambda_\omega^\prime$. 
\itemize
\item [1.] For each $\omega$, for each  $i\ge 2 $ and $\Gamma \in \xi_i^\omega$ such that $T_\omega |_\Gamma > \tau^\omega_{i-1}$ we have
\begin{equation*}
\tilde \lambda\{ T_\omega = \tau^\omega_i |  \Gamma\} \ge C_{\tilde\lambda} V_{\sigma^{\tau^\omega_{i-1}}\omega}^{\tau^\omega_i - \tau^\omega_{i-1}}.
\end{equation*}
\item [2.] For each $\omega$, for each  $i$ and $\Gamma \in \xi_i^\omega$ 
\begin{equation*}
\tilde \lambda\{ \tau_{i+1}^\omega - \tau_i^\omega > \ell_0 + n | \Gamma\} 
\leq M_0 MC_{\tilde \lambda}^{-1} \cdot m\{ \hat R_{\sigma^{\tom_i + \ell_0}\omega} > n \},
\end{equation*}
\enditemize
where $0<C_{\bl}<1$, which only depends on the Lipschitz constant of $\bar\vp,\bar\vp'$. We can fix $C_{\tilde\lambda}= \frac{2D+1}{2(D+1)^2}$, independent of $\bl$, for all $i$ sufficiently large, i.e. $i\ge i_0(\bl)$, which only depends on the Lipschitz constant of $\bar\vp,\bar\vp'$.
\end{lemma}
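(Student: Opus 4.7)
The plan is a coupling argument on the product tower $\bar\Delta_\omega\times\bar\Delta_\omega$, adapting the Young--Markov coupling of \cite{Y99} to the random setting as in \cite{BBR19}. Both parts rest on the bounded distortion estimate of Lemma~\ref{l.cfbeta} together with the density regularity encoded by $\F^+_\beta\cap\L^{\kappa_\omega}_\infty$.

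\textbf{Part (1).} I would fix $\Gamma\in\xi_i^\omega$ with $T_\omega|_\Gamma>\tau^\omega_{i-1}$ and, say, $i$ even (the odd case is symmetric). By the construction of $\xi_i^\omega$, we may write $\Gamma=A\times B$ where $B$ is refined so that $\bar F^{\tau_i^\omega}_\omega$ maps $B$ bijectively onto $\bar\Delta_{\sigma^{\tau_i^\omega}\omega,0}$, and $A$ is a cylinder in $\bigvee_{k=0}^{\tau_{i-1}^\omega-1}\bar F^{-k}_\omega\bar\P_{\sigma^k\omega}$ that enters the base at time $\tau_{i-1}^\omega$. Under $\tilde\lambda$ on $\Gamma$, the event $\{T_\omega=\tau_i^\omega\}$ is exactly the first-coordinate event $\{x\in A'\}$, where $A'=A\cap(\bar F^{\tau_i^\omega}_\omega)^{-1}(\bar\Delta_{\sigma^{\tau_i^\omega}\omega,0})$, so the problem reduces to lower-bounding $\bar\lambda_\omega(A'\mid A)$. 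I would push $\bar\lambda_\omega|_A$ forward by $\bar F^{\tau_{i-1}^\omega}_\omega$; the Markov property of the tower yields $\bar F^{\tau_{i-1}^\omega}_\omega(A)=\bar\Delta_{\sigma^{\tau_{i-1}^\omega}\omega,0}$, and Lemma~\ref{l.cfbeta} together with $\bar\varphi\in\F^+_\beta$ shows that the pushforward density is comparable, up to the distortion constant $D$, to a multiple of $\bar m_{\sigma^{\tau_{i-1}^\omega}\omega}|_{\bar\Delta_{\sigma^{\tau_{i-1}^\omega}\omega,0}}$. The remaining $\bar m$-proportion of base points that return at exactly time $\tau_i^\omega-\tau_{i-1}^\omega$ is, by definition, $V^{\tau_i^\omega-\tau_{i-1}^\omega}_{\sigma^{\tau_{i-1}^\omega}\omega}$. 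Composing these two estimates gives the stated lower bound, with $C_{\tilde\lambda}$ depending only on $D$ and on the Lipschitz constants of $\bar\varphi,\bar\varphi'$.

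\textbf{Part (2).} For a given $\Gamma\in\xi_i^\omega$ I would push $\tilde\lambda|_\Gamma$ forward by $\bar F^{\tau_i^\omega+\ell_0}_\omega\times\bar F^{\tau_i^\omega+\ell_0}_\omega$ to obtain a measure on $\bar\Delta_{\sigma^{\tau_i^\omega+\ell_0}\omega}\times\bar\Delta_{\sigma^{\tau_i^\omega+\ell_0}\omega}$. On the marginal corresponding to the coordinate whose next return defines $\tau_{i+1}^\omega$, the event $\{\tau_{i+1}^\omega-\tau_i^\omega>\ell_0+n\}$ becomes $\{\hat R_{\sigma^{\tau_i^\omega+\ell_0}\omega}>n\}$. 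Applying Lemma~\ref{l.cfbeta} and the $\L^{\kappa_\omega}_\infty$-bounds on $\bar\varphi,\bar\varphi'$, together with the normalisation coming from Part~(1), dominates this marginal by $M_0 M C_{\tilde\lambda}^{-1}\bar m_{\sigma^{\tau_i^\omega+\ell_0}\omega}$, which yields the claimed inequality.

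\textbf{Stabilisation of $C_{\tilde\lambda}$.} For the final assertion, I would show that after $i_0(\tilde\lambda)$ alternating coupling steps, the pushforward density entering a new round is quantitatively close, in the $\F^+_\beta$-seminorm, to the invariant density produced by Theorem~\ref{thm:QDC}(1); at that stage the constant controlling the density in Part~(1) depends only on $D$, and a direct balancing of $\bar\varphi$ against its Lipschitz oscillation gives $C_{\tilde\lambda}=\tfrac{2D+1}{2(D+1)^2}$. The main obstacle is precisely this stabilisation step: it amounts to a cone-contraction / spectral gap statement for the transfer operator of $\bar F_\omega$ on $\F^+_\beta$, and requires carefully tracking how the random factors $\kappa_\omega$ interact with the uniform constants in Lemma~\ref{l.cfbeta}, so that $i_0$ depends only on the Lipschitz data of the initial densities rather than on the fibre $\omega$.
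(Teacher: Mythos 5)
You should note first that the paper does not in fact give a proof of Lemma~\ref{lem:tailtau}: the lemma is stated and then the text only remarks that it ``shows the constant appearing in the estimates in \cite{BBR19} for the polynomial case and in \cite{D15} for the (stretched)-exponential case depend on the observables $\bar\vp,\bar\vp'$ only through their Lipschitz constants.'' In other words, the proof is delegated to \cite{BBR19,D15}, and the new content claimed here is the observation that $C_{\tilde\lambda}$ can be tracked through the Lipschitz data. So there is no in-paper argument to measure your sketch against; the relevant comparison is with the Young coupling scheme those references run on the quotient tower.

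Against that scheme your Parts~(1) and~(2) are pointed in the right direction: the reduction of $\{T_\omega=\tau_i^\omega\}$ on $\Gamma=A\times B$ to a single-coordinate event, the pushforward by $\bar F_\omega^{\tau_{i-1}^\omega}$ with bounded distortion from Lemma~\ref{l.cfbeta}, the use of the Markov property $\bar F_\omega^{\tau_{i-1}^\omega}(A)=\bar\Delta_{\sigma^{\tau_{i-1}^\omega}\omega,0}$, and in Part~(2) the identification of $\{\tau_{i+1}^\omega-\tau_i^\omega>\ell_0+n\}$ with $\{\hat R_{\sigma^{\tau_i^\omega+\ell_0}\omega}>n\}$ after pushing forward $\ell_0$ more steps, are all the correct moves. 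One imprecision: you describe $V_\omega^n$ as the proportion of base points that ``return at exactly time $n$''; what enters here is the measure of base points that are \emph{in the base at time $n$} (possibly after several intermediate returns), since between $\tau_{i-1}^\omega$ and $\tau_i^\omega$ the tracked coordinate is free to touch the base many times and the event $\{T_\omega=\tau_i^\omega\}$ only asks that it be in the base at the stopping time of the other coordinate. A first-return reading would give a strictly smaller set and would not match the intended estimate.

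The more substantive concern is your treatment of the stabilisation of $C_{\tilde\lambda}$. You frame it as ``a cone-contraction\,/\,spectral-gap statement for the transfer operator of $\bar F_\omega$ on $\F^+_\beta$'' and flag it as the main obstacle. That is heavier machinery than is needed and somewhat misses how both Young's original scheme and \cite{BBR19} actually handle this point. The argument is an elementary iteration of the distortion bound in Lemma~\ref{l.cfbeta}: each coupling step pushes the conditional density through one matching round, replaces the log-oscillation on a cylinder $C_\varphi\beta^{\bar s(\cdot,\cdot)}$ by $\beta\cdot C_\varphi\beta^{\bar s(\cdot,\cdot)}$ (using $\bar s(\bar F(x),\bar F(y))+1=\bar s(x,y)$ on a common element), and adds a term controlled by the uniform distortion constant $D=C_F$ from Lemma~\ref{l.cfbeta}. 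Hence the Lipschitz seminorm satisfies a recursion of the form $C_{\varphi}^{(i)}\le \max\{\beta\, C_{\varphi}^{(i-1)},\,D\}$, and after $i_0\asymp \log_\beta\!\bigl(D/C_{\varphi}^{(0)}\bigr)$ steps one has $C_{\varphi}^{(i)}\le D$ forever, which immediately yields a two-sided density bound $e^{-D}\le\cdot\le e^{D}$ on each base element and hence the universal $C_{\tilde\lambda}=\tfrac{2D+1}{2(D+1)^2}$; no spectral theory enters, and in particular no interaction with the random factors $\kappa_\omega$ needs to be controlled, since $D$ is $\omega$-independent by Lemma~\ref{l.cfbeta}. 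If you instead try to run a genuine Lasota--Yorke/spectral-gap argument for the fibre transfer operator on $\F^+_\beta$, you will face exactly the uniformity-in-$\omega$ difficulties you anticipate, whereas the direct iteration sidesteps them.

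Finally, your Part~(2) is written at the level of ``apply the bounds and conclude''; for a complete argument you would still need to justify the specific constant $M_0 M C_{\tilde\lambda}^{-1}$ in front of $\bar m\{\hat R>n\}$, which in the references comes from combining the upper density bound just discussed (the $M$ factor), a bound on the number of base returns in an $\ell_0$-window (the $M_0$ factor), and the fact that conditioning on $\{T_\omega>\tau_i^\omega\}$ increases densities by at most $C_{\tilde\lambda}^{-1}$ thanks to Part~(1). These are all routine but need to be said explicitly for the claim to stand on its own.
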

The above lemma shows the constant appearing in the estimates in  \cite{BBR19} (see Lemmas 7.3 and 7.4 in \cite{BBR19}) depend on the observables $\bar\vp,\bar\vp'$ only through their Lipschitz constants and consequently estimates of $\bar m_\omega\times\bar m_\omega\{T_\omega>n\}$ has the same property. 

 We now consider $\hat F_\omega=(\bar F_\omega\times \bar F_\omega)^{T_\omega}$ which is a mapping from  $\hat\Delta_\omega=\bar\Delta_\omega\times \bar\Delta_\omega$ into $\hat\Delta_{\sigma^{T_\omega}\omega}$.  
Let $\hat\xi_{1}^\omega$ be the partition of $\hat\Delta_\omega$ on which $T_{\omega}$ is constant. 
 Let $T_{1, \omega}<T_{2, \omega} \dots$ be stopping times  on $\hat\Delta_\omega$  defined as
$$
T_{1, \omega}=T_\omega, \quad T_{n, \omega}=T_{n-1, \omega}+ T_{\sigma^{T_{n-1, \omega}}\omega}\circ \hat F^{n-1}_\omega.
$$
Further as in \cite{BBR19} we obtain estimates  for $\bar m_\omega\times\bar m_\omega\{T_{i, \omega}>n\}$ of the same type as for  $\bar m_\omega\times\bar m_\omega\{R_\omega>n\}$ for all $i, n\ge 1$. In particular, if $\bar m_\omega\times\bar m_\omega\{R_\omega>n\}$ is uniform with respect to $\omega$  then  $\bar m_\omega\times\bar m_\omega\{T_{i, \omega}>n\}$ is uniform.  Thus the following lemma  completes the proof of Theorem \ref{thm:QDC}. 

\begin{lemma}\label{lem:main}
Let $\bl=\lambda_\omega\times \lambda_\omega'$. There exists $0<\eps_1<1$ independent of $d\bl/(d\bar m_\omega\times d\bar m_\omega)$ and a $C>0$ such that for almost every $\omega$ and all $n\in \mathbb N$ 
$$
|(\bar F_\omega^n)_*(\lambda_\omega)-(\bar F_\omega^n)_*(\lambda'_\omega)|\le C\sum_{i=0}^\infty \eps_1^{i}\bl\{T_{i, \omega}\le n <T_{i+1, \omega}\}.
$$
\end{lemma}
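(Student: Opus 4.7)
The argument follows a Young-style coupling carried out on the doubled tower $\hat\Delta_\omega=\bar\Delta_\omega\times\bar\Delta_\omega$ equipped with $\bl=\lambda_\omega\times\lambda'_\omega$. The plan is to construct inductively a decomposition $\bl=\bl_i^{\mathrm{res}}+\bl_i^{\mathrm{coup}}$ such that the two coordinate marginals of $\bl_i^{\mathrm{coup}}$ are equal under every further iterate of $\hat F_\omega$, and such that $|\bl_i^{\mathrm{res}}|\le\epsilon_1^i\,|\bl|$ for some $\epsilon_1\in(0,1)$ independent of the initial densities. Since matched mass contributes identically to the two push-forwards, only the residual feeds into the total variation; restricting it to the event $\{T_{i,\omega}\le n<T_{i+1,\omega}\}$ (on which exactly $i$ coupling times have occurred by time $n$) and summing over $i$ then yields the stated bound.

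The single coupling step is the heart of the argument. On an element $\Gamma\in\hat\xi_1^\omega$ where $T_\omega$ equals some $\tau_j^\omega$, one has $\Gamma=A\times B$ with $\bar F^{T_\omega}_\omega(A)=\bar F^{T_\omega}_\omega(B)=\bar\Delta_{\sigma^{T_\omega}\omega,0}$. By Lemma~\ref{l.cfbeta}, the two marginal densities of $(\hat F_\omega)_*(\bl_i^{\mathrm{res}}|_\Gamma)$ with respect to $\bar m_{\sigma^{T_\omega}\omega}$ belong to $\F^+_\beta$ with distortion constant controlled by $C_F$, uniformly in $i$ and $\omega$. After normalisation both are therefore bounded below by a universal $c_0>0$ times a common reference probability density on the base. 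I would split each marginal accordingly and couple the two $c_0$-pieces through this common reference, assigning the matched mass to $\bl_{i+1}^{\mathrm{coup}}$ and the $(1-c_0)$-fraction to $\bl_{i+1}^{\mathrm{res}}$. Setting $\epsilon_1:=1-c_0$ produces the claimed geometric decay.

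Iterating at each $T_{i,\omega}$ uses that $\bl_i^{\mathrm{res}}$ is supported on full unions of the stage-$i$ partition elements, so its image under $\hat F_\omega$ retains the $\F^+_\beta$-regularity required to rerun the single step. Projecting through the coordinate maps $\pi_1,\pi_2$ cancels the $\bl_i^{\mathrm{coup}}$ contribution, and on $\{T_{i,\omega}\le n<T_{i+1,\omega}\}$ the residual has mass at most $\epsilon_1^i\,\bl\{T_{i,\omega}\le n<T_{i+1,\omega}\}$; summing over $i$ and bounding $|(\bar F^n_\omega)_*\lambda_\omega-(\bar F^n_\omega)_*\lambda'_\omega|$ by twice this residual mass gives the lemma.

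The main obstacle will be securing the uniform lower bound $c_0$ at every successful coupling step, independently of $\omega$, of the stage $i$, and of the initial densities of $\lambda_\omega,\lambda'_\omega$. After a simultaneous return the two marginals automatically acquire the universal $\F^+_\beta$-regularity from Lemma~\ref{l.cfbeta}, and the coupling probability on each successful element stabilises as in Lemma~\ref{lem:tailtau} for $i\ge i_0(\bl)$; the finitely many earlier stages, where the Lipschitz constants of $\lambda_\omega,\lambda'_\omega$ still intrude, contribute only to the overall prefactor $C$ and leave $\epsilon_1$ untouched.
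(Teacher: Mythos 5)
The paper does not actually contain a proof of Lemma~\ref{lem:main}: it is stated at the end of Subsection~\ref{se.convequi} to complete the proof of Theorem~\ref{thm:QDC}, and the text explicitly defers to \cite{BBR19,D15} (and to \cite{BBM02,BBM03}) for the details after noting that the constants in those works depend on the observables only through their Lipschitz constants. Your proposal reconstructs the Young-style coupling that underlies those references, and your outline is correct: the matching is performed on the elements of the $T_\omega$-partition where both coordinates return to the base simultaneously; after such a return Lemma~\ref{l.cfbeta} provides the uniform $\F^+_\beta$-regularity of both marginals, which yields a universal lower proportion $c_0$ and hence $\eps_1=1-c_0$; Lemma~\ref{lem:tailtau} is indeed what guarantees that $c_0$ can be chosen independently of the initial densities once $i\ge i_0(\bl)$, and you correctly observe that the finitely many earlier rounds, where the matched fraction is only $\ge C_{\bl}$, are absorbed into the prefactor~$C$ rather than into~$\eps_1$. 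This matches the intent of the paper.

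One point worth making explicit, since it is the step that converts the construction into the claimed inequality: the estimate
$\bl_i^{\mathrm{res}}\{T_{i,\omega}\le n<T_{i+1,\omega}\}\le\eps_1^{i}\,\bl\{T_{i,\omega}\le n<T_{i+1,\omega}\}$
requires the \emph{cell-wise} form of the geometric decay, i.e.\ that on each element of the partition on which $T_{i,\omega}$ is constant the residual after $i$ matching rounds is bounded by $\eps_1^{i}$ times the original $\bl$-mass of that element, not merely that the total residual mass is $\le\eps_1^{i}$. This does hold in your scheme because the matching at each round removes a fraction $\ge(1-\eps_1)$ from each cell separately, and the partitions $\hat\xi^\omega_1\prec\hat\xi^\omega_2\prec\cdots$ are nested so the bound propagates along the refinement; but stating this explicitly (and tracking the first $i_0$ rounds separately, as you already indicate) is what closes the gap between the per-round lower bound and the displayed sum.
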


\subsection{Decay of  correlations}
In this subsection we prove Theorems \ref{exrates} and \ref{polyrates} by establishing a relation between decay of future correlations for the original dynamics and that of quotient dynamics.  Recall that for $\varphi, \psi:M\to \RR$ we have
\[
\C_{n}(\varphi, \psi; \mu_\omega)=
\int(\varphi\circ f^n_\omega)\psi d\mu_\omega -\int\varphi d\mu_{\sigma^n\omega}\int\psi  d\mu_\omega.
\]
Let $\pi_\omega:\Dom\to M$,  be the tower projection. Let $\bar\pi_\omega:\Dom\to \bar\Delta_\omega$ be the projection to the corresponding quotient tower. Then we have $\mu_\omega=(\pi_\omega)_\ast\nu_\omega$ and $\bar\nu_\omega=(\bar\pi_\omega)_\ast\nu_\omega$,  where $\{\nu_\omega\}$ is the equivariant family of physical measures for hyperbolic tower, $\{\mu_\omega\}$ is the equivariant family of physical measures of the original dynamics, $\{\bar\nu_\omega\}$ is the equivariant family of absolutely continuous measures for the quotient tower. Notice that $\{\bar\nu_\omega\}$ are the same as the measures constructed in Theorem \ref{thm:QDC} since the conditional measures of  $\nu_\omega$ are equivalent to $m_{\gamma_\omega}$. For $\varphi, \psi\in C^\eta(M)$ define $\tilde\psi_\omega=\psi\circ\pi_\omega$ and $\tilde\varphi_\omega=\varphi\circ\pi_\omega$. Then we have 
\begin{equation}\label{corduction}
\begin{aligned}
&\C_{n}(\varphi, \psi; \mu_\omega)=
\int(\varphi\circ f^n_\omega)\psi d\mu_\omega -\int\varphi d\mu_{\sigma^n\omega}\int\psi  d\mu_\omega=\\
&\int(\tilde\varphi_{\sigma^n\omega}\circ F^n_\omega)\tilde\psi_\omega d\nu_\omega-\int\tilde\varphi_{\sigma^n\omega} d\nu_{\sigma^n\omega}\int\tilde\psi_\omega d\nu_\omega = \C_{n}(\tilde\varphi_\omega, \tilde\psi_\omega; \nu_\omega).
\end{aligned}
\end{equation}
Fix some integer $k=[n/4]$ and define discretizations   $\bar\varphi_{\omega, k}$ of $\tilde\varphi_\omega$ on $\Dom$ as follows 
$$
\bar\varphi_{\omega, k}|_A=\inf\{\tilde\varphi_{\sigma^k\omega}\circ F^k_\omega(x)\mid x\in A\in \mathcal Q_{2k}^\omega\}.
$$
 $\bar\psi_{\omega, k}$ is defined similarly. The main result of this section is the following proposition.
\begin{proposition} For any $n\in \NN$, $\varphi, \psi\in C^\eta(M)$ 
\[
|\C_{n}(\varphi, \psi; \mu_\omega)|\le C_{\varphi, \psi}\max\{\delta_{\sigma^{[n/4]}\omega, [n/4]}^\eta, \C_{n}(\bar\varphi_{\sigma^{n-k}\omega, k}, \bar\psi_{\sigma^{-k}\omega, k}; \bar\nu_{\sigma^{-k}\omega})\}
\]
for some constant  $C_{\varphi, \psi}>0$ and for a.e. $\omega \in \Omega$.
\end{proposition}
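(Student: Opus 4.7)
The plan is to reduce $\C_n(\varphi,\psi;\mu_\omega)$ to a correlation of the discretised observables $\bar\varphi_{\sigma^{n-k}\omega,k}$ and $\bar\psi_{\sigma^{-k}\omega,k}$ against $\bar\nu_{\sigma^{-k}\omega}$, up to an error of order $\delta^\eta$, via three steps: a backward shift of the base fibre by $k$, replacement of the lifted iterates by their discretisations, and descent to the quotient tower.

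For the first step, since $(f^k_{\sigma^{-k}\omega})_*\mu_{\sigma^{-k}\omega}=\mu_\omega$, a change of variables combined with~\eqref{corduction} and the semi-conjugacy $\pi\circ F=f\circ\pi$ yields
\[
 \C_n(\varphi,\psi;\mu_\omega)=\int(\Phi\circ F^n_{\sigma^{-k}\omega})\,\Psi\,d\nu_{\sigma^{-k}\omega}-\int\Phi\,d\nu_{\sigma^{n-k}\omega}\int\Psi\,d\nu_{\sigma^{-k}\omega},
\]
where $\Phi:=\tilde\varphi_{\sigma^n\omega}\circ F^k_{\sigma^{n-k}\omega}$ on $\Delta_{\sigma^{n-k}\omega}$ and $\Psi:=\tilde\psi_\omega\circ F^k_{\sigma^{-k}\omega}$ on $\Delta_{\sigma^{-k}\omega}$ are precisely the iterates that $\bar\varphi_{\sigma^{n-k}\omega,k}$ and $\bar\psi_{\sigma^{-k}\omega,k}$ are designed to approximate.

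For the second step, $\eta$-H\"older continuity of $\varphi,\psi$ together with the $\delta$-estimate following~\eqref{deltak} gives $\|\Phi-\bar\varphi_{\sigma^{n-k}\omega,k}\|_\infty\le|\varphi|_\eta\,\delta_{\sigma^n\omega,k}^\eta$ and $\|\Psi-\bar\psi_{\sigma^{-k}\omega,k}\|_\infty\le|\psi|_\eta\,\delta_{\omega,k}^\eta$. Writing $\Phi=\bar\varphi+e_1$ and $\Psi=\bar\psi+e_2$ and expanding both the mixed integral and the product of means produces six cross terms, each of which is a trivial $L^\infty$--$L^1$ estimate bounded by $C_{\varphi,\psi}\,\delta_*^\eta$, with $\delta_*^\eta$ the maximum of the two $\delta^\eta$ bounds above.

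For the third step, $\Q^\tau$ is the tower lift of the Markov partition $\P_\tau$, whose elements are $s$-subsets and thus saturated by stable leaves; since $F_\tau$ preserves stable leaves, each element of $\Q_{2k}^\tau$ is a union of stable leaves, so $\bar\varphi_{\sigma^{n-k}\omega,k}$, $\bar\psi_{\sigma^{-k}\omega,k}$ and the composition $\bar\varphi_{\sigma^{n-k}\omega,k}\circ F^n_{\sigma^{-k}\omega}$ are all constant along stable leaves. As $\bar\nu_\tau=(\bar\pi_\tau)_*\nu_\tau$, each of the three tower integrals equals its quotient analogue on $\bar\Delta$, identifying the principal term with $\C_n(\bar\varphi_{\sigma^{n-k}\omega,k},\bar\psi_{\sigma^{-k}\omega,k};\bar\nu_{\sigma^{-k}\omega})$ and closing the estimate. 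The step requiring the most care is the fibre-index bookkeeping: the decomposition naturally produces $\delta_{\sigma^n\omega,k}^\eta$ and $\delta_{\omega,k}^\eta$, which in the setting of Remark~\ref{re.exdelta} and of the intermittent example are uniformly comparable to the single quantity $\delta_{\sigma^{[n/4]}\omega,[n/4]}^\eta$ appearing in the statement.
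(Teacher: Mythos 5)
Your proof is correct and takes essentially the same approach as the paper: discretise both lifted observables on the partition $\mathcal{Q}_{2k}$ so that they become constant along stable leaves, bound the resulting $L^\infty$ errors by $\|\cdot\|_{C^\eta}\delta^\eta$ via \eqref{eq:estim}, and then descend the main term to the quotient tower using $\bar\nu=\bar\pi_*\nu$. The only organisational difference is that you pull everything back to the fibre $\sigma^{-k}\omega$ at the outset via equivariance, whereas the paper stays on the fibre $\omega$ and introduces the density $\hat\psi_{\omega,k}$ of $(F^k_{\sigma^{-k}\omega})_*(\bar\psi_{\sigma^{-k}\omega,k}\,\nu_{\sigma^{-k}\omega})$ as an intermediary before identifying the result with the quotient correlation in~\eqref{step3}; these are exactly the same calculation written in a different order. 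Your closing remark is also well taken: with the indexing of~\eqref{deltak} the two approximation errors naturally sit at the fibres $\sigma^n\omega$ and $\omega$ (the paper's own \eqref{step1}--\eqref{step2} carry the same ambiguity, written there as $\delta_{\sigma^{n-k}\omega,k}$ and $\delta_{\sigma^{-k}\omega,k}$ because the definition of $\delta$ is restated inconsistently just before \eqref{eq:estim}), and reconciling them with the single term $\delta_{\sigma^{[n/4]}\omega,[n/4]}^\eta$ in the statement relies, as you say, on the $\omega$-uniformity of the $\delta$-decay available in the applications via Remark~\ref{re.exdelta} and the separate treatment given for the intermittent solenoid.
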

\begin{proof}
By \eqref{corduction} it is sufficient to estimate $\C_{n}(\tilde\varphi_\omega, \tilde\psi_\omega; \nu_\omega)$. We will estimate this in several steps. 
Obviously, for any $x\in A$, $A\in \mathcal Q_{2k}^\omega$, 
\begin{equation}\label{eq:estim}
|\tilde\varphi_{\sigma^k\omega}\circ F^k_\omega(x)-\bar\varphi_{\omega, k}(x)|\le {\|\varphi\|}_{C^\eta}\diam(\pi_{\sigma^k\omega}(F^k_\omega(A)))^\eta.
\end{equation}
Recall that 
\[
\delta_{\omega, k}=\sup_{A\in \mathcal Q_{2k}^\omega}\diam(\pi_{\sigma^k\omega}(F^k_\omega(A))).
\]
Notice that  $\C_{n}(\tilde\varphi_\omega, \tilde\psi_\omega; \nu_\omega)=\C_{n-k}(\tilde\varphi_{\sigma^n\omega}\circ F^k_{\sigma^{n-k}\omega}, \tilde\psi_\omega; \nu_\omega)$. Indeed, by equivariance of $\nu_\omega$ we have
\begin{align*}
&\int (\tilde\varphi_{\sigma^n\omega}\circ F^n_{\omega}) \tilde\psi_\omega  d\nu_\omega-\int\bar\varphi_{\sigma^{n}\omega, k}d\nu_{\sigma^n\omega}\int\tilde\psi_\omega d\nu_\omega \\
&= \int (\tilde\varphi_{\sigma^n\omega}\circ F^k_{\sigma^{n-k}\omega})\circ F^{n-k}_\omega \tilde\psi_\omega  d\nu_\omega-\int\bar\varphi_{\sigma^{n}\omega, k}\circ F^k_{\sigma^{n-k}\omega}d\nu_{\sigma^{n-k}\omega}\int\tilde\psi_\omega d\nu_\omega.
\end{align*}
We have 
\begin{equation}\label{step1}
\begin{aligned}
&|\C_{n}(\tilde\varphi_\omega, \tilde\psi_\omega; \nu_\omega)-\C_{n-k}(\bar\varphi_{\sigma^{n-k}\omega}, \tilde\psi_\omega; \nu_\omega)| \\
\le&\left|\int (\tilde\varphi_{\sigma^n\omega}\circ F^k_{\sigma^{n-k}\omega}-\bar\varphi_{\sigma^{n-k}\omega, k})\circ F^{n-k}_{\omega}\tilde\psi_\omega d\nu_\omega\right|\\
+& \left|\int (\tilde\varphi_{\sigma^n\omega}\circ F^k_{\sigma^{n-k}\omega}-\bar\varphi_{\sigma^{n-k}\omega, k})d\nu_\omega\int\tilde\psi_\omega d\nu_\omega\right|\\
&\le 2{\|\varphi\|}_{C^\eta}{\|\psi\|}_{\infty}\delta_{\sigma^{n-k}\omega, k}^\eta
\end{aligned}
\end{equation}

Denote by $\bar\psi_{\omega, k}\nu_\omega$ the signed measure whose density with respect to $\nu_\omega$ is $\bar\psi_{\omega, k}$. Let $\hat \psi _{\omega, k}$ denote the density of $(F_{\sigma^{-k}\omega}^k)_\ast(\bar\psi_{\sigma^{-k}\omega, k}\nu_{\sigma^{-k}\omega})$. Then we have 
\begin{equation}\label{step2}
|\C_{n-k}(\bar\varphi_{\sigma^{n-k}\omega, k}, \tilde\psi_\omega; \nu_\omega)-\C_{n-k}(\bar\varphi_{\sigma^{n-k}\omega, k}, \hat\psi_{\omega, k}; \nu_\omega)| \le 2{\|\varphi\|}_{\infty} {\|\psi\|}_{C^\eta}\delta_{\sigma^{-k}\omega, k}^\eta.
\end{equation}
Indeed, the left hand side of \eqref{step2} is
\begin{align*}
&\le \left|\int \varphi_{\sigma^{n-k}\omega, k}\circ F_{\omega}^{n-k}(\tilde\psi_\omega-\hat\psi_{\omega, k})d\nu_{\omega} \right|\\
+
\left|\int \varphi_{\sigma^{n-k}\omega, k}d\nu_\omega\right.& \left. \int (\tilde\psi_\omega-\hat\psi_{\omega, k})d\nu_{\omega} \right|\le 
2{\|\varphi\|}_{\infty}\left|\int (\tilde\psi_\omega-\hat\psi_{\omega, k})d\nu_{\omega} \right|.
\end{align*}
Observing  that 
$$ 
\tilde\psi_{\omega}\nu_{\omega}=(F^k_{\sigma^{-k}\omega})_\ast(\tilde\psi_{\omega}\circ F^k_{\sigma^{-k}\omega})\nu_{\sigma^{-k}\omega}.
$$
and letting $|\cdot|$ denote the total  variation of a signed measure 
we have 
\[
\begin{aligned}
&\left|\int (\tilde\psi_\omega-\hat\psi_{\omega, k})d\nu_{\omega} \right|=|\tilde\psi_\omega\nu_\omega -\hat\psi_{\omega, k}\nu_\omega|
\\&=
|(F^k_{\sigma^{-k}\omega})_\ast(\tilde\psi_{\omega}\circ F^k_{\sigma^{-k}\omega})\nu_{\sigma^{-k}\omega}
- (F^k_{\sigma^{-k}\omega})_\ast\bar\psi_{\sigma^{-k}\omega, k}\nu_{\sigma^{-k}\omega}|\\
&=\int|\tilde\psi_{\omega}\circ F^k_{\sigma^{-k}\omega}-\bar\psi_{\sigma^{-k}\omega, k}|d\nu_{\sigma^{-k}\omega}
\le {\|\psi\|}_{C^\eta}\delta_{\sigma^{-k}\omega, k}^\eta.
\end{aligned}
\] This shows \eqref{step2}.
Next we observe that
\begin{equation}\label{step3}
\C_{n-k}(\bar\varphi_{\sigma^{n-k}\omega, k}, \hat\psi_{\omega, k}; \nu_\omega)=
\C_{n}(\bar\varphi_{\sigma^{n-k}\omega, k}, \bar\psi_{\sigma^{-k}\omega, k}; \bar\nu_{\sigma^{-k}\omega}).
\end{equation}

Indeed, 
\begin{align*}
\int \bar\varphi_{\sigma^{n-k}\omega, k}\circ F^{n-k}_\omega\hat \psi_{\omega, k}d\nu_\omega=
\int \bar\varphi_{\sigma^{n-k}\omega, k}\circ F^{n-k}_\omega d(\hat\psi_{\omega, k}\nu_\omega) \\
=\int\bar\varphi_{\sigma^{n-k}\omega, k} d( (F^{n-k}_\omega)_\ast(\hat\psi_{\omega, k}\nu_\omega))=
\int \bar\varphi_{\sigma^{n-k}\omega, k} d( (F^{n}_{\sigma^{-k}\omega})_\ast(\bar\psi_{\sigma^{-k}\omega, k}\nu_{\sigma^{-k}\omega}))\\
=\int  \bar\varphi_{\sigma^{n-k}\omega, k}\circ F^{n}_{\sigma^{-k}\omega}\bar\psi_{\sigma^{-k}\omega, k}d\nu_{\sigma^{-k}\omega}
=\int  \bar\varphi_{\sigma^{n-k}\omega, k}\circ \bar F^{n}_{\sigma^{-k}\omega}\bar\psi_{\sigma^{-k}\omega, k}d\bar\nu_{\sigma^{-k}\omega}.
\end{align*}

Also, 
\begin{align*}
\int \bar\varphi_{\sigma^{n-k}\omega, k}d\nu_{\sigma^{n-k}\omega}\int\hat \psi_{\omega, k}d\nu_\omega=
\int \bar\varphi_{\sigma^{n-k}\omega, k}d\nu_{\sigma^{n-k}\omega}\int d (F^k_{\sigma^{-k}\omega})_\ast \bar\psi_{\sigma^{-k}\omega, k}\nu_{\sigma^{-k}\omega}\\
= \int \bar\varphi_{\sigma^{n-k}\omega, k}d\bar\nu_{\sigma^{n-k}\omega}\int \bar\psi_{\sigma^{-k}\omega, k}\bar\nu_{\sigma^{-k}\omega}.
\end{align*}

Finally we show that $\bar\varphi_{\sigma^{n-k}\omega, k}$ and $\bar\psi_{\sigma^{-k}\omega, k}$ belong to right  functions spaces and thus the results for the quotient tower are applicable.

Suppose that $\psi$ is not identically zero. Fix $k\le n/4$ and let 
$$
\Psi_{\omega, k}=b_{\omega, k}(\bar\psi_{\omega, k}+2{\|\bar\psi_{\omega, k}\|}_{\infty}),
$$
where $(3{\|\psi\|}_{\infty})^{-1}\le b_{\omega, k}\le {\|\psi\|}_{\infty}^{-1}$ is chosen so that $\int \Psi_{\omega, k}d\bar\nu_\omega=1$. Then $1\le {\|\Psi_{\omega, k}\|}_{\infty}\le 3$.  Let  $\bar m_\omega$ denote the reference measure on quotient tower $\bar\Delta_\omega$, and let 
$$
\rho_\omega=\frac{d\bar\nu_\omega}{d\bar m_\omega}, \quad d\hat\lambda_{\omega, k}=\Psi_{\omega, k}\rho_\omega d\bar m_\omega.
$$
Recalling that $\int\Psi_{\sigma^{-k}\omega, k} d\nu_{\sigma^{-k}\omega}=1$ we have  
\begin{equation*}
\begin{split}
&\left|\int  \bar\varphi_{\sigma^{n-k}\omega, k}\circ \bar F^{n}_{\sigma^{-k}\omega}\bar\psi_{\sigma^{-k}\omega, k}d\bar\nu_{\sigma^{-k}\omega} - \int \bar\varphi_{\sigma^{n-k}\omega, k}d\bar\nu_{\sigma^{n-k}\omega}\int \bar\psi_{\sigma^{-k}\omega, k}d\bar\nu_{\sigma^{-k}\omega}\right|\\
&=\frac{1}{b_{\sigma^{-k}\omega, k}}\left|\int  \bar\varphi_{\sigma^{n-k}\omega, k}\circ \bar F^{n}_{\sigma^{-k}\omega} \Psi_{\sigma^{-k}\omega, k}d\nu_{\sigma^{-k}\omega} 
-  \int \bar\varphi_{\sigma^{n-k}\omega, k}d\bar\nu_{\sigma^{n-k}\omega}\right|\\
&=\frac{1}{b_{\sigma^{-k}\omega, k}}\left|\int  \bar\varphi_{\sigma^{n-k}\omega, k}d (\bar F^{n}_{\sigma^{-k}\omega})_\ast \hat\lambda_{\sigma^{-k}\omega} 
-  \int \bar\varphi_{\sigma^{n-k}\omega, k}d\bar\nu_{\sigma^{n-k}\omega}\right|\\
&\le 3{\|\varphi\|}_{\infty}{\|\psi\|}_{\infty} 
\int \left|\frac{d(\bar F^n_{\sigma^{-k}\omega})_\ast \hat \lambda_{\sigma^{-k}\omega, k}}{d\bar m_{\sigma^{n-k}}}-\rho_{\sigma^{n-k}\omega}\right| d\bar m_{\sigma^{n-k}\omega}.
\end{split}
\end{equation*}
Letting   $\bar\lambda_{\sigma^k\omega, k}=(\bar F_{\sigma^{-k}\omega}^{2k})_\ast\hat\lambda_{\sigma^{-k}\omega, k}$,  the above equality implies  
\begin{equation}
\C_{n}(\bar\varphi_{\sigma^{n-k}\omega, k}, \bar\psi_{\sigma^{-k}\omega, k}; \bar\nu_{\sigma^{-k}\omega}) \le 
3{\|\varphi\|}_{\infty}{\|\psi\|}_{\infty} |(\bar F_{\sigma^{k}\omega}^{2k})_\ast\bar\lambda_{\sigma^k\omega, k}-\bar\nu_{\sigma^{n-k}\omega}|.
\end{equation}

Let $\phi_{\omega, k}$  denote the density of the measure  $\bar\lambda_{\omega, k}$
with respect to $\bar m_\omega$. The next lemma shows that
$\phi_{\omega, k}\in\mathcal F_\beta^+$, with the constant $C$ independent of $\omega$ and $k$. This is enough for using 
Theorem~\ref{thm:QDC} and concluding the proof of the proposition.  
\end{proof}
\begin{lemma}\label{Liberta}
There is $C>0$, independent of $\omega$ and $k$, and a random variable $\kappa_\omega:\Omega\to \RR_+$ such that
$$
|\phi_{\omega, k}(\bar x)-\phi_{\omega, k}(\bar y)|\le C\kappa_\omega\beta^{\bar s(\bar x,\bar y)}, \quad \text{for all } \bar x,\bar y \in \bar\Delta_\omega.
$$
\end{lemma}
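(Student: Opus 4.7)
The plan is to use the Perron--Frobenius expression for $\phi_{\omega,k}$ and combine three ingredients: the piecewise constancy of $\Psi_{\sigma^{-k}\omega,k}$ on elements of $\bar\Q^{\sigma^{-k}\omega}_{2k}$ (inherited from the definition of $\bar\psi_{\omega,k}$ as an infimum over elements of $\mathcal Q^\omega_{2k}$), the regularity of the invariant density $\rho_{\sigma^{-k}\omega}=d\bar\nu_{\sigma^{-k}\omega}/d\bar m_{\sigma^{-k}\omega}\in\F^+_\beta\cap\F^1_\beta$ provided by Theorem~\ref{thm:QDC}(1), and the uniform Jacobian distortion of Lemma~\ref{l.cfbeta}. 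Writing $\omega'=\sigma^{-k}\omega$ and $g=\Psi_{\omega',k}\rho_{\omega'}$, since $\bar F^{2k}_{\omega'}$ is injective on every element of $\bar\Q^{\omega'}_{2k}$, the standard change of variables yields
\[
\phi_{\omega,k}(\bar x)=\sum_{\bar y\in(\bar F^{2k}_{\omega'})^{-1}(\bar x)}\frac{g(\bar y)}{J\bar F^{2k}_{\omega'}(\bar y)}.
\]

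Given $\bar x,\bar x'\in\bar\Delta_\omega$, I may assume $\bar s(\bar x,\bar x')\ge 1$, as the bound is trivial when $\bar s(\bar x,\bar x')=0$ (the density $\phi_{\omega,k}$ is uniformly bounded by $\|\Psi_{\omega',k}\|_\infty\|\rho_{\omega'}\|_\infty\le 3C_0$). In this case, for each preimage $\bar y$ of $\bar x$ lying in some $A\in\bar\Q^{\omega'}_{2k}$, there is a unique paired preimage $\bar y'$ of $\bar x'$ in the same $A$, since positivity of $\bar s(\bar x,\bar x')$ forces both $\bar x$ and $\bar x'$ to lie in the common partition element $\bar F^{2k}_{\omega'}(A)$. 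Moreover, unpacking the extended definition of the separation time gives $\bar s(\bar y,\bar y')\ge \bar s(\bar x,\bar x')$, because taking preimages under $\bar F^{2k}_{\omega'}$ can only add returns preceding the first separation.

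Using the elementary decomposition
\[
\left|\frac{g(\bar y)}{J\bar F^{2k}_{\omega'}(\bar y)}-\frac{g(\bar y')}{J\bar F^{2k}_{\omega'}(\bar y')}\right|\le\frac{|g(\bar y)-g(\bar y')|}{J\bar F^{2k}_{\omega'}(\bar y)}+\frac{g(\bar y')}{J\bar F^{2k}_{\omega'}(\bar y')}\left|1-\frac{J\bar F^{2k}_{\omega'}(\bar y')}{J\bar F^{2k}_{\omega'}(\bar y)}\right|,
\]
I handle each piece separately. In the first, $\Psi_{\omega',k}$ is constant on $A$, so the numerator reduces to $\Psi_{\omega',k}(\bar y)|\rho_{\omega'}(\bar y)-\rho_{\omega'}(\bar y')|\le 3C_\rho\beta^{\bar s(\bar y,\bar y')}\le 3C_\rho\beta^{\bar s(\bar x,\bar x')}$ since $\rho_{\omega'}\in\F^1_\beta$ with uniform constant. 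For the second, Lemma~\ref{l.cfbeta} gives $|1-J\bar F^{2k}_{\omega'}(\bar y')/J\bar F^{2k}_{\omega'}(\bar y)|\le C_F\beta^{\bar s(\bar x,\bar x')}$. Summing over pairs and using $\sum_{\bar y'}g(\bar y')/J\bar F^{2k}_{\omega'}(\bar y')=\phi_{\omega,k}(\bar x')\le 3C_0$ uniformly, I obtain $|\phi_{\omega,k}(\bar x)-\phi_{\omega,k}(\bar x')|\le C\beta^{\bar s(\bar x,\bar x')}$ with $C$ independent of $\omega$ and $k$, so $K_\omega$ may be taken constant.

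The main obstacle is the combinatorial bookkeeping around the preimage pairing when $\bar x,\bar x'$ lie at arbitrary levels of the tower rather than in the base: one must verify that the itinerary partition $\bar\Q^{\omega'}_{2k}$ yields a well-defined bijection between preimages of $\bar x$ and of $\bar x'$, and that the extended separation time transforms correctly under $\bar F^{2k}_{\omega'}$. Once these aspects are settled, the remainder is a standard transfer operator estimate.
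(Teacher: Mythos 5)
Your proposal is correct and follows essentially the same route as the paper: expand $\phi_{\omega,k}$ as a transfer--operator sum over preimages of $\bar F^{2k}_{\sigma^{-2k}\omega}$, exploit that $\Psi_{\sigma^{-2k}\omega,k}$ is constant on elements of $\bar\Q^{\sigma^{-2k}\omega}_{2k}$, and combine the uniform regularity of $\rho_\omega$ from Theorem~\ref{thm:QDC}(1) with the Jacobian distortion bound of Lemma~\ref{l.cfbeta}. The only difference is the algebraic bookkeeping --- you split the per--branch difference additively as $\tfrac{a}{b}-\tfrac{a'}{b'}=\tfrac{a-a'}{b}-\tfrac{a'}{b'}\bigl(1-\tfrac{b'}{b}\bigr)$, whereas the paper factors it multiplicatively as $\tfrac{a'}{b'}\bigl(\tfrac{a}{a'}\tfrac{b'}{b}-1\bigr)$; both reduce to the same two ingredients and both need the two--sided bound $1/C_0\le\rho_\omega\le C_0$ to close the sum over branches. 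Your closing observation that $K_\omega$ may be taken constant is consistent with the paper's own final remark that $C=3\|\rho\|_\infty(C_\rho+D)/\kappa_\omega$, which makes the product $CK_\omega$ uniform.
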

\begin{proof} Since $(\bar F^{2k}_\omega)_*\bar\nu_\omega=\bar\nu_{\sigma^{2k}\omega}$  and
$\rho_\omega=d\bar \nu_\omega/d\bar m_\omega$,  we write 
\begin{equation}\label{eq.ro}
\rho_{\omega}(\bar x)=\sum_{Q\in \bar \Q_{2k}^{\omega}}
\frac{\rho_{\sigma^{-2k}\omega}\big((\bar F^{2k}_{\sigma^{-2k}\omega} \vert Q)^{-1}(\bar x)\big)}{J\bar F^{2k}_{\sigma^{-2k}\omega}\big((\bar F^{2k}_{\sigma^{-2k}\omega}\vert Q)^{-1}(\bar x)\big)}.
\end{equation}
By definition we have 
 $$\phi_{\omega, k}=\frac {d\bar\lambda_{\omega, k}}{dm}=\frac d{d\bar m_\omega} 
 (\bar F^{2k}_{\sigma^{-2k}\omega})_\ast\hat\lambda_{\sigma^{-2k}\omega, k}  \text{ and }\frac {d\hat\lambda_{\sigma^{-2k}\omega, k}}{dm_\omega}=\Psi_{\sigma^{-2k}\omega, k}\rho_{\sigma^{-2k}\omega}.$$
 Since $\Psi_{\omega, k}$ is constant on elements of $\Q_{2k}^\omega$, letting $\Psi_{\sigma^{-2k}\omega}\big((\bar F^{2k}_{\sigma^{-2k}\omega} \vert Q)^{-1}(\bar x)\big)=c_{\omega, Q}$   we have
 \begin{equation*}
\phi_{\omega, k}(\bar x)=\sum_{Q\in \bar \Q_{2k}^{\omega}}
c_{\omega, Q}\cdot
\frac{\rho_{\sigma^{-2k}\omega}\big((\bar F^{2k}_{\sigma^{-2k}\omega} \vert Q)^{-1}(\bar x)\big)}{J\bar F^{2k}_{\sigma^{-2k}\omega}\big((\bar F^{2k}_{\sigma^{-2k}\omega}\vert Q)^{-1}(\bar x)\big)}.
\end{equation*}
  Hence,
 \begin{equation}\label{figa}
 \begin{aligned}
\phi_k(\bar x)-\phi_k(\bar y) &= 
\\  \sum_{Q\in \bar\Q_{2k}^\omega}c_{\omega, Q}
&\left(
\frac{\rho_{\sigma^{-2k}\omega}\big((\bar F^{2k}_{\sigma^{-2k}\omega} \vert Q)^{-1}(\bar x)\big)}
{J\bar F^{2k}_{\sigma^{-2k}\omega}\big((\bar F^{2k}_{\sigma^{-2k}\omega}\vert Q)^{-1}(\bar x)\big)}
 -
\frac{\rho_{\sigma^{-2k}\omega}\big((\bar F^{2k}_{\sigma^{-2k}\omega} \vert Q)^{-1}(\bar y)\big)}{J\bar F^{2k}_{\sigma^{-2k}\omega}\big((\bar F^{2k}_{\sigma^{-2k}\omega}\vert Q)^{-1}(\bar y)\big)}
 \right).
 \end{aligned}
\end{equation}
Let $\omega'=\sigma^{-2k}\omega$. For  $Q\in\bar\Q_{2k}^\omega$,let $\bar x',\bar y',\in Q$ be such
$\bar F^{2k}_{\omega'}(\bar x')=\bar x$ and  $\bar F^{2k}_{\omega'}(\bar x')=\bar x$.
We have
 \begin{equation}\label{eq.outra}
\frac{\rho_{\omega'}(\bar x')} {J\bar F^{2k}_{\omega'}(\bar x')}
 -\frac{\rho_{\omega'}(\bar y')} {J\bar F^{2k}_{\omega'}(\bar y')}
 =
 \left(\frac{\rho_{\omega'}(\bar y')} {J\bar F^{2k}_{\omega'}(\bar y')}\right)
 \left( \frac{\rho_{\omega'}(\bar x')} { \rho_{\omega'}(\bar y')}\frac{J\bar F^{2k}_{\omega'}(\bar y')}
 {J\bar F^{2k}_{\omega'}(\bar x')} -1 \right).
\end{equation}
It follows from Theorem~\ref{existence} that there is $C_{\rho}>0$ independent of $\omega$ such that 
 $$
 \left| \frac{\rho_{\omega'}(\bar x')} { \rho_{\omega'}(\bar y')}-1\right|\le C_{\rho}\beta^{s(\bar x',\bar y')}.
 $$
On the other hand, by Lemma~\ref{l.cfbeta} there is $D>0$
such that independent of $\omega$ such that 
$$ \left| \frac{J\bar F^{2k}_{\omega'}(\bar y')} {J\bar F^{2k}_{\omega'}(\bar x')}-1\right| \leq D\beta^{ \bar s(\bar
F^{2k}_{\omega'}(\bar x'),F^{2k}_{\omega'}(\bar y'))}.
$$
Since $s(\bar x',\bar y')\ge s(\bar F^{2k}_{\omega'}(\bar x'),F^{2k}_{\omega'}(\bar
y'))=s(\bar x,\bar y)$, we have
 \begin{equation}\label{label.eq}
\log \left| \frac{\rho(\bar x')} { \rho(\bar y')}\frac{J\bar F^{2k}(\bar y')} {J\bar F^{2k}(\bar x')} \right|
 \le
 (C_{\rho}+D)\beta^{ \bar s(\bar x,\bar y)}.
\end{equation}
Recalling \eqref{eq.ro} and the fact that $|c_Q|\le
\|\hat\Psi_{\omega, k}\|_\infty\le 3$, it follows from \eqref{figa},
\eqref{eq.outra} and \eqref{label.eq} that there is some constant
$C>0$ not depending on $\phi_k$ such that
 \begin{eqnarray*}
\mdlo{\phi_{\omega, k}(\bar x)-\phi_{\omega, k}(\bar y)} &\le & C\kappa_\omega
 \beta^{ \bar s(\bar x,\bar y)}.
\end{eqnarray*}
\end{proof}
Lemma \ref{Liberta} shows that the Lipschitz constant of $\phi_{\omega, k}\in\mathcal F_\beta^+$ is independent of $\omega$ and $k$. Thus, the rate of decay of
$|(\bar F_{\sigma^{k}\omega}^{2k})_\ast\bar\lambda_{\sigma^k\omega, k}-\bar\nu_{\sigma^{n-k}\omega}|$ can be estimated as in Theorem \ref{thm:QDC} with uniform constants for all $0\le k\le n/4$. This completes the proofs of Theorems \ref{exrates} and \ref{polyrates}.

\section{Partially hyperbolic attractors}\label{se.PH}

In this section we prove Theorem~\ref{randomPH}.
%
%
%
%


\subsection{Preliminary results }
\label{sec:bounded-curvature}  Here we present
the ``random version'' of the main results in \cite[Section
2]{ABV00}, where it was  shown 
that $f$ satisfies a bounded curvature property over disks
having the tangent space at each point contained in a cone
field around the centre-unstable direction.

First of all we fix continuous extensions (not necessarily invariant under $Df$) of the two subbundles $E^{s}$ and
$E^{cu}$ to the trapping region  $U\supset K$.
  Given $0<a<1$, we define the \emph{centre-unstable cone
field $C^{cu}_a=(C^{cu}_a(x))_{x\in U}$} by
\begin{equation}
  \label{eq:cu-cone-field}
  C^{cu}_a(x) = \{
v_1+v_2\in E^{s}_x\oplus E^{cu}_x \colon \| v_1 \| \le a
\| v_2 \| \}
\end{equation}
and the \emph{stable cone field} $C^{s}_a=(C^{s}_a(x))_{x\in U}$ similarly, reversing the roles of the bundles
in~(\ref{eq:cu-cone-field}).
Slightly increasing $\lambda<1$, if necessary, we may fix $a$  
 so that  dominated
decomposition extends to  the cone fields for
all maps nearby $f$, i.e.
\begin{equation}
  \label{eq:domination-cone-fields}
  \| Df_\omega(x) v^{s} \|\cdot\| Df_{\omega}^{-1}(f_\omega (x)) v^{cu}\|
 \le\lambda \|v^{s}
  \|\cdot \|v^{cu}\|
\end{equation}
for all $v^{s}\in C^{s}_a(x)$, $v^{cu}\in C^{cu}_a(f_\omega(
x))$, $x\in U$ and $\omega\in\Omega$. Moreover, the
domination property   implies that for $f_\omega$ sufficiently close to $f$ we have 

\begin{equation}\label{invcones}
 Df_\omega^{-1} C_{a}^{s}(x)\subset C_{\lambda a}^{s}(f_\omega^{-1}(x))\qand  Df_\omega C_a^{cu}(x) \subset C_{\lambda a}^{cu}(f_\omega
(x)).
\end{equation}
Given $x\in K_\omega= \cap_{n\ge0}f^n_{ \sigma^{-n}\omega}(U)$, set
$$
 E^{s}_\omega(x)=\bigcap_{n\ge0}Df^{-n}_{ \omega}(f_{ \omega}^{n}(x))(C^s_a(f_{ \omega}^{n}(x)))
 $$
and
 $$E^{cu}_\omega(x)=\bigcap_{n\ge0}Df^n_{\sigma^{-n}\omega}(f_{\sigma^{-n}\omega}^{-n}(x))(C^{cu}_a(f_{\sigma^{-n}\omega}^{-n}(x))).
$$
It is easily verified that
 $$
Df_\omega(x)E^{s}_\omega(x)=E^{s}_{\sigma\omega}(f_\omega (x))\qand Df_\omega(x)E^{cu}_\omega(x)=E^{cu}_{\sigma\omega}(f_\omega (x)).
$$ 
Standard arguments give that that $E^{s}_\omega(x)$ is a subspace of the same dimension of $E^{s}(x)$, $E^{cu}_\omega(x)$ is a subspace of the same   dimension of $E^{cu}(x)$ and $T_xM=E^{s}_\omega(x)\oplus E^{cu}_\omega(x)$.

We say that an embedded sub-manifold $S\subset U$ is
\emph{tangent to the centre-unstable cone field} if $T_x S\subset
C^{cu}_a(x)$ for all $x\in S$. Note that as we assume contraction in the $E^s$ direction,  the local unstable manifolds are necessarily tangent to the centre-unstable cone field.


The \emph{curvature} of  local unstable manifolds and their iterates will be approximated in local
coordinates by the notion of H\"older variation of the tangent
bundle as follows.
Let  $\varepsilon>0$ be sufficiently small so that if  
$V_x=B(x,\varepsilon)$, then the exponential map $\exp_x:V_x\to
T_x M$ is a diffeomorphism onto its image for all $x\in M$.
We are going to identify $V_x$ through the local chart
$\exp_x^{-1}$ with the neighborhood $U_x=\exp_x V_x$ of the
origin in $T_x M$. Identifying $x$ with the origin in $T_x
M$ and reducing $\varepsilon$, if necessary, we get that $E_x^{cu}$ is contained in $C^{cu}_a(y)$ for
all $y\in U_x$. Then the
intersection of $E^{s}_x$ with $C^{cu}_a(y)$ is the zero
vector. So if $x\in S$ then $T_y S$ is the graph of a
linear map $A_x(y):E_x^{cu}\to E_x^{s}$ for $y\in U_x\cap
S$.
Given  $C>0$ and $\zeta\in (0,1)$, we say that the tangent bundle
of
  $S$ is \emph{$(C,\zeta)$-H\"older} if
\begin{equation}
  \label{eq:holder-bundle}
  \| A_x(y) \| \le C \dist_S(x,y)^\zeta, \quad\mbox{for all $
y\in U_x\cap S $  and all $ x\in U$,}
\end{equation}
where $\dist_S(x,y)$ is the distance measured along $S$.
Up to choosing  smaller $a>0$  we may assume
that there are $\lambda<\lambda_1<1$ and $0<\zeta<1$ such that for
all norm one vectors $v^{s}\in C^{s}_a(x), v^{cu}\in
C^{cu}_a(x)$, $x\in U$ it holds $$ \| Df_\omega(x) v^{s}
\|\cdot\| Df_{\omega}^{-1}(f_\omega( x)) v^{cu}\|^{1+\zeta}
 \le\lambda_1.
$$ For these  values of $\lambda_1$ and $\zeta$, given a $C^1$
submanifold $S\subset U$ tangent to the centre-unstable
cone field we define
\begin{equation}
  \label{eq:curvature-definition}
  \kappa(S)=\inf\{
C>0 : TS \mbox{ is }(C,\zeta)\mbox{-H\"older} \}.
\end{equation}
The proofs of the conclusions in the next proposition  may be
obtained by mimicking the proofs of the corresponding ones
in \cite[Proposition 2.2 \& Corollary 2.4]{ABV00}.
Actually, the proof of  Proposition 2.2  in \cite{ABV00} holds in a more general setting of submanifolds tangent to the centre-unstable cone fields, than just local unstable manifolds, but here we do not need it in its full generality.
The main ingredients  are the cone
invariance and dominated decomposition properties that we
have already extended for nearby perturbations $f_\omega$ of the
diffeomorphism $f$; see also~\cite[Section 4]{AAV07} in the random setting.

\begin{proposition}\label{pr:bounded-curvature}
There is $C_1>0$ such that
for every local unstable manifold $\Sigma\subset  U$ 
and every $\omega\in \Omega$
\begin{enumerate}
\item there exists $n_1$ such that $\kappa(f^n_{\omega} (\Sigma))\le
  C_1$ for all $n\ge n_1$ with $f^k_{\omega}(\Sigma)\subset U$ for
  all $1\le k\le n$;
\item if $\kappa(S)\le C_1$ then $\kappa(f^n_{\omega} (\Sigma))\le
  C_1$ for all $n\ge1$ such that $f^k_{\omega}(\Sigma)\subset U$ for
  all $1\le k\le n$;
\item in particular, if $\Sigma$ is as in the previous item, then for every $n\ge1$ we have
$$ J_n: f^n_{\omega} (\Sigma) \ni x \longmapsto \log | \det (Df_{\sigma^n\omega}\vert
T_x f^n_{\omega}(\Sigma)) | $$ is $(L_1,\zeta)$-H\"older
continuous with $L_1>0$ depending only on $C_1$ and $f$.
\end{enumerate}
\end{proposition}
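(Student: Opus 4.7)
The plan is to adapt the graph-transform argument of ABV00 to the time-dependent composition $f^n_\omega = f_{\sigma^{n-1}\omega}\circ\cdots\circ f_\omega$, tracking how the tangent-bundle regularity of $\Sigma_n := f^n_\omega(\Sigma)$ evolves. Fix $x\in\Sigma_n$ and work in the exponential chart $\exp_x:U_x\to M$; by cone invariance~\eqref{invcones} every tangent space $T_y\Sigma_n$ for $y\in U_x\cap\Sigma_n$ lies in $C^{cu}_a(y)$, hence is the graph of a linear map $A^{(n)}_x(y):E^{cu}_x\to E^{s}_x$ with $A^{(n)}_x(x)=0$. Thus $\kappa(\Sigma_n)$ is essentially the smallest $C$ for which $\|A^{(n)}_x(y)\|\le C\dist_{\Sigma_n}(x,y)^\zeta$ uniformly in $x$.

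The next step is to derive a recursive inequality. Passing from $x\in\Sigma_n$ to $x':=f_{\sigma^n\omega}(x)\in\Sigma_{n+1}$ and writing $Df_{\sigma^n\omega}(y)$ in block form with respect to the splitting $E^{s}\oplus E^{cu}$, a direct computation shows that $A^{(n+1)}_{x'}$ is obtained from $A^{(n)}_x$ by the graph-transform action of $Df_{\sigma^n\omega}$, i.e.\ by pre- and post-composing with $Df_{\sigma^n\omega}|E^{cu}$ and $Df_{\sigma^n\omega}|E^{s}$, plus a correction coming from the $\zeta$-H\"older variation of $Df_{\sigma^n\omega}$ along $\Sigma_n$. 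The linear part contracts the H\"older seminorm by a factor bounded above by $\|Df_{\sigma^n\omega}|E^{s}\|\cdot\|Df_{\sigma^n\omega}^{-1}|E^{cu}\|^{1+\zeta}\le\lambda_1$, by the choice of $\zeta$. The nonlinear correction is bounded by a constant $K>0$ depending only on the uniform $C^{1+\zeta}$ bound over $\mathbb B$, which is available since $\mathbb B$ is a compact subset of $\diff^{1+}(M)$ in a small $C^1$-neighbourhood of $f$. Altogether,
\begin{equation*}
\kappa(\Sigma_{n+1})\le\lambda_1\,\kappa(\Sigma_n)+K.
\end{equation*}

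Iterating this yields $\kappa(\Sigma_n)\le\lambda_1^n\kappa(\Sigma_0)+K/(1-\lambda_1)$. Setting $C_1:=2K/(1-\lambda_1)$ simultaneously gives items (1) and (2): for any $\Sigma$ there is $n_1=n_1(\kappa(\Sigma))$ with $\kappa(\Sigma_n)\le C_1$ for all $n\ge n_1$, and the ball $\{\kappa\le C_1\}$ is forward invariant since $C_1$ satisfies $C_1\ge\lambda_1 C_1+K$. Crucially, $\lambda_1$ and $K$ do not depend on $\omega$, so $C_1$ does not either. For item (3), the log-Jacobian $J_n(x)=\log|\det(Df_{\sigma^n\omega}|T_x\Sigma_n)|$ depends on $x$ through (a) the base point at which $Df_{\sigma^n\omega}$ is evaluated and (b) the tangent direction $T_x\Sigma_n$. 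Dependence (a) is $\zeta$-H\"older because each $f_\omega\in\diff^{1+}(M)$ with uniform constant over $\mathbb B$, and dependence (b) is $\zeta$-H\"older by the curvature bound $\kappa(\Sigma_n)\le C_1$ just established; composition yields the H\"older constant $L_1$ depending only on $C_1$ and $f$.

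The main technical obstacle is the bookkeeping in the recursive step: switching exponential charts between $x\in\Sigma_n$ and $x'=f_{\sigma^n\omega}(x)\in\Sigma_{n+1}$ produces nonlinear error terms whose $\zeta$-H\"older size must be estimated in terms of $\kappa(\Sigma_n)$ and the sizes of $Df_{\sigma^n\omega}$ and $D^2 f_{\sigma^n\omega}$ (or more precisely the H\"older modulus of $Df_{\sigma^n\omega}$). Separating the clean graph-transform contribution (which contracts at rate $\lambda_1$) from these higher-order corrections, and verifying that the feedback from $\kappa(\Sigma_n)$ into the error terms is absorbed in a sufficiently small $C^1$-neighbourhood of $f$ so that the affine recursion above is valid with $\omega$-independent constants, is the real content of the argument --- and it is precisely the point at which compactness of $\mathbb B$ in $\diff^{1+}(M)$ (uniform H\"older bounds on derivatives) is indispensable.
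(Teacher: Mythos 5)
Your proposal reconstructs the graph-transform argument of \cite[Proposition~2.2 \& Corollary~2.4]{ABV00}, which is precisely what the paper's own (terse) proof points to: it simply states that the conclusions may be obtained by mimicking the proofs in \cite{ABV00}, the relevant ingredients --- cone invariance and dominated splitting --- having already been extended to the random perturbations. Your recursive inequality $\kappa(\Sigma_{n+1})\le\lambda_1\,\kappa(\Sigma_n)+K$ with $\omega$-independent $\lambda_1$ and $K$, the choice $C_1=2K/(1-\lambda_1)$, and the composition argument for the H\"older log-Jacobian in item~(3) faithfully reproduce that scheme in the random setting.
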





From the condition of non-uniform expansion along the
centre-unstable direction we will be able to deduce some uniform
expansion at certain times which are precisely defined through the
following notion.
  Given $0<\alpha<1$, we say that
$n\ge1$ is a  $\alpha$-\emph{hyperbolic time} for $(\omega,x)\in
\Omega\times U$ if $$ \prod_{j=n-k+1}^n \| Df^{-1}_{\sigma^{j}\omega} \vert
E^{cu}_{f^j_{\omega} (x)} \| \le \alpha^k, \quad\mbox{for
all $ k=1,\dots,n.$} $$  

%
%
%
%
%
The condition of non-uniform expansion for random orbits along the
centre-unstable direction is enough to ensure that almost all points
have infinitely many $\alpha$-hyperbolic times   easily adapting the proof of \cite[Corollary
3.2]{ABV00},   based on an idea of Pliss~\cite{P72}.

\begin{proposition} \label{pr.hyptimes} There exist
$\rho,\alpha>0$ depending only on $f$ such that for
$P\times \leb$ almost all $(\omega,x)\in \Omega\times U$
and a sufficiently large  integer $n\ge1$, there exist $1\le
n_1<\dots<n_k\le n$, with $k\ge\rho n$, which are
 $\alpha$-hyperbolic times for $(\omega,x)$.  \end{proposition}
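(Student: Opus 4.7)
\textbf{Proof plan for Proposition~\ref{pr.hyptimes}.}
The strategy is to adapt the deterministic argument of \cite[Corollary~3.2]{ABV00}, which is built on Pliss's lemma: once one has produced, along the orbit of $(\omega,x)$, a sequence of log-expansion rates whose Ces\`aro averages are eventually bounded below by a strictly positive constant, Pliss's lemma yields a positive density of ``$\alpha$-hyperbolic times''. The randomness plays no genuine role beyond fixing the fibre $\omega$ once and for all.

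First I would set, for $(\omega,x)\in\Omega\times U$ and $j\ge 1$,
\[
a_j(\omega,x) := -\log\|Df^{-1}_{\sigma^{j}\omega}\mid E^{cu}_{\sigma^j\omega}(f^j_\omega(x))\|.
\]
Since the support $\mathbb B$ of $\theta$ is compact in $C^{1}$, there is a uniform $A>0$, depending only on $\mathcal F$ and hence only on $f$, with $|a_j|\le A$ for all $\omega,x,j$. Condition~\eqref{NUE} then reads $\liminf_n (1/n)\sum_{j=1}^n a_j(\omega,x) \ge c$ for $\leb$-a.e.\ $x\in H_\omega$ and every $\omega$; by Fubini this holds for $P\times\leb$-a.e.\ $(\omega,x)$.

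Next, fix any $c_1\in(0,c)$ and set $\alpha:=e^{-c_1}$ and $\rho:=(c-c_1)/(A-c_1)$; both depend only on $c$ and $A$, hence only on $f$. For $n$ large enough (depending on $(\omega,x)$) one has $\sum_{j=1}^n a_j(\omega,x) \ge c\, n$, so Pliss's lemma (stated e.g.\ as \cite[Lemma~3.1]{ABV00}) applied to the $n$-tuple $(a_1,\dots,a_n)$ produces $k\ge\rho n$ indices $1\le n_1<\cdots<n_k\le n$ with
\[
\sum_{j=n_i-\ell+1}^{n_i} a_j(\omega,x)\ge c_1\ell,\qquad 1\le \ell\le n_i,\ 1\le i\le k.
\]
Exponentiating this inequality gives precisely the definition of $n_i$ being an $\alpha$-hyperbolic time for $(\omega,x)$.

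The only point needing a genuine addition to the deterministic argument is the measurability of $(\omega,x)\mapsto E^{cu}_\omega(x)$ and hence of $a_j$, which follows from the $C^1$ persistence of dominated splittings \cite[Section~B.1]{BDV05} together with the continuity of $\omega\mapsto f_\omega$. The main (and essentially only) obstacle is to secure uniformity in $\omega$ of the constants $A$ and $c$ entering Pliss's lemma: uniformity of $A$ comes from compactness of $\mathbb B$, while uniformity of $c$ is built into hypothesis~\eqref{NUE}. Once these are in hand, the density $\rho$ and the contraction $\alpha$ depend only on $f$, as claimed.
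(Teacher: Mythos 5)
Your proposal is correct and follows essentially the same route as the paper, which proves the proposition by the one-line remark that it follows ``easily adapting the proof of [Corollary 3.2]{ABV00}, based on an idea of Pliss [P72]'' — precisely the Pliss-lemma argument you spell out, with uniformity of the bound $A$ coming from compactness of $\mathbb B$ and uniformity of $c$ built into hypothesis \eqref{NUE}.
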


Let  $n$ be a  $\alpha$-hyperbolic time for $(\omega,x)\in
\Omega\times U$. This implies  that $Df^{-k}_{\sigma^{n-k}\omega}\vert
E^{cu}_{f^n_{\omega} (x)}$ is a contraction for all
$k=1,\ldots,n$. In addition, if $a>0$ and $\epsilon>0$ are taken
small enough in the definition of the cone fields and the
random perturbations, then taking $\delta_1>0$ also small, we
have by continuity
\begin{equation}\label{eq:spare-contraction}
  \| Df_\omega^{-1} \vert E^{cu}_{f_\omega (y)} \| \le \alpha^{-1/4}
  \| Df^{-1}_\omega \vert E^{cu}_{f_\omega(x)} \|,
\end{equation}
for all $\omega\in\Omega, x\in\overline{f (U)}$ and $y\in U$ with
$\dist(x,y)\le\delta_1$. 

In the sequel, we will refer to  a local unstable manifold of size $\delta_1$ simply as \emph{$cu$-disk} of a certain radius $r>0$. 
Using \eqref{eq:spare-contraction}, the next result can be
obtained by adapting the proof of \cite[Lemma 2.7]{ABV00}.

\begin{proposition}\label{pr.predisks} 
Given a local unstable manifold
$\Sigma\subset U$, a point  $x\in
\Sigma$ and $n\ge1$ a  $\alpha$-hyperbolic time for $(\omega,x)$,
 there exists a set $V_{\omega,n}(x)\subset \Sigma$ such that $f_\omega^n$ maps diffeomorphically onto a $cu$-disk of radius $\delta_1$ around $f^n_\omega(x)$. Moreover, 
$$
 \dist_{f^{n-k}_{\omega}(\Sigma)}( f^{n-k}_{\omega} (y)  ,
f^{n-k}_{\omega} (z) )\le \alpha^{k/2} \dist_{f^n_{\omega}( \Sigma)}(f^n
_{\omega}y, f^n_{\omega} z) ,
$$
for every $k=1,\dots,n$ and every   $y,z\in V_{\omega,n}(x)$. 
\end{proposition}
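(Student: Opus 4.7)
The plan is to construct $V_{\omega,n}(x)$ as the pullback of a $cu$-disk at the final time $n$ and to establish the geometric contraction by combining the hyperbolic-time inequality for $x$ with the spare contraction estimate~\eqref{eq:spare-contraction}, which turns the pointwise bound along the orbit into a uniform bound on a whole neighbourhood. Concretely, let $D_n \subset f^n_\omega(\Sigma)$ denote the $cu$-disk of radius $\delta_1$ centred at $f^n_\omega(x)$, and inductively define $D_{n-k}$ to be the connected component of $f^{-1}_{\sigma^{n-k}\omega}(D_{n-k+1}) \cap f^{n-k}_\omega(\Sigma)$ containing $f^{n-k}_\omega(x)$, for $k=1,\dots,n$. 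I then set $V_{\omega,n}(x) := D_0 \subset \Sigma$.

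The heart of the argument is a finite induction on $k$ establishing simultaneously that (a) $D_{n-k}$ is an embedded $cu$-disk of intrinsic diameter at most $\delta_1$, and (b) the stated distance estimate holds at stage $k$. The inductive base $k=0$ is tautological. For the step, assume (a) at stage $k$. Then $D_{n-k} \subset B(f^{n-k}_\omega(x), \delta_1)$, so for every $w \in D_{n-k}$ the spare contraction estimate~\eqref{eq:spare-contraction} gives
\[
\|Df^{-1}_{\sigma^{n-k}\omega}|E^{cu}_{w}\| \le \alpha^{-1/4}\, \|Df^{-1}_{\sigma^{n-k}\omega}|E^{cu}_{f^{n-k}_\omega(x)}\|.
\]
Telescoping from $n$ down to $n-k$ along the orbit of any $y \in V_{\omega,n}(x)$ and using that $n$ is an $\alpha$-hyperbolic time for $(\omega,x)$,
\[
\prod_{j=n-k+1}^{n} \|Df^{-1}_{\sigma^{j}\omega}|E^{cu}_{f^{j}_\omega(y)}\| \le \alpha^{-k/4}\cdot\alpha^{k} \;=\; \alpha^{3k/4} \;\le\; \alpha^{k/2}.
\]
Integrating this contraction along curves tangent to $C^{cu}_a$ (which is where $T D_{n-j}$ lives, by cone invariance~\eqref{invcones}) produces the distance estimate~(b) at stage $k$, and in particular yields $\diam D_{n-k-1} \le \alpha^{(k+1)/2}\delta_1 \le \delta_1$, closing condition~(a) at the next stage.

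The main technical obstacle is making sure that each pulled-back connected component $D_{n-k-1}$ is genuinely a single-sheeted $cu$-disk, i.e.\ a graph over $E^{cu}_{f^{n-k-1}_\omega(x)}$ in the exponential chart, rather than folding over itself or leaving the injectivity neighbourhood of the exponential map. This is precisely where the inductive diameter bound $\le\delta_1$, the $Df^{-1}_\omega$-invariance of the centre-unstable cone $C^{cu}_a$ from~\eqref{invcones}, and the H\"older curvature control from Proposition~\ref{pr:bounded-curvature} conspire: the contraction rate $\alpha^{k/2}$ keeps the preimage well within the local chart, cone invariance guarantees that its tangent spaces lie in $C^{cu}_a$, and the uniform bound on $\kappa$ prevents the graph representation from breaking down. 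Once these facts are in place, $f^n_\omega|_{V_{\omega,n}(x)}$ is a local diffeomorphism onto $D_n$ because $Df_{\sigma^j\omega}$ is non-degenerate on any direction of $C^{cu}_a$ by~\eqref{eq:domination-cone-fields}, and it is globally injective because each step of the construction selected a single connected component, so there is no folding. This produces the asserted $V_{\omega,n}(x)$ together with the exponential contraction along the backward orbit.
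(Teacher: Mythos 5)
Your approach is the right one and matches what the paper does (it defers to adapting the proof of Lemma~2.7 of~\cite{ABV00}, which is exactly the backward pull-back construction you describe, powered by the spare contraction estimate~\eqref{eq:spare-contraction}). The product estimate $\alpha^{-k/4}\cdot\alpha^{k}=\alpha^{3k/4}\le\alpha^{k/2}$ is also the correct place where the extra $\alpha^{-1/4}$ (rather than $\alpha^{-1/2}$) in~\eqref{eq:spare-contraction} gives room, as Remark~\ref{re.bounded} emphasises.

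There is, however, a genuine circularity in the inductive step as you have written it. To pull back from $D_{n-k}$ to $D_{n-k-1}$ you need the factor $\|Df^{-1}_{\sigma^{n-k-1}\omega}\vert E^{cu}_{f^{n-k}_\omega(y)}\|$, and the spare contraction~\eqref{eq:spare-contraction} controls this only under the hypothesis $\dist(f^{n-k-1}_\omega(x),f^{n-k-1}_\omega(y))\le\delta_1$ --- that is, the distance condition \emph{at time $n-k-1$}, which is precisely what stage $k+1$ of the induction is supposed to produce. Assuming only (a) at stage $k$, as you do, does not give you this, so the step ``assume (a) at stage $k$ $\Rightarrow$ spare contraction on $D_{n-k}$ $\Rightarrow$ (a) at stage $k+1$'' is not closed. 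The standard repair, and the one used in~\cite[Lemma~2.7]{ABV00}, is a connectedness (or open-closed) bootstrap: parametrise a curve $\gamma$ in $D_{n-k-1}$ starting at $f^{n-k-1}_\omega(x)$, and consider the maximal initial segment on which the distance to $f^{n-k-1}_\omega(x)$ stays $\le\delta_1$; on that segment the spare contraction applies, the telescoped bound then forces the distance to be $\le\alpha^{(k+1)/2}\delta_1<\delta_1$, so the segment is both open and closed in $[0,1]$ and must be all of $\gamma$. This a priori improvement is what breaks the circle, and it should be stated rather than absorbed into the phrase ``the contraction rate keeps the preimage well within the local chart.''

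Two smaller points. First, $D_n$ has diameter $2\delta_1$, not $\delta_1$, so your condition~(a) fails at $k=0$ as stated; what you actually need, and what the contraction estimate with $z=x$ gives you, is that the \emph{distance from $f^{n-k}_\omega(x)$} to any point of $D_{n-k}$ is $\le\alpha^{k/2}\delta_1\le\delta_1$, which is a radius bound, not a diameter bound. Second, in the display where you apply~\eqref{eq:spare-contraction} the index on $Df^{-1}$ should be $\sigma^{n-k-1}\omega$ rather than $\sigma^{n-k}\omega$ (the estimate compares the derivative of the map taking time $n-k-1$ to time $n-k$, and the distance condition it requires sits at time $n-k-1$); this off-by-one is in fact what hides the circularity noted above.
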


These sets $V_{\omega,n}(x)$ will be referred to as \emph{hyperbolic predisks}. Notice that their images $B^{cu}_{\delta_1} (f^n_\omega(x))$ are $cu$-disks of radius $\delta_1$.

\begin{remark}\label{re.bounded}
The proof of \cite[Lemma 2.7]{ABV00} works under the less restrictive assumption of having $\alpha^{-1/2}$ instead of  $\alpha^{-1/4}$ in~\eqref{eq:spare-contraction}. 
Also, that proof gives
 that $n$ is a $\alpha^{1/2}$-hyperbolic time for each $y\in V_{\omega,n}(x)$. Under~\eqref{eq:spare-contraction}, it is straightforward to check that $n$ is still an $\alpha^{1/4}$-hyperbolic time for any point $z\in U$ such that 
 $\dist(f^j_\omega(y),f^j_\omega(z))\le\delta_1$, for some $y\in V_{\omega,n}(x)$. 
%
%
\end{remark}

Using the previous lemma and the H\"older continuity   given
by Proposition~\ref{pr:bounded-curvature}  the following bounded
distortion result can be deduced as in \cite[Proposition~2.8]{ABV00}.

\begin{proposition} \label{p.distortion} There exists $C>1$ such that
given any $C^1$ $cu$-disk $\Sigma$   with $\kappa(\Sigma) \le C$, any
$x\in \Sigma$ and $n \ge 1$ a  $\alpha$-hyperbolic time for
$(\omega,x)$, then
$$
\log \frac{|\det
  Df^{n}_{\omega} \vert T_y \Sigma|} {|\det Df^{n}_{\omega}
  \vert T_z \Sigma|} \le C\dist_{f_\omega^{n }(\Sigma)}(f^n_\omega(y), f^n_\omega(z))^\eta.
$$
for every  $y,z\in V_{\omega,n}(x)$.
\end{proposition}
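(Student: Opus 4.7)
The plan is to mimic the deterministic argument of \cite[Proposition~2.8]{ABV00}, with the random dynamics replaced by $f^n_\omega = f_{\sigma^{n-1}\omega}\circ\cdots\circ f_\omega$, exploiting the two tools already assembled in this section: the uniform H\"older regularity of the log-Jacobian along iterates of a $cu$-disk (Proposition~\ref{pr:bounded-curvature}), and the backward contraction along hyperbolic predisks (Proposition~\ref{pr.predisks}).

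First I would write the log-distortion as the telescoping sum
\begin{equation*}
\log\frac{|\det Df^n_\omega\vert T_y\Sigma|}{|\det Df^n_\omega\vert T_z\Sigma|}
=\sum_{j=0}^{n-1}\bigl(J_j(f^j_\omega(y))-J_j(f^j_\omega(z))\bigr),
\end{equation*}
where $J_j(w)=\log|\det(Df_{\sigma^j\omega}\vert T_w f^j_\omega(\Sigma))|$. Since $\Sigma$ is a $cu$-disk with $\kappa(\Sigma)\le C_1$, Proposition~\ref{pr:bounded-curvature}(2) propagates the curvature bound to every iterate $f^j_\omega(\Sigma)$ (they remain inside $U$ because $\Sigma$ is part of an unstable manifold, hence contained in the attractor). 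Then Proposition~\ref{pr:bounded-curvature}(3) supplies a constant $L_1>0$ \emph{independent of $j$, $\omega$ and $\Sigma$} so that each $J_j$ is $(L_1,\zeta)$-H\"older on $f^j_\omega(\Sigma)$:
\begin{equation*}
|J_j(f^j_\omega(y))-J_j(f^j_\omega(z))|\le L_1\,\dist_{f^j_\omega(\Sigma)}(f^j_\omega(y),f^j_\omega(z))^\zeta.
\end{equation*}

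Next I would invoke Proposition~\ref{pr.predisks}: because $n$ is an $\alpha$-hyperbolic time for $(\omega,x)$ and $y,z\in V_{\omega,n}(x)$, for every $0\le j\le n-1$ (setting $k=n-j$) we have
\begin{equation*}
\dist_{f^j_\omega(\Sigma)}(f^j_\omega(y),f^j_\omega(z))\le \alpha^{(n-j)/2}\,\dist_{f^n_\omega(\Sigma)}(f^n_\omega(y),f^n_\omega(z)).
\end{equation*}
Raising to the $\zeta$ power and plugging into the telescoping sum yields the geometric series estimate
\begin{equation*}
\Big|\log\frac{|\det Df^n_\omega\vert T_y\Sigma|}{|\det Df^n_\omega\vert T_z\Sigma|}\Big|
\le L_1\sum_{j=0}^{n-1}\alpha^{\zeta(n-j)/2}\dist_{f^n_\omega(\Sigma)}(f^n_\omega(y),f^n_\omega(z))^\zeta
\le \frac{L_1\alpha^{\zeta/2}}{1-\alpha^{\zeta/2}}\dist_{f^n_\omega(\Sigma)}(f^n_\omega(y),f^n_\omega(z))^\zeta,
\end{equation*}
which is the desired inequality with $C=L_1\alpha^{\zeta/2}/(1-\alpha^{\zeta/2})$ and the H\"older exponent equal to the $\zeta$ provided by Proposition~\ref{pr:bounded-curvature} (identified with the $\eta$ in the statement).

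The only real subtlety, and the one that requires the randomness to be small, is guaranteeing that the constant $L_1$ can be taken uniformly in $\omega$: this reduces to noting that the cone-field estimates \eqref{eq:domination-cone-fields}--\eqref{invcones} and the bound \eqref{eq:spare-contraction} used in proving Proposition~\ref{pr:bounded-curvature} hold uniformly on the small $C^1$ neighbourhood $\mathcal F$, so that the $(C_1,\zeta)$-H\"older constants in the conclusion of that proposition do not deteriorate under composition along any random orbit. Once this uniformity is in hand, the calculation above is automatic; consequently I do not expect any genuine obstacle, only the bookkeeping of verifying that the arguments of~\cite[Section~2]{ABV00} go through verbatim in the random setting, which follows as in~\cite[Section~4]{AAV07}.
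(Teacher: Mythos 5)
Your argument is correct and coincides with what the paper intends: the paper does not spell out a proof but refers to \cite[Proposition~2.8]{ABV00}, and your telescoping decomposition of $\log|\det Df^n_\omega|$, combined with the $(L_1,\zeta)$-H\"older regularity from Proposition~\ref{pr:bounded-curvature}(3) and the backward contraction from Proposition~\ref{pr.predisks}, summed as a geometric series, is precisely that argument transplanted to the random setting. The only remark worth making is that the exponent in the conclusion is the $\zeta$ of Proposition~\ref{pr:bounded-curvature}, which you correctly identify with the $\eta$ in the statement, and that the single constant $C$ in the proposition plays a double role (curvature threshold and distortion bound) — your treatment, taking the curvature threshold to be $C_1$ and then producing a separate distortion constant, is the natural way to read it.
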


\subsection{Partition on a reference leaf}
\label{s.structure}

Here we find a $cu$-disk $\Sigma_\omega\subset D_\omega$ whose hyperbolic pre-disks contained in it return to a neighborhood of $\Sigma_\omega$ under forward iterations and their images project along stable leaves covering $\Sigma_\omega$ completely.
Then  we define a partition on~$\Sigma_\omega$ whose
construction is inspired essentially by \cite{ADL13,ADL17,ALP05,AP10}. In particular, we improve the product structure construction of \cite{ADL13}, which was performed for deterministic diffeomorphisms, to incorporate random perturbations in a partially hyperbolic setting. We then estimate the decay of return times adapting the ideas of~\cite{G06} to the random partially hyperbolic setting. 

%

\label{s.disk}

Let $D_\omega$ be a local unstable manifold as in  Theorem~\ref{randomPH}.  Given $\delta_1>0$ as in Proposition~\ref{pr.predisks}, we take $0<\delta_s<\delta_1/2$ such that points 
$x\in K_\omega$ have local stable manifolds $W_{\omega,\delta_s}^s(x)$ of size $\delta_s$. Notice that as we are assuming uniform contraction in the stable  direction $E^s$, these stable manifolds with uniform size exist; see e.g. \cite[Chapter~7]{A98}. Moreover, they depend continuously on $\omega$ in the $C^1$ topology. 
 Given any $cu$-disk $\Sigma_\omega\subset D_\omega$, we define the \emph{cylinder} over $\Sigma_\omega$
  $$\C (\Sigma_\omega)=\bigcup_{x\in\Sigma_\omega}W^s_{\omega,\delta_s}(x)$$
and consider $\pi_\omega$ the projection from $\C(\Sigma_\omega)$ onto $\Sigma_\omega$
along the local stable leaves.
We say that a $cu$-disk $\gamma$ {\em $u$-crosses} $\C(\Sigma_\omega)$ if $\pi_\omega(\gamma\cap \mathcal C(\Sigma_\omega))=\Sigma_\omega.$

The next result is purely deterministic, it is a consequence of the transitivity of $f$ on the partially hyperbolic set $K$ and it follows from \cite[Lemma 3.1 \& 3.2]{AP10}.

\begin{lemma}\label{l.N0q}
There are   $p\in D$ and $L\ge 1$ such that for  any  $\delta_0>0$ sufficiently small and each $cu$-disk
$\Sigma$ of radius $\delta_1/9$   there is $0\le\ell\le L$ such that  $f^{\ell}(\Sigma)$  intersects $W^s_{\delta_s/4} (p)$ and $u$-crosses $\mathcal C(B^{cu}_{2\delta_{0}}(p))$, where $B^{cu}_{2\delta_{0}}(p)$  is the
$cu$-disk in $D$ of radius $2\delta_0$  centred at \( p \).
\end{lemma}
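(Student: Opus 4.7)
The strategy follows \cite[Lemmas 3.1, 3.2]{AP10} and is purely deterministic, so I would work with $f$ and its transitive attractor $K$. First, using transitivity of $f|_K$ together with the fact that forward iterates of any local unstable manifold accumulate on all of $K$, I would select a point $p \in D$ whose forward $f$-orbit is dense in $K$. This $p$ is fixed once and for all; no further choices of $p$ are needed.

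Next I would show that for each individual $cu$-disk $\Sigma$ of radius $\delta_1/9$, some forward iterate lands near $p$ containing a $cu$-disk of the uniform size $\delta_1$. Specifically, Proposition \ref{pr.hyptimes} (in the deterministic setting) and Proposition \ref{pr.predisks} provide, for $\Leb_\Sigma$-almost every $x \in \Sigma$, infinitely many $\alpha$-hyperbolic times $n$, and at each such time a hyperbolic predisk $V_n(x) \subset \Sigma$ that $f^n$ maps diffeomorphically onto a $cu$-disk of radius $\delta_1$ around $f^n(x)$, with Hölder-bounded curvature by Proposition \ref{pr:bounded-curvature} and distortion controlled by Proposition \ref{p.distortion}. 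Exploiting density of the forward orbit of $p$, I can choose $n$ and $x$ so that $f^n(x)$ lies within distance $\delta_s/8$ of $p$. Since $\delta_1 \gg \delta_s/4 + 2\delta_0$ for $\delta_0$ small, the resulting $cu$-disk intersects $W^s_{\delta_s/4}(p)$ transversally (angles between $E^s$ and $E^{cu}$ are bounded from zero) and, via continuity of the stable holonomy, $u$-crosses $\mathcal{C}(B^{cu}_{2\delta_0}(p))$.

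The remaining task — and the main difficulty — is to obtain the \emph{uniform} bound $L$ independent of both $\Sigma$ and $\delta_0$. For this I would invoke compactness. The family of $cu$-disks of radius $\delta_1/9$ contained in $U$ whose tangent bundles are $(C_1,\zeta)$-Hölder (as in \eqref{eq:curvature-definition}) is $C^1$-precompact by Arzelà--Ascoli. For any fixed $\ell$, the condition that $f^\ell(\Sigma)$ simultaneously intersects $W^s_{\delta_s/4}(p)$ and $u$-crosses $\mathcal{C}(B^{cu}_{2\delta_0}(p))$ is $C^1$-open in $\Sigma$, since the cone fields \eqref{invcones} and the stable holonomies depend continuously on the disk. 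Thus each $\Sigma$ admits a $C^1$-neighborhood on which a common $\ell$ works, and a finite subcover of the compact family produces a uniform $L$. The delicate point is restricting attention to $cu$-disks of uniformly bounded curvature so that the family is genuinely compact; this is delivered by Proposition \ref{pr:bounded-curvature}(1), which guarantees that after a fixed bounded number of initial iterates every $cu$-disk satisfies $\kappa \le C_1$. That initial bounded iterate is absorbed into $L$, and the argument is complete.
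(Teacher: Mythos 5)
Your proposal captures the right skeleton — this is indeed the route sketched by AP10, which the paper cites in lieu of a proof, namely hyperbolic times to produce uniform-size $cu$-disk images, recurrence to bring those images near $p$, and a compactness argument to extract the uniform bound $L$. The final compactness step (precompactness of the family of bounded-curvature $cu$-disks via Ascoli–Arzelà, and $C^1$-openness of the crossing condition, followed by a finite subcover) is essentially correct.

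However, there is a genuine gap in the recurrence step. You choose $p\in D$ with \emph{dense forward orbit} and then claim that "exploiting density of the forward orbit of $p$, I can choose $n$ and $x$ so that $f^n(x)$ lies within distance $\delta_s/8$ of $p$." Density of the forward orbit of $p$ asserts that for every $q\in K$ and every $\epsilon>0$ there is $m$ with $d(f^m(p),q)<\epsilon$: the orbit of $p$ visits every region. It does \emph{not} assert that iterates of an arbitrary $cu$-disk $\Sigma$ come close to $p$, which is the property you actually use. The direction is reversed. To make the recurrence step work you need either (i) a point $p$ with dense \emph{backward} orbit, so that for any $cu$-disk $\Gamma$ one finds $m$ with $f^{-m}(p)\in W^s_{\delta_s}(q)$ for some $q\in\Gamma$, whence $d(p,f^m(q))\le\lambda^m\delta_s$ by stable contraction; or (ii) a direct invocation of transitivity applied to the relatively open cylinder $\mathcal C(\Gamma)\cap K$ and the ball $B(p,\delta_s/8)\cap K$, so that $f^n(\mathcal C(\Gamma))\cap B(p,\delta_s/8)\neq\emptyset$ for some $n$, which by stable contraction yields a point of $f^n(\Gamma)$ near $p$. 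Either route is then fed into your compactness argument to obtain the uniform $L$. (If one insists on dense \emph{forward} orbit one can still rescue the step, but only by combining it with recurrence of $p$ and stable contraction — first find $m$ with $f^m(p)\in W^s_{\delta_s}(\Gamma)$, then use a recurrence time $k$ of $p$ to push $f^k(\Gamma)$ near $p$; this is materially more than "density" and is not spelled out.) Finally, you should also note that the argument requires, for each $\Sigma$, a hyperbolic time $n$ for some $x\in\Sigma$ within a uniformly bounded window — this uniform upper bound on the first hyperbolic time over the compact family of disks is used implicitly when you absorb the initial iterates into $L$, and it is not free: it must be extracted from the same compactness argument.
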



Now we fix $p\in D$, $L\ge 1$ and $\delta_0>0$ small enough such that the conclusions of Lemma~\ref{l.N0q} hold, and define
\begin{equation}\label{eq.sigma01}
 \Sigma_0^0=
B^{cu}_{\delta_{0}}(p)\qand  \Sigma_0^1= B^{cu}_{2\delta_{0}}(p).
\end{equation}
For $i=0,1$ we consider the corresponding cylinders
\begin{equation}\label{eq.C0}
 \C^i_0=  \C(\Sigma_0^i).
 \end{equation}
 We also choose $\delta_0>0$ small so that any $cu$-disk intersecting $W^s_{\delta_s/4}(p)$ is at a distance at least $\delta_s/2$ from the top and bottom   of $\mathcal C_0^0$.

Since we assume  that each $f_\omega$ has a  partially hyperbolic  set $K_\omega$   with a local  unstable manifold $D_\omega\subset K_\omega$  close to $D$ in the $C^1$ topology, choosing $f_\omega$ sufficiently close to $f$, for each   $i=0,1$ there is a local unstable disk  $\Sigma_\omega^i \subset D_\omega$ close to $\Sigma_0^i$ such that  $$ \C^i_\omega=  \C (\Sigma_\omega^i)$$ is close to  $\C^i_0 $ in the Hausdorff distance. 
Here we use the fact that the local stable manifolds also depend continuously on the dynamics. 
%
Denoting $\pi_\omega$ the projection along the stable leaves of the cylinder $\mathcal C_\omega^1$, we have
$$\pi_\omega(\C^i_\omega)=\Sigma_\omega^i,\quad\mbox{for }i=0,1.$$

For each  $n\ge 1$ and $\omega\in\Omega$, define
 $$
 H_{\omega,n}=\{x\in \Sigma_\omega^0   \colon \text{$n$ is a $\alpha$-hyperbolic time for
 $(\omega,x)$}\}.
 $$
 Since $\Sigma_\omega^0\subset D_\omega$, 
From Proposition~\ref{pr.hyptimes}  and Proposition~\ref{pr.predisks} we have   for each   $x \in H_{\omega,n}$  a hyperbolic predisk  $V_{\omega, n}(x)\subset \Sigma_\omega^0$ which is mapped by $f_\omega^n$   diffeomorphically onto the disk $B^{cu}_{\delta_1} (f^n_\omega(x))$  with uniformly bounded distortion, by Proposition~\ref{p.distortion}. 
For each $0<a\le 1$, consider the set 
sets $
  V_{\omega,n}^a(x) \subset V_{\omega,n}(x)
$
such that
\begin{equation}\label{def.doubles}
\text{$f_\omega^n$  maps $V^a_{\omega,n}(x)$  diffeomorphically   onto   $B_{a\delta_1}(f_\omega^n(x))$.}\\
\end{equation}
Notice that $V_{\omega,n}^1(x) = V_{\omega,n}(x)$. 
It follows that for each  $x\in H_{\omega,k}$ and $y\in H_{\omega,n}$ with $n\ge k$, we   have
 \begin{equation}\label{eq.keywn}
 \begin{cases}
V_{\omega, n}^{1/9}(y)\cap V_{\omega, k}^{1/9}(x)\neq\emptyset\implies V_{\omega, n}^{1/9}(y)\subset V_{\omega, k}^{1/3}(x);\\
V_{\omega, n}^{1/3}(y)\cap V_{\omega, k}^{1/3}(x)\neq\emptyset\implies V_{\omega, n}^{1/3}(y)\subset V_{\omega, k}^1(x).
\end{cases}
\end{equation}
To see this,  observe first that by Proposition~\ref{pr.predisks} we have
\begin{equation}\label{eq.diamo}
\diam\left(f_\omega^k\left(V^{1/9}_{\omega,n}(y)\right)\right)\le \frac{2\delta_1}9\alpha^{\frac{n-k}{2}}\le \frac{2\delta_1}9.
\end{equation}
Then, assuming that $V^{1/9}_{\omega,n}(y)$  intersects $V^{1/9}_{\omega,n}(x)$, we necessarily have  $f_\omega^k(V^{1/9}_{\omega,n}(y))$   intersecting
$f_\omega^k(V^{1/9}_{\omega,n}(x))$, which by definition is the ball of radius $\delta_1/9$ around $f_\omega^k(x)$. Together with~\eqref{eq.diamo}, this implies that $f_\omega^k(V^{1/9}_{\omega,n}(y))$ is contained in the ball of radius $ \delta_1/3$ around $f_\omega^k(x)$, and so the first case of \eqref{eq.keywn} follows. The second case of \eqref{eq.keywn} can be obtained similarly.

\begin{remark}\label{la belle}
Since in Lemma~\ref{l.N0q} we have at most a finite number  of iterates $N_0$, shrinking~$\delta_0$ if necessary, a similar  conclusion can be drawn for the random perturbations:
 for any $\omega$ and any  $cu$-disk
$\Sigma$ of radius $\delta_1/9$   there is $0\le\ell\le L$ such that  $f_\omega^{\ell}(\Sigma)$  intersects $W^s_{\sigma^\ell\omega,\delta_s/2} (y)$ for some $y\in \Sigma_{\sigma^\ell\omega}^0$ and $u$-crosses $\mathcal C_{\sigma^\ell\omega}^1$.
Indeed, since the images $f^{n}_\omega(V_{\omega,n}^{1/9}(x))$ of   hyperbolic predisks $V_{\omega,n}^{1/9}(x)$ with $x\in\Sigma_\omega$ are $cu$-disks of radius $\delta_1/9$, we easily get that for each hyperbolic predisk $V_{\omega,n}^{1/9}(x)$ with $x\in\Sigma_\omega$, there is $0\le \ell_{\xi_\omega}\le L$ for which $f^{n+\ell_{\xi_\omega}}_\omega(V^{1/9}_{\omega,n}(x))$ intersects $W_{\sigma^{n+\ell_{\xi_\omega}}\omega, \delta_s/2}^s(y)$, for some $y\in\Sigma_{\sigma^{n+\ell_{\xi_\omega}}\omega}^0$, and $u$-crosses $\mathcal C^1_{\sigma^{n+\ell_{\xi_\omega}}\omega}$.
 \end{remark}

Hence, for each $i=0,1$ there are $cu$-disks 
$\xi_{\omega,x}^{n,\ell_{\xi_\omega}}\subset\tilde\xi_{\omega,x}^{n,\ell_{\xi_\omega}}\subset V^{1/9}_{\omega,n}(x)$ such that
\begin{equation}\label{D.candidate2}
 \pi_{\sigma^{n+\ell_{\xi_\omega}}\omega}\left(f_\omega^{n+\ell_{\xi_\omega}}(\xi_{\omega,x}^{n,\ell_{\xi_\omega}})\right)=
 \Sigma_{\sigma^{n+\ell_{\xi_\omega}}\omega}^0
 \qand  \pi_{\sigma^{n+\ell_{\xi_\omega}}\omega}\left(f_\omega^{n+\ell_{\xi_\omega}}(\tilde{\xi}_{\omega,x}^{n,\ell_{\xi_\omega}})\right)=\Sigma_{\sigma^{n+\ell_{\xi_\omega}}\omega}^1,
\end{equation}
with the property that $f_\omega^{n+\ell_{\xi_\omega}}(\xi_{\omega,x}^{n,\ell_{\xi_\omega}})$ intersects $W_{\sigma^{n+\ell_{\xi_\omega}}\omega, \delta_s/2}^s(y)$, for some $y\in\Sigma_{\sigma^{n+\ell_{\xi_\omega}}\omega}^0$. 

As condition \eqref{D.candidate2} may in principle hold for several values of $0\le\ell_{\xi_\omega}\le L$, for definiteness we will assume that $\ell_{\xi_\omega}$ takes the smallest possible value. Observe that the $cu$-disk $\tilde\xi_{\omega,x}^{n,\ell_{\xi_\omega}}$ is associated to $x$, by construction, but does not necessarily contain~$x$.
In the sequel we will often simplify the notation and  refer to elements of the type $\xi_{\omega,x}^{n,\ell_{\xi_\omega}}$ or $  \tilde\xi_{\omega,x}^{n,\ell_{\xi_\omega}}$ as $\xi_\omega$ or $\tilde\xi_\omega$, respectively. In such cases we will also consider
 $$
 \ell_{ \xi_\omega}=\ell_{\xi_\omega}\qand V_{\omega,n}^a(\xi)=V_{\omega,n}^a(x) 
 $$
 with $V_{\omega,n}^a(x)$  defined as in \eqref{def.doubles}.
Given  $x\in H_{\omega,k}$,   define for $\xi=\xi_{\omega,x}^{k,\ell_{\xi_\omega}}$ and  $n>  k  $
\begin{equation}\label{eq.anelan}
A_{\omega,n}\left(\xi \right)=\left\{y\in \tilde\xi : \dist_{f_\omega^{n+\ell_{\xi_\omega}}(D_\omega)}\left(   
 f_\omega^{k+\ell_{\xi_\omega}}(y), f_\omega^{k+\ell_{\xi_\omega}}(\xi )\right)\le \delta_0\alpha^{\frac{n-k}{2}}\right\}.
\end{equation}

\begin{remark}\label{re.sizeleast}
Without loss of generality, here we  assume  that  the  $cu$-disks $f_\omega^{k+\ell_{\xi_\omega}}(\tilde\xi )$  and $f_\omega^{k+\ell_{\xi_\omega}}(\xi )$  are  still   disks of radius $2\delta_0$ and $\delta_0$, respectively. 
 It follows  from the definition of $A_{\omega,n}(\xi)$   that $  f_\omega^{k+\ell_{\xi_\omega}}(\tilde\xi)$ contains a neighbourhood of  the outer component of the boundary of $  f_\omega^{k+\ell_{\xi_\omega}}(A_{\omega,n}(\xi))$ of size at least  $$2\delta_0-\delta_0\left(1+\alpha^{( n-k)/2}\right)=\delta_0\left(1-\alpha^{( n-k)/2}\right).$$
Using the fact that $0\le \ell_{\xi_\omega} \le L$ and  taking 
\begin{equation}\label{eq.defK0}
K_0=\max_{0\le\ell\le L}\{\| Df_\omega^{\ell} \|:  \omega\in\Omega  \},
\end{equation}
we easily get that $f_\omega^{k }(\tilde\xi)$ contains a neighbourhood of the outer component of the boundary of $f_\omega^{k+\ell_{\xi_\omega}}(A_{\omega,n}(\xi))$ of  size at least
  $$\frac{\delta_0\left(1-\alpha^{( n-k)/2}\right)}{K_0}.$$
  \end{remark}
  
This last remark partly  motivates the choice of the  constants that we introduce next. We take
\begin{equation}\label{eq.delta2}
\delta_2=\delta_0+\frac{\delta_1}2K_0
\end{equation}
and choose $N_1\in\mathbb N$ such that
\begin{equation}\label{eq.N1}
\delta_2\alpha^{\frac{N_1}2}\le \delta_0\qand \frac{2\delta_1}9\alpha^{\frac{N_1}{2}}\le \frac{\delta_0(1-\alpha^{N_1/2})}{K_0}.
\end{equation}
Below we describe the inductive process that leads to a partition $\mathcal P_{\omega}$ of each local unstable disk $\Sigma_\omega^0$. The sets $ \xi_{\omega,n}^{n,\ell_{\xi_\omega}}\subset \Sigma_\omega^0$ as in~\eqref{D.candidate2}   are the natural candidates to be in~$\mathcal P_\omega$.

\subsubsection*{First step of induction}

Fixing some large $N_0\in\mathbb{N} $, we ignore the dynamics until time
$N_0$.  Since $H_{\omega,N_0}$  is a compact set, there is a finite set  
$E_{\omega,N_0} \subset H_{\omega,N_0}\cap\Sigma_\omega^0$ such that
$$H_{\omega,N_0}\cap \Sigma_\omega^0\subset \bigcup_{x\in E_{\omega,N_0}}V_{\omega,N_0}^{1/9}(x).$$
Consider $x_1,\dots,x_{j_{N_0}}\in E_{\omega,N_0}$ such that 
$$
\mathcal P_{\omega,N_0}=\left\{
\xi_{\omega,N_0}^{x_1,\ell_{1}},\ldots,
\xi_{\omega,N_0 }^{x_{j_{N_0}}, \ell_{j_{N_0}}}\right\}
$$
is a maximal family of pairwise disjoint sets as in (\ref{D.candidate2}) contained in
$\Sigma_\omega^0$. The sets in $\mathcal P_{\omega,N_0}$ are precisely the elements of the partition $\mathcal P$ constructed in  our first step of the construction.
Consider also  
\begin{equation*} 
  \Sigma_{\omega,N_0}^c = {\Sigma_\omega^0}\setminus  \bigcup_{\xi\in\mathcal P_{\omega,N_0}}\xi.
\end{equation*}
 For each $\xi \in\mathcal P_{\omega,N_0}$,  we define
 $$ 
 S_{\omega,N_0}(\xi)=V_{\omega,N_0}^{1/3}(\xi),
 $$
 with $V_{\omega,N_0}^{1/3}(\xi)$   as in \eqref{def.doubles}.
We also define
\begin{equation*}
 S_{\omega,N_0}(\Sigma_\omega^0)  =\bigcup_{\xi\in
\mathcal P_{\omega,N_0}}{S}_{\omega,N_0}(\xi).
\end{equation*}
 and    for  ${\Sigma_\omega^c}=\Sigma_\omega^1\setminus{\Sigma_\omega^0}$ 
\begin{equation*}\label{exremark}
{S}_{\omega,N_0}\left(\Sigma_\omega^c\right)= \left\{x \in \Sigma_\omega^0:\,
\dist_{D_\omega}(x,\partial \Sigma_\omega^0)<2\delta_1\alpha^{N_0/2}\right\}.
\end{equation*}

\subsubsection*{General step of induction}

The  next steps of the construction follow the ideas of the first one with
minor modifications. Given $n>N_0$, assume that 
$\mathcal P_{\omega,k}$, $\Sigma_{\omega,k}$  and ${S}_{\omega,k}$ have   been defined  for all $ N_0\le k\leq n-1 $.
As before, let
$ E_{\omega,n}\subset  H_{\omega,n}\cap\Sigma^c_{\omega,n-1}$ be a finite set of points such that
\begin{equation}\label{eq.endef}
H_{\omega,n}\cap\Sigma^c_{\omega,n-1}\subset \bigcup_{x\in E_{\omega,n}}
V_{\omega,n}^{1/9}(x).
\end{equation}
Consider $ x_1,\dots,x_{j_{n}}\in E_n$ such that 
$$
\mathcal P_{\omega,n}=\left\{
\xi_{\omega,n}^{x_1,\ell_{1}},\ldots,
\xi_{\omega,n }^{x_{j_{n}}, \ell_{j_{n}}}\right\}
$$
is a maximal family  of pairwise disjoint sets as in \eqref{D.candidate2},  all contained in~$\Sigma^c_{\omega,n-1}$ and
satisfying for  each $1\le i\le j_n$
\begin{equation}\label{eq.omegaanel}
\xi_{\omega,n}^{x_i,\ell_{i}}\cap\left(
\bigcup_{k=N_0}^{n-1}\bigcup_{\xi\in \mathcal P_{\omega,k}} A_{\omega,n}(\xi)\right)=\emptyset.
\end{equation}
The sets in $\mathcal P_{\omega,n}$ are the elements of the partition $\mathcal P_\omega$ constructed in the
step  $n$ of the algorithm.
Consider also  for each $n\ge N_0$
 \begin{equation}\label{eq.deltans}
\Sigma^c_{\omega,n} = {\Sigma_\omega^0}\setminus \bigcup_{j=N_0}^{n}\bigcup_{\xi\in\mathcal P_{\omega,j}}\xi.
\end{equation}
Given $\xi\in \mathcal P_{\omega,k}$ for some $N_0\le k\le n$, we define for
$n-k < N_1$
$$
S_{\omega,n}\left(\xi \right)=V_{\omega,k}^{1/3}(\xi).$$
and for
 $n-k\ge N_1 $
\begin{equation}\label{eq.essenomega}
S_{\omega,n}\left(\xi\right)=\left\{y\in \tilde\xi: 0< \dist_{f_\omega^{n+\ell_{\xi_\omega}}(D_\omega)}\left(    f_\omega^{k+\ell_{\xi_\omega}}(y), f_\omega^{k+\ell_{\xi_\omega}}(\xi )\right)\le \delta_2\alpha^{\frac{n-k}{2}}  \right\};
\end{equation}
recall~\eqref{eq.delta2} and~\eqref{eq.N1}.
Then, we define
  \begin{equation}\label{eq.Sn1}
   S_{\omega, n}({\Sigma_\omega^0})= \bigcup_{j=N_0}^{n}\bigcup_{\xi\in
\mathcal P_{\omega,j}}{S}_{\omega,n}(\xi) 
\end{equation}
and 
\begin{equation*}
    S_{\omega,n}\left(\Sigma_\omega^c\right)=    \left\{x \in \Delta_{0}:\,
\dist_{D_\omega}(x,\partial\Sigma_\omega^{0})< \delta_1\alpha^{n/2}\right\}.
\end{equation*}

%
%

Finally, set 
$$
\mathcal P_\omega=\bigcup_{n\ge N_0}\mathcal P_{\omega,n}.
$$
By construction,   the elements in $\mathcal P_\omega$ are pairwise disjoint and contained in ${\Sigma_\omega^0}$. However,   there is still no evidence that the union of these elements   covers a full $\leb_{D_\omega}$ measure subset of~${\Sigma_\omega^0}$. This will be obtained in Proposition~\ref{pr.partition}.


Now we prove some properties about the sets $S_{\omega,n}(\xi)$   introduced above. First of all observe that, by definition,  for  each $\xi\in\mathcal P_{\omega,k}$ with $k\ge N_0$ we have
\begin{equation}\label{eq.sncontido}
V_{\omega,k}^{1/3}(\xi)\supset S_{\omega,k}(\xi)\supset S_{\omega,k+1}(\xi)\supset\cdots
\end{equation}
Moreover, if  $n-k\ge N_1$,   we even have
\begin{equation}\label{eq.WSn}
V_{\omega,k}^{1/9}(\xi)\supset S_{\omega,n}(\xi).
\end{equation}
From the construction of these sets and~\eqref{eq.keywn},  it follows that for all  $k_2\ge k_1\ge N_0$, 
given $\xi_1\in\mathcal P_{\omega,k_1}$ and $\xi_2\in\mathcal P_{\omega,k_2}$, we have 
 \begin{equation*}\label{eq.incluisses0}
S_{\omega,k_2}(\xi_2)\cap V_{\omega,k}^{1/3}(\xi)\neq\emptyset \implies  S_{\omega,k_2 }(\xi_2)\cup \xi_2\subset V_{\omega,k_1}(\xi_1).
\end{equation*}
and for any  $n \ge N_1$  
 \begin{equation}\label{eq.incluisses}
S_{\omega,k_2+n}(\xi_2)\cap V_{\omega,k}^{1/9}(\xi)\neq\emptyset \implies  S_{\omega,k_2+n}(\xi_2)\cup \xi_2\subset V_{\omega,k}^{1/3}(\xi).
\end{equation}

In the proof of the next result  we use in an important way property~\eqref{eq.omegaanel}, which establishes   that an element in  $\mathcal P_\omega$ obtained at a certain  stage of the construction not only does not  intersect an  element  constructed at a previous stage, but also it does not intersect a   larger  annulus of the type~\eqref{eq.anelan} around~it. 
This fact will  be   useful to deduce the   estimates on the tail of recurrence times in Subsection~\ref{s.tail esti}. 
We fix once and for all an integer    $Q_0\ge N_1$ such that
\begin{equation}\label{eq.chooseq}
\delta_2(1+K_0^2)\,\alpha^{Q_0/2}<\delta_0\qand \delta_2K_0\alpha^{Q_0/2}<\frac{\delta_0\left(1-\alpha^{1/2}\right)}{K_0}.
\end{equation}

 \begin{lemma}\label{le.existQ}
If   $k_2>k_1\ge N_0$, then    
for  all $\xi_1\in\mathcal P_{\omega,k_1}$ and all $\xi_2\in\mathcal P_{\omega,k_2}$ we have
 $$
   S_{\omega,k_2+Q_0}(\xi_1)\cap S_{\omega,k_2+Q_0}(\xi_2)=\emptyset.
 $$
\end{lemma}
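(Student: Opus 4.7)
The proof is by contradiction, exploiting the inductive constraint \eqref{eq.omegaanel} that was imposed when $\xi_2$ was selected at step $k_2$. Suppose there exists $z \in S_{\omega, k_2+Q_0}(\xi_1) \cap S_{\omega, k_2+Q_0}(\xi_2)$. Since $Q_0 \ge N_1$ and $k_2 > k_1$, both increments $k_2 + Q_0 - k_1$ and $k_2 + Q_0 - k_2 = Q_0$ are $\ge N_1$, so both sets are given by the shrinking formula \eqref{eq.essenomega}. This yields $z \in \tilde\xi_1 \cap \tilde\xi_2$ together with
\begin{align*}
\dist\bigl(f^{k_1+\ell_{\xi_1}}_\omega(z), f^{k_1+\ell_{\xi_1}}_\omega(\xi_1)\bigr) &\le \delta_2\, \alpha^{(k_2+Q_0-k_1)/2},\\
\dist\bigl(f^{k_2+\ell_{\xi_2}}_\omega(z), f^{k_2+\ell_{\xi_2}}_\omega(\xi_2)\bigr) &\le \delta_2\, \alpha^{Q_0/2}.
\end{align*}
Since $z \in \tilde\xi_1 \cap \tilde\xi_2 \subset V^{1/9}_{\omega,k_1}(x_1) \cap V^{1/9}_{\omega,k_2}(x_2)$ and $k_2 > k_1$, the inclusion \eqref{eq.keywn} forces $V^{1/9}_{\omega,k_2}(x_2) \subset V^{1/3}_{\omega,k_1}(x_1)$, so $\xi_2 \subset V_{\omega,k_1}(x_1)$.

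The strategy is to exhibit a point $y \in \xi_2$ lying in $A_{\omega, k_2}(\xi_1)$, contradicting \eqref{eq.omegaanel}. Choose $y \in \xi_2$ minimising $\dist(f^{k_2+\ell_{\xi_2}}_\omega(y), f^{k_2+\ell_{\xi_2}}_\omega(z))$ and bound $\dist(f^{k_1+\ell_{\xi_1}}_\omega(y), f^{k_1+\ell_{\xi_1}}_\omega(\xi_1))$ by the triangle inequality through $f^{k_1+\ell_{\xi_1}}_\omega(z)$. The $z$-to-$\xi_1$ piece is immediately controlled by the first displayed inequality. For the $y$-to-$z$ piece, the points $y, z$ both lie in $V^{1/9}_{\omega,k_2}(x_2)$, so backward contraction from the hyperbolic time $k_2$ (Proposition~\ref{pr.predisks}) propagates the diameter bound $2\delta_1/9$ at iterate $k_2$ back to iterate $k_1$ with a factor $\alpha^{(k_2-k_1)/2}$, while the uniform forward Lipschitz bound $\|Df^\ell_\omega\| \le K_0$ from \eqref{eq.defK0} for $0 \le \ell \le L$ absorbs the remaining $\ell_{\xi_1}$ iterates. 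A case-split depending on whether $k_1+\ell_{\xi_1} \le k_2$ or $k_1+\ell_{\xi_1} > k_2$ is needed, with the latter subcase requiring a second application of $K_0$ for the surplus iterates past $k_2$.

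Assembling the estimates and factoring out $\alpha^{(k_2-k_1)/2}$ produces an upper bound of the form
\[
\alpha^{(k_2-k_1)/2}\Bigl[\tfrac{2\delta_1 K_0}{9} + \delta_2(1+K_0^2)\,\alpha^{Q_0/2}\Bigr],
\]
and the calibration of the constants $\delta_2$ in \eqref{eq.delta2}, $N_1$ in \eqref{eq.N1}, and $Q_0$ in \eqref{eq.chooseq} is designed precisely so that the bracketed expression is strictly smaller than $\delta_0$. Hence $y \in A_{\omega, k_2}(\xi_1) \cap \xi_2$, contradicting \eqref{eq.omegaanel}.

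The main technical obstacle is the interplay between the backward contraction of Proposition~\ref{pr.predisks} (which produces the essential factor $\alpha^{(k_2-k_1)/2}$) and the bounded forward expansion over the at most $L$ iterates corresponding to $\ell_{\xi_1}$ (which introduces the constant $K_0$). The subcase $k_1+\ell_{\xi_1} > k_2$ is especially delicate, as one cannot apply pure backward contraction past iterate $k_2$; the surplus forward iterates must be absorbed through a second factor of $K_0$, which explains why $K_0^2$ (rather than just $K_0$) appears in the calibration \eqref{eq.chooseq}.
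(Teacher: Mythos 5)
Your overall strategy is the right one: find a point $y\in\xi_2$ that lands in $A_{\omega,k_2}(\xi_1)$, contradicting the disjointness requirement \eqref{eq.omegaanel}. You even choose $y$ correctly — minimising $\dist(f^{k_2+\ell_{\xi_2}}_\omega(y), f^{k_2+\ell_{\xi_2}}_\omega(z))$. But you then fail to use that choice: the whole point of minimising is that $z\in S_{\omega,k_2+Q_0}(\xi_2)$ forces $\dist_{f^{k_2+\ell_{\xi_2}}_\omega(D_\omega)}(f^{k_2+\ell_{\xi_2}}_\omega(y), f^{k_2+\ell_{\xi_2}}_\omega(z))\le \delta_2\alpha^{Q_0/2}$, and pulling this back $\ell_{\xi_2}$ steps via $K_0$ gives $\dist_{f^{k_2}_\omega(D_\omega)}(f^{k_2}_\omega(y), f^{k_2}_\omega(z))\le \delta_2K_0\alpha^{Q_0/2}$. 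Instead, you bound the $y$-to-$z$ piece at iterate $k_2$ by the crude diameter $2\delta_1/9$ of the hyperbolic pre-disk $f^{k_2}_\omega(V^{1/9}_{\omega,k_2}(x_2))$, throwing away the $\alpha^{Q_0/2}$ improvement that the minimising choice was designed to secure.

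This is not a cosmetic difference: the argument doesn't close without that factor. Your assembled bound requires $\tfrac{2\delta_1 K_0}{9} + \delta_2(1+K_0^2)\,\alpha^{Q_0/2} < \delta_0$, but the first summand has no $\alpha^{Q_0/2}$ factor and so cannot be made small by taking $Q_0$ large. The calibration \eqref{eq.chooseq} only controls $\delta_2(1+K_0^2)\alpha^{Q_0/2}$; it says nothing about a standalone $\tfrac{2\delta_1 K_0}{9}$, and indeed $\delta_2 = \delta_0 + \tfrac{\delta_1K_0}{2}$ makes $\delta_1 K_0$ comparable to $\delta_2$, not to the much smaller $\delta_0$. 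The condition in \eqref{eq.N1} involving $\tfrac{2\delta_1}{9}$ carries an extra $\alpha^{N_1/2}$ factor precisely because it controls a quantity that has undergone at least $N_1$ backward iterations, which is not the situation here. Once the sharp bound $\delta_2K_0\alpha^{Q_0/2}$ is substituted, the contraction from $k_2$ back to $k_1$ gives $\delta_2K_0\alpha^{(Q_0+k_2-k_1)/2}$, the forward $\ell_{\xi_1}$ iterates contribute another $K_0$, and the triangle inequality with the $z$-to-$\xi_1$ estimate yields $\delta_2(1+K_0^2)\alpha^{(Q_0+k_2-k_1)/2}$, which \eqref{eq.chooseq} directly converts into $<\delta_0\alpha^{(k_2-k_1)/2}$. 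One additional step is also needed that your proposal skips: before applying $K_0$ forward from iterate $k_1$, one must verify $f^{k_1}_\omega(y)\in f^{k_1}_\omega(\widetilde\xi_1)$ so that the $\ell_{\xi_1}$ forward iterates are meaningful; the paper handles this via Remark~\ref{re.sizeleast} together with the second half of \eqref{eq.chooseq}, after first showing $x\in A_{\omega,k_2}(\xi_1)$ from the first half of \eqref{eq.chooseq}. Your case-split on $k_1+\ell_{\xi_1}\lessgtr k_2$ is not needed and does not substitute for this domain check.
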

\begin{proof}
Assume by contradiction that    
\begin{equation}\label{eq.esses}
\exists x\in S_{\omega,k_2+Q_0}(\xi_1)\cap   S_{\omega,k_2+Q_0}(\xi_2) .
\end{equation}
By~\eqref{D.candidate2}, we have
\begin{equation}\label{eq.bla}
\dist_{ f_\omega^{k_2+\ell_2}(D_\omega)}\left(f_\omega^{k_2+\ell_2}(x), f_\omega^{k_2+\ell_2}(\xi_2)\right)<\delta_2\alpha^{Q_0/2},
\end{equation}
for some $0\le \ell_2\le L$ such that 
$$
\pi_{\sigma^{k_2+\ell_{2}}\omega}\left(f_\omega^{k_2+\ell_{2}}(\tilde{\xi}_2)\right)=
\Sigma_{\sigma^{k_2+\ell_{2}}\omega}^1.
$$
 Using~\eqref{eq.defK0} and~\eqref{eq.bla}, we find   $y\in \xi_2$ such that
\begin{equation}\label{eq.deltasi2}
\dist_{ f_\omega^{k_2 }(D_\omega)}\left(f_\omega^{k_2 }(x),f_\omega^{k_2 }(y)\right)<\delta_2K_0\alpha^{Q_0/2}.
\end{equation}
Also, from~\eqref{eq.WSn} and~\eqref{eq.esses} it follows  in particular that
$S_{\omega,k_2+Q_0}(\xi_2)$ intersects $V_{\omega,k_1}^{1/9}(\xi_1) $, and so by \eqref{eq.incluisses} we have $S_{\omega,k_2+Q_0}(\xi_2)\cup\xi_2\subset V_{\omega,k}^{1/3}(\xi_1)$. Then, using~\eqref{eq.deltasi2} and   Proposition~\ref{pr.predisks}   we obtain
\begin{eqnarray}
\dist_{f_\omega^{k_1}(D_\omega)}\left(f_\omega^{k_1 }(x),f_\omega^{k_1 }(y)\right)&\le & \alpha^{(k_2-k_1)/2}\dist_{f_\omega^{k_2}(D_\omega)}\left(f_\omega^{k_2 }(x),f_\omega^{k_2 }(y)\right)\nonumber\\
&<&\delta_2K_0\alpha^{(Q_0+k_2-k_1)/2}.\label{eq.deltask1}
\end{eqnarray}
On the other hand, since $x\in S_{\omega,k_2+Q_0}(\xi_1)$ we also have by definition
\begin{equation}\label{eq.deltaQ0}
\dist_{ f_\omega^{k_1+\ell_1}(D_\omega)}\left(f_\omega^{k_1+\ell_1}(x), f_\omega^{k_1+\ell_1}(\xi_1)\right)<\delta_2\alpha^{(Q_0+k_2-k_1)/2},
\end{equation}
for some $0\le \ell_1\le L$ such that  
$$
\pi_{\sigma^{k_1+\ell_{1}}\omega}\left(f_\omega^{k_1+\ell_{1}}(\tilde{\xi}_1)\right)=
\Sigma_{\sigma^{k_1+\ell_{1}}\omega}^1.
$$
Recalling the first part of~\eqref{eq.chooseq}, we easily conclude that
$x\in A_{\omega,k_2}(\xi_1)$.
 Then,  Remark~\ref{re.sizeleast}  
   gives that $f_\omega^{k_1 }(\widetilde\xi_1)$ contains a neighbourhood of the outer component of the boundary of $f_\omega^{k_1}(A_{\omega,k_2}(\xi_1))$ of  size at least
  $$\frac{\delta_0\left(1-\alpha^{( k_2-k_1)/2}\right)}{K_0}\ge \frac{\delta_0\left(1-\alpha^{1/2}\right)}{K_0}.$$
  Recalling now the second part of~\eqref{eq.chooseq} and using~\eqref{eq.deltask1} we deduce that $f_\omega^{k_1}(y)\in f_\omega^{k_1 }(\widetilde\xi_1)$.  So, using~\eqref{eq.defK0} together with~\eqref{eq.deltask1} we obtain
  \begin{equation}\label{eq.deltaskell1}
\dist_{ f_\omega^{k_1+\ell_1}(D_\omega)}\left(f_\omega^{k_1+\ell_1 }(x),f_\omega^{k_1+\ell_1 }(y)\right) <\delta_2K_0^2\alpha^{(Q_0+k_2-k_1)/2},
\end{equation}
which jointly with~\eqref{eq.deltaQ0} and  the first part of~\eqref{eq.chooseq}  yields
$$
\dist_{ f_\omega^{k_1+\ell_1}(D_\omega)}\left(f_\omega^{k_1+\ell_1}(y),{f_\omega^{k_1+\ell_1}(\xi_1)}\right)< \delta_2(1+K_0^2)\,\alpha^{(Q_0+k_2-k_1)/2}<\delta_0\alpha^{(k_2-k_1)/2}.
$$
This implies that $y\in A_{\omega,k_2}(\xi_1)$ with  $\xi_1\in\mathcal P_{\omega,k_1}$. Since $y\in \xi_2$ and $\xi_2\in\mathcal P_{\omega,k_2}$ we have a contradiction with~\eqref{eq.omegaanel}.
 \end{proof}

In the next result  we prove  the key fact   that every point having a $\alpha$-hyperbolic time at a given time $n$ will necessarily belong to either to an element of the partition or to  the   set ${S}_{\omega,n}({\Sigma_\omega^0})\cup S_{\omega,n}({\Sigma_\omega^c} )$. In Lemma~\ref{le.sumSn} we will show that the sum of the measure of all these  sets  is finite. This will be an important  step towards proving that~$\mathcal P_\omega$ is indeed a $\leb $ mod~0 partition of ${\Sigma_\omega^0}$.

\begin{lemma}\label{le.keyinc}  For each $n\ge N_0$, we have
 $ H_{\omega,n}\cap\Sigma_{\omega,n}^c\subset
{S}_{\omega,n}({\Sigma_\omega^0})\cup S_{\omega,n}({\Sigma_\omega^c}).
 $
 \end{lemma}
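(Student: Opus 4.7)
The plan is to argue by contradiction: suppose $x \in H_{\omega,n} \cap \Sigma_{\omega,n}^c$ lies outside of $S_{\omega,n}(\Sigma_\omega^0) \cup S_{\omega,n}(\Sigma_\omega^c)$, and derive a contradiction from the maximality built into the construction of $\mathcal{P}_{\omega,n}$. First, because $x \notin S_{\omega,n}(\Sigma_\omega^c)$, the point $x$ lies at distance at least $\delta_1 \alpha^{n/2}$ from $\partial \Sigma_\omega^0$; combined with the contraction estimate of Proposition~\ref{pr.predisks}, which bounds the diameter of $V_{\omega,n}^{1/9}(x)$ by $(2\delta_1/9)\alpha^{n/2}$, this guarantees that the candidate predisk $V_{\omega,n}^{1/9}(x)$ is fully contained in $\Sigma_\omega^0$, so $x$ is a legitimate stage-$n$ candidate.

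Next, since $\Sigma_{\omega,n}^c \subset \Sigma_{\omega,n-1}^c$ by~\eqref{eq.deltans}, we have $x \in H_{\omega,n}\cap \Sigma_{\omega,n-1}^c$, and so by the covering property~\eqref{eq.endef} there exists $y \in E_{\omega,n}$ with $x \in V_{\omega,n}^{1/9}(y)$. If $y$ happens to be one of the selected centres $x_1,\dots,x_{j_n}$, then by definition $V_{\omega,n}^{1/9}(y) \subset V_{\omega,n}^{1/3}(y) = S_{\omega,n}(\xi_{\omega,n}^{y,\ell_y})$, contradicting $x \notin S_{\omega,n}(\Sigma_\omega^0)$. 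Otherwise, the maximality of the selected family forces the candidate $\xi_y := \xi_{\omega,n}^{y,\ell_y}$ to violate one of the two constraints in the selection: either (b1) $\xi_y \cap \xi_{x_i} \neq \emptyset$ for some selected $\xi_{x_i} \in \mathcal{P}_{\omega,n}$, or (b2) $\xi_y \cap A_{\omega,k'}(\xi) \neq \emptyset$ for some $\xi \in \mathcal{P}_{\omega,k}$ with $N_0 \le k < n$ (where here $k'=n$ in the notation of~\eqref{eq.anelan}).

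In case (b1), since $\xi_y \subset V_{\omega,n}^{1/9}(y)$ and $\xi_{x_i} \subset V_{\omega,n}^{1/9}(x_i)$ both at the same stage, the intersection triggers the first implication in~\eqref{eq.keywn} and yields $V_{\omega,n}^{1/9}(y) \subset V_{\omega,n}^{1/3}(x_i) = S_{\omega,n}(\xi_{x_i})$, so $x \in S_{\omega,n}(\Sigma_\omega^0)$, a contradiction. Case (b2) is the main geometric step: fix $z \in \xi_y \cap A_{\omega,n}(\xi)$; since $z \in \tilde\xi \subset V_{\omega,k}^{1/9}(\xi)$ while $z \in V_{\omega,n}^{1/9}(y)$, the implication~\eqref{eq.keywn} gives $V_{\omega,n}^{1/9}(y) \subset V_{\omega,k}^{1/3}(\xi)$, so if $n-k < N_1$ we already have $x \in V_{\omega,k}^{1/3}(\xi) = S_{\omega,n}(\xi)$. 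For $n-k \ge N_1$ we must check the finer annular bound in~\eqref{eq.essenomega}: both $x$ and $z$ lie in $V_{\omega,n}^{1/9}(y)$, so $\mathrm{dist}(f_\omega^n(x), f_\omega^n(z)) \le 2\delta_1/9$, and Proposition~\ref{pr.predisks} contracts this back by $\alpha^{(n-k)/2}$ to yield $\mathrm{dist}_{f_\omega^k(D_\omega)}(f_\omega^k(x), f_\omega^k(z)) \le (2\delta_1/9)\alpha^{(n-k)/2}$. Composing with at most $\ell_\xi \le L$ further iterates (bounded by $K_0$ from~\eqref{eq.defK0}) and invoking the triangle inequality with the $A_{\omega,n}(\xi)$-defining bound $\delta_0 \alpha^{(n-k)/2}$ for $z$, we conclude
$$\mathrm{dist}_{f_\omega^{k+\ell_\xi}(D_\omega)}\bigl(f_\omega^{k+\ell_\xi}(x), f_\omega^{k+\ell_\xi}(\xi)\bigr) \le \Bigl(\delta_0 + \tfrac{2\delta_1}{9}K_0\Bigr)\alpha^{(n-k)/2} < \delta_2 \alpha^{(n-k)/2},$$
exactly by the choice $\delta_2 = \delta_0 + \tfrac{\delta_1 K_0}{2}$ in~\eqref{eq.delta2}. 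Hence $x \in S_{\omega,n}(\xi)$, again a contradiction.

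The main obstacle is the bookkeeping in case (b2), where one must simultaneously use the backward contraction at hyperbolic times to shrink the distance between $x$ and the intersection point $z$, and then compare this shrunken distance with the annular radius $\delta_2 \alpha^{(n-k)/2}$ defining $S_{\omega,n}(\xi)$; getting the constants to fit is precisely what motivates the particular choice of $\delta_2$ in~\eqref{eq.delta2} and the factor $K_0$ absorbing the at-most-$L$ extra iterates. A minor point to handle is the strict positivity requirement in~\eqref{eq.essenomega}, but this is automatic since $x \in \Sigma_{\omega,n}^c$ guarantees $x \notin \xi$.
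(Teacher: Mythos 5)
Your argument follows essentially the same geometric outline as the paper's: take the covering centre $y\in E_{\omega,n}$ with $x\in V_{\omega,n}^{1/9}(y)$, invoke the maximality of $\mathcal{P}_{\omega,n}$ to split into cases according to which constraint is violated by the candidate $\xi_y$, push forward by $f_\omega^k$ and $f_\omega^{\ell_\xi}$, and use Proposition~\ref{pr.predisks}, \eqref{eq.defK0} and the triangle inequality to land inside the shell of width $\delta_2\alpha^{(n-k)/2}$. The contradiction formulation (rather than a direct case split) is cosmetic, and your case (b1) and the distance estimate in (b2) reproduce the paper's computation exactly.

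There is, however, a step you skip in case (b2) with $n-k\ge N_1$ that is not harmless. Membership in $S_{\omega,n}(\xi)$ as defined in \eqref{eq.essenomega} requires \emph{two} things: the distance bound $\le\delta_2\alpha^{(n-k)/2}$, which you verify, \emph{and} $x\in\tilde\xi$, which you do not. You observe $z\in\tilde\xi$ (true, since $A_{\omega,n}(\xi)\subset\tilde\xi$), but you never deduce that $x$ — or, more robustly, the whole predisk $V_{\omega,n}^{1/9}(y)$ — lies in $\tilde\xi$. The paper isolates this as the claim $f_\omega^k(V_{\omega,n}^{1/9}(y))\subset f_\omega^k(\tilde\xi)$, proved by combining the size estimate of Remark~\ref{re.sizeleast} (that $f_\omega^k(\tilde\xi)$ contains a halo of width $\delta_0(1-\alpha^{N_1/2})/K_0$ around the outer boundary of $f_\omega^k(A_{\omega,n}(\xi))$) with $\diam(f_\omega^k(V_{\omega,n}^{1/9}(y)))\le (2\delta_1/9)\alpha^{N_1/2}$ and the second inequality in the choice of $N_1$ in \eqref{eq.N1}. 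This is precisely why \eqref{eq.N1} has two conditions rather than one, and it is the ingredient your write-up is missing. Without it, ``hence $x\in S_{\omega,n}(\xi)$'' does not follow from the distance bound alone. A secondary, cosmetic point: your opening paragraph reasons about $V_{\omega,n}^{1/9}(x)$, whereas the object that matters is $V_{\omega,n}^{1/9}(y)$ for the covering centre $y$. The intention (ruling out $\xi_y\cap\Sigma_\omega^c\neq\emptyset$, i.e.\ the paper's third case) is clear, but the containment should be phrased for $V_{\omega,n}^{1/9}(y)$, using $x\in V_{\omega,n}^{1/9}(y)$ and the diameter bound.
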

\begin{proof}
Consider   the finite set
$ E_{\omega,n}\subset  H_{\omega,n}\cap\Sigma_{\omega,n-1}^c$   as in~\eqref{eq.endef}.
Given  any $z\in H_{\omega,n}\cap \Sigma_{\omega,n}^c\subset H_{\omega,n}\cap\Sigma_{\omega,n-1}^c$, there is $y\in E_{\omega,n}$ such that   $z\in V_{\omega,n}^{1/9}(y)$. It is enough to show that $V_{\omega,n}^{1/9}(y)\subset {S}_{\omega,n}({\Sigma_\omega^0})\cup S_{\omega,n}({\Sigma_\omega^c} )$. Since $z\in  \Sigma_{\omega,n}^c$,  at least one of the following    cases holds:
\begin{enumerate}
\item \emph{$\xi_{\omega,y}^{n,\ell_{\omega,y}}\cap \xi\neq\emptyset$, for some $\xi\in\mathcal P_{\omega,n}$.}\\
In this case, we have $S_{\omega,n}(\xi )=V_{\omega,n}^{1/3}(\xi)$ and   $V_{\omega,n}^{1/9}(y)$ necessarily intersecting $V_{\omega,n}^{1/9}(\xi)$.  
Hence,  using \eqref{eq.keywn} we get
 $$ V_{\omega,n}^{1/9}(y)\subset V_{\omega,n}^{1/3}(\xi)=S_{\omega,n}(\xi )\subset {S}_{\omega,n}({\Sigma_\omega^0}).$$
 
 \item \emph{$\xi_{\omega,y}^{n,\ell_{\omega,y}}\cap A_{\omega,n}(\xi) \neq\emptyset$, for some $N_0\le k<n$ and $\xi\in\mathcal P_{\omega,k}$.}\\
Observe that by definition we have
$ 
A_{\omega,n}\left(\xi \right)\subset  V_{\omega,n}^{1/9}(\xi).
$
Assume first that  $n-k<N_1$. Then,   as in the previous situation, using \eqref{eq.keywn} we get 
  $$ V_{\omega,n}^{1/9}(y)\subset V_{\omega,k}^{1/3}(\xi)=S_{\omega,n}(\xi )\subset {S}_{\omega,n}({\Sigma_\omega^0}).$$
 Assume now that  $n-k\ge N_1$.   
 We claim that 
 \begin{equation}\label{eq.wholds}
f_\omega^k\left(V_{\omega,n}^{1/9}(y)\right)\subset f_\omega^k (\tilde\xi).
\end{equation}
Recall that by Remark~\ref{re.sizeleast} the set  $f_\omega^{k }(\tilde\xi)$ contains a neighbourhood of the outer component of the boundary of $f_\omega^{k}(A_{\omega,n}(\xi))$ of  size at least
  $$\frac{\delta_0(1-\alpha^{N_1/2})}{K_0}.$$
On the other hand, using  Proposition~\ref{pr.predisks} we obtain
 \begin{equation}\label{eq.diamum}
\diam\left(f_\omega^k\left(V_{\omega,n}^{1/9}(y)\right)\right)\le \frac{2\delta_1}9\alpha^{\frac{n-k}{2}}\le \frac{2\delta_1}9\alpha^{\frac{N_1}{2}}.
\end{equation}
Recalling the choice of $N_1$ in~\eqref{eq.N1}
 and   observing that in the situation  we are considering the set $f_\omega^k\left(V_{\omega,n}^{1/9}(y)\right)$ intersects $f_\omega^k\left(A_{\omega,n}(\xi)\right)$,  we conclude that~\eqref{eq.wholds} holds. 
 Then, using~\eqref{eq.defK0}  and~\eqref{eq.diamum} we also obtain
\begin{equation}\label{eq.diamois}
\diam\left(f_\omega^{k+\ell_\xi}\left(V_{\omega,n}^{1/9}(y)\right)\right)\le \frac{2\delta_1}9K_0\alpha^{\frac{n-k}{2}} .
\end{equation}
 Now, since the set $f_\omega^{k+\ell_\xi}\left(V_{\omega,n}^{1/9}(y)\right) $ intersects $f_\omega^{k+\ell_\xi}\left(  A_{\omega,n}(\xi)\right)$, we have for each $u\in f_\omega^{k+\ell_\xi}\left(V_{\omega,n}^{1/9}(y)\right) $
  $$\dist_{D_\omega}(u,{\Sigma_\omega^0})\le \delta_0\alpha^{\frac{n-k}{2}}+\frac{2\delta_1}9K_0\alpha^{\frac{n-k}{2}}=\delta_2\alpha^{\frac{n-k}{2}}.
  $$
 This shows that $V_{\omega,n}^{1/9}(y)\subset S_{\omega,n}(\xi)\subset {S}_{\omega,n}({\Sigma_\omega^0})$. 
 
 \item \emph{$\xi_{\omega,y}^{n,\ell_{\omega,y}}\cap {\Sigma_\omega^c}\neq\emptyset $.}\\
 This in particular  implies  that  $V_{\omega,n}^{1/9}(y)$ intersects $\partial{\Sigma_\omega^0}$. From  Proposition~\ref{pr.predisks}  we get
 \begin{equation*}\label{eq.diamumdois}
\diam\left( V_{\omega,n}^{1/9}(y) \right)\le \frac{2\delta_1}9\alpha^{\frac{n}{2}} ,
\end{equation*}
and so $V_{\omega,n}^{1/9}(y) \subset  S_{n}({\Sigma_\omega^c})$.
\end{enumerate}
\end{proof}


Now we give some estimates on the measure of the sets involved in the construction of~$\mathcal P_\omega$. We claim that there exists  $C_0> 0$ such that for any $n\ge N_0$ and   $\xi\in
\mathcal{P}_{\omega,n}$ we have
\begin{equation}\label{eq.sumle}
\leb_{D_\omega} \left(V_{\omega,n}(\xi)\right)\leq C_0 \leb_{D_\omega}  (\xi).
\end{equation}
In fact, 
using Proposition~\ref{p.distortion} we get some constant  $C_1>0$ such that
 \begin{equation}\label{eq.jose}
\frac{\leb_{D_\omega} \left(V_{\omega,n}(\xi)\right)}{\leb_{D_\omega} \left(\xi\right)} \le C_1\frac{\leb_{f_\omega^n(D_\omega)} (f_\omega^n(V_{\omega,n}(\xi)))}{\leb_{f_\omega^n(D_\omega)} (f_\omega^n(\xi))}
\le 
C_1\frac{C_2\delta_1^2}{\leb_{f_\omega^n(D_\omega)} (f_\omega^n(\xi))},
\end{equation}
where $C_2>0$ is some uniform constant; recall that $f_\omega^n(V_{\omega,n}(\xi))$ is a $cu$-disk of radius $\delta_1$. 
On the other hand, using that $f_\omega^{n+\ell_\xi}(\xi)$ is a $cu$-disk of radius $\delta_0$ and~\eqref{eq.defK0} we have
\begin{equation}\label{eq.wael}
\leb_{f_\omega^n(D_\omega)} (f_\omega^n(\xi))\ge \frac1{K_0} \leb_{f_\omega^{n+\ell_\xi}(D_\omega)} (f_\omega^{n+\ell_\xi}(\xi))\ge \frac{c_0\delta_0^2}{K_0} ,
\end{equation}
where   $c_0>0$ is again some uniform constant. From~\eqref{eq.jose} and~\eqref{eq.wael} we deduce~\eqref{eq.sumle}.

  \begin{lemma}\label{le.sncore} There
exists  $C> 0$ such that for all $n\geq k\ge N_0$ and  $\xi\in
\mathcal{P}_{\omega,k}$ we have
$$
\leb_{D_\omega} \left(S_{\omega,n}(\xi)\right)\leq C \alpha^{(n-k)/2} \leb_{D_\omega} (\xi).
$$
\end{lemma}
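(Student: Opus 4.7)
The plan is to split according to whether $n-k<N_1$ or $n-k\ge N_1$, and in the second (main) case estimate the image $f_\omega^{k+\ell_{\xi_\omega}}(S_{\omega,n}(\xi))$ as a tubular shell around the $cu$-disk $f_\omega^{k+\ell_{\xi_\omega}}(\xi)$, then pull back via bounded distortion at the hyperbolic time $k$.

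First, suppose $n-k<N_1$. By definition $S_{\omega,n}(\xi)=V_{\omega,k}^{1/3}(\xi)\subset V_{\omega,k}(\xi)$, so the desired bound follows directly from \eqref{eq.sumle} after noting that $\alpha^{(n-k)/2}\ge\alpha^{N_1/2}$, which lets us absorb the factor $C_0$ into a suitable constant $C$.

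Now assume $n-k\ge N_1$. Set $d=\dim E^{cu}$. By definition $S_{\omega,n}(\xi)\subset\tilde\xi$, and
\[
f_\omega^{k+\ell_{\xi_\omega}}(S_{\omega,n}(\xi))\subset \{z\in f_\omega^{k+\ell_{\xi_\omega}}(\tilde\xi): 0<\dist(z,f_\omega^{k+\ell_{\xi_\omega}}(\xi))\le \delta_2\alpha^{(n-k)/2}\},
\]
i.e.\ a tubular shell of width $\delta_2\alpha^{(n-k)/2}$ around the $cu$-disk $f_\omega^{k+\ell_{\xi_\omega}}(\xi)$ of radius $\delta_0$ (recall Remark~\ref{re.sizeleast} and the bound $\delta_2\alpha^{N_1/2}\le\delta_0$ in~\eqref{eq.N1}). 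By Proposition~\ref{pr:bounded-curvature}, the image $cu$-disk has uniformly bounded curvature, so its volume form is comparable to the Euclidean one in the tangent space; hence there is a uniform constant $C_1>0$ with
\[
\leb_{f_\omega^{k+\ell_{\xi_\omega}}(D_\omega)}\!\bigl(f_\omega^{k+\ell_{\xi_\omega}}(S_{\omega,n}(\xi))\bigr)\le C_1\,\delta_0^{d-1}\,\delta_2\,\alpha^{(n-k)/2},
\]
while $\leb_{f_\omega^{k+\ell_{\xi_\omega}}(D_\omega)}(f_\omega^{k+\ell_{\xi_\omega}}(\xi))\ge c_2\,\delta_0^{d}$ for some uniform $c_2>0$.

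It remains to pull back this ratio to $\Sigma_\omega^0$. Since $k$ is a $\alpha$-hyperbolic time for the base point of $\xi$, Proposition~\ref{p.distortion} gives bounded distortion for $f_\omega^k$ on the hyperbolic predisk containing $\tilde\xi$. The remaining $\ell_{\xi_\omega}\le L$ iterates contribute at most a uniform factor controlled by $K_0$ from~\eqref{eq.defK0} (both the norm of $Df_\omega^{\ell_{\xi_\omega}}$ and of its inverse on the $cu$-direction are uniformly bounded, so the Jacobian ratio is bounded by a constant depending only on $L$ and $f$). Combining, there is a uniform constant $C_3>0$ with
\[
\frac{\leb_{D_\omega}(S_{\omega,n}(\xi))}{\leb_{D_\omega}(\xi)}
\le C_3\,\frac{\leb_{f_\omega^{k+\ell_{\xi_\omega}}(D_\omega)}\!\bigl(f_\omega^{k+\ell_{\xi_\omega}}(S_{\omega,n}(\xi))\bigr)}{\leb_{f_\omega^{k+\ell_{\xi_\omega}}(D_\omega)}\!\bigl(f_\omega^{k+\ell_{\xi_\omega}}(\xi)\bigr)}
\le \frac{C_1 C_3 \delta_2}{c_2\,\delta_0}\,\alpha^{(n-k)/2},
\]
which is the required estimate (with the constant absorbed into $C$).

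The only real subtlety is the honest handling of the $\ell_{\xi_\omega}$ extra iterates beyond the hyperbolic time $k$: Proposition~\ref{p.distortion} is stated at hyperbolic times, so to apply it at time $k+\ell_{\xi_\omega}$ one has to use the uniform bound $\ell_{\xi_\omega}\le L$ together with $K_0$ to absorb the additional (non-hyperbolic) iterates into a constant. The rest is a standard co-area style estimate for tubular neighborhoods of $cu$-disks with uniformly bounded curvature.
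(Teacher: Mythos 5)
Your proof is correct and follows the same route as the paper: the same case split at $n-k=N_1$, with the first case reduced to \eqref{eq.sumle}, and the second case obtained by estimating $f_\omega^{k+\ell_{\xi_\omega}}(S_{\omega,n}(\xi))$ as a shell of width $\delta_2\alpha^{(n-k)/2}$ around the disk $f_\omega^{k+\ell_{\xi_\omega}}(\xi)$, dividing by the lower bound \eqref{eq.wael} on $\leb(f_\omega^{k+\ell_{\xi_\omega}}(\xi))$, and pulling back with Proposition~\ref{p.distortion} together with the $K_0$ factor from \eqref{eq.defK0} for the $\ell_{\xi_\omega}\le L$ extra iterates. Your version just makes explicit the shell-volume constant $C_1\delta_0^{d-1}\delta_2$ and the lower bound $c_2\delta_0^d$, which the paper compresses into uniform constants $C_2$ and $c_0\delta_0^2$ (the latter implicitly taking $d=2$, a harmless abuse), so there is no substantive difference.
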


\begin{proof} 
Assuming first that $n-k<N_1$, as $S_n(\xi)\subset V_k(\xi)$, it follows   from~\eqref{eq.sumle}  
that 
\begin{equation}\label{eq.smlesn}
\leb_{D_\omega}\left(S_{\omega,n}(\xi)\right)\leq C_0 \leb_{D_\omega}(\xi).
\end{equation}
Assume now that $n-k\ge  N_1$. Recalling that
\begin{equation*} 
S_{\omega,n}\left(\xi\right)=\left\{y\in \tilde\xi: 0< \dist_{f_\omega^{k+\ell_\xi}(D_\omega)} \left(    f_\omega^{k+\ell_{\xi_\omega}}(y), f_\omega^{k+\ell_{\xi_\omega}}(\xi )\right)\le \delta_2\alpha^{\frac{n-k}{2}}  \right\},
\end{equation*}
then by the choice of $N_1$ in~\eqref{eq.N1} we have
$f_\omega^{k+\ell_\xi}(S_{\omega,n}(\xi))$ contained in a $cu$-disk of radius~$2\delta_0$; recall Remark~\ref{re.sizeleast}.
We also have  some uniform constant $C_2>0$ such that
$$\leb_{f_\omega^{k+\ell_\xi}(D_\omega)}\left(f_\omega^{k+\ell_\xi}(S_{\omega,n}(\xi))\right)\le C_2\alpha^{\frac{n-k}{2}}.$$
Hence, using~\eqref{eq.defK0}, \eqref{eq.wael} and Proposition~\ref{p.distortion}  we get some constant $C_1>0$  such  that   
  \begin{equation}\label{eq.snomega2}
\frac{\leb_{D_\omega} (S_{\omega,n}(\xi))}{\leb_{D_\omega} (\xi)} \le C_1K_0\frac{\leb_{f_\omega^{k+\ell_\xi}(D_\omega)}(f_\omega^{k+\ell_\xi}(S_{\omega,n}(\xi)))}{\leb_{f_\omega^{k+\ell_\xi}(D_\omega)}(f_\omega^{k+\ell_\xi}(\xi))}\le   \frac{C_1C_2K_0}{c_0\delta_0^2}\alpha^{\frac{n-k}{2}}.
\end{equation}
Finally, choose
$$
C\ge \frac{C_1C_2K_0}{c_0\delta_0^2}
$$
sufficiently large so that $C_0\le C\alpha^{ {N_1}/2}$. Using  \eqref{eq.snomega2} and \eqref{eq.smlesn} we easily derive  the desired conclusion.
\end{proof}

 \begin{lemma} \label{le.sumSn}
$\displaystyle{\sum_{n=N_0}^{\infty}\leb_{D_\omega} \left({S}_{\omega,n}(\Sigma_\omega^0)\cup S_{\omega,n}(\Sigma_\omega^c)\right) <\infty}$.
\end{lemma}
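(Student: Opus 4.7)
The plan is to split the total sum into its two natural parts
\[
\sum_{n\ge N_0}\leb_{D_\omega}\!\left(S_{\omega,n}(\Sigma_\omega^0)\right)\qquad\text{and}\qquad \sum_{n\ge N_0}\leb_{D_\omega}\!\left(S_{\omega,n}(\Sigma_\omega^c)\right),
\]
bound each separately by an absolutely convergent geometric series, and finally use the disjointness of the whole family $\mathcal P_\omega$ inside $\Sigma_\omega^0$ to keep the totals under control.

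For the boundary term, $S_{\omega,n}(\Sigma_\omega^c)$ is, by definition, a tubular neighbourhood in $D_\omega$ of the boundary of the fixed $cu$-disk $\Sigma_\omega^0=B^{cu}_{\delta_0}(p_\omega)$ of width $\delta_1\alpha^{n/2}$. Since $\Sigma_\omega^0$ has bounded geometry (controlled curvature via Proposition \ref{pr:bounded-curvature} and a fixed radius $\delta_0$), standard volume estimates in a disk give
\[
\leb_{D_\omega}\!\left(S_{\omega,n}(\Sigma_\omega^c)\right)\le C_3\,\delta_1\alpha^{n/2},
\]
for a uniform constant $C_3>0$. Summing over $n$ produces a finite geometric series.

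For the interior term, I first decompose by subadditivity and then interchange the two summations,
\[
\sum_{n\ge N_0}\leb_{D_\omega}\!\left(S_{\omega,n}(\Sigma_\omega^0)\right)
\le \sum_{n\ge N_0}\sum_{j=N_0}^{n}\sum_{\xi\in\mathcal P_{\omega,j}}\leb_{D_\omega}\!\left(S_{\omega,n}(\xi)\right)
=\sum_{j\ge N_0}\sum_{\xi\in\mathcal P_{\omega,j}}\sum_{n\ge j}\leb_{D_\omega}\!\left(S_{\omega,n}(\xi)\right).
\]
Lemma~\ref{le.sncore} applied to a fixed $\xi\in\mathcal P_{\omega,j}$ gives $\leb_{D_\omega}(S_{\omega,n}(\xi))\le C\alpha^{(n-j)/2}\leb_{D_\omega}(\xi)$, so summing the geometric series in $n\ge j$ yields
\[
\sum_{n\ge j}\leb_{D_\omega}\!\left(S_{\omega,n}(\xi)\right)\le \frac{C}{1-\alpha^{1/2}}\,\leb_{D_\omega}(\xi).
\]
Plugging this back in,
\[
\sum_{n\ge N_0}\leb_{D_\omega}\!\left(S_{\omega,n}(\Sigma_\omega^0)\right)\le \frac{C}{1-\alpha^{1/2}}\sum_{j\ge N_0}\sum_{\xi\in\mathcal P_{\omega,j}}\leb_{D_\omega}(\xi).
\]

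The key observation that closes the argument is that the inner double sum is actually bounded by $\leb_{D_\omega}(\Sigma_\omega^0)$: the construction chooses each $\mathcal P_{\omega,j}$ inside $\Sigma^c_{\omega,j-1}=\Sigma_\omega^0\setminus\bigcup_{k<j}\bigcup_{\xi\in\mathcal P_{\omega,k}}\xi$, so the whole family $\mathcal P_\omega=\bigsqcup_{j\ge N_0}\mathcal P_{\omega,j}$ consists of pairwise disjoint subsets of the disk $\Sigma_\omega^0$, whence
\[
\sum_{j\ge N_0}\sum_{\xi\in\mathcal P_{\omega,j}}\leb_{D_\omega}(\xi)\le \leb_{D_\omega}(\Sigma_\omega^0)<\infty.
\]
Combining the two bounds gives the desired finiteness. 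I do not anticipate a serious obstacle here — the only subtle point is the reordering step, which works precisely because Lemma~\ref{le.sncore} supplies a uniform (in $\omega$, $j$, $\xi$) geometric decay that is summable independently of $j$, allowing the disjointness of $\mathcal P_\omega$ in $\Sigma_\omega^0$ to absorb the remaining $j$-sum.
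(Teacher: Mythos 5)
Your proof is correct and follows essentially the same route as the paper: split into interior and boundary terms, bound the boundary term by the obvious tubular-neighbourhood volume $C\alpha^{n/2}$, and bound the interior term by interchanging the order of summation and invoking Lemma~\ref{le.sncore} to reduce to a convergent geometric series times $\sum_{j}\sum_{\xi\in\mathcal P_{\omega,j}}\leb_{D_\omega}(\xi)\le\leb_{D_\omega}(\Sigma_\omega^0)$, which is finite by disjointness of the family $\mathcal P_\omega$. The paper performs the same discrete-convolution estimate written in the reverse order, but the argument is identical in substance.
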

 \begin{proof}
First we consider the terms in $S_{\omega,n}(\Sigma_\omega^0)$. Recalling   \eqref{eq.Sn1} and using Lemma~\ref{le.sncore},   for each   $n\ge N_0$
 we may write
\begin{eqnarray*}
\leb_{D_\omega}\left( S_{\omega,n}(\Sigma_\omega^0)\right) &=&
 \sum_{k=N_0}^n\sum_{\xi\in
\mathcal P_{\omega,k}} \leb_{D_\omega} (S_{\omega,n}(\xi))\\
&\le&
 \sum_{k=N_0}^n\sum_{\xi\in
\mathcal P_{\omega,k}} C \alpha^{(n-k)/2} \leb_{D_\omega} (\xi)\\
&=&
C \sum_{k=N_0}^n\alpha^{(n-k)/2}\leb_{D_\omega}\left(\bigcup_{\xi\in
\mathcal P_{\omega,k}}  \xi\right)
\end{eqnarray*}
Hence
$$
\sum_{n\ge N_0} \leb_{D_\omega}\left( S_{\omega,n}(\Sigma_\omega^0)\right)=\sum_{n\ge N_0}\sum_{k\ge0}\alpha^{k/2}\leb_{D_\omega}\left(\bigcup_{\xi\in \mathcal P_{\omega,n}}\xi\right)=\frac{1}{1-\alpha^{1/2}}\leb_{D_\omega}(\Sigma_\omega^0).
$$
On the other hand, recalling that
\begin{equation*}\label{exremark0}
{S}_{\omega,n}\left(\Sigma_\omega^c\right)=\left\{x \in \Sigma_\omega^{0}:\,
\dist_{D_\omega}(x,\partial\Sigma_\omega^{0})< \delta_1\alpha^{n/2}\right\},
\end{equation*}
 we can find $C>0$ such that
$
\leb_{D_\omega}  \left({S}_{\omega,n}\left(\Sigma_\omega^c \right)\right)\leq C\alpha^{n/2}.
$
This obviously gives that the sum of these terms for $n\ge N_0$ is finite.
\end{proof}

 \begin{proposition}\label{pr.partition}
$\mathcal P_\omega$ is a $\leb_{D_\omega}$ mod 0 partition of $\Sigma_\omega^0$.
\end{proposition}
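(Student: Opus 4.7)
The elements of $\mathcal P_\omega$ are pairwise disjoint subsets of $\Sigma_\omega^0$ by construction, so it only remains to prove that the ``uncovered'' set
$$
\Sigma_\omega^\infty = \bigcap_{n\ge N_0}\Sigma_{\omega,n}^c = \Sigma_\omega^0\setminus\bigcup_{\xi\in\mathcal P_\omega}\xi
$$
has zero $\leb_{D_\omega}$--measure. The plan is a direct Borel--Cantelli argument combining the recurrence of hyperbolic times with the summability estimate of Lemma~\ref{le.sumSn}.

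First, I would appeal to Proposition~\ref{pr.hyptimes}: since $\Sigma_\omega^0\subset D_\omega$ and points in $D_\omega$ are non-uniformly expanding along the centre-unstable direction, for $\leb_{D_\omega}$--almost every $x\in\Sigma_\omega^0$ the set of $\alpha$-hyperbolic times for $(\omega,x)$ is infinite (in fact has positive density). Let $\Sigma_\omega^\star\subset\Sigma_\omega^0$ be the full-measure subset of such points; it suffices to prove $\leb_{D_\omega}(\Sigma_\omega^\infty\cap\Sigma_\omega^\star)=0$.

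Next, I combine Lemma~\ref{le.keyinc} with the sequence $\Sigma_{\omega,n}^c$: if $x\in\Sigma_\omega^\infty\cap\Sigma_\omega^\star$ then for every $\alpha$-hyperbolic time $n\ge N_0$ of $(\omega,x)$ we have $x\in H_{\omega,n}\cap \Sigma_{\omega,n}^c$, and therefore
$$
x\in S_{\omega,n}(\Sigma_\omega^0)\cup S_{\omega,n}(\Sigma_\omega^c).
$$
Since $x$ has infinitely many such hyperbolic times, this shows
$$
\Sigma_\omega^\infty\cap\Sigma_\omega^\star\subset \limsup_{n\to\infty}\bigl(S_{\omega,n}(\Sigma_\omega^0)\cup S_{\omega,n}(\Sigma_\omega^c)\bigr).
$$

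Finally, Lemma~\ref{le.sumSn} gives
$$
\sum_{n\ge N_0}\leb_{D_\omega}\bigl(S_{\omega,n}(\Sigma_\omega^0)\cup S_{\omega,n}(\Sigma_\omega^c)\bigr)<\infty,
$$
so the Borel--Cantelli lemma (applied to $\leb_{D_\omega}$ restricted to $\Sigma_\omega^0$, which has finite measure) yields that the $\limsup$ set above has zero $\leb_{D_\omega}$--measure. Combined with the previous step this gives $\leb_{D_\omega}(\Sigma_\omega^\infty\cap\Sigma_\omega^\star)=0$, and since $\Sigma_\omega^\star$ has full measure in $\Sigma_\omega^0$, we conclude $\leb_{D_\omega}(\Sigma_\omega^\infty)=0$, as required. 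No step here is genuinely hard: all the delicate work has been packaged into Lemmas~\ref{le.keyinc} and~\ref{le.sumSn} (the existence of hyperbolic predisks covering the hyperbolic-time set, and the geometric/distortion estimates on $S_{\omega,n}(\xi)$); the only thing to verify is that the three ingredients plug together cleanly, which is the content of the three paragraphs above.
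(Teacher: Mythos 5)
Your proof is correct and follows essentially the same route as the paper: combine Proposition~\ref{pr.hyptimes} (almost every point has infinitely many hyperbolic times), Lemma~\ref{le.keyinc} (to place such points in $S_{\omega,n}(\Sigma_\omega^0)\cup S_{\omega,n}(\Sigma_\omega^c)$ infinitely often), and Lemma~\ref{le.sumSn} with Borel--Cantelli. The only cosmetic difference is that you argue directly via a $\limsup$ inclusion, whereas the paper phrases the same argument as a proof by contradiction.
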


\begin{proof} 
Recalling  the definition  of  the sets $\Sigma_{\omega,n}^c$   in~\eqref{eq.deltans},
it is enough to show that the intersection of all these sets  has zero $\leb_{D_\omega} $ measure. Assume by contradiction that
\begin{equation}\label{eq:cap1}
\leb_{D_\omega}  \left(\bigcap_{n\ge N_0}\Sigma_{\omega,n}^c\right)>0.
\end{equation}
Then, since $\leb_{D_\omega} $ almost every point in $\Sigma_\omega^0$ has infinitely many $\alpha$-hyperbolic times, there must be  some Borel set $B\subset\Sigma_\omega^{0}$ with $\leb_{D_\omega} \left(B\right)>0$ such that for every $x\in B$ we can find infinitely many times  $n_1<n_2<\cdots $ (in principle depending on $\omega,x$)  so that 
 $x\in H_{\omega,n_k}\cap \Sigma_{\omega,n_k}^c$  for all $k\in\mathbb N.$ It follows from 
Lemma~\ref{le.keyinc} that 
\begin{equation}\label{eq.infinets}
x \in S_{\omega,n_k}(\Sigma_\omega^{0})\cup S_{\omega,n_k}\left(\Sigma_\omega^{c}\right),\quad \text{for all $ k\in\mathbb N.$}
\end{equation}
On the other hand,  using Lemma~\ref{le.sumSn} and 
Borel-Cantelli Lemma we easily deduce that for $\leb_{\Sigma_\omega^1}$ almost every
$x\in \Sigma_\omega^{0}$ we cannot have  $ x \in S_{n}(\Sigma_\omega^{0})\cup S_{n}\left(\Sigma_\omega^{c}\right)$ for infinitely many values of $n$. Clearly, this gives a contradiction with the fact that   $\leb_{D_\omega} (B)>0$ and  \eqref{eq.infinets} holds  for   every $x\in B$. 
\end{proof}

 \subsection{Return times}
\label{s.tail esti}


In the previous section we have constructed for each $\omega$ a $\leb_{\Sigma_\omega^1}$ mod~0 partition   $$\mathcal P_\omega=\bigcup_{n\ge N_0}\mathcal P_{\omega,n}$$ of the $cu$-disk   $\Sigma_\omega^0\subset D_\omega$.
This partition is  formed by elements $  \xi_{\omega,x}^{n,\ell_{\xi_\omega}}\in\mathcal P_{\omega,n}$ of the type described in \eqref{D.candidate2}. In particular,  each $  \xi_{\omega,x}^{n,\ell_{\xi_\omega}}$ satisfies
\begin{equation*}
 \xi_{\omega,x}^{n,\ell_{\xi_\omega}}\subset  V^{1/9}_{\omega,n}(x)\qand   \pi_{\sigma^{n+\ell_{\xi_\omega}}\omega}\left(f_\omega^{n+\ell_{\xi_\omega}}(\xi_{\omega,x}^{n,\ell_{\xi_\omega}})\right)=
 \Sigma_{\sigma^{n+\ell_{\xi_\omega}}\omega}^0,
\end{equation*}
for some $0\le\ell_{\xi_\omega}\le L$. 
Naturally, for each $y\in
\ell_{\xi_\omega}\in\mathcal P_{\omega,n}$ we set the \emph{recurrence time} 
\begin{equation}\label{eq.rectime}
R_\omega(y)=n+\ell_{\xi_\omega}.
\end{equation}

\begin{remark}\label{re.VIR}
Consider   $f\in\diff^{1^+}(M)$ as in Theorem~\ref{randomPH}. For  the deterministic case, it has been proved in \cite[Section~5]{ADL17} that the construction on the reference leaf $\Sigma_0^0$ performed  in Subsection~\ref{s.structure} gives rise to a set with a hyperbolic product  structure contained in~$ K$   with integrable return times $\{R_i\}$ and partition $\mathcal P=\{\xi_i\}$. If we assume that  $f$  has a unique physical measure $\mu$ supported on $K$, then we necessarily  have $\mu=\pi_*\tilde\mu$, where $\pi$ is the natural projection from the tower to $M$ and $\tilde\mu$ is the physical measure for the tower map. Additionally, assuming  $\mu$  mixing, we have $\tilde\mu$  mixing as well. This implies   $\gcd\{R_i\}=1$, which then means that there are $\xi_1,\dots,\xi_n\in\mathcal P$ for which the corresponding return times $R_1,\dots,R_n$ satisfy $\gcd\{R_1\dots, R_n\}=1$. Now, shrinking the disk $\Sigma_0^0$, if necessary, and using the fact that we can choose $\Sigma_\omega^0$ arbitrarily close to $\Sigma_0^0$ for random perturbations sufficiently close to $f$, it is not difficult to see that for each $1\le i\le n$ we can take a domain $\xi_{\omega,i}\in \mathcal P_\omega$ with return time~$R_i$. 
 Observe that since we take only a finite number of domains under these conditions, the estimates on the tail of return times that we prove next remain valid.
\end{remark}

Our goal now is to prove that 
given $C,c>0$  and $0< \tau \le 1$  there exist $C',c'>0$ such that 
\begin{equation}\label{eq.EimpR}
\Leb_{D_\omega}\{\E_\omega>n\}=  Ce^{-cn^\tau} 
\implies
\Leb_{D_\omega}\{R_\omega>n\}= C'e^{-c'n^\tau}.
\end{equation}
First of all
 observe that by construction we have 
 $\mathcal
\{R_\omega>n\}\subset\Sigma_{\omega,n-N_0}^c$.
Hence,  it is enough to have the conclusion of~\eqref{eq.EimpR} with $\Leb_{D_\omega}(\Sigma_{\omega,n-N_0}^c)$ instead of $\Leb_{D_\omega}\{R_\omega>n\}$.
We start with a simple result, which is essentially a consequence of  Proposition~\ref{pr.hyptimes} and Lemma~\ref{le.keyinc}. From here on we fix  $\theta>0$  as in
Proposition~\ref{pr.hyptimes}.

\begin{lemma}\label{le.introX}
Assume that   $\dist_{D_\omega}(x,\partial{\Sigma_\omega^0})> \delta_1\alpha^{{\rho n}/{4}}$ and  $\mathcal E_\omega (x)\le n$ for some $x\in\Sigma_{\omega,n}^c$ with $n\ge 2N_0/\rho$. Then there are ${\theta n}/{2} \le t_1<\dots< t_k\le n$ with $k\ge [{\rho}n/{2}] $ such that
$$
x\in
\bigcap_{i=1}^k{S_{\omega,t_i}({\Sigma_\omega^0})}.
$$
\end{lemma}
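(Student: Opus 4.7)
The plan is to combine the Pliss-type conclusion of Proposition~\ref{pr.hyptimes} with the key inclusion of Lemma~\ref{le.keyinc}. Roughly, the hypothesis $\mathcal{E}_\omega(x)\le n$ will produce a linear number of $\alpha$-hyperbolic times in $[1,n]$; localising these in the upper window $[\theta n/2,n]$ will place $x$ in each $S_{\omega,t_i}(\Sigma_\omega^0)$, modulo ruling out the boundary set $S_{\omega,t_i}(\Sigma_\omega^c)$, which is precisely what the assumption on $\dist_{D_\omega}(x,\partial\Sigma_\omega^0)$ is designed for.

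First, since $\mathcal{E}_\omega(x)\le n$, the definition of the expansion time function in \eqref{exptime} (applied at $N=n$) yields
\[
\frac{1}{n}\sum_{j=1}^{n}\log\|Df^{-1}_{\sigma^{j-1}\omega}\mid E^{cu}_{\sigma^{j}\omega}(f_\omega^{j}(x))\|<-c.
\]
The Pliss argument underlying Proposition~\ref{pr.hyptimes} then provides integers $1\le n_1<\cdots<n_m\le n$ with $m\ge \rho n$ that are $\alpha$-hyperbolic times for $(\omega,x)$. At most $\theta n/2$ of these can lie in $[1,\theta n/2)$, so (with $\theta$ as fixed above, in particular $\theta\le\rho$) at least $[\rho n/2]$ of them lie in the window $[\theta n/2,n]$; call them $t_1<\cdots<t_k$. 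The assumption $n\ge 2N_0/\rho$ ensures $t_i\ge \rho n/2\ge N_0$, so we are in the range where the inductive construction of $\mathcal{P}_\omega$ is defined and Lemma~\ref{le.keyinc} applies.

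Next, fix one of these $t_i$. Since $t_i\le n$, the definition of $\Sigma^c_{\omega,\cdot}$ in \eqref{eq.deltans} gives $\Sigma^c_{\omega,n}\subset \Sigma^c_{\omega,t_i}$, so $x\in H_{\omega,t_i}\cap \Sigma^c_{\omega,t_i}$. By Lemma~\ref{le.keyinc},
\[
x\in S_{\omega,t_i}(\Sigma_\omega^0)\cup S_{\omega,t_i}(\Sigma_\omega^c).
\]

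The main (though easy) point is ruling out membership in $S_{\omega,t_i}(\Sigma_\omega^c)=\{y\in\Sigma_\omega^0:\dist_{D_\omega}(y,\partial\Sigma_\omega^0)<\delta_1\alpha^{t_i/2}\}$. Since $t_i\ge \rho n/2$ and $\alpha\in(0,1)$, we have $\alpha^{t_i/2}\le \alpha^{\rho n/4}$, so
\[
\delta_1\alpha^{t_i/2}\le \delta_1\alpha^{\rho n/4}<\dist_{D_\omega}(x,\partial\Sigma_\omega^0)
\]
by hypothesis, and therefore $x\notin S_{\omega,t_i}(\Sigma_\omega^c)$. Hence $x\in S_{\omega,t_i}(\Sigma_\omega^0)$ for every $1\le i\le k$, which yields the desired intersection. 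The only place any care is needed is in ensuring the compatibility of the constants $\theta\le\rho$ and $n\ge 2N_0/\rho$ so that all the times $t_i$ land in the regime where both Lemma~\ref{le.keyinc} and the boundary estimate apply; this is what the numerical hypotheses of the lemma are tailored for.
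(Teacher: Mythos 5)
Your proof is correct and follows essentially the same route as the paper: use the expansion time hypothesis together with Proposition~\ref{pr.hyptimes} (Pliss) to produce $[\rho n/2]$ hyperbolic times in the upper half-window, apply Lemma~\ref{le.keyinc} at each such time via the inclusion $\Sigma^c_{\omega,n}\subset\Sigma^c_{\omega,t_i}$, and then rule out the boundary alternative $S_{\omega,t_i}(\Sigma_\omega^c)$ using the distance hypothesis and $t_i\ge\rho n/2$. The only cosmetic difference is that you flag a possible $\theta\neq\rho$ mismatch, but in the paper $\theta$ is just the constant from Proposition~\ref{pr.hyptimes} (i.e.\ $\rho$), so the bookkeeping is identical.
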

\begin{proof}
Consider $n\ge 2N_0/\rho$ and $x\in\Sigma_{\omega,n}^c$ with    $\mathcal E (x)\le n$. If follows from Proposition~\ref{pr.hyptimes} that   $x$ has at least $[\rho n]$
$\alpha$-hyperbolic times between $1$ and $n$, and so  $x$ has at least
$[{{\rho}n}/{2}]$ $\alpha$-hyperbolic times  between ${{\rho}n}/{2}$ and~$n$.  This implies  that there are times $t_1<\dots< t_k\le n$ with  $k\ge [{{\rho}n}/{2}]$  and $t_1\ge {{\rho}n}/{2}\ge N_0$  such that 
\begin{equation}\label{eq.xhtin}
x\in H_{\omega,t_i}\cap \Sigma_{\omega,n}^c,\quad\text{for all $1\le i\le k$.}
\end{equation}
Using Lemma~\ref{le.keyinc} and recalling that $\Sigma_{\omega,n}^c\subset\Sigma_{\omega,t_i}^c$ we may write 
 for all  $1\le i\le k$
\begin{equation}\label{eq.london}
H_{\omega,t_i}\cap\Sigma_{\omega,n}^c\subset H_{\omega,t_i}\cap\Sigma_{\omega,t_i}^c\subset  {S}_{\omega,t_i}({\Sigma_\omega^0})\cup {S}_{\omega,t_i}(\Sigma_\omega^c).\end{equation}
Assuming also that $\dist_{D_\omega}(x,\partial{\Sigma_\omega^0})>  \delta_1\alpha^{{\rho n}/{4}}$,   we have
$\dist_{D_\omega}(x,\partial{\Sigma_\omega^0})>  \delta_1\alpha^{t_i/2}$  for each $1\le i\le k$.
This implies that
\begin{equation}
x\notin {S}_{\omega,t_i}(\Sigma_\omega^c),\quad \text{for all $1\le i\le k$.}
\end{equation}
Together with~\eqref{eq.xhtin} and \eqref{eq.london}, this gives    $x\in {S}_{\omega,t_i}({\Sigma_\omega^0})$ for all $1\le i\le k$.
\end{proof} 

Define for any $k,n \in\mathbb N$  the set
\begin{equation*}
X_{\omega,n}( k)= \left\{x\in\Sigma_{\omega,n}^c\,\mid\,\exists t_1<\cdots<t_k\le n \,: \,x\in
\bigcap_{i=1}^k{{S}_{\omega,t_i}({\Sigma_\omega^0})}\right\}. 
\end{equation*}
By Lemma~\ref{le.introX}, we may write
$$\Sigma_{\omega,n}^c \subset \{  \mathcal E_\omega >n\}\cup \left\{x\in{\Sigma_\omega^0}\mid \dist_{D_\omega}(x,\partial{\Sigma_\omega^0})
\leq \delta_1\alpha^{{\rho}n/{4}}\right\}\cup X_{\omega,n}\left( {\left[\frac{\rho n}2\right]}\right),$$
Observing  that  there exists   $C>0$ such that for all $n\in\mathbb N$  
\begin{equation}\label{in.lebdelta}
\leb_{D_\omega} \left\{x\in {\Sigma_\omega^0} \mid
\dist_{D_\omega}(x,\partial{\Sigma_\omega^0})\leq  \delta_1\alpha^{{\rho n}/{4}}\right\}\leq
C\alpha^{{\rho n}/{4}},
\end{equation}
  the proof of \eqref{eq.EimpR} will be complete  once we have proved that     $\leb_{D_\omega}(X_{\omega,n}( k))$ decays exponentially fast in $k$.
   For this  we need several  auxiliary lemmas that we   prove in the sequel.
Consider $Q_0\ge N_1$ as in~\eqref{eq.chooseq} and take some large   integer $Q_1\ge Q_0$, to be specified  later  in~\eqref{eq.pelinha}.
 Given   $x\in X_{\omega,n}( k)$, consider:
 \begin{itemize}
\item  the moments
  $u_1<\cdots<u_{ p}\le n$ for which~$x$
belongs to some $S_{\omega,u_i+n_i}(\xi_{i})$ with $\xi_{i}\in\mathcal P_{\omega,u_i}$ and $n_i\geq Q_1$;
\item the  moments $v_1<\cdots<v_q\le n$ for which $x$
belongs to some $S_{\omega,v_i+m_i}(\xi_{i})$ with $\xi_{i}\in\mathcal P_{\omega,v_i}$ and $  m_i< Q_1$.  
\end{itemize}
Observe that 
 \begin{equation}\label{eq.sumim}
\sum_{i=1}^p (n_i+1)+\sum_{i=1}^q (m_i+1)\ge k.
\end{equation}
We distinguish   two possible cases:

\begin{enumerate}
\item $\displaystyle\sum_{i=1}^{p}{n_i}\ge \frac k2$.
\\
Defining  for each   $ p \in\mathbb N$ and $n_1,\dots,n_p\ge Q_1$  the set
$$
Y_\omega (n_1,\dots,n_p) = \left\{ x\in{\Sigma_\omega^0} \mid \exists{ u_1<\cdots<u_p\le n \atop \xi_{1}\in\mathcal P_{\omega,u_1},\dots, \xi_{p}\in\mathcal P_{\omega,u_p}} :  
 x\in  
\bigcap_{i=1}^pS_{\omega,u_i+n_i}(\xi_{i})  \right\}
$$
and 
$$Y_{\omega,k}= \bigcup_{n_1,\ldots,n_p\geq
Q_1\atop\sum{n_i}\geq\frac{k}{2}}Y_\omega (n_1,\ldots,n_p ).$$
we   have in this case $x\in Y_{\omega,k}$.

\item  $\displaystyle\sum_{i=1}^{p}{n_i}< \frac k2$.
\\
 Since we assume $n_1,\ldots,n_p\ge Q_1$, we must have in this case $p<k/(2Q_1)$. Using~\eqref{eq.sumim} and the fact that $m_1,\dots,m_q<Q_1$, we may write
$$qQ_1+q\geq \sum_{i=1}^q (m_i+1)\ge k-\sum_{i=1}^p n_i-p \ge  \frac{k}2-\frac{k}{2Q_1}=
\frac{(Q_1-1)k}{2Q_1}\geq\frac{k}{2Q_1},
$$  
which then implies 
\begin{equation*}
q\ge \left[\frac{k}{4Q_1^2}\right].
\end{equation*}
Hence, defining for any positive  integers $n,q$  the set
\begin{equation}\label{def.znq}
Z_{\omega,n}(q) = \left\{ x\in\Sigma_{\omega,n}^c \mid  \exists {v_1<\cdots<v_q \le n \atop \xi_{1}\in\mathcal P_{\omega,v_1},\dots, \xi_{p}\in\mathcal P_{\omega,v_p} }:
 x\in
\bigcap_{i=1}^qS_{\omega,v_i}(\xi_i)  \right\},
\end{equation}
we have in this case $x\in Z_{\omega,n}\left({\left[ k / \left(4Q_1^2\right)\right]}\right)$.
\end{enumerate}

The overall conclusion   is that  
\begin{equation}\label{Z12}
X_{\omega,n}( k)\, \subset\, Y_k
\cup Z_{\omega,n}\left(\left[ \frac{k }{  4Q_1^2 }\right]\right).
\end{equation}
Our goal now is to show that the measure  of   the  sets $Y_{\omega,k}$ and $Z_{\omega,n}(k)$  decays exponentially fast in~$k$. 
We start with a preliminary  estimate on the measure of the sets used  in the definition of~$Y_{\omega,k}$.
 
\begin{lemma}\label{le.Ynk}
There is  $C_0>0$ 
such that for all $n_1,\ldots,n_p>Q_0$ we have
$$\leb_{D_\omega}\left(Y_{\omega}(n_1,\ldots,n_p)\right)\leq
C_0^p \alpha^{( n_1+\cdots +n_p)/2} .
$$
\end{lemma}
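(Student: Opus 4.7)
The proof is by induction on $p$, with $C_0$ a geometric constant to be fixed at the end.

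\textbf{Base case $p=1$.} The set $Y_\omega(n_1)$ is the union of $S_{\omega,u_1+n_1}(\xi_1)$ over $(u_1,\xi_1)$ with $\xi_1\in\mathcal P_{\omega,u_1}$. Lemma~\ref{le.sncore} bounds each summand by $C\alpha^{n_1/2}\leb_{D_\omega}(\xi_1)$, and by Proposition~\ref{pr.partition} the $\xi_1$'s form a $\leb_{D_\omega}$ mod-$0$ partition of $\Sigma_\omega^0$. Summing yields
\[
\leb_{D_\omega}(Y_\omega(n_1)) \le C\alpha^{n_1/2}\sum_{(u_1,\xi_1)} \leb_{D_\omega}(\xi_1) \le C\leb_{D_\omega}(\Sigma_\omega^0)\,\alpha^{n_1/2},
\]
which is at most $C_0 \alpha^{n_1/2}$ for $C_0$ large enough.

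\textbf{Inductive step.} For $p\ge 2$, the crucial structural fact is that, since $n_i > Q_0 \ge N_1$ for all $i$, the second form of~\eqref{eq.incluisses} applies to any chain $(u_1,\xi_1)<\cdots<(u_p,\xi_p)$ whose intersection $\bigcap_i S_{\omega,u_i+n_i}(\xi_i)$ is nonempty, yielding the nested containment $\xi_{i+1}\subset V^{1/3}_{\omega,u_i}(\xi_i)$ for each $i<p$. Decomposing
\[
\leb_{D_\omega}(Y_\omega(n_1,\ldots,n_p)) \le \sum_{\text{chains}} \leb_{D_\omega}\Big(\bigcap_{i=1}^p S_{\omega,u_i+n_i}(\xi_i)\Big)
\]
and peeling off the innermost sum, I would use mod-$0$ disjointness of the $\xi_p$'s together with~\eqref{eq.sumle} to derive, from Lemma~\ref{le.sncore},
\[
\sum_{\substack{(u_p,\xi_p)\\ \xi_p\subset V^{1/3}_{\omega,u_{p-1}}(\xi_{p-1})}} \leb_{D_\omega}(S_{\omega,u_p+n_p}(\xi_p)) \le CC_0\,\alpha^{n_p/2}\leb_{D_\omega}(\xi_{p-1}).
\]

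The main obstacle, and the step requiring the most care, is closing the induction so that the full product $\alpha^{(n_1+\cdots+n_p)/2}$ is recovered: a naive replacement of $\leb_{D_\omega}(\xi_{p-1})$ by $\leb_{D_\omega}(S_{\omega,u_{p-1}+n_{p-1}}(\xi_{p-1}))$ would introduce a spurious factor $\alpha^{-n_{p-1}/2}$. To circumvent this, I would strengthen the inductive scheme by using Proposition~\ref{p.distortion} together with the tubular-neighborhood description in the proof of Lemma~\ref{le.sncore} to obtain a local estimate of the form $\leb_{D_\omega}(S_{\omega,u_i+n_i}(\xi_i)\cap B) \le C\alpha^{n_i/2}\leb_{D_\omega}(B)$, valid for Borel sets $B\subset\tilde\xi_i$ that inherit, from the nested containment $\xi_{j}\subset V^{1/3}_{\omega,u_i}(\xi_i)$ for all $j>i$, enough Lebesgue regularity in the image chart $f_\omega^{u_i+\ell_{\xi_i}}(\tilde\xi_i)$. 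Iterating this local bound from $i=p$ down to $i=1$ along each chain, and then summing at the outermost step over the partition $\mathcal P_\omega$ provided by Proposition~\ref{pr.partition}, yields the desired bound $C_0^p\,\alpha^{(n_1+\cdots+n_p)/2}$ with $C_0$ chosen to absorb $\leb_{D_\omega}(\Sigma_\omega^0)$ and the constants arising from Proposition~\ref{p.distortion}, \eqref{eq.sumle}, and Lemma~\ref{le.sncore}.
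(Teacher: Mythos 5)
Your base case is correct and your diagnosis of the obstacle is exactly right: a naive iteration of Lemma~\ref{le.sncore} leaks a spurious factor $\alpha^{-n_{p-1}/2}$. However, the \emph{local estimate} you propose to fix this, namely $\leb_{D_\omega}(S_{\omega,u_i+n_i}(\xi_i)\cap B)\le C\alpha^{n_i/2}\leb_{D_\omega}(B)$, is false as stated: test it with $B=S_{\omega,u_i+n_i}(\xi_i)$ to get $1\le C\alpha^{n_i/2}$, which fails once $n_i$ is large. More to the point, the nesting $\xi_{i+1}\subset V^{1/3}_{\omega,u_i}(\xi_i)$ extracted from \eqref{eq.incluisses} is too weak to supply any such estimate: it places $\xi_{i+1}$ inside the hyperbolic predisk, but says nothing quantitative about where it sits relative to the thin annulus $S_{\omega,u_i+n_i}(\xi_i)$, and there is no reason these intersections are \emph{proportional} to the ambient measure of $B$.

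The ingredient you are missing is Lemma~\ref{le.existQ}, which is precisely what the integer $Q_0$ in \eqref{eq.chooseq} was designed for. Because $n_1,n_2>Q_0$, the monotonicity $S_{\omega,u_2+n_2}(\xi_2)\subset S_{\omega,u_2+Q_0}(\xi_2)$, the disjointness $S_{\omega,u_2+Q_0}(\xi_1)\cap S_{\omega,u_2+Q_0}(\xi_2)=\emptyset$ furnished by Lemma~\ref{le.existQ}, and the assumed intersection $S_{\omega,u_1+n_1}(\xi_1)\cap S_{\omega,u_2+n_2}(\xi_2)\neq\emptyset$ together force $u_1+n_1<u_2+Q_0$, i.e.\ a genuine time separation $u_2-u_1>n_1-Q_0$. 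Pushing forward by $f^{u_1}_\omega$ to the hyperbolic time $u_1$ and invoking the backward contraction of Proposition~\ref{pr.predisks}, the image $f_\omega^{u_1}(S_{\omega,u_2+n_2}(\xi_2))$ has diameter at most a constant times $\alpha^{(n_1-Q_0)/2}$; since it must meet the outer boundary of $f_\omega^{u_1}(S_{\omega,u_1+n_1}(\xi_1))$ --- a codimension-one submanifold of uniformly bounded volume --- every candidate $f_\omega^{u_1}(\xi_2)$ is confined to a tubular neighbourhood of that boundary of measure at most a constant times $\alpha^{n_1/2}$. This boundary confinement, combined with disjointness of the $\xi_2$'s and the bounded distortion of Proposition~\ref{p.distortion} to transport the inductive bound forward, is what produces the factor $\alpha^{n_1/2}$ at each level. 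Without the time separation coming from Lemma~\ref{le.existQ} there is simply no mechanism forcing the $\xi_2$'s into a set of small measure, and the induction cannot close along the lines you describe.
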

\begin{proof}
 Defining for each $ u_1\ge N_0$ and $\xi_1\in\mathcal P_{\omega,u_1}$ the set
 $$Y_{\omega, u_1}^{\xi_1}(n_1,\dots,n_p)= \left\{ x \mid \exists{ u_2<\cdots<u_p \atop \xi_{2}\in\mathcal P_{\omega,u_2},\dots, \xi_{p}\in\mathcal P_{\omega,u_p}} \!:   u_2>u_1,\; x\in  
\bigcap_{i=1}^pS_{\omega,u_i+n_i}(\xi_{i})  \right\},
$$
we may write
$$
Y_{\omega}(n_1,\dots,n_p) = \bigcup_{u_1\ge N_0} \bigcup_{\xi_1\in\mathcal P_{\omega,u_1}} Y_{\omega, u_1}^{\xi_1}(n_1,\dots,n_p).
$$
Noticing  that the elements  $\xi_1\in\mathcal P_{\omega,u_1}$  with $u_1\ge N_0$ are pairwise disjoint, it is enough to show that there is some constant $C_0>0$ such that for all $u_1\ge N_0 $ and $\xi_1\in\mathcal P_{\omega,u_1}$ we have
\begin{equation}\label{eq.ynunion}
\leb_{D_\omega}\left(Y_{\omega,u_1}^{\xi_1}(n_1,\dots,n_p)\right)\leq
C_0^p \alpha^{ (n_1+\cdots +n_p)/2}\leb_{D_\omega}(\xi_1).
\end{equation}
We shall  prove \eqref{eq.ynunion} by induction on  $p$. Considering  $C_0>C $, where $C>0$ is the constant in Lemma~\ref{le.sncore},   we immediately get
the result for $p=1$.
Now suppose that $p>1$. 
We may write
\begin{equation}\label{eq.Ynsomega}
Y_{\omega,u_1}^{\xi_1}(n_1,\dots,n_p)=\bigcup_{u_2> u_1} \bigcup_{\xi_2\in\mathcal P_{\omega,u_2}}Y_{\omega,u_2}^{\xi_2}(n_2,\dots,n_p)
\end{equation}
 By Lemma~\ref{le.existQ}, for all $\xi_2\in \mathcal P_{\omega,u_2}$  with  $u_2>u_1$  we have
\begin{equation}\label{eq.intertwoQ}
S_{\omega,u_2+Q_0} (\xi_1)\cap S_{\omega,u_2+Q_0} (\xi_2)=\emptyset.
\end{equation}
Assuming with no loss of generality  that $Y_{\omega, u_1}^{\xi_1}(n_1,\dots,n_p)$  is nonempty, we have  in particular
\begin{equation}\label{eq.intertwo}
 S_{\omega,u_1+n_1} (\xi_1) \cap
 S_{\omega,u_2+n_2} (\xi_2)\neq\emptyset.
\end{equation}
From~\eqref{eq.intertwoQ} and~\eqref{eq.intertwo} we easily deduce that $u_1+n_1< u_2+Q_0,$ or equivalently  $u_2-u_1>n_1-Q_0$. Observe that 
$$
\diam \left(f_\omega^{u_2+\ell_2}(S_{\omega,u_2+n_2} (\xi_2))\right)\le 2\delta_0,
$$
and so, by  \eqref{eq.defK0}
$$
\diam \left(f_\omega^{u_2}(S_{\omega,u_2+n_2} (\xi_2))\right)\le 2\delta_0 K_0.
$$
Then, using~\eqref{eq.sncontido} and Proposition~\ref{pr.predisks},
we get
\begin{equation}\label{eq.ineqs12}
\diam \left(f_\omega^{u_1}(S_{\omega,u_2+n_2} (\xi_2))\right)\leq \alpha^{\frac{u_2-u_1}{2}}
\diam \left(f_\omega^{u_2}(S_{\omega,u_2+n_2} (\xi_2))\right)\leq
2\delta_0K_0\alpha^{\frac{n_1-Q_0}{2}}.
\end{equation}

Consider now $\beta_\omega$   the outer component of the boundary of $f_\omega^{u_1}(S_{\omega,u_1+n_1} (\xi_1))$ and $\mathcal N_\omega$  a neighbourhood   of  $\beta_\omega$ of size $2\delta_0K_0\alpha^{( {n_1-Q_0})/{2}}$ inside $f_\omega^{u_1}(D_\omega)$. Since  $n_1\ge Q_0\ge N_1$, by definition  of  $S_{\omega,u_1+n_1} (\xi_1)$ there is $0\le \ell\le L$ such that $f^{\ell}(\beta_\omega)$ coincides with the boundary of a ball of radius   not exceeding $2\delta_0$ centred at~$p$. Then, using~\eqref{eq.defK0}  we deduce   that the volume of the submanifold $\beta_\omega$ is uniformly bounded. Hence, there must be some   constant $C>0$ (not depending on $u_1\ge N_0$ or on $\xi_1\in\mathcal P_{\omega,u_1}$) such that
 \begin{equation}\label{eq.nomega}
\leb_{f_\omega^{u_1}(D_\omega)}\left(\mathcal N_\omega \right)\le C\alpha^{n_1/2}.
\end{equation}
Now,  since $f_\omega^{u_1}(S_{\omega,u_2+n_2} (\xi_2))$ intersects $\beta_\omega$,  it follows  from~\eqref{eq.ineqs12}  that $f_\omega^{u_1}(S_{\omega,u_2+n_2} (\xi_2))$ is contained in  $\mathcal N_\omega$. This in particular implies that
 \begin{equation}\label{eq.incfu1no}
f_\omega^{u_1}(\xi_2)\subset \mathcal N_\omega.
\end{equation}
On the other hand, by the induction hypothesis we have
 \begin{equation*}
 \leb_{D_\omega}  \left(Y_{\omega,u_2}^{\xi_2}(n_2,\dots,n_p)\right)
  \leq C_0^{p-1} \alpha^{(n_2+\cdots +n_p)/2}\leb_{D_\omega}  (\xi_2),
 \end{equation*}
Using~\eqref{eq.sncontido} and the definition of $Y_{\omega,u_1}^{\xi_1}(n_1,\dots,n_p)$, we easily deduce that   
$$
Y_{\omega,u_1}^{\xi_1}(n_1,\dots,n_p)\subset S_{\omega,u_1+n_1}(\xi_1)\subset V_{u_1}(\xi_1).
$$
Considering the constant $C_1>0$ given by Proposition~\ref{p.distortion}, we get
 \begin{equation*}
 \leb_{f_\omega^{u_1}(D_\omega)}   \left(f_\omega^{u_1}\left( Y_{\omega,u_2}^{\xi_2}\left(n_2,\dots,n_p\right)  \right)\right)
  \leq C_1C_0^{p-1} \alpha^{(n_2+\cdots +n_p)/2}\leb_{f_\omega^{u_1}(D_\omega)} \left(f_\omega^{u_1}(\xi_2)\right).
 \end{equation*}
 Observe that the sets $\xi_2\in \mathcal P_{\omega,u_2}$ with $u_1<u_2 $ are pairwise disjoint. Moreover, 
 by~\eqref{eq.incluisses} these sets are all contained in $V_{\omega,u_1}(\xi_1)$. Since $f_\omega^{u_1}$ is injective on $V_{\omega,u_1}(\xi_1)$, we easily get that the sets $f_\omega^{u_1}(\xi_2)$ with $\xi_2\in \mathcal P_{\omega,u_2}$ and $u_1<u_2 $  are also pairwise disjoint. Then, using~\eqref{eq.Ynsomega}, \eqref{eq.nomega} and \eqref{eq.incfu1no}
we may write
 \begin{align*}
 \leb_{f_\omega^{u_1}(D_\omega)}   \left(f^{u_1 }\left(Y_{\omega,u_1}^{\xi_1}(n_1,\dots,n_p)\right)\right) &\leq
 \sum_{u_2=u_1+1}^{n}\sum_{\xi_2\in\mathcal P_{\omega,u_2}}\leb_{f_\omega^{u_1}(D_\omega)}  \left(f_\omega^{u_1 }\left(Y_{\omega,u_2}^{\xi_2}(n_2,\dots,n_p)\right)\right)\\
&\leq {C_1}C_0^{p-1} \alpha^{(n_2+\cdots +n_p)/2} \sum_{u_2=u_1+1}^{n}\sum_{\xi_2\in\mathcal P_{\omega,u_2}}\leb_{f_\omega^{u_1}(D_\omega)} 
 \left(f_\omega^{u_1}(\xi_2)\right)\\
 &\leq {C_1}C_0^{p-1} \alpha^{(n_2+\cdots +n_p)/2}\leb_{f_\omega^{u_1}(D_\omega)}  \left(\mathcal N_\omega\right)\\
&\leq {CC_1}C_0^{p-1} \alpha^{(n_1+\cdots +n_p)/2} .
 \end{align*}
Taking $C_0\geq CC_1$, we finish the proof.
\end{proof}

Observe that the constant $C_0$ given by Lemma~\ref{le.Ynk} in principle depends on the integer $Q_0$ introduced in~\eqref{eq.chooseq}, but not  on $Q_1\ge Q_0$. Since this holds, we may  choose $Q_1$ and integer sufficiently large such that 
\begin{equation}\label{eq.pelinha}
\alpha^{1/2}+C_0\alpha^{Q_1/2}<1.
\end{equation}
The next result gives the expected  estimate on  the measure of the set $Y_k $. 

\begin{proposition}\label{pr.Ynk}
There are  $C_1>0$ and $ \lambda_1<1$
such that for all 
 $k\ge 1$ we have
$$\leb_{D_\omega}\left(Y_{\omega,k}\right)\leq C_1\lambda_1^k .$$
\end{proposition}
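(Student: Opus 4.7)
The plan is to exploit the definition of $Y_{\omega,k}$ together with the estimate from Lemma~\ref{le.Ynk}, and then extract exponential decay in $k$ from condition~\eqref{eq.pelinha} via a Hölder-type splitting of the exponent.

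First, by subadditivity of the measure and Lemma~\ref{le.Ynk}, we may write
\begin{equation*}
\leb_{D_\omega}(Y_{\omega,k}) \le \sum_{p\ge 1} \sum_{\substack{n_1,\ldots,n_p\ge Q_1 \\ n_1+\cdots+n_p\ge k/2}} \leb_{D_\omega}(Y_\omega(n_1,\ldots,n_p)) \le \sum_{p\ge 1} C_0^p \sum_{\substack{n_1,\ldots,n_p\ge Q_1 \\ n_1+\cdots+n_p\ge k/2}} \alpha^{(n_1+\cdots+n_p)/2}.
\end{equation*}
The essential point is to bound this double sum by a geometrically small quantity in $k$, and simultaneously to ensure summability in $p$.

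Next, I would use the splitting trick: pick $\theta\in(0,1)$ and write, for $s=n_1+\cdots+n_p\ge k/2$,
\[
\alpha^{s/2}=\alpha^{\theta s/2}\cdot\alpha^{(1-\theta)s/2}\le \alpha^{\theta s/2}\cdot\alpha^{(1-\theta)k/4}.
\]
Substituting and dropping the constraint $s\ge k/2$ from the resulting unconstrained geometric sum in each variable $n_i$, one obtains
\[
\leb_{D_\omega}(Y_{\omega,k}) \le \alpha^{(1-\theta)k/4}\sum_{p\ge 1}\left(\frac{C_0\alpha^{\theta Q_1/2}}{1-\alpha^{\theta/2}}\right)^p.
\]

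Finally, the condition \eqref{eq.pelinha}, namely $\alpha^{1/2}+C_0\alpha^{Q_1/2}<1$, is precisely the statement that the ratio $C_0\alpha^{Q_1/2}/(1-\alpha^{1/2})$ is strictly less than $1$, i.e. the case $\theta=1$ of the ratio above. Since this ratio depends continuously on $\theta$, there exists $\theta\in(0,1)$ sufficiently close to $1$ for which $r:=C_0\alpha^{\theta Q_1/2}/(1-\alpha^{\theta/2})<1$. The geometric series in $p$ then sums to $r/(1-r)$, so setting $C_1=r/(1-r)$ and $\lambda_1=\alpha^{(1-\theta)/4}\in(0,1)$ yields the desired bound $\leb_{D_\omega}(Y_{\omega,k})\le C_1\lambda_1^k$.

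There is no substantial obstacle here: the inductive hard work has already been done in Lemma~\ref{le.Ynk} (which controlled the geometry of nested $S_{\omega,n}(\xi)$ sets via the $Q_0$-separation Lemma~\ref{le.existQ}), and the only subtlety is that \eqref{eq.pelinha} was chosen exactly so that at the critical exponent $\theta=1$ the corresponding series is sub-geometric; continuity in $\theta$ then converts summability into genuine exponential decay in $k$, and uniformity in $\omega$ is automatic because all constants ($C_0,\alpha,Q_1$) are $\omega$-independent.
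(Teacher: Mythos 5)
Your argument is correct and it gives the same conclusion as the paper, but by a genuinely different computation. The paper reduces, exactly as you do, to showing that
\[
\sum_{n_1,\ldots,n_p\ge Q_1,\ \sum n_i\ge k/2}C_0^p\,\alpha^{(n_1+\cdots+n_p)/2}
\]
decays exponentially in $k$, and then invokes a generating-function argument: it forms $\sum_n\bigl(\sum_{n_1+\cdots+n_p=n}C_0^p\alpha^{n/2}\bigr)z^n=\frac{C_0\alpha^{Q_1/2}z^{Q_1}}{1-\alpha^{1/2}z-C_0\alpha^{Q_1/2}z^{Q_1}}$, observes that under \eqref{eq.pelinha} the denominator is nonzero on a neighbourhood of the closed unit disk, and concludes exponential decay of the Taylor coefficients. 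Your Hölder splitting $\alpha^{s/2}=\alpha^{\theta s/2}\alpha^{(1-\theta)s/2}\le\alpha^{\theta s/2}\alpha^{(1-\theta)k/4}$ replaces this renewal-type complex-analytic step with an entirely elementary one: you extract the geometric factor $\alpha^{(1-\theta)k/4}$ from the constraint, drop the constraint in the remaining sum, and sum a genuine geometric series in $p$ whose ratio $r(\theta)=C_0\alpha^{\theta Q_1/2}/(1-\alpha^{\theta/2})$ is controlled by continuity near $\theta=1$, since \eqref{eq.pelinha} says exactly $r(1)<1$. Both approaches stand on Lemma~\ref{le.Ynk} and on \eqref{eq.pelinha}, and both give $\omega$-uniform constants because $C_0$, $\alpha$, $Q_1$ are $\omega$-independent. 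What the paper's route buys is the optimal base $\lambda_1$ (the reciprocal of the nearest pole of the generating function), at the cost of quoting some complex analysis that the authors do not fully spell out; what your route buys is self-containedness and elementary transparency, at the cost of a worse (but still explicit) base $\alpha^{(1-\theta)/4}$ for some $\theta$ close to $1$, which is all that is needed here.
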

\begin{proof}
By the definition of $Y_{\omega,k}$ and Lemma~\ref{le.Ynk} we just need to show that 
$$\sum_{n_1,\ldots,n_p\geq
Q_1\atop\sum{n_i}\geq\frac{k}{2}}C_0^p \alpha^{( n_1+\cdots +n_p)/2}
$$
decays  exponentially fast  with $k$.
We use the generating series
\begin{eqnarray*}
\sum_{n\ge 1}\sum_{n_1,\ldots,n_p  \geq  
Q_1\atop\sum{n_i}=n}C_0^p \alpha^{( n_1+\cdots +n_p)/2}z^n
&=&\sum_{p=1}^{\infty}\left(C_0\sum_{n=Q_1}^{\infty}\alpha^{n/2}z^n
 \right)^p\\
 &=&\frac{C_0\alpha^{Q_1/2}z^{Q_1}}{1-{\alpha^{1/2}}z-C_0\alpha^{Q_1/2}z^{Q_1}}.
\end{eqnarray*}
Under condition \eqref{eq.pelinha}, the function above has no  pole in a neighborhood of the unit disk
in~$\mathbb{C}$. Thus, the coefficients of its power series decay
exponentially fast: there are constants $C_1>0$ and $\lambda_1<1$
such that
$$\sum_{n_1,\ldots,n_p\geq
Q_1,\atop\sum{n_i}=n}C_0^p \alpha^{( n_1+\cdots +n_p)/2}\leq
C_1\lambda_1^n,
$$
and so we are done.
\end{proof}

At this point we introduce  some  simplification in the notation. Consider 
$$\mathcal T_\omega=\left\{ S_{\omega,n}(\xi)\cup\xi \mid \text{for some $\xi \in \mathcal P_{\omega,n}$ and $n \ge N_0$}\right\}.
$$
Given  $T=S_{\omega,n}(\xi)\cup \xi\in\mathcal T_\omega$  with $\xi\in\mathcal P_{\omega,n}$, define
 $$\xi_T=\xi\qand t(T)=n.$$
We will  refer to $\xi_T$ as the \emph{core} of $T\in \mathcal T_\omega$. Notice  that for any  $T_1,T_2\in\mathcal T_\omega$  we 
 have 
 \begin{equation}\label{eq.cores1}
T_1\neq T_2\implies  \xi_{T_1}\cap \xi_{T_2}=\emptyset,
\end{equation}
and from~\eqref{eq.incluisses} it follows  that  
\begin{equation}\label{eq.cores2}
T_1\cap T_2\neq\emptyset\implies T_2\subset V_{\omega,t(T_1)}(\xi_{T_1})
\end{equation}
Finally,
from 
Lemma~\ref{le.sncore} we easily deduce the existence of $C>0$ such that  for every $T\in\mathcal T_\omega$ we have
\begin{equation}\label{eq.satcore} \leb_{D_\omega}\left(T\right) \leq C\leb\left(\xi_T\right).
\end{equation}

Our goal now is to prove that the measure of $Z_{\omega,n}(q)$ decays exponentially fast with~$q$. For that  we need a couple of  auxiliary lemmas that we   prove in the sequel.
We start by fixing some large integer 
$Q_2$ 
 to be specified  in~\eqref{eq.Qduas}.  Given  
 $x\in Z_{\omega,n}(q)$, consider  the times  $v_1<\cdots<v_q\le n$ as
in the definition of $Z_{\omega,n}(q)$   in~\eqref{def.znq}. Take the smallest positive integer $u$ such that $Q_2u\ge n$ and, for each $1\le i\le u$, consider 
  from  the 
interval $( (i-1)Q_2,iQ_2]$ the first  element in $\{v_1,\dots,v_q\}$, if there is at least one. Denote the subsequence of those elements  by  $w_{1}<\dots<w_{p}$. We necessarily have $p\ge \left[  q/Q_2\right],$ and so
$(p+1)Q_2 \geq q$. Keeping only the elements  with odd indexes, we get a
sequence  $t_1<\ldots<t_\ell$ with $2\ell\geq p$. This implies that
\begin{equation}\label{eq.ellqQ2}
\ell\geq \frac{q-Q_2}{2Q_2}.
\end{equation}
  Moreover, by construction we have 
  \begin{equation}\label{eq.tdifer}
t_{i+1}-t_i\geq Q_2,\quad\text{for each $1\le
i\le\ell$.}
\end{equation}
According to the definition of  $Z_{\omega,n}(q)$,  for each  $1\le i\le\ell$
there is some  $T_i\in \mathcal T_\omega$ such that  $t(T_i)=t_i$.
Set  $$I=\{1\leq i\leq\ell \mid T_i\subset T_1\cap\dots\cap T_{i-1}\}.$$
Now we consider  the   two possible cases:

\begin{enumerate}
\item  $\# I\geq \ell/2$.
\\
Define for any positive integers $n,k\ge 1$ the set
\begin{equation*}\label{eq.ze2}
Z_{\omega,n}^0(k)  = \left\{ x\in\Sigma_{\omega,n}^c\, |\,\exists T_1 \supset 
\cdots\supset  T_k\text{ with $t(T_1)<\cdots< t(T_k) \le n$ and $x\in
 T_{\omega,k}$} \right\}.
\end{equation*}
Keeping only  
elements with indexes in $I$ and recalling~\eqref{eq.ellqQ2},  we  easily see that in this case we have 
$x\in Z^0_{\omega,n}\left({\left[   {(q-Q_2)}/{(4Q_2)} \right]}\right).$

\smallskip

\item   $\# I<\ell/2$.

Considering  $J=\{1,\dots,\ell\}\setminus I$, we necessarily have   $\# J \geq \ell/2$. We define
$$j_0=\sup J\qand i_0=\inf\left\{i<j_0\mid T_{j_0}\not\subset 
 T_i\right\}.$$ Next we define 
 $$j_1=\sup\{j\leq i_0\mid j\in J\}\qand i_1=\inf\left\{i<j_1\mid T_{j_1}\not\subset
 T_i\right\}.$$ Proceeding inductively, the process must necessarily stop at some    $i_{m_0}$. By construction, we have 
   $$J\subset\bigcup_{s=0}^{m_0}\{i_s+1,\dots,j_s\},$$ which then gives 
    $$
    \sum_{s=0}^{m_0}(j_s-i_s)\geq\# J\geq
     \frac\ell2.
     $$
     On the other hand, it follows from~\eqref{eq.tdifer} that   $|t_{j_s}-t_{is}|\ge Q_2(j_s-i_s)$ for all $0\le s\le m_0$.
Altogether, this yields
 $$\sum_{s=0}^{m_0} \frac{t(T_{j_s})-t(T_{i_s})}{Q_2} =\sum_{s=0}^{m_0}\frac{t_{j_s}-t_{i_s}}{Q_2} \geq
 \frac\ell2\ge  \frac{q-Q_2}{4Q_2}.
 $$  
 \end{enumerate}

Motivated by this second case,      we consider  for each     $T\in\mathcal T_\omega$
the set    $\mathcal F(T) $ of   finite sequences   $ (T_0, \dots,T_{2m})\in\mathcal T^{2m+1}_\omega$ with $m\ge 0$ and $T_0=T$ such that 
   $ t(T_{2i-2} )\leq
t(T_{2i-1}) \leq t(T_{2i})-Q_2$  and $T_{2i} \not\subset T_{2i-1}$  for all $1\le i\le m$. 
Then we define for each $k\ge 0$ and $T \in\mathcal T_\omega$  the sets
 \begin{equation*}
 Z_\omega(k,T)=
  \left\{x\in{\Sigma_{\omega,n}^0}\mid  
\exists (T_{0},\dots,T_{2m})\in\mathcal F(T) : \sum_{i=1}^{m}\frac{t(T_{2i})-t(T_{2i-1})}{Q_2} \geq k
\text{ and } \, x \in  
 \bigcap_{i=0}^{2m}T_{i}\,\right\}. \label{eq.ze1}
\end{equation*}
and 
$$ Z^1_\omega(k)=\bigcup_{T\in\mathcal T_\omega}Z_\omega(k,T).$$
The considerations  in the case $\# I<\ell/2$  above show that   
$x\in Z_\omega \left(\left[   {(q-Q_2)}/{(4Q_2)} \right], T\right),$
with  $T=T_{i_{m_0}}$ and   sequence $
(T_{i_{m_0}},T_{i_{m_0}},T_{j_{m_0}},\dots,T_{i_0},T_{j_0})\in\mathcal F( T)$. 
Hence,    we have 
\begin{equation}\label{eq.incluiznk}
 Z_{\omega,n}(q)\subset Z^0_{\omega,n}\left(\left[  \frac{q-Q_2}{4Q_2} \right]
\right)\cup Z_\omega^1\left(\left[   \frac{q-Q_2}{4Q_2} \right]\right).
\end{equation}
Our final goal   is to show that the measure of the  sets   $Z_{\omega,n}^0(k) $ and  $Z_\omega^1(k )$ decays exponentially fast with $k$.


\begin{lemma}\label{le.Z^0}
There exists $\lambda_2<1$ such that for all $k\ge 1$   we have
$$\leb_{D_\omega}\left(Z^0_{\omega,n}(k)\right)\leq \lambda_2^k\leb_{D_\omega}\left({\Sigma_{\omega,n}^0}\right).$$
\end{lemma}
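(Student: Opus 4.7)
The plan is to establish a uniform one-step geometric bound
$$
\leb_{D_\omega}\!\left(\bigcup_{\substack{T'\in\mathcal T_\omega \\ T'\subsetneq T,\; t(T')>t(T)}} T'\right) \;\leq\; \lambda_2\,\leb_{D_\omega}(T), \qquad T\in\mathcal T_\omega,
$$
with $\lambda_2<1$ independent of $T$ and $\omega$. Once this is available, I iterate it $(k-1)$ times along the nested chain $T_1\supset T_2\supset\cdots\supset T_k$ appearing in the definition of $Z^0_{\omega,n}(k)$, and then sum over the outermost element $T_1\in\mathcal T_\omega$. Since the cores $\xi_{T_1}$ are pairwise disjoint subsets of $\Sigma_\omega^0$ by \eqref{eq.cores1} and $\leb_{D_\omega}(T_1)\le C\leb_{D_\omega}(\xi_{T_1})$ by \eqref{eq.satcore}, this initial sum is controlled by a uniform constant multiple of $\leb_{D_\omega}(\Sigma_\omega^0)$, and after absorbing this constant into the exponent the stated bound $\lambda_2^k\leb_{D_\omega}(\Sigma_\omega^0)$ follows.

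To prove the one-step contraction I push $T=V^{1/3}_{\omega,t(T)}(\xi_T)$ forward by $f_\omega^{t(T)}$, turning it into a $cu$-disk of radius $\delta_1/3$. For any $T'\subsetneq T$ of height $t(T')=t(T)+m>t(T)$, Proposition~\ref{pr.predisks} applied between times $t(T)$ and $t(T')$ yields the diameter contraction
$$
\diam\!\bigl(f_\omega^{t(T)}(T')\bigr)\;\le\;\tfrac{2\delta_1}{3}\alpha^{m/2},
$$
hence $\leb_{f_\omega^{t(T)}(D_\omega)}(f_\omega^{t(T)}(T'))\le c\,\alpha^{dm/2}$ where $d=\dim D_\omega$. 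Bounded distortion (Proposition~\ref{p.distortion}) then translates this into $\leb_{D_\omega}(T')/\leb_{D_\omega}(T)\le C'\alpha^{dm/2}$. On the other hand, the cores $\xi_{T'}$ at height $t(T)+m$ inside $T$ are pairwise disjoint by \eqref{eq.cores1}, and each has an $f_\omega^{t(T')+\ell_{\xi_{T'}}}$-image of fixed radius $\delta_0$; transporting this uniform lower bound back to time $t(T)$ via distortion shows the number $N_m$ of such $T'$ satisfies $N_m\le C''\alpha^{-dm/2}$. Combining these two estimates gives
$$
\sum_{\substack{T'\subsetneq T\\ t(T')=t(T)+m}}\!\!\leb_{D_\omega}(T') \;\le\; C_1\,\alpha^{m/2}\,\leb_{D_\omega}(T),
$$
and summing the geometric series $m\ge 1$ produces $\lambda_2\le C_1\alpha^{1/2}/(1-\alpha^{1/2})<1$ provided $\alpha$ was taken sufficiently small at the outset of the construction, which only tightens the set of hyperbolic times without affecting any of the previous lemmas.

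The principal technical point is balancing the exponential decay $\alpha^{dm/2}$ of each individual $\leb_{D_\omega}(T')$ against the possible exponential growth of the cardinality $N_m$ of partition elements at height $t(T)+m$ that fit inside $T$. A naive use of either bound alone fails: the disjoint-core estimate gives only $\sum_{T'}\leb_{D_\omega}(T')\le C\leb_{D_\omega}(T)$, while the pointwise bound summed over a potentially very large $N_m$ is only a constant per height. It is only after combining both bounds through bounded distortion, so that a net factor $\alpha^{m/2}$ survives at each height, that the geometric series converges strictly below~$1$ and the recursion closes.
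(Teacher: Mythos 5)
Your proposed one-step contraction
$$
\sum_{\substack{T'\subsetneq T\\ t(T')=t(T)+m}}\leb_{D_\omega}(T') \le C_1\,\alpha^{m/2}\leb_{D_\omega}(T)
$$
is the crux of the argument, and it is not established by what you wrote; in fact I believe it is false in general. The two estimates you quote — $\leb_{D_\omega}(T')\lesssim\alpha^{dm/2}\leb_{D_\omega}(T)$ for each individual $T'$, and $N_m\lesssim\alpha^{-dm/2}$ for the count of disjoint cores — multiply to a \emph{constant}, since $\alpha^{dm/2}\cdot\alpha^{-dm/2}=1$; no factor $\alpha^{m/2}$ survives. Indeed, the only structural inequality available here is that the cores $\xi_{T'}$ of the deeper elements are pairwise disjoint and disjoint from $\xi_T$, whence
$$
\sum_{\substack{T'\subsetneq T\\ t(T')>t(T)}}\leb_{D_\omega}(T')\le C\sum\leb_{D_\omega}(\xi_{T'})\le C\bigl(\leb_{D_\omega}(T)-\leb_{D_\omega}(\xi_T)\bigr)\le (C-1)\leb_{D_\omega}(T),
$$
using \eqref{eq.satcore}; this is $\ge\leb_{D_\omega}(T)$ whenever the distortion constant $C\ge 2$ and gives no strict contraction. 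The suggestion to take $\alpha$ smaller does not help: $\alpha$ is fixed by the Pliss/hyperbolicity constants of $f$ and is not a free parameter.

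Your second gap is that even granting the one-step bound, bounding $Z^0_{\omega,n}(k)$ by a nested chain and summing over the ``outermost'' $T_1$ requires bounding a union of $W_{k-1}(T')$ over overlapping $T'$, and the induction you sketch passes through the sum $\sum_{T'}\leb_{D_\omega}(T')$, which is exactly the quantity that does not contract.

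The paper's proof sidesteps both issues. Instead of trying to extract decay from time differences (which is what the sibling lemmas \ref{le.Ynk}, \ref{le.zkt} do, exploiting the annular geometry of elements that \emph{escape} their predecessor), for $Z^0$ it organizes $\mathcal T_\omega$ by \emph{rank} (nesting depth) and constructs a global decreasing sequence of sets $\tilde Z_{\omega,k}\supset Z^0_{\omega,n}(k)$. It then observes that every element $T$ of rank $k{+}1$ contributes its core $\xi_T\subset\tilde Z_{\omega,k}\setminus\tilde Z_{\omega,k+1}$, and since these cores are disjoint and satisfy $\leb_{D_\omega}(T)\le C\leb_{D_\omega}(\xi_T)$ the pigeonhole
$$
\leb_{D_\omega}(\tilde Z_{\omega,k+1})\le C\,\leb_{D_\omega}(\tilde Z_{\omega,k}\setminus\tilde Z_{\omega,k+1})
$$
yields the factor $C/(C+1)<1$ per rank, independently of time differences. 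You would need this global stratification (or an equivalent device) to close the argument; the per-level, per-height attempt does not.
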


\begin{proof}
We define $\mathcal {T}_{\omega,1}$ as the class of elements  $T\in\mathcal T_\omega$ with $t(T)\le n$   not contained in any other element of~$\mathcal T_\omega$. Then, we define
$\mathcal T_{_\omega,2}$ as the class of elements  $T\notin \mathcal T_{\omega,1}$ with $t(T)\le n$ contained  in elements of~$\mathcal T_{\omega,1}$ and not contained in any other elements of $\mathcal T_\omega$.
Proceeding inductively, this process must stop in a finite number of steps.   We say that  the  elements in $\mathcal T_{\omega,i}$ have 
\emph{rank}~$i$.
Then we define $$C_{\omega,k}=\bigcup_{i=1}^{k}\bigcup_{T\in \mathcal T_{\omega,i}}\xi_T,$$
and
$$\tilde{Z}_{\omega,k}=\left(\bigcup_{T\in{\mathcal T_k}}T\right)\setminus C_{\omega,k}.$$

We claim  that $Z^0_{\omega,n}(k)\subset\tilde Z_{\omega,k}$. Actually, given any $x\in
Z_{\omega,n}^0(k)$, we have $T_1,   \dots, T_k\in \mathcal T_\omega$ with $T_1\supset    \cdots\supset  T_k$ and  $t(T_1)<\cdots <t(B_k)\leq n$ such that $x\in T_k\cap\Sigma_{\omega,n}^c$. Clearly, $T_k$ has rank $r\geq k$. Take $T'_1\supset \cdots\supset  T'_r$  with $T'_i
\in \mathcal T_{\omega,i}$ and $T'_r=T_k$. In particular, $x\in T'_i$ for
$i=1,\dots,k$, and so $x\in \bigcup_{T\in\mathcal T_{\omega,k}}T$. On the
other hand, as  $x\in\Sigma_{\omega,n}^c$ and $C_{\omega,k} \cap\Sigma_{\omega,n}^c=\emptyset$,
we get $x \notin C_{\omega,k}$. This gives    $x\in \tilde{Z}_{\omega,k}$.

Now we show that for each $k$ we have
\begin{equation}\label{eq.ztildes}
\leb_{D_\omega}\left(\tilde{Z}_{\omega,k+1}\right)\le \frac{C}{C+1}\leb_{D_\omega}\left(\tilde{Z}_{\omega,k}\right),
\end{equation}
where $C>0$ is the constant given in~\eqref{eq.satcore}.
 To see this, we start by showing that for all $T \in \mathcal T_{k+1}$  we have
 \begin{equation}\label{eq.coresubz}
\xi_T\subset
\tilde{Z}_{\omega,k}\setminus \tilde{Z}_{\omega,k+1}.
\end{equation}
Take any $T \in \mathcal T_{\omega,k+1}$ and let $T'\in\mathcal T_\omega$ be
an element of rank $k$ containing $T$. As the cores are pairwise
disjoint, we have $\xi_T\cap C_{\omega,k}=\emptyset$, and so $\xi_T\subset
T'\setminus C_{\omega,k}\subset \tilde{Z}_{\omega,k}$. As, by definition, we have $\xi_T\subset
C_{\omega,k+1}$, it follows that $\xi_T\cap
\tilde{Z}_{\omega,k+1}=\emptyset$. This gives \eqref{eq.coresubz}. 

Let us now prove~\eqref{eq.ztildes}. Using~\eqref{eq.satcore} and the fact that the cores of the elements in $\mathcal T$ are pairwise disjoint, we may write
\begin{eqnarray*}
  \leb_{D_\omega}\left(\tilde{Z}_{\omega,k+1}\right) &\leq &\sum_{T\in
\mathcal{T}_{\omega,k+1}}\leb_{D_\omega}\left(T\right)\\ 
& \le &C\sum_{T\in\mathcal T_{\omega,k+1}}\leb_{D_\omega}\left(\xi_T\right)\\
                            & \le &C\leb_{D_\omega}\left(\tilde{Z}_{\omega,k}\setminus \tilde{Z}_{\omega,k+1}\right).
\end{eqnarray*}
Therefore,
\begin{eqnarray*}
  (C+1)\leb_{D_\omega}\left(\tilde{Z}_{\omega,k+1}\right) &\le& C\leb_{D_\omega}\left(\tilde{Z}_{\omega,k+1}\right)+C\leb_{D_\omega}\left(\tilde{Z}_{\omega,k}\setminus \tilde{Z}_{\omega,k+1}\right) \\
    &=& C\leb_{D_\omega}\left(\tilde{Z}_{\omega,k}\right).
\end{eqnarray*}
Inductively, this yields 
 $$
 \leb_{D_\omega}\left(\tilde{Z}_{\omega,k}\right)\leq
\left(\frac{C}{C+1}\right)^k\leb_{D_\omega}({\Sigma_{\omega,n}^0}).$$  
Since $Z_{\omega,k}\subset \tilde{Z}_{\omega,k}$, the result follows with $\lambda_2=C/(C+1)$. 
\end{proof}

\begin{lemma}\label{le.zkt}
There is $C_2>0$   such that for all $k\ge 0$ and  $T\in\mathcal T_\omega$ we have
$$
\leb_{D_\omega}\left(Z_\omega(k,T) \right)\leq C_2\left(C_2 \alpha^{Q_2/2}\right)^{k}\leb_{D_\omega}\left(\xi_T\right).
$$
\end{lemma}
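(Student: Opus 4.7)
My plan is to prove the estimate by induction on $k$, choosing the constant $C_2$ at the end.

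\textbf{Base case.} For $k=0$ the only admissible sequence is $(T_0)=(T)$ with $m=0$, so $Z_\omega(0,T)\subset T$ and by \eqref{eq.satcore} we get $\leb_{D_\omega}(Z_\omega(0,T))\le C\leb_{D_\omega}(\xi_T)\le C_2\leb_{D_\omega}(\xi_T)$ as soon as $C_2\ge C$.

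\textbf{Inductive step.} Given a witnessing sequence $(T_0=T,T_1,T_2,\dots,T_{2m})$ for some $x\in Z_\omega(k,T)$, write $j_i=\bigl(t(T_{2i})-t(T_{2i-1})\bigr)/Q_2\ge 1$. Peeling off the first pair, the tail $(T_2,T_3,\dots,T_{2m})$ witnesses $x\in Z_\omega\bigl(k-\lceil j_1\rceil,\,T_2\bigr)$ with root $T_2$. Hence
\[
Z_\omega(k,T)\subset\bigcup_{j\ge 1}\ \bigcup_{(T_1,T_2)\in\mathcal A_j(T)} \bigl(T\cap T_1\cap Z_\omega(k-j,T_2)\bigr),
\]
where $\mathcal A_j(T)$ is the set of pairs with $T\cap T_1\ni x$, $t(T_1)\ge t(T)$, $T_1\cap T_2\ni x$, $t(T_2)\ge t(T_1)+jQ_2$ and $T_2\not\subset T_1$. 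By \eqref{eq.cores2}, each such $T_1\subset V_{\omega,t(T)}(\xi_T)$ and each $T_2\subset V_{\omega,t(T_1)}(\xi_{T_1})$. The induction hypothesis gives $\leb_{D_\omega}(Z_\omega(k-j,T_2))\le C_2(C_2\alpha^{Q_2/2})^{k-j}\leb_{D_\omega}(\xi_{T_2})$, so the task reduces to bounding $\sum_{T_2}\leb_{D_\omega}(\xi_{T_2})$ and then $\sum_{T_1}\leb_{D_\omega}(\xi_{T_1})$.

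\textbf{The key tube estimate.} For fixed $T_1$ and $j\ge 1$, the cores $\xi_{T_2}$ associated to admissible $T_2$ are pairwise disjoint by \eqref{eq.cores1}. By Proposition \ref{pr.predisks} applied backwards from $t(T_2)$ to $t(T_1)$ (together with $K_0$-bounded distortion of the extra iterates $\ell_{T_2}$), each $f_\omega^{t(T_1)}(T_2)$ has diameter at most $C'\alpha^{jQ_2/2}$. Since $T_2\cap T_1\ni x$ but $T_2\not\subset T_1$, this image straddles $\partial f_\omega^{t(T_1)}(T_1)$ and is therefore contained in a tubular neighbourhood of width $C'\alpha^{jQ_2/2}$ around that boundary. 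By the same argument that produced \eqref{eq.nomega}, $\partial f_\omega^{t(T_1)}(T_1)$ has uniformly bounded $(d-1)$-volume, so this tube has Lebesgue measure at most $C''\alpha^{jQ_2/2}$. Pulling back to $D_\omega$ with Proposition \ref{p.distortion} and using \eqref{eq.wael} to divide by $\leb_{f_\omega^{t(T_1)}(D_\omega)}(f_\omega^{t(T_1)}(\xi_{T_1}))$, we obtain
\[
\sum_{T_2\ \text{admissible}}\leb_{D_\omega}(\xi_{T_2})\ \le\ C'''\,\alpha^{jQ_2/2}\,\leb_{D_\omega}(\xi_{T_1}).
\]
Exactly the same argument, with $j=0$ allowed (i.e.\ we only use that $T_1\subset V_{\omega,t(T)}(\xi_T)$ and the $\xi_{T_1}$ are pairwise disjoint), gives $\sum_{T_1}\leb_{D_\omega}(\xi_{T_1})\le C_0\leb_{D_\omega}(\xi_T)$ via \eqref{eq.sumle}.

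\textbf{Closing the induction.} Combining the two estimates,
\[
\leb_{D_\omega}(Z_\omega(k,T))\ \le\ \sum_{j\ge 1} C_0\,C'''\,\alpha^{jQ_2/2}\cdot C_2(C_2\alpha^{Q_2/2})^{k-j}\,\leb_{D_\omega}(\xi_T).
\]
Taking $C_2\ge\max\{C,C_0 C'''\}$ and then $Q_2$ large enough that the geometric tail $\sum_{j\ge 1}\alpha^{(j-1)Q_2/2}$ is less than, say, $2$, the right-hand side is bounded by $C_2(C_2\alpha^{Q_2/2})^k\leb_{D_\omega}(\xi_T)$, completing the induction.

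\textbf{Main obstacle.} The delicate point is the tube estimate: one must check that $\partial f_\omega^{t(T_1)}(T_1)$ has uniformly bounded $(d-1)$-volume independently of $\omega$ and of the admissible $T_1$. This is genuine only for $T_1$ at depth $\ge N_1$, where $T_1$ is an annular set whose image at a further $\ell_{T_1}$ iterates is the standard reference annulus; for the finitely many depths $<N_1$ one absorbs the loss into $C_2$. Together with ensuring bounded distortion of $f_\omega^{t(T_1)}$ on $V_{\omega,t(T_1)}(\xi_{T_1})$ (Proposition \ref{p.distortion}), this is what forces the earlier choices of $N_1$, $Q_0$ and in particular $Q_2\ge Q_0$.
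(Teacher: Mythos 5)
Your proof follows essentially the same approach as the paper: induction on $k$, peeling off the first pair $(T_1,T_2)$, using backward contraction to place $f_\omega^{t(T_1)}(T_2)$ in a thin tube around $\partial f_\omega^{t(T_1)}(T_1)$, exploiting pairwise disjointness of the cores $\xi_{T_2}$ to sum the tube measure, and finally summing over $T_1$ via \eqref{eq.cores2} and \eqref{eq.sumle}. Two small points. First, in the \emph{Closing the induction} step the $\alpha^{jQ_2/2}$ you bring out cancels against the $\alpha^{-jQ_2/2}$ hidden in $(C_2\alpha^{Q_2/2})^{-j}$, so the displayed sum collapses to $C_0C'''C_2(C_2\alpha^{Q_2/2})^k\sum_{j\ge 1}C_2^{-j}$; the geometric tail is in $C_2^{-j}$, not in $\alpha^{(j-1)Q_2/2}$, and it is $C_2$ that must be taken large enough so that $C_0C'''/(C_2-1)\le 1$ (taking $Q_2$ large at that point does nothing). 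This is exactly what the paper does; $Q_2$ large is used separately, via~\eqref{eq.Qduas}, only to make $C_2\alpha^{Q_2/2}<1$. Second, in your \emph{Main obstacle} paragraph you describe $T_1$ as being an annular set at large depth, but this is not the case: every $T\in\mathcal T_\omega$ has the form $T=S_{\omega,n}(\xi)\cup\xi$ with $\xi\in\mathcal P_{\omega,n}$, and since $n-n=0<N_1$ this is just $V_{\omega,n}^{1/3}(\xi)$, so $f_\omega^{t(T)}(T)$ is always a ball of radius $\delta_1/3$. The uniform boundedness of the $(d-1)$-volume of its boundary then holds for all $T\in\mathcal T_\omega$ without a separate case for small depth. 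Neither point changes the validity of the argument.
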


\begin{proof}
We   prove the result by induction on $k\ge 0$.
For $k=0$, we have by~\eqref{eq.satcore}
\begin{equation*}\label{eq.z1}
 \leb_{D_\omega}(Z_\omega(0,T))\le \leb_{D_\omega}(T)\leq C\leb_{D_\omega}(\xi_T).
\end{equation*}
So, in this case it is enough to take
$
C_2\ge C.
$
Given any $k\geq 1$, we may write
\begin{eqnarray*}
Z_\omega(k,T)\subset \bigcup_{m=1}^{k}\bigcup_{T_1\cap T\neq
\emptyset}\bigcup_{T_1\cap T_2\neq \emptyset,T_2\not\subset
T_1\atop \frac{t(T_2)-t(T_1)}{Q_2}  \geq m}
Z_\omega(k-m,T_2).\end{eqnarray*}
Our goal now is to estimate the measure of the triple union above. 

Take   $T_1\in\mathcal T_\omega$ intersecting $T$ and   $T_2\in\mathcal T_\omega$ intersecting $T_1$ with $T_2\not\subset
T_1 $ and $  {t(T_2)-t(T_1)}  \geq mQ_2$. 
Consider  $t_1=t(T_1)$ and $t_2=t(T_2)$. Notice  that  for each $i=1,2$, the set  $f_\omega^{t_i}(T_i)=f_\omega^{t_i}(V_{t_i}^{1/3}(\xi_{T_i}))$ is, by definition,   a ball of radius $\delta_1/3$.
Hence, recalling~\eqref{eq.sncontido} and using Proposition~\ref{pr.predisks}, we may write
 \begin{equation}\label{eq.diaminho}
\diam \left(f_\omega^{t_1}(T_2)\right)\leq
{{\alpha^{\frac{t_2-t_1}{2}}}{\diam\left(f_\omega^{t_2}(T_2)\right)}}\leq
{\frac23{\delta_1}{\alpha^{\frac{t_2-t_1}{2}}}}\leq
{\frac23{\delta_1}{\alpha^{\frac{mQ_2}{2}}}}.
\end{equation}
 Now, let  $\beta_\omega$ be   the boundary of the ball $f_\omega^{t_1}(T_1)$ and consider  $\mathcal N_\omega$ a neighbourhood   of  $\beta_\omega$  of size $2{\delta_1}{\alpha^{ {mQ_2}/{2}}}/3$ inside $f_\omega^{t_1}(D_\omega)$. Notice that  $\mathcal N_\omega$  is  necessarily contained in $f_\omega^{t_1}\left(V_{t_1}(\xi_{T_1})\right)$. Let   $U_\omega$ be the subset of $ V_{\omega,t_1}(\xi_{T_1})$ which is sent diffeomorphically   onto  $\mathcal N_\omega$ under~$f_\omega^{t_i}$. 
One can easily see that there exists some constant $\tilde C>0$ (not depending on $t_1\ge N_0$ nor on $T_1$) such that
 \begin{equation}\label{eq.u1meas}
\leb_{f_\omega^{t_1}(D_\omega)}\left(f_\omega^{t_1}(U_\omega )\right)\le \tilde C\alpha^{\frac{mQ_2}{2}}\leb_{f_\omega^{t_1}(D_\omega)}\left(f_\omega^{t_1}(T_1)\right).
\end{equation}
Recalling that $T_1\subset V_{\omega,t_1}(\xi_{T_1})$, considering $C_1>0$  the constant given by Proposition~\ref{p.distortion}  and using~\eqref{eq.satcore}, we obtain
 \begin{equation}\label{eq.u1omega}
\leb_{D_\omega}\left( U_1 \right)\le C_1\tilde C\alpha^{\frac{mQ_2}{2}}\leb_{D_\omega}\left( T_1\right) \le  C_1\tilde CC\alpha^{\frac{mQ_2}{2}}\leb_{D_\omega}\left( \xi_{T_1}\right).
\end{equation}
Now,  as $T_2\cap T_1\neq\emptyset$ with $T_2\not\subset
T_1 $, it follows that  $f_\omega^{t_1}(T_2)$    intersects~$\beta_\omega$. Then, using~\eqref{eq.diaminho}, we easily deduce   that $f_\omega^{t_1}(T_2)$ is contained in  $\mathcal N_\omega= f_\omega^{t_1}(U_\omega)$. This gives in particular  
$ \xi_{T_2}\subset U_\omega.
$
Since the cores of these possible $T_2$ are pairwise disjoint, from~\eqref{eq.u1omega} we get
\begin{equation}\label{eq.fund1}
\sum_{T_1\cap T_2\neq \emptyset,T_2\not\subset
T_1\atop \frac{t(T_2)-t(T_1)}{Q_2}  \geq m}\leb_{D_\omega}\left(\xi_{T_2}\right)  \le  C_1\tilde CC\alpha^{\frac{mQ_2}{2}}\leb_{D_\omega}\left( \xi_{T_1}\right).
\end{equation}
On the other hand, since  $T_1\cap T\neq\emptyset$, it follows from~\eqref{eq.cores2} that $T_1\subset V_{\omega,t(T)}(\xi_T)$.
Moreover, as the cores of these possible $T_1$ are pairwise disjoint,  using~\eqref{eq.sumle} we may write
\begin{equation}\label{eq.fund2}
 \sum_{T_1\cap T\neq
\emptyset}\leb_{D_\omega} \left(\xi_{T_1}\right)\leq \leb_{D_\omega}\left(V_{\omega,t(T)}(\xi_T)\right) \le C_0\leb_{D_\omega}\left(\xi_T\right).
\end{equation}
Finally, using \eqref{eq.fund1}, \eqref{eq.fund2} and the inductive hypothesis, we may write
\begin{eqnarray*}
\leb_{D_\omega}\left(Z_\omega(k,T)\right)&\leq& 
\sum_{m=1}^{k}\sum_{T_1\cap T \neq
\emptyset}\sum_{T_1\cap T_2\neq \emptyset,T_2\not\subset
T_1\atop\left[\frac{t(T_2)-t(T_1)}{Q_2}\right]
\geq m}\leb_{D_\omega}\left(Z_\omega(k-m,T_2)\right)\\
&\leq& 
\sum_{m=1}^{k}\sum_{T_1\cap T \neq
\emptyset}\sum_{T_1\cap T_2\neq \emptyset,T_2\not\subset
T_1\atop\left[\frac{t(T_2)-t(T_1)}{Q_2}\right]
\geq m}C_2\left(C_2 \alpha^{Q_2/2}\right)^{k-m}\leb_{D_\omega}\left(\xi_{T_2}\right)\\
&\leq& \sum_{m=1}^{k}\sum_{T_1\cap T\neq \emptyset}C_2(C_2
\alpha^{{Q_2}/{2}})^{k-m} C_1\tilde CC\alpha^{{mQ_2}/{2}}\leb_{D_\omega}\left(\xi_{T_1}\right)\\
&\leq& C_2(C_2\alpha^{{Q_2}/{2}})^{k} C_0C_1\tilde C C
\sum_{m=1}^{k}C_2^{-m}\leb_{D_\omega}\left(\xi_T\right).
\end{eqnarray*}
Hence, taking  $C_2>0$ sufficiently large enough for
$$
\frac{C_0C_1\tilde CCC_2^{-1}}{1-C_2^{-1}}\leq1,$$
we finish the proof.
\end{proof}

At this point we are able to specify the choice of $Q_2$. We  take $Q_2$ an integer 
sufficiently  large such that  
\begin{equation}\label{eq.Qduas}
C_2\alpha^{Q_2/2}<1,
\end{equation}
with $C_2>0$ as in 
Lemma~\ref{le.zkt}.
Under this choice, we easily deduce that   $\leb_{D_\omega}(Z_\omega^1(k))$ decays exponentially fast in $k$.

%

\subsection{Hyperbolic product structure}\label{s.product}
Here we obtain a set $\Lambda_\omega$ with a hyperbolic product structure verifying  properties (P$_0$)-(P$_5$) as in Subsection~\ref{se.hps}. 
Consider the center-unstable disk $\Sigma_\omega^0\subset D_\omega$ as in \eqref{eq.sigma01} and the $\leb_{D_\omega}$ mod 0
partition $\mathcal P_\omega$ of $\Sigma_\omega^0$ defined in
Section~\ref{s.structure}. We define
$$\Gamma_\omega^s=\left\{ W^s_{\omega,\delta_s}(x):\,x\in \Sigma_\omega^0\right\}.$$
Moreover, we define
 $\Gamma_\omega^u$ as the set of all  local unstable manifolds contained in $\mathcal C_\omega^0$ which $u$-cross~$\mathcal C_\omega^0$. Clearly,
$\Gamma_\omega^u$ is nonempty because $\Sigma_\omega^0\in \Gamma^u$. We need to see
 that the union of the leaves in $\Gamma_\omega^u$ is compact. 
 By the domination
property and Ascoli-Arzela Theorem, any limit leaf $\gamma_\infty$
of leaves in $\Gamma_\omega^u$ is still a $cu$-disk $u$-crossing $\mathcal C_0$. So, by definition of $\Gamma_\omega^u$, we have
$\gamma_\infty\in\Gamma_\omega^u$. We   define our set \( \Lambda_\omega\) with hyperbolic product structure as the intersection of these families of stable and unstable leaves.
The cylinders $\{\mathcal C(\xi )\}_{\xi \in\mathcal P_\omega}$ then clearly form a countable collection of
\(s\)-subsets of \( \Lambda_\omega\) that can be used as the sets    $\Lambda_{1,\omega},\Lambda_{2,\omega},\dots$ in (P$_1$)
with the corresponding return times \( R_\omega(\xi )\), for each $\xi\in\mathcal P_\omega$.
Since (P$_0$) is obvious, we are left  to verify  conditions (P$_1$)-(P$_5$).

\subsubsection{Markov and contraction on stable leaves}
Condition (P$_1$) is essentially an immediate consequence of the construction.  We just need to check that $f_\omega^{R_\omega(\xi)}(\mathcal C(\xi))$ is a $u$-subset, for any $\xi\in\mathcal P_\omega$. In fact, choosing the integer $N_0$ in the first step of the construction of $\mathcal P_\omega$  sufficiently large, and using the fact that the local stable manifolds are uniformly contracted by forward iterations under  $f_\omega$, we can easily see that the ``height'' of $f_\omega^{R_\omega(\xi)}(C(\xi))$ is at most $\delta_s/4$, for each $\xi\in\mathcal P_\omega$. On the other hand, by the choice of $\delta_0$ we have that $f^{R_\omega(\xi)}_\omega(\xi)$ intersects $W_{\sigma^{R_\omega(\xi)}\omega, \delta_s/2}^s(y)$, for some $y\in\Sigma_{\sigma^{R_\omega(\xi)}\omega}^0$, and $u$-crosses $\mathcal C^1_{\sigma^{R_\omega(\xi)}\omega}$; recall Remark~\ref{la belle}.
Hence, the local unstable leaves that form $\mathcal C(\xi)$ necessarily  $u$-cross~$\mathcal C_0$, and so  (P$_1$) holds.
 Property (P$_2$) is clearly verified in our case.

\subsubsection{Expansion and bounded distortion}
The expansion and bounded distortion on unstable leaves, property (P$_3$), follows from Proposition~\ref{pr.predisks} and Proposition~\ref{p.distortion}. Indeed, by construction, for each $\xi\in\mathcal P_\omega$  there is a
hyperbolic pre-ball $V_{\omega,n(\xi)}(x)$ containing $\xi$
associated to some point $x\in \Sigma_\omega^0$ with $\alpha$-hyperbolic time
$n(\xi)$.
Therefore, recalling Remark~\ref{re.bounded} and taking $\delta_s <\delta_1$, for any $\gamma\in \Gamma_\omega^u$ we have that $n$ is a $\alpha^{1/4}$-hyperbolic
time for every point in $\mathcal C(\xi)\cap\gamma$. 
Since $R_\omega(\xi)-L\le n(\xi)\le R_\omega(\xi)$, choosing $N_0$ such that $K_0\alpha^{N_0/2}<1$, by~\eqref{eq.defK0} we obtain (P$_3$).

\subsubsection{Regularity of the foliations}
\label{sec.regularity}
Property (P$_4$)  is standard for uniformly hyperbolic attractors. In the rest of this section we shall adapt classical ideas to our setting.
We begin with the statement of a useful lemma on vector bundles whose proof can be found in \cite[Theorem 6.1]{HP70}. We should say that a metric $d$ on a vector bundle $E$ is \emph{admissible} if there is a complementary bundle $E'$ over $X$, and an isomorphism $h\colon E\oplus E'\to X\times B$ to a product bundle, where $B$ is a Banach space, such that $d$ is induced from the product metric on $X\times B$.

\begin{lemma}\label{th.hirsch-pugh}
Let $p\colon E\to X$ be a vector bundle over a metric space $X$ endowed with an admissible metric. Let $D\subset E$ be the unit  ball bundle, and $F\colon D\to D$ a map covering a Lipschitz homeomorphism $f\colon X\to X$. Assume  that there is $0\le \kappa<1$ such that for each $x\in X$ the restriction $F_x\colon D_x\to D_x$ satisfies $\lip(F_x)\le \kappa$. Then
\begin{enumerate}
  \item there is a unique section $\mathfrak s\colon X\to D $ whose image is invariant under $F$;
  \item if  $\kappa \lip(f)^\eta<1$ for some $0<\eta\le 1$, then $\mathfrak s$ is H\"older condition with exponent~$\eta$.
\end{enumerate}
\end{lemma}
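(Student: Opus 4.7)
The plan is to realize the invariant section as a fixed point of a contracting operator on a suitable complete function space, and then to upgrade regularity by showing this operator preserves a closed Hölder ball.

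First I would fix an admissible trivialization so that $D$ may be viewed inside $X\times B$ with $B$ a Banach space, and work with the complete metric space $\mathcal S$ of continuous sections $\mathfrak s\colon X\to D$ equipped with the uniform metric
\[
d(\mathfrak s_1,\mathfrak s_2)=\sup_{x\in X} \|\mathfrak s_1(x)-\mathfrak s_2(x)\|.
\]
Since $f$ is a homeomorphism, I can define the graph-transform operator
\[
(\mathcal T\mathfrak s)(x)= F_{f^{-1}(x)}\bigl(\mathfrak s(f^{-1}(x))\bigr).
\]
A section $\mathfrak s$ is $F$-invariant precisely when $\mathcal T\mathfrak s=\mathfrak s$. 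Using the fibrewise hypothesis $\lip(F_y)\le \kappa$ together with the fact that each $F_y$ sends $D_y$ into $D_y$, a direct fibre-by-fibre estimate yields
\[
d(\mathcal T\mathfrak s_1,\mathcal T\mathfrak s_2)\le \kappa\, d(\mathfrak s_1,\mathfrak s_2),
\]
so $\mathcal T$ is a contraction on $\mathcal S$. The Banach fixed point theorem produces a unique $F$-invariant section $\mathfrak s$, proving item (1).

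For item (2), fix $0<\eta\le 1$ with $\kappa\lip(f)^\eta<1$ and consider for each $H>0$ the closed subset
\[
\mathcal S_H=\Bigl\{\mathfrak s\in\mathcal S \;\Bigm|\; \|\mathfrak s(x)-\mathfrak s(y)\|\le H\,d_X(x,y)^\eta\ \text{for all }x,y\in X\Bigr\}.
\]
Using $\lip(f^{-1})\le \lip(f)$ (or absorbing the constant in the same way) and the fibre contraction, one computes
\[
\|(\mathcal T\mathfrak s)(x)-(\mathcal T\mathfrak s)(y)\|\le \kappa\,\bigl(H\,\lip(f^{-1})^\eta\bigr)\,d_X(x,y)^\eta+L\,d_X(x,y)^\eta,
\]
where the extra term $L\,d_X(x,y)^\eta$ comes from the base Hölder variation of $F$ (which is automatic from the Lipschitz hypothesis on the covering map and the boundedness of $D$). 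Choosing $H$ large enough so that $\kappa\lip(f^{-1})^\eta H+L\le H$, which is possible exactly because $\kappa\lip(f)^\eta<1$, shows $\mathcal T(\mathcal S_H)\subset \mathcal S_H$. Since $\mathcal S_H$ is closed in $\mathcal S$, the unique fixed point $\mathfrak s$ of $\mathcal T$ lies in $\mathcal S_H$, hence is $\eta$-Hölder.

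The main subtlety I would have to handle carefully is verifying that $\mathcal T$ actually maps $\mathcal S$ into itself, i.e.\ that $(\mathcal T\mathfrak s)(x)$ stays in the unit ball $D_x$ and depends continuously on $x$. The first is immediate because $F_y\colon D_y\to D_y$; the second requires continuity of the trivialization and of $F$ as a bundle map, which is guaranteed by the hypothesis that $F$ covers a continuous $f$ and that the metric is admissible. Everything else is routine Banach-space analysis.
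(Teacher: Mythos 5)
The paper does not prove this lemma; it explicitly refers the reader to Hirsch--Pugh \cite[Theorem~6.1]{HP70} for the proof, so there is no in-text argument to compare against. Your overall strategy---Banach fixed point on the complete space of continuous sections for existence, then invariance of a closed H\"older ball $\mathcal S_H$ for regularity---is the standard one and is structurally sound.

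There are, however, two genuine gaps in the regularity step (item~2). First, the auxiliary inequality $\lip(f^{-1})\le\lip(f)$ that you invoke is false in general: take $f(x)=x/2$ on $\RR$, so that $\lip(f)=1/2$ while $\lip(f^{-1})=2$. Your own fibre-by-fibre estimate correctly produces the factor $\lip(f^{-1})^\eta$, so the ball $\mathcal S_H$ can be shown invariant only when $\kappa\,\lip(f^{-1})^\eta<1$; this is a different condition from the stated $\kappa\,\lip(f)^\eta<1$, and neither implies the other. As written the argument does not close under the stated hypothesis, and ``absorbing the constant'' does not fix it since the discrepancy is in the exponential base, not a multiplicative prefactor. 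Second, the term $L\,d_X(x,y)^\eta$ that you attribute to an ``automatic'' base H\"older variation of $F$ is not automatic: the hypotheses control the fibrewise Lipschitz constant of $F$ and the Lipschitz constant of $f$, but say nothing about how $F_x(v)$ varies with $x$. For instance, $F(x,v)=(f(x),\kappa v+h(x))$ with $h$ merely continuous and $\|h\|_\infty\le1-\kappa$ satisfies every hypothesis you use, yet its unique invariant section $\mathfrak s(x)=\sum_{j\ge1}\kappa^{j-1}h(f^{-j}(x))$ inherits only the regularity of $h$ and need not be H\"older at all. Some transverse (base) Lipschitz or H\"older control on $F$ is an additional hypothesis, present in \cite{HP70}, that must be stated and tracked; your estimate introduces the constant $L$ without ever establishing it.
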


\begin{proposition}\label{th.Holder}
Assume  the random perturbations $f_\omega$ sufficiently close to $f$ in the $C^1$ topology.
If    $T_{K_\omega}M=E^{cs}_\omega \oplus E^{cu}_\omega $ is a dominated splitting, then the fibre bundles $E^{cs}_\omega$ and $E^{cu}_\omega$ are H\"older continuous on $K_\omega$.
\end{proposition}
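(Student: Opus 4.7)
The plan is to apply Lemma~\ref{th.hirsch-pugh} to a graph bundle over the skew product, following the classical Hirsch--Pugh--Shub strategy in the random setting. I work globally on the attractor bundle
\[
\mathcal{K}=\{(\omega,x):\omega\in\Omega,\,x\in K_\omega\},
\]
equipped with the product metric $\mathbf{d}((\omega,x),(\omega',x'))=\max\{d_\Omega(\omega,\omega'),d_M(x,x')\}$, where $d_\Omega$ is chosen to make the shift $\sigma$ Lipschitz. Since $\mathbb{B}$ is compact in the $C^1$ topology and $f_\omega$ depends only on the coordinate $\omega_0$, the skew product $F(\omega,x)=(\sigma\omega,f_\omega(x))$ is a Lipschitz homeomorphism of $\mathcal{K}$ with a constant $L_F$ depending only on $\mathcal{F}$ and $d_\Omega$; establishing joint H\"older continuity of $E^{cu}$ on $\mathcal{K}$ will, by restriction to $\{\omega\}\times K_\omega$, give the claim for each fixed $\omega$.

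First I would fix continuous reference extensions $\tilde E^{cs}$ and $\tilde E^{cu}$ of the invariant bundles to a neighbourhood of $\mathcal{K}$, obtained from the cone fields of Subsection~\ref{sec:bounded-curvature}. Over $\mathcal{K}$ consider the vector bundle $E$ whose fibre at $(\omega,x)$ is the space $L(\tilde E^{cu}_\omega(x),\tilde E^{cs}_\omega(x))$ with the operator norm, and let $D\subset E$ be its closed unit ball bundle; compactness of $\mathcal{K}$ supplies an admissible metric on $D$ in the sense of Lemma~\ref{th.hirsch-pugh}. A point $A\in D_{(\omega,x)}$ represents a subspace $\mathrm{graph}(A)\subset T_xM$, and $Df_\omega$ sends it to the graph of an element of $L(\tilde E^{cu}_{\sigma\omega}(f_\omega x),\tilde E^{cs}_{\sigma\omega}(f_\omega x))$; this defines a graph transform $\Phi\colon D\to D$ covering $F$. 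Writing $Df_\omega(x)$ in block form with respect to the reference splittings with diagonal blocks $P,S$ and off-diagonal $Q,R$, a direct computation gives $\Phi_{(\omega,x)}(A)=(R+SA)(P+QA)^{-1}$. Choosing the reference extensions sufficiently close to the true bundles on $\mathcal{K}$ makes $Q,R$ as small as wished, and by \eqref{domination1} the fibre Lipschitz constant of $\Phi$ is bounded by some $\kappa<1$ uniformly in $(\omega,x)$.

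Lemma~\ref{th.hirsch-pugh}(1) then produces a unique $\Phi$-invariant section $\mathfrak{s}\colon\mathcal{K}\to D$; by the $Df$-equivariance of $E^{cu}$, this section must coincide with the graph representation of the true centre-unstable bundle. Choosing $\eta\in(0,1]$ small enough so that $\kappa\,L_F^\eta<1$, part~(2) of the same lemma gives that $\mathfrak{s}$ is $\eta$-H\"older with respect to $\mathbf{d}$, hence $E^{cu}_\omega$ is $\eta$-H\"older on each $K_\omega$ with constants uniform in $\omega\in\Omega$. For $E^{cs}$ I would run the analogous argument with $F^{-1}$ in place of $F$ and the reversed bundle $L(\tilde E^{cs},\tilde E^{cu})$: the same block computation shows that the $Df^{-1}$-graph transform there has fibre Lipschitz constant equal to $\|Df|E^{cs}\|\cdot\|Df^{-1}|E^{cu}\|+o(1)$, which is again smaller than $\lambda$ by \eqref{domination1}, so Lemma~\ref{th.hirsch-pugh} applies verbatim.

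The main obstacle is arranging that the graph transform is a \emph{uniform} fibre contraction and that the bundle $D$ carries an admissible metric with constants independent of $\omega$: this requires local trivialisations of $\tilde E^{cs},\tilde E^{cu}$ that are jointly continuous in $(\omega,x)\in\mathcal{K}$, together with uniform bounds on the block decomposition of $Df_\omega$. Both follow from the compactness of $\mathbb{B}$ in the $C^1$ topology and the $C^1$ persistence of the dominated splitting that is already used throughout Section~\ref{se.PH}.
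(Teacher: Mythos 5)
Your argument and the paper's both reduce to Lemma~\ref{th.hirsch-pugh} via a graph transform on a bundle of linear maps between the two halves of the splitting, so the two proofs are essentially the same in spirit. You do, however, handle two points more carefully than the paper does.

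First, you lift the dynamics to the skew product $F(\omega,x)=(\sigma\omega,f_\omega x)$ on $\mathcal K=\{(\omega,x):x\in K_\omega\}$, so that the graph transform genuinely has the form $\Phi\colon D\to D$ over a self-map of a single compact base, exactly as required by Lemma~\ref{th.hirsch-pugh}. The paper instead works with $G_\omega\colon L_\omega^1\to L_{\sigma\omega}^1$ over $f_\omega\vert K_\omega\colon K_\omega\to K_{\sigma\omega}$, where the source and target bundles (and their bases) differ, so the hypotheses of the lemma are not met literally; your skew-product formulation repairs this without changing the estimates, since the fibre contraction rate and the Lipschitz constant of the covered base map remain uniformly controlled. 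One small point worth making explicit is the choice of metric on $\Omega=\mathbb B^{\mathbb Z}$: with a standard geometrically-weighted product metric the shift is Lipschitz but with constant strictly greater than one, which is fine because the H\"older exponent $\eta$ is taken small enough that $\kappa\,L_F^\eta<1$.

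Second, you build the bundle out of fixed continuous reference extensions $\tilde E^{cu},\tilde E^{cs}$ rather than out of the invariant splitting itself. Then the unique $\Phi$-invariant section literally encodes $E^{cu}$ as a graph over $\tilde E^{cu}$, and its H\"older regularity in the admissible metric translates directly into H\"older regularity of $E^{cu}$ in the ambient metric. The paper takes the fibres to be $L(E^{cu}_{\omega,x},E^{cs}_{\omega,x})$, so the invariant section is the null section and the H\"older conclusion is only carried by the admissible metric (i.e.\ by the choice of trivialising isomorphism), which in effect reintroduces a reference splitting implicitly. Your version makes this bookkeeping explicit, and it also makes transparent why $E^{cs}$ is treated symmetrically by replacing $F$ with $F^{-1}$ and swapping the roles of the two bundles.

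In short: correct, same key lemma and same graph-transform mechanism as the paper, but executed in the textbook Hirsch--Pugh--Shub form, which closes two small gaps (base-space mismatch and null-section triviality) that the paper's terser presentation leaves implicit.
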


\begin{proof} We consider only the centre-unstable bundle as the other one is similar.
For each $x\in K_\omega$, let $L_{\omega,x}$ be the space of bounded linear maps from $E_{\omega,x}^{cu}$ to $E_{\omega,x}^{cs}$ and let $L_{\omega,x}^1$ denote the unit ball around $0\in L_{\omega,x}$. Define  $G_{\omega,x}: L_{\omega,x}^1\to L_{\sigma\omega,f_\omega(x)}^1$   the graph transform induced by
 $Df_\omega(x)$ 
    $$G_{\omega,x}(\phi)=(Df_\omega \vert E_{\omega,x}^{cs})\,\phi\,(Df_{ \omega}^{-1} \vert E^{cu}_{\sigma\omega,f_\omega(x)}).$$
Consider $L_\omega$ the vector bundle over $K_\omega$ whose fibre over each $x\in K_\omega$ is $L_{\omega,x}$, and let $L_\omega^1$ be its unit   bundle. Then $G_\omega : L_\omega^1\to L_{\sigma\omega}^1$ is a bundle map covering $f_\omega\vert K_\omega$ with
 $$\lip(G_{\omega,x})\le  \|Df_\omega \vert E_{\omega,x}^{cs}\|\cdot \|Df_{ \omega}^{-1} \vert E^{cu}_{\sigma\omega,f_\omega(x)}\|\le\lambda<1.$$
Let $C$ be a Lipschitz constant for $f_\omega|_{K_\omega}$, and choose $0<\eta\le 1$ small so that  $\lambda c^\eta<1$. By Lemma~\ref{th.hirsch-pugh} there exists a unique section $\mathfrak s_0\colon M\to L_\omega^1 $ whose image is invariant under~$G_\omega$ and it satisfies a H\"older condition with exponent~$\eta$. This unique section is necessarily the null section.
\end{proof}

We now show that the holonomy map $\Theta_\omega$ is absolutely continuous and derive a formula for the density $\rho_\omega$. This will be done in few steps following the ideas of \cite{M87}.
\begin{corollary}\label{co.produtorio}
There are $C>0$ and $0<\beta<1$ such that for all $y\in\gamma^s_\omega(x)$ and $ n\ge 0$
 $$\displaystyle
 \log \prod_{i=n}^\infty\frac{\det D^uf_{\sigma^i\omega}(f^i_\omega(x))}{\det D^uf_{\sigma^i\omega}(f^i_\omega(y))}\le C\beta^{n}.
 $$
\end{corollary}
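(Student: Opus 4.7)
The plan is to reduce the product to a sum, bound each term by combining H\"older regularity of the log-Jacobian along the centre-unstable direction with the uniform contraction on stable leaves, and then sum the resulting geometric series.

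First I would introduce the function $\varphi_\omega(z) = \log|\det(Df_\omega \mid E^{cu}_\omega(z))|$ and establish that it is $\eta$-H\"older on $K_\omega$ with uniform constants (independent of $\omega$), for some $0<\eta\le 1$. This requires two ingredients: that $f_\omega\in\diff^{1+}(M)$ with uniform H\"older control on $Df_\omega$ (so $|\det(Df_\omega \mid E)|$ is H\"older as a function of both the base point and the subspace $E\subset T_zM$), and that the centre-unstable bundle $E^{cu}_\omega$ is itself H\"older continuous on $K_\omega$; the latter is precisely the content of Proposition~\ref{th.Holder} above. Composing these gives a uniform constant $L>0$ and exponent $\eta>0$ such that
\begin{equation*}
|\varphi_{\sigma^i\omega}(z) - \varphi_{\sigma^i\omega}(z')| \le L\,\dist(z,z')^\eta, \qquad z,z' \in K_{\sigma^i\omega}.
\end{equation*}

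Next I would use uniform contraction along stable leaves. Since we are in the $E^s \oplus E^{cu}$ setting with $\|Df_\omega \mid E^s_\omega\|<\lambda$ for some $\lambda\in(0,1)$ from \eqref{domination1}, and the stable leaves $\gamma^s_\omega(x)$ are tangent to $E^s$, for $y\in\gamma^s_\omega(x)$ there is a uniform constant $C_s>0$ with
\begin{equation*}
\dist(f^i_\omega(x), f^i_\omega(y)) \le C_s \lambda^i \dist(x,y) \le C_s \lambda^i \diam(\gamma^s_\omega)
\end{equation*}
for all $i\ge 0$. Combining this with the H\"older estimate for $\varphi$ gives
\begin{equation*}
\left|\log \frac{\det D^uf_{\sigma^i\omega}(f^i_\omega(x))}{\det D^uf_{\sigma^i\omega}(f^i_\omega(y))}\right|
= |\varphi_{\sigma^i\omega}(f^i_\omega(x)) - \varphi_{\sigma^i\omega}(f^i_\omega(y))|
\le L(C_s\lambda^i)^\eta \diam(\gamma^s_\omega)^\eta.
\end{equation*}

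Finally, summing the resulting geometric series yields
\begin{equation*}
\log\prod_{i=n}^\infty \frac{\det D^uf_{\sigma^i\omega}(f^i_\omega(x))}{\det D^uf_{\sigma^i\omega}(f^i_\omega(y))}
\le L C_s^\eta \diam(\gamma^s_\omega)^\eta \sum_{i=n}^\infty \lambda^{i\eta}
\le C\,\beta^n,
\end{equation*}
with $\beta := \lambda^\eta \in (0,1)$ and $C := L C_s^\eta \diam(\gamma^s_\omega)^\eta/(1-\lambda^\eta)$, which (using that $\diam(\gamma^s_\omega)$ is uniformly bounded by $\delta_s$) is a uniform constant.

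The main obstacle is the H\"older regularity step: identifying $D^u$ with $Df_\omega \mid E^{cu}_\omega$ and genuinely justifying the H\"older continuity of $\varphi_\omega$ with uniform constants across the random base, since a priori the implicit constants in Proposition~\ref{th.Holder} depend on the Lipschitz constant of $f_\omega\vert K_\omega$ and on $\lambda$ in the domination. These are uniform in $\omega$ because $\mathcal{F}$ is a small $C^1$ neighbourhood of $f$ with uniform H\"older control on $Df$, so the constants $L$ and $\eta$ above can indeed be taken independent of $\omega$; this is the point that needs to be verified carefully.
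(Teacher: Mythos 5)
Your proof is correct and takes essentially the same route as the paper: H\"older regularity of $\log|\det D^uf_{\sigma^i\omega}|$ via Proposition~\ref{th.Holder}, combined with exponential decay of $\dist(f^i_\omega(x),f^i_\omega(y))$, and then a geometric series. You correctly base that decay on forward contraction along stable leaves (the domination in~\eqref{domination1}, equivalently (P$_2$)), which is the relevant ingredient for $y\in\gamma^s_\omega(x)$; the paper's one-line proof instead cites Proposition~\ref{pr.predisks} (backward contraction on unstable leaves), a small slip that your version avoids.
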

\begin{proof} Since $Df_{\sigma^i\omega}$ is H\"older continuous, Proposition~\ref{th.Holder} implies that $\log|\det D^uf_{\sigma^i\omega}|$ is H\"older continuous. The corollary then follows from Proposition \ref{pr.predisks} which implies uniform backward contraction on unstable leaves.
\end{proof}

We now introduce some notion. We say that $\phi: M_1\to M_2$, where $M_1$ and $M_2$ are submanifolds of $M$,  is \emph{absolutely continuous} if it is an injective map for which there exists $J:M_1\to\RR$ such that
 $$
 \leb_{M_2}(\phi(A))=\int_A Jd\leb_{M_1}.
 $$
$J$ is called the \emph{Jacobian} of $\phi$.
Property (P$_4$) (1) can be restated as follows, replacing $\Theta^{-1}_\omega$ by $\phi_\omega$:

\begin{proposition}\label{pr.regulstable}
Given
$\gamma,\gamma'\in\Gamma_\omega^u$, define
$\phi_\omega\colon\gamma'\to\gamma$  by
$\phi_\omega(x)=\gamma^s(x)\cap \gamma$. Then $\phi_\omega$ is absolutely continuous and the Jacobian of $\phi_\omega$ is given by
        $$
        J_\omega(x)=
        \prod_{i=0}^\infty\frac{\det D^uf_{\sigma^i\omega}(f_\omega^i(x))}{\det
        D^uf_{\sigma^i\omega}(f_\omega^i(\phi_\omega(x)))}.$$
\end{proposition}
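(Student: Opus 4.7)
The plan is to push the holonomy forward by $f_\omega^n$, use the uniform contraction (P$_2$) to make the two image leaves arbitrarily close, and then pull back via the change of variables formula using the unstable Jacobian of $f_\omega^n$. Fix $\gamma,\gamma'\in\Gamma_\omega^u$ and set $\gamma_n=f_\omega^n(\gamma)$, $\gamma_n'=f_\omega^n(\gamma')$. Both are $C^1$ $cu$-disks with uniformly bounded curvature by Proposition~\ref{pr:bounded-curvature}, and by the equivariance of the stable foliation under $f_\omega$ they come with a natural holonomy $\phi_n\colon\gamma_n'\to\gamma_n$ satisfying the conjugation relation $f_\omega^n\circ\phi_\omega=\phi_n\circ f_\omega^n$ on the appropriate intersections.

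First I would show that $\phi_n$ is absolutely continuous with Jacobian $J^{(n)}$ satisfying
\[
|J^{(n)}(y)-1|\le C\beta^{\eta n}\quad\text{for all }y\in\gamma_n',
\]
for some $0<\beta<1$ and $C,\eta>0$ independent of $\omega$. The ingredients are: (i) $\mathrm{dist}(f_\omega^n(x),f_\omega^n(\phi_\omega(x)))\le\beta^n$ from (P$_2$), so $\gamma_n$ and $\gamma_n'$ are $\beta^n$-close; (ii) uniform transversality of $\Gamma^s$ and $\Gamma^u$ with angles bounded away from zero; (iii) uniform bounded curvature of $\gamma_n,\gamma_n'$; and (iv) H\"older continuity of the distributions $E^{cs}_\omega,E^{cu}_\omega$ from Proposition~\ref{th.Holder}, which in turn gives that the angle between $T_y\gamma_n'$ and $T_{\phi_n(y)}\gamma_n$ is controlled by $\mathrm{dist}(y,\phi_n(y))^\eta$. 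A standard estimate (as in \cite{M87}) then yields that the Jacobian $J^{(n)}$ with respect to the induced Riemannian volumes is $1+O(\beta^{\eta n})$. This is the technical core of the argument and the main potential obstacle: one must carefully check that the H\"older exponent $\eta$ and constants produced by the proof of Proposition~\ref{th.Holder} are uniform in $\omega$.

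Next I apply the change of variables induced by $f_\omega^n$ on both sides. For any measurable $A\subset\gamma'$, using $f_\omega^n(\phi_\omega(A))=\phi_n(f_\omega^n(A))$ and the change of variables for $\phi_n$, followed by the change of variables for $f_\omega^n$,
\begin{align*}
\leb_\gamma(\phi_\omega(A))
&=\int_{\phi_n(f_\omega^n(A))}(\det D^uf_\omega^n)^{-1}\circ (f_\omega^n|_\gamma)^{-1}\,d\leb_{\gamma_n}\\
&=\int_{A}\frac{\det D^uf_\omega^n(x)}{\det D^uf_\omega^n(\phi_\omega(x))}\,J^{(n)}(f_\omega^n(x))\,d\leb_{\gamma'}(x).
\end{align*}
By Corollary~\ref{co.produtorio} the product
\[
\prod_{i=0}^{n-1}\frac{\det D^uf_{\sigma^i\omega}(f_\omega^i(x))}{\det D^uf_{\sigma^i\omega}(f_\omega^i(\phi_\omega(x)))}
\]
converges as $n\to\infty$ with exponential rate and is uniformly bounded in $x\in\gamma'$; together with $J^{(n)}\to 1$ uniformly, dominated convergence gives
\[
\leb_\gamma(\phi_\omega(A))=\int_A\prod_{i=0}^{\infty}\frac{\det D^uf_{\sigma^i\omega}(f_\omega^i(x))}{\det D^uf_{\sigma^i\omega}(f_\omega^i(\phi_\omega(x)))}\,d\leb_{\gamma'}(x).
\]
This proves that $\phi_\omega$ is absolutely continuous and identifies its Jacobian with the advertised infinite product, completing the proof.
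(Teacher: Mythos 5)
Your argument has a genuine circularity. You define $\phi_n\colon\gamma_n'\to\gamma_n$ by the exact conjugation $f_\omega^n\circ\phi_\omega=\phi_n\circ f_\omega^n$, so $\phi_n$ is precisely the stable holonomy between the image leaves. Since $f_\omega^n$ is a diffeomorphism, $\phi_n$ is absolutely continuous if and only if $\phi_\omega$ is; so when you write ``I would show that $\phi_n$ is absolutely continuous with Jacobian $J^{(n)}$ satisfying $|J^{(n)}(y)-1|\le C\beta^{\eta n}$'' you are assuming exactly what the proposition asks you to prove. Your ingredients (i)--(iv) control the geometry of the image leaves and the stable distribution, but none of them directly supplies absolute continuity of a holonomy along a merely H\"older stable foliation, and ``a standard estimate as in \cite{M87}'' does not fill the gap, because the map in Ma\~n\'e's Lemma that has Jacobian close to $1$ is \emph{not} the stable holonomy: it is a geometrically defined projection between nearby transversals, constructed so as to be manifestly smooth/Lipschitz, and then shown to approximate the holonomy. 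Once you identify $\phi_n$ with that object you have quietly changed a provable statement into a question-begging one, and your change-of-variables identity for each $n$ (on which the rest rests) is unjustified.

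The paper's proof repairs exactly this point. It introduces, for each $n$, a map $\pi_{\sigma^n\omega}\colon f_\omega^n(\gamma)\to f_\omega^n(\gamma')$ (Lemma~\ref{le.mane}, after Ma\~n\'e) which is absolutely continuous \emph{by construction}, with Jacobian $G_{\sigma^n\omega}$ converging uniformly to $1$ and with $\pi_{\sigma^n\omega}\circ f_\omega^n$ converging uniformly to $f_\omega^n\circ\phi_\omega$. Then $\phi_{\omega,n}=f_\omega^{-r_n}\pi_{\sigma^{r_n}\omega}f_\omega^{r_n}$ is absolutely continuous for each $n$, with a Jacobian whose first factor is the finite product $\det D^uf_\omega^{r_n}(x)/\det D^uf_\omega^{r_n}(\phi_{\omega,n}(x))$. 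One shows $\phi_{\omega,n}\to\phi_\omega$ and $J_n\to J$ uniformly (Lemma~\ref{le.phin}, using bounded distortion and Corollary~\ref{co.produtorio}), and the abstract stability Lemma~\ref{le.jacomane} --- uniform limit of absolutely continuous maps with uniformly converging Jacobians is absolutely continuous --- carries over the conclusion to $\phi_\omega$. So the overall scheme you envisioned (push forward, project, pull back) is the right one; what is missing is (a) replacing the true holonomy on the image leaves by a manifestly absolutely continuous projection, and (b) invoking a lemma like Lemma~\ref{le.jacomane} to pass the conclusion to the limit rather than treating the identity as already holding for the limit map.
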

Obviously, by Corollary~\ref{co.produtorio}, the infinite product in Proposition \ref{pr.regulstable} converges uniformly.
We prove Proposition~\ref{pr.regulstable} in the following three lemmas.

\begin{lemma}\label{le.jacomane}
Let $M_1$ and $M_2$ be manifolds, $M_1$ with finite volume, and for each $n\ge 1$,  let $\phi_n:M_1\to M_2$ be an
absolutely continuous map with Jacobian $J_n$. Assume that
\begin{enumerate}
  \item $\phi_n$ converges uniformly to an injective
continuous map $\phi:M_1\to M_2$;
  \item $J_n$ converges uniformly to an integrable function $J: M_1\to\RR$.
\end{enumerate}
Then
$\phi$ is absolutely continuous with Jacobian $J$.
\end{lemma}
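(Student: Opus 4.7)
The plan is to pass to the limit $n\to\infty$ in the absolute-continuity identity
\begin{equation*}
\leb_{M_2}(\phi_n(A))=\int_A J_n\,d\leb_{M_1}.
\end{equation*}
The right-hand side is easy: uniform convergence $J_n\to J$ together with the finiteness of $\leb_{M_1}(M_1)$ gives $\int_A J_n\,d\leb_{M_1}\to \int_A J\,d\leb_{M_1}$ for every Borel $A$. The real task is to identify this limit as $\leb_{M_2}(\phi(A))$.

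I would first reduce to compact sets $A=K$ by inner regularity of the finite Borel measure $B\mapsto\int_B J\,d\leb_{M_1}$. On a compact $K$, continuity and injectivity of $\phi$ make $\phi|_K$ a homeomorphism onto the compact set $\phi(K)$, and uniform convergence $\phi_n\to\phi$ gives Hausdorff convergence $\phi_n(K)\to\phi(K)$. For the one-sided estimate I would use outer regularity of $\leb_{M_2}$: given $\varepsilon>0$, pick an open $V\supset\phi(K)$ with $\leb_{M_2}(V)<\leb_{M_2}(\phi(K))+\varepsilon$; then $\phi_n(K)\subset V$ for all large $n$, so $\leb_{M_2}(\phi_n(K))\le \leb_{M_2}(V)$. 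Combined with the convergence of the integrals, and extended via inner regularity, this yields the one-sided bound
\begin{equation*}
\int_B J\,d\leb_{M_1}\le \leb_{M_2}(\phi(B)) \quad \text{for every Borel $B\subset M_1$.}
\end{equation*}

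The main obstacle is closing the gap to equality. The idea is to exploit the injectivity of $\phi$ through a $\sigma$-additivity argument: for any Borel $B$ the images $\phi(B)$ and $\phi(M_1\setminus B)$ are disjoint, so
\begin{equation*}
\leb_{M_2}(\phi(M_1))=\leb_{M_2}(\phi(B))+\leb_{M_2}(\phi(M_1\setminus B)),
\end{equation*}
and once the single global identity $\leb_{M_2}(\phi(M_1))=\int_{M_1}J\,d\leb_{M_1}$ is in hand, applying the one-sided inequality to both $B$ and its complement forces both to be equalities, finishing the proof. To establish the global identity I would combine the weak convergence of the pushforward measures $(\phi_n)_*(J_n\leb_{M_1})=\leb_{M_2}|_{\phi_n(M_1)}$ towards $\phi_*(J\leb_{M_1})$ — a consequence of total-variation convergence of the densities $J_n\to J$ and uniform convergence $\phi_n\to\phi$ — with the fact that $\phi(M_1)$ is closed in $M_2$ (automatic in the applications below, where $M_1$ is a compact disk and so $\phi$ is proper). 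Testing against the constant function $1$ then yields $\leb_{M_2}(\phi_n(M_1))\to \leb_{M_2}(\phi(M_1))$ and hence the global identity, concluding the proof.
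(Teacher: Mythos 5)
The strategy is reasonable up to a point: the uniform convergence $J_n\to J$ and finiteness of $\leb_{M_1}(M_1)$ do give $\int_A J_n\to\int_A J$, and the outer-regularity/Hausdorff argument correctly yields the one-sided bound $\int_B J\,d\leb_{M_1}\le\leb_{M_2}(\phi(B))$ for every Borel $B$. The observation that the reverse inequality for all $B$ follows from that one-sided bound together with injectivity of $\phi$ \emph{once} one has the single global identity $\leb_{M_2}(\phi(M_1))=\int_{M_1}J\,d\leb_{M_1}$ is also correct. However, the proposed derivation of that global identity is circular and does not work. Testing the weak convergence $(\phi_n)_*(J_n\leb_{M_1})\to\phi_*(J\leb_{M_1})$ against the constant function $1$ only gives $\leb_{M_2}(\phi_n(M_1))=\int J_n\,d\leb_{M_1}\to\int J\,d\leb_{M_1}$, which is true but says nothing about $\leb_{M_2}(\phi(M_1))$; claiming that the limit is $\leb_{M_2}(\phi(M_1))$ amounts to assuming $\int J\,d\leb_{M_1}=\leb_{M_2}(\phi(M_1))$, which is exactly the identity you are trying to prove. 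Closedness of $\phi(M_1)$ does not rescue this: even when $F_n\to F$ in Hausdorff distance and $\leb|_{F_n}$ converges weakly with total masses converging, the weak limit need not be $\leb|_F$ (consider sets $F_n$ of fixed measure $1/2$ that equidistribute over $[0,1]$: the weak limit is $\frac12\leb|_{[0,1]}$, not $\leb|_{[0,1]}$). The Portmanteau inequality and outer regularity only give $\phi_*(J\leb_{M_1})\le\leb_{M_2}$, i.e.\ the same direction you already have, never the reverse inequality $\leb_{M_2}(\phi(B))\le\int_B J\,d\leb_{M_1}$.

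For comparison, the paper does not prove this lemma at all: it defers to Ma\~n\'e's Theorem~3.3 in \cite{M87}, adding only the remark that Ma\~n\'e's argument uses the continuity of the limit $\phi$ but not of the approximating maps $\phi_n$, so it still applies under the weaker hypotheses used here. So you cannot simply cite the statement as folklore; you would have to reproduce the missing direction (the upper bound on $\leb_{M_2}(\phi(B))$), which is the genuinely hard part and which the present write-up leaves open. Also, the appeal to ``$M_1$ a compact disk, hence $\phi$ proper'' is an extra hypothesis not present in the lemma as stated (which only assumes finite volume of $M_1$); if you needed it, it should be added explicitly.
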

The proof of Lemma \ref{le.jacomane} is given in \cite[Theorem~3.3]{M87}. For the sake of completeness, we observe that there is a slight difference in our definition of absolute continuity. Unlike~\cite{M87}, we do not assume continuity of the maps $\phi_n$. However, the proof of \cite[Theorem~3.3]{M87} uses only the continuity of the limit function $\phi$, and so it still works in our case.

Consider now $\gamma,\gamma'\in\Gamma^u$ and
$\phi\colon\gamma'\to\gamma$  as in Proposition~\ref{pr.regulstable}. 


We consider the following sequence of  random return times
for points in $\Lambda_\omega$,
 \begin{equation*}\label{def.rs}
 r_1=R_\omega\qand r_{n+1}=r_{n}+R_{\sigma^{r_n}\omega}\circ f_\omega^{r_{n}},\quad\text{for $n\ge1$}.
 \end{equation*}
Notice that the $r_n$'s are defined $\leb_\gamma$ almost everywhere on each $\gamma\in \Gamma^u_\omega$ and they are piecewise constant.

\begin{remark}\label{re.sp} Using the above sequence of random return times, one can construct a sequence of  $\leb_\gamma$ mod 0  partitions   $(\mathcal Q_{\omega,n})_n$ by $s$-subsets of $\Lambda_\omega$ with $r_n$ being constant on each element of $\mathcal Q_{\omega,n}$, for which (P$_1$)-(P$_5$)-(1) hold with $r_n$ replacing $R_\omega$ and the elements of $\mathcal Q_{\omega,n}$ replacing the $s$-subsets of $\Lambda_\omega$. Moreover, the constants $C>0$ and $0<\beta<1$ in the backward contraction, implied by Proposition \ref{pr.predisks} can be chosen not depending on $n$.
\end{remark}

\begin{lemma}\label{le.mane}
For each $n\ge 1$, there is an absolutely continuous  $\varrho_{\sigma^{r_n}\omega}:f_\omega^{r_n}(\gamma)\to f_\omega^{r_n}(\gamma')$ with Jacobian $G_{\sigma^{r_n}\omega}$ satisfying
\begin{enumerate}
  \item $\displaystyle \lim_{n\to\infty}\sup_{x\in \gamma}\left\{ \dist_{f_\omega^{r_n}(\gamma')}(\varrho_{\sigma^{r_n}\omega}(f_\omega^{r_n}(x)),f_\omega^{r_n}(\phi_\omega(x))\right\}=0$;
  \item $\displaystyle \lim_{n\to\infty}\sup_{x\in f_\omega^{r_n}(\gamma)} \left\{ |1-G_{\sigma^{r_n}\omega}(x)|  \right\}=0$.
\end{enumerate}

\end{lemma}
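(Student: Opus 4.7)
The plan is to construct $\pi_{\sigma^n\omega}$ as the holonomy projection along the stable direction from $f_\omega^n(\gamma)$ onto $f_\omega^n(\gamma')$, exploiting the fact that these two submanifolds become $C^1$-close under forward iteration.

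First I would establish the $C^1$-closeness of $f_\omega^n(\gamma)$ and $f_\omega^n(\gamma')$. Since $\gamma' = \phi_\omega^{-1}(\gamma) \in \Gamma^u_\omega$, each $x \in \gamma$ and its image $\phi_\omega(x)$ lie on a common stable leaf, so property (P$_2$) gives $\dist(f_\omega^n(x), f_\omega^n(\phi_\omega(x))) \le \beta^n$. Combined with the Hölder continuity of the invariant splitting $T_{K_\omega}M = E^s_\omega \oplus E^{cu}_\omega$ (Proposition~\ref{th.Holder}) and the fact that forward iterates of unstable disks are tangent to the centre-unstable cone field and in fact exponentially align with $E^{cu}$, the tangent spaces $T_{f_\omega^n(x)} f_\omega^n(\gamma)$ and $T_{f_\omega^n(\phi_\omega(x))} f_\omega^n(\gamma')$ differ by at most $C \beta^{\eta n}$ for some $\eta \in (0,1]$, uniformly in $x$ and $\omega$.

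Next I would define the projection. For $n$ large enough that $\beta^n < \delta_s / 10$, the local stable manifolds $W^s_{\sigma^n\omega,\delta_s}(y)$ for $y \in f_\omega^n(\gamma)$ intersect $f_\omega^n(\gamma')$ transversally in a unique point, thanks to the uniform angle bound between stable and unstable directions. I set $\pi_{\sigma^n\omega}(y)$ to be this intersection point. This is smooth by standard foliation arguments, and is a diffeomorphism onto its image because the two submanifolds are $C^1$-close graphs over the same centre-unstable domain. For the finitely many small values of $n$ where this direct construction may fail, one can either define $\pi_{\sigma^n\omega}$ in an arbitrary absolutely continuous way (the lemma's conclusions only concern the tail behavior) or use the exponential map to build a local graph representation.

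Item (1) then follows immediately: both $\pi_{\sigma^n\omega}(f_\omega^n(x))$ and $f_\omega^n(\phi_\omega(x))$ lie in the local stable manifold through $f_\omega^n(x)$, the first by construction and the second because $\phi_\omega(x) \in \gamma^s_\omega(x)$; their distance on $f_\omega^n(\gamma')$ is therefore bounded by $\beta^n$, which goes to $0$ uniformly. For item (2), the Jacobian $G_{\sigma^n\omega}(y)$ is the volume distortion of a projection between two smooth submanifolds whose tangent spaces at corresponding points differ by $O(\beta^{\eta n})$, hence $|1 - G_{\sigma^n\omega}(y)| = O(\beta^{\eta n})$ uniformly. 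The main obstacle is keeping the angle and $C^1$-closeness estimates uniform in $\omega$; this is where Proposition~\ref{th.Holder} is essential, as it furnishes the Hölder regularity of $E^s_\omega, E^{cu}_\omega$ with constants independent of $\omega$, so that the argument from \cite[Lemma 3.8]{M87} transfers verbatim to the random setting.
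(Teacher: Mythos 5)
Your definition of $\pi_{\sigma^n\omega}$ as the stable holonomy between $f_\omega^n(\gamma)$ and $f_\omega^n(\gamma')$ makes the argument circular. With that choice, $\pi_{\sigma^n\omega}\circ f_\omega^n = f_\omega^n\circ\phi_\omega$ identically --- the stable leaf through $f_\omega^n(x)$ meets $f_\omega^n(\gamma')$ in exactly one point, and that point is $f_\omega^n(\phi_\omega(x))$ --- so item~(1) is trivially zero for every $n$ rather than a limit statement, and $\phi_{\omega,n}=f_\omega^{-r_n}\pi_{\sigma^{r_n}\omega}f_\omega^{r_n}$ in \eqref{eq.phin} collapses to $\phi_\omega$ for all $n$, leaving Lemma~\ref{le.phin} with nothing to prove. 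More importantly, item~(2) then asserts that the Jacobian of the stable holonomy converges uniformly to~$1$, which is essentially (P$_4$)(1) itself --- the very thing this whole subsection is establishing. Your justification, that a projection between two submanifolds with $O(\beta^{\eta n})$-close tangent spaces has Jacobian $1+O(\beta^{\eta n})$, is valid for a fixed $C^1$ projection, but is \emph{not} a general fact about foliation holonomies: a holonomy can fail to be absolutely continuous altogether even between arbitrarily $C^1$-close transversals, and when it is absolutely continuous its Jacobian depends on the transverse regularity of the foliation, not merely on the geometry of the two slices. You cannot read it off from the $C^1$-closeness of $f_\omega^n(\gamma)$ and $f_\omega^n(\gamma')$ without already knowing something equivalent to (P$_4$)(1).

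The map $\pi_{\sigma^n\omega}$ in Ma{\~n}{\'e}'s scheme (which the paper invokes via \cite[Lemmas~3.4~and~3.8]{M87}) is \emph{not} the stable holonomy. It is a smooth auxiliary projection built from the graph structure: for $n$ large, $f_\omega^n(\gamma)$ and $f_\omega^n(\gamma')$ are $C^1$-close graphs over a common local $E^{cu}$-transversal in an exponential chart, and $\pi_{\sigma^n\omega}$ matches points with the same base coordinate. Because this $\pi_{\sigma^n\omega}$ is $C^1$ and its derivative is controlled directly by the slopes of the two graphs, item~(2) does indeed follow from the $O(\beta^{\eta n})$ alignment of tangent spaces --- this is where your tangent-space estimate is used correctly, but only for a genuinely smooth projection. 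Item~(1) is then a nontrivial but easy statement: both $\pi_{\sigma^n\omega}(f_\omega^n(x))$ and $f_\omega^n(\phi_\omega(x))$ lie within $O(\beta^n)$ of $f_\omega^n(x)$, the former because the graphs are $O(\beta^n)$-close and the latter by (P$_2$), so they are $O(\beta^n)$-close to each other. Your first paragraph (the alignment of $T f_\omega^n(\gamma)$ and $T f_\omega^n(\gamma')$ via Proposition~\ref{th.Holder} and (P$_2$), uniformly in $\omega$) is the right groundwork and is exactly what transfers Ma{\~n}{\'e}'s argument to the random setting; the defect is purely in the choice of $\pi_{\sigma^n\omega}$.
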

\begin{proof}
Since $\Gamma^u_\omega$ is a continuous family of $C^1$ disks, there are
a compact set~$K$, a unit disk~$D$ in some $\mathbb R^k$ and an injective  continuous function
 $\Phi\colon K\times D\to M$ 
 such that
  $$\Gamma^u_\omega=\left\{\Phi(\{x\}\times D) \colon x\in K\right\}.$$
Moreover,  $\Phi$ maps $K\times D$ homeomorphically onto its image, and  $x\mapsto \Phi\vert_{\{x\}\times D}$ defines a continuous map from $K$ into $\text{Emb}^1(D,M)$.
Since $\Lambda_\omega$ has a full product structure,  For $\mathcal Q, \mathcal Q'\in (\mathcal Q_{\omega,n})_n$, the disks $f_\omega^{r_n}(\mathcal Q)$ and $f_\omega^{r_n}( \mathcal Q')$ necessarily belong in $\Gamma^u_\omega$. Thus, we may consider points $x^{}_{\mathcal Q,n}, x^{}_{\mathcal Q',n} \in K$ such that
 $$
\Phi(\{x^{}_{\mathcal Q,n}\}\times D)= f_\omega^{r_n}( \mathcal Q)\qand \Phi(\{x_{\mathcal Q',n}\}\times D)=f_\omega^{r_n}( \mathcal Q').
 $$
This naturally induces  $C^1$ diffeomorphisms 
  $\Phi^{}_{\mathcal Q,n}\colon 
  D\to f_\omega^{r_n}( \omega)\qand 
  \Phi_{\mathcal Q',n}\colon 
   D\to f_\omega^{r_n}( \gamma').$
Set
  $
 \varrho_{\sigma^{r_n}\omega}= \Phi_{\mathcal Q,n}\circ \Phi^{-1}_{\mathcal Q',n}
  $. The two conclusions  are now consequence of the contraction property (P$_2$), together with  the fact that $x\mapsto \Phi\vert_{\{x\}\times D}$ defines a continuous map from $K$ into $\text{Emb}^1(D,M)$. Note  that   $f^{r_n}_\omega( \mathcal Q)$ and  $f^{r_n}_\omega( \mathcal Q')$  are disks  in $\Gamma^u_\omega$  close to each other in the $C^1$ topology for large $n$, uniformly on  $\mathcal Q\in\mathcal Q_{\omega,n}$. This gives in particular the second item. For the first one, note also that $f^{r_n}_\omega\circ\phi_{\gamma',\gamma}(x)$ is the point in the unstable disk $f_\omega^{r_n}(\mathcal Q')$ resulting from the intersection of $f^{r_n}_\omega(\mathcal Q')$ with $\gamma^s ((f_\omega^{r_n})(x))$. This point is necessarily close to~$f_\omega^{r_n}(x)$ for large $n$, since also we assume $\Gamma_\omega^s$ is a continuous family of $C^1$ disks.  
\end{proof}
We define, for each $n\ge 1$, and $\gamma,\gamma' \in\Gamma^u_\omega$, $\phi_{\omega,n}: \gamma\to\gamma'$ as
 \begin{equation}\label{eq.phin}
 \phi_{\omega,n}=f^{-r_n}_\omega\varrho_{\sigma^{r_n}\omega} f_\omega^{r_n}.
 \end{equation}
Then $\phi_{\omega,n}$ is absolutely continuous with Jacobian
  \begin{equation}\label{eq.Jn}
  J_n(x)=\frac{|\det D^uf_\omega^{r_n}(x)|}{|\det
        D^uf_\omega^{r_n}(\phi_{\omega,n}(x))|}\cdot G_{\sigma^{r_n}\omega,n}(f_\omega^{r_n}(x)).
  \end{equation}
Observe that $\phi_{\omega,n}$ is defined $\leb_\gamma$ almost everywhere. We can find a Borel set $A\subset \gamma$ with full $\leb_\gamma$ measure on which $\phi_{\omega,n}$ is defined everywhere. We extend $\phi_{\omega,n}$ to $\gamma$ by letting $\phi_{\omega,n}(x)=\phi_\omega(x)$ and $J_n(x)=J(x)$ for all $n\ge 1$ and $x\in \gamma\setminus A$. Since $\gamma\setminus A$ has zero $\leb_\gamma$ measure, $J_n$ is the Jacobian of $\phi_{\omega,n}$.

\begin{lemma}\label{le.phin}
$(\phi_{n,\omega})_n$ converges   uniformly to $\phi_\omega$ and   $(J_n)_n$ converges uniformly to $J$.
\end{lemma}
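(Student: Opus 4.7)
The plan is to derive both uniform convergences from Lemma~\ref{le.mane} together with (i) the uniform backward contraction on unstable manifolds coming from (P$_3$)(1) iterated along the sequence of returns (cf.\ Remark~\ref{re.sp}), and (ii) the bounded distortion of Proposition~\ref{p.distortion}. The underlying regularity input is the H\"older continuity of $\log|\det D^uf|$ on unstable leaves, which follows from Proposition~\ref{th.Holder} and the curvature control in Proposition~\ref{pr:bounded-curvature}.

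For the convergence $\phi_{\omega,n}\to\phi_\omega$, I would first combine the definitions: by construction $f_\omega^{r_n}(\phi_{\omega,n}(x))=\pi_{\sigma^{r_n}\omega}(f_\omega^{r_n}(x))$, while $f_\omega^{r_n}(\phi_\omega(x))\in\gamma^s_{\sigma^{r_n}\omega}(f_\omega^{r_n}(x))\cap f_\omega^{r_n}(\gamma')$. Lemma~\ref{le.mane}(1) then forces
\[
\varepsilon_n:=\sup_{x\in\gamma}\dist_{f_\omega^{r_n}(\gamma')}\bigl(f_\omega^{r_n}(\phi_{\omega,n}(x)),\,f_\omega^{r_n}(\phi_\omega(x))\bigr)\longrightarrow 0.
\]
Pulling this back along $f_\omega^{-r_n}$ and using (P$_3$)(1) over the $n$ returns yields the uniform estimate $\sup_{x\in\gamma}\dist_{\gamma'}(\phi_{\omega,n}(x),\phi_\omega(x))\le\beta^n\varepsilon_n\to 0$.

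For the convergence $J_n\to J$, I would split
\[
J_n(x)=G_{\sigma^{r_n}\omega,n}(f_\omega^{r_n}(x))\cdot\prod_{i=0}^{r_n-1}\frac{\det D^u f_{\sigma^i\omega}(f_\omega^i(x))}{\det D^u f_{\sigma^i\omega}(f_\omega^i(\phi_\omega(x)))}\cdot\frac{\det D^uf_\omega^{r_n}(\phi_\omega(x))}{\det D^uf_\omega^{r_n}(\phi_{\omega,n}(x))}.
\]
Lemma~\ref{le.mane}(2) sends the first factor uniformly to $1$, while Corollary~\ref{co.produtorio} makes the middle factor converge uniformly to $J(x)$ with tail error controlled by $\exp(C\beta^{r_n})-1$.

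The main obstacle is the third factor, which compares the distortion of $f_\omega^{r_n}$ along two distinct orbits in $\gamma'$. My plan is to use that $r_n$ is an $\alpha$-hyperbolic time for $\phi_{\omega,n}(x)$ and, since $\varepsilon_n\ll\delta_1$ for large $n$, that $\phi_\omega(x)$ and $\phi_{\omega,n}(x)$ lie in a common hyperbolic predisk after the harmless enlargement of Remark~\ref{re.bounded}. Proposition~\ref{p.distortion} then gives
\[
\left|\log\frac{\det D^uf_\omega^{r_n}(\phi_\omega(x))}{\det D^uf_\omega^{r_n}(\phi_{\omega,n}(x))}\right|\le C\varepsilon_n^{\eta},
\]
uniformly in $x$, and this tends to $0$. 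Putting the three estimates together yields the uniform convergence of $J_n$ to $J$; the delicate point is precisely this last step, since naive term-by-term H\"older bounds fail to be summable along the tower, so it is essential to invoke the bounded distortion at the \emph{total} return time $r_n$ rather than estimate each factor separately.
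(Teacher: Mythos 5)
Your proposal matches the paper's proof in essence: the same three-factor decomposition of $J_n$ (your middle product is just $\det D^uf_\omega^{r_n}(x)/\det D^uf_\omega^{r_n}(\phi_\omega(x))$ written out via the chain rule), with the first factor handled by Lemma~\ref{le.mane}(2), the middle by Corollary~\ref{co.produtorio}, and the problematic third factor by bounded distortion at the full return time $r_n$ — which is precisely what the paper extracts from Remark~\ref{re.sp}, and which you reconstruct via Proposition~\ref{p.distortion} and Remark~\ref{re.bounded}. The $\phi_{\omega,n}\to\phi_\omega$ argument (Lemma~\ref{le.mane}(1) plus backward contraction on unstable leaves along the return sequence) is likewise the paper's.
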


\begin{proof} It is sufficient to prove the convergence of the sequence restricted to $A$ as described above. In particular, $\phi_{\omega,n}$ and $J_{\omega,n}$ are given by~\eqref{eq.phin} and~\eqref{eq.Jn} respectively. We first prove the convergence of $(\phi_{\omega,n})_n$. By Remark~\ref{re.sp}, we may write for each $x\in\gamma$
\begin{eqnarray*}
  \dist_{\gamma'}(\phi_{\omega,n}(x),\phi_\omega(x))  &=& \dist_{\gamma'}(f^{-r_n}_\omega\varrho_{\sigma^{r_n}\omega} f_\omega^{r_n}),f_\omega^{-{r_n}}f_\omega^{r_n}\phi_\omega(x)) \\
   &\le& C\beta^{r_n}\dist_{f_\omega^{r_n}(\gamma')}(\varrho_{\sigma^{r_n}\omega} f_\omega^{r_n}(x),f_\omega^{r_n}\phi_\omega(x)).
\end{eqnarray*}
Since ${r_n}\to\infty$ as $n\to\infty$ and $\dist_{f_\omega^{r_n}(\gamma')}(\varrho_{\sigma^{r_n},\omega} f_\omega^{r_n}(x),f_\omega^{r_n}\phi_\omega(x))$ is bounded, by Lemma~\ref{le.mane}, $\phi_{\omega,n}$ converges uniformly to $\phi_\omega$.

 We now prove that $(J_n)_n$ converges to $J$ uniformly. By \eqref{eq.Jn}, we have
  $$J_n(x)=\frac{|\det D^uf_\omega^{{r_n}}(x)|}{|\det
        D^uf_\omega^{{r_n}}(\phi_\omega (x))|}\cdot \frac{|\det D^uf_\omega^{{r_n}}(\phi_\omega (x))|}{|\det
        D^uf_\omega^{{r_n}}(\phi_{\omega,n}(x))|}\cdot G_{\sigma^{r_n}\omega}(f_\omega^{{r_n}}(x)).
        $$
 By the chain rule and  Corollary~\ref{co.produtorio}, the first term in the product converges uniformly to $J(x)$. Moreover, by Lemma~\ref{le.mane}, the third term converges uniformly to~1. Finally, recalling Remark~\ref{re.sp}, by  bounded distortion we have
  \begin{eqnarray*}
    \frac{|\det D^uf_\omega^{{r_n}} (\phi_\omega (x))|}{|\det
        D^uf_\omega^{{r_n}}(\phi_{\omega,n}(x))|}  &\le &\exp\big(C\dist_{f_\omega^{r_n}(\gamma')}(f_\omega^{r_n}(\phi_\omega(x)),f_\omega^{r_n}(\phi_{\omega,n}(x)))^{\eta}\big) \\
     &=& \exp\big(C\dist_{f_\omega^{r_n}(\gamma')}(f_\omega^{r_n}(\phi(x)),\varrho_{\sigma^{r_n}\omega}(f_\omega^{r_n}(x)))^{\eta}\big).
  \end{eqnarray*}
  Similarly we obtain
  $$\frac{|\det D^uf_\omega^{{r_n}}(\phi_\omega (x))|}{|\det
        D^uf_\omega^{{r_n}}(\phi_{\omega,n}(x))|}  \ge  \exp\big(-C\dist_{f_\omega^{r_n}(\gamma')}(f_\omega^{r_n}(\phi_\omega(x)),\varrho_{\sigma^{r_n}\omega}(f_\omega^{r_n}(x)))^{\eta}\big).
        $$
        Therefore, by Lemma~\ref{le.mane}, $G_{\sigma^{r_n}\omega}\circ f_\omega^{{r_n}}$ converges uniformly to~1.
 \end{proof}
This completes the proof of Proposition~\ref{pr.regulstable} and of (P$_4$)(1); i.e., the holonomy map $\Theta_\omega$ is absolutely continuous and  
	\[
	\rho_\omega(x)= \frac{d([\Theta_\omega]_\ast\Leb_{\gamma_\omega})}{d\Leb_{\gamma_\omega'}}(x)=\prod_{i=0}		           	^\infty\frac{\det D^uf_{\sigma^i\omega}(f^i_\omega(x))}{\det D^uf_{\sigma^i\omega}(f^i_\omega(\Theta^{-1}_\omega(x)))}.
	 \]

 We finally prove (P$_4$)(2) in the next result.

\begin{lemma}\label{co.new}
For any $\gamma_\omega\in\Gamma_\omega^u$ and any $x, y\in \gamma_\omega$  we have $\log\dfrac{\rho_\omega(x)}{\rho_\omega(y)}\le C\beta^{s(x, y)}$. 	
\end{lemma}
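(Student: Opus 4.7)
The plan is to write $\log(\rho_\omega(x)/\rho_\omega(y))$ as a telescoping series of ``four-point'' differences of the log-Jacobian $\varphi_\omega:=\log|\det D^u f_\omega|$ and to control each term by balancing a stable-contraction estimate against an unstable Lipschitz one.

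Writing $z''$ for the holonomy partner of $z\in\gamma_\omega$ on the associated leaf (i.e.\ $z''=\Theta_\omega^{-1}(z)$ in the notation of (P$_4$)(1)), the product formula for $\rho_\omega$ gives
$$
\log\rho_\omega(x)-\log\rho_\omega(y)=\sum_{i=0}^{\infty}S_i,\quad S_i:=\bigl[\varphi_{\sigma^i\omega}(f^i_\omega x)-\varphi_{\sigma^i\omega}(f^i_\omega x'')\bigr]-\bigl[\varphi_{\sigma^i\omega}(f^i_\omega y)-\varphi_{\sigma^i\omega}(f^i_\omega y'')\bigr].
$$
By Proposition~\ref{th.Holder} together with $f_\omega\in\diff^{1+}(M)$, each $\varphi_{\sigma^i\omega}$ is H\"older continuous with common exponent $\eta\in(0,1]$ and constant $L$, uniform in $\omega$ by compactness of $\mathbb{B}$. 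Regrouping $S_i$ in the two natural ways produces the twin bounds $|S_i|\le 2L\beta^{i\eta}$ (stable grouping, via (P$_2$)) and $|S_i|\le 2LK^{i\eta}\delta^{\eta}$ (unstable grouping), where $K:=\sup_\omega\|Df_\omega\|$ and $\delta$ denotes the larger of $\dist_{\gamma_\omega}(x,y)$ and the distance between the corresponding holonomy partners.

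The decisive input will be the estimate $\delta\le\beta^{n}D_0$ with $n:=s(x,y)$ and $D_0>0$ a uniform upper bound on the diameter of any leaf in $\Gamma^u_\omega$. To obtain it I will use that by definition of the separation time $f^{r_k}_\omega x$ and $f^{r_k}_\omega y$ lie in a common element of $\mathcal P_{\sigma^{r_k}\omega}$ for each $0\le k<n$; since they remain on a single unstable leaf, (P$_3$)(1) applies at each of the $n$ returns and the estimate follows by $n$-fold iteration. The analogous bound for the holonomy partners is immediate because $\Theta_\omega$ preserves the Markov structure along stable leaves and hence the separation time.

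Finally I will optimise at $i^*\approx\alpha n$ with $\alpha:=\log(1/\beta)/\log(K/\beta)\in(0,1]$ (taking $\alpha=1$ in the easier case $K\beta\le 1$), split the series at $i^*$ and sum the two resulting geometric tails. The resulting bound
$$
\bigl|\log\rho_\omega(x)-\log\rho_\omega(y)\bigr|\le C\beta^{\alpha\eta n}=C(\beta')^{s(x,y)},\qquad \beta':=\beta^{\alpha\eta}\in(0,1),
$$
yields the claim after relabelling $\beta\leftarrow\beta'$ and adjusting the constant $C$. The step I anticipate to require the most care is justifying the iterated application of (P$_3$)(1): one must check that the common-Markov-element property in the definition of separation time actually places $f^{r_k}_\omega(x)$ and $f^{r_k}_\omega(y)$ on a common unstable disk within the element, so that the one-return expansion estimate legitimately applies at each $k<n$. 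This is a direct consequence of (P$_1$) and the initial fact $x,y\in\gamma_\omega$, but warrants explicit verification.
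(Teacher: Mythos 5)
Your proposal is correct and follows the same overall strategy as the paper: split the infinite telescoping series for $\log\rho_\omega(x)-\log\rho_\omega(y)$, bound the tail by the stable-contraction estimate (your ``stable grouping,'' the paper's Corollary~\ref{co.produtorio}), and bound the head by an unstable-direction estimate. Where the two treatments diverge is in how the head is controlled. The paper cuts at $k=[s(x,y)]/2$ and directly invokes the bounded-distortion machinery of the partially hyperbolic construction (e.g.\ Proposition~\ref{p.distortion} and the backward contraction of Proposition~\ref{pr.predisks}) to get $\sum_{i=0}^{k-1}|\varphi(f^i_\omega x)-\varphi(f^i_\omega y)|\le C\beta^{s(x,y)-k}$; this is a sharp estimate because it uses the fact that distances along unstable leaves decay exponentially backward from each return time, not merely grow geometrically forward from time zero. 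You instead make the separation-time distance estimate $\dist_{\gamma_\omega}(x,y)\le D_0\beta^{s(x,y)}$ explicit (correctly derived from (P$_3$)(1) via $n$-fold contraction of the inverse branch, together with $\Theta_\omega$ preserving both the partition and hence the separation time), and then control the head using only the crude forward Lipschitz bound $K=\sup_\omega\|Df_\omega\|$, balancing $K^{i\eta}\beta^{s\eta}$ against $\beta^{i\eta}$ at the optimal cut $i^*\approx\alpha s$. This is more elementary — it avoids the backward-contraction input entirely and needs only (P$_2$), (P$_3$)(1), uniform Hölder continuity of $\log|\det D^uf_\omega|$ (which you rightly get from Proposition~\ref{th.Holder} and compactness of $\mathbb B$), and compactness of the unstable leaves — but it pays with a weaker exponent $\beta'=\beta^{\alpha\eta}$ instead of the paper's $\beta^{1/2}$, which is harmless here since the constant $\beta$ in (P$_4$)(2) may be relabelled. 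One small slip: in the case $K\beta\le1$ the optimal exponent is not $\alpha=1$; when $K\le1$ one has $\alpha\ge1$ and must cap $i^*$ at $s$, which introduces a polynomial factor readily absorbed into the geometric decay — but since any $f\in\diff^{1+}(M)$ with nontrivial unstable dynamics has $K>1$, this case never arises.
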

\begin{proof}
Let $k\approx[ s(x,y)]/2$. We have
\begin{align}
\left|\log\dfrac{\rho_\omega(x)}{\rho_\omega(y)}\right|&=\left|\log \prod_{i=0}^\infty\frac{\det D^uf_{\sigma^i\omega}(f^i_\omega(x))}{\det D^uf_{\sigma^i\omega}(f^i_\omega(\Theta^{-1}_\omega(x)))}-
\log\prod_{i=0}^\infty\frac{\det D^uf_{\sigma^i\omega}(f^i_\omega(y))}{\det D^uf_{\sigma^i\omega}(f^i_\omega(\Theta^{-1}_\omega(y)))}\right|
\nonumber\\
&\le\left|\log \prod_{i=0}^{k-1}\frac{\det D^uf_{\sigma^i\omega}(f^i_\omega(x))}{\det D^uf_{\sigma^i\omega}(f^i_\omega(\Theta^{-1}_\omega(x)))}-
\log\prod_{i=0}^{k-1}\frac{\det D^uf_{\sigma^i\omega}(f^i_\omega(y))}{\det D^uf_{\sigma^i\omega}(f^i_\omega(\Theta^{-1}_\omega(y)))}\right|\nonumber\\
&+\left|\log \prod_{i=k}^\infty\frac{\det D^uf_{\sigma^i\omega}(f^i_\omega(x))}{\det D^uf_{\sigma^i\omega}(f^i_\omega(\Theta^{-1}_\omega(x)))}-
\log\prod_{i=k}^\infty\frac{\det D^uf_{\sigma^i\omega}(f^i_\omega(y))}{\det D^uf_{\sigma^i\omega}(f^i_\omega(\Theta^{-1}_\omega(y)))}\right|\nonumber\\
&\le\sum_{i=0}^{k-1}\left|\log \frac{\det D^uf_{\sigma^i\omega}(f^i_\omega(x))}{\det D^uf_{\sigma^i\omega}(f^i_\omega(y))}\right|+
\sum_{i=0}^{k-1}\left|\log\frac{\det D^uf_{\sigma^i\omega}(f^i_\omega(\Theta^{-1}_\omega(y)))}{\det D^uf_{\sigma^i\omega}(f^i_\omega(\Theta^{-1}_\omega(x)))}\right|\nonumber\\
&+\left|\log \prod_{i=k}^\infty\frac{\det D^uf_{\sigma^i\omega}(f^i_\omega(x))}{\det D^uf_{\sigma^i\omega}(f^i_\omega(\Theta^{-1}_\omega(x)))}\right|+\left|
\log\prod_{i=k}^\infty\frac{\det D^uf_{\sigma^i\omega}(f^i_\omega(y))}{\det D^uf_{\sigma^i\omega}(f^i_\omega(\Theta^{-1}_\omega(y)))}\right|.
\label{eq.notthistorsion}
\end{align}
Denote
$J_{\sigma^i\omega}(x)= |\det D^uf_{\sigma^i\omega} \vert T_{f_\omega^i(x)} \gamma_i|$, for all  $0\le i <k$ and $x\in \gamma$. By Proposition~\ref{pr:bounded-curvature}, $J_{\sigma^i\omega}$ is
$(L,\zeta)$-H\"older continuous, for some $L>0$. Then, there is some uniform constant $C'>0$ such that
$$
 \sum_{i=0}^{k-1}  \log\frac{|\det D^uf_{\sigma^i\omega} \vert T_{f_\omega^i(y)} \gamma_i|}{|\det D^uf_{\sigma^i\omega} \vert T_{f_\omega^i(x)} \gamma_i|}
= \sum_{i=0}^{k-1} \big(\log J_{\sigma^i\omega}(y) - \log J_{\sigma^i\omega}(x)\big)\le  \sum_{i=0}^{n-1} C' \dist(f^i_\omega(x), f^i_\omega(y))^\zeta.
$$
 Since we have chosen   $k\approx [ s(x,y)]/2$,   the points  $ f^i_\omega(x)$ and $ f^i_\omega(y)$ have at least $s(x,y)-i $ simultaneous returns to $\Lambda_{\sigma^i\omega}$, for $0\le i<k$. So, using the backward expansion on unstable leaves there is a $C''>0$ such that 
$
 \dist(f_\omega^i(x), f_\omega^i(y))^\zeta \le C''\beta^{\zeta (s(x,y)-i)}.
 $
Recalling that $s(x,y)-k\approx s(x,y)/2$, we get
for   $\beta_1=\beta^{\zeta/2}$ and some $C_1>0$
\begin{equation}\label{lastone}
\sum_{i=0}^{k-1}  \log\frac{|\det D^uf_{\sigma^i\omega} \vert T_{f_\omega^i(y)} \gamma_i|}{|\det D^uf_{\sigma^i\omega} \vert T_{f_\omega^i(x)} \gamma_i|}\le C_1\beta_1^{s(x,y)}.
\end{equation}
The same conclusion holds for the other finite sum  in \eqref{eq.notthistorsion}. Using Corollary~\ref{co.produtorio}, for the infinite products  in~\eqref{eq.notthistorsion}, and \eqref{lastone} we can find $C_2>0$ so that 
$$\left|\log\dfrac{\rho_\omega(x)}{\rho_\omega(y)}\right|\le 2C_1\beta_1^{s(x,y)}+2C\beta^k\le C_2\beta_1^{s(x,y)}.$$
\end{proof}

\subsection{Axiom A attractors} \label{se.AxA}
Here we prove Theorem~\ref{co.DCAxA}. 
Let $f\in \diff^{1+}(M)$  have a topologically mixing uniformly hyperbolic attractor  $K\subset M$. Classical results give that under this conditions $f$ has a unique physical measure which is actually mixing; see~\cite{B75}.
Recall that in this case the attractor necessarily contains the local unstable manifolds. Moreover, such manifolds persist under small $C^1$ random perturbations and depend continuously on the  perturbations; see e.g. \cite[Sections~1~\&~2]{Y86}. Consequently, we can choose the reference leaf $ \Sigma_\omega$ and the cylinder $\C (\Sigma_\omega)$ of Subsection~\ref{s.structure} depending continuously on $\omega\in\Omega$. Also, the unstable  leaves $u$-crossing these cylinders depend continuously on $\omega\in\Omega$, and so the measurability of $\Omega\ni\omega\mapsto\Lambda_\omega$ follows. 
In this uniformly hyperbolic case, we trivially have that
$\Leb_{D_\omega}\{\E_\omega>n\}$ decays exponentially fast in $n$, uniformly in $\omega$.
For the aperiodicity see Remark~\ref{re.VIR}.
So, recalling Remark~\ref{re.exdelta} and applying Theorem~\ref{exrates} and Theorem~\ref{randomPH} we obtain
Theorem~\ref{co.DCAxA}. 

\subsection{Derived from Anosov}\label{se:DA}

To obtain Theorem~\ref{co.Derived} we are going to apply Theorem~\ref{co.DCAxA} with $K=M$. 
Firstly, observe that property (5) above and  \cite[Lemma 7.4]{AAV07} guarantee that for any  unstable disk $D$ for $f$ there exist $C,c>0$  and $0< \tau \le 1$  such that 
$\Leb_{D }\{\E_\omega>n\}=  Ce^{-cn } $ for all $\omega\in\Omega$. Moreover,  taking $D_\omega=D$ we trivially have that  $\{f_\omega\}_{\omega\in\Omega}$  is a random perturbation   $C^1\!$-close to $f|_D$ on domains $\{D_\omega\}_{\omega\in\Omega}$ of $cu$-nonuniform expansion. 
Hence, by Theorem~\ref{co.DCAxA}   there exists $\Lambda_\omega\subset K_\omega$ with a   hyperbolic product structure and  there exist $C',c'>0$ such that $\Leb_{D_\omega}\{R_\omega>n\}= C'e^{-c'n^\tau}$ for all $\omega\in\Omega$. If the random perturbation is not aperiodic, then we take $N$ equal to the greatest common divisor of the return times. Notice that $\Lambda_\omega$ is now aperiodic measurable hyperbolic product structure for $\{f^N_\omega\}_{\omega\in\Omega}$. Finally, using Theorem~\ref{existence} and Theorem~\ref{exrates} we get the conclusion.

\section{Solenoid with intermittency}\label{se.sole}
In this section we prove Theorem~\ref{Appl}. We will show that the random dynamical system $\{g_\omega\}_{\omega\in\Omega}$ introduced in Subsection~\ref{se.solemio} admits a hyperbolic product structure. First notice that for almost every $\omega\in\Omega$ the random map  $g_\omega$ has an attractor in $M$ which is given by 
$$
K_\omega=\bigcap_{n\ge 1} g^{n-1}_{\sigma^{-n}\omega}(M).
$$
Note that $K_\omega$ is measurable in $\omega$ by    \cite[Theorem 3.11]{CF94}. First we show that for every $\omega$ the attractor $K_\omega$ is partially hyperbolic. By a straightforward computation for  $p= (x, y, z) \in M$ we obtain 
\[
Dg_\omega(x, y, z)=\begin{bmatrix}T'_\omega(x) & 0 &0 \\ -(\sin x)/2 &{1}/{10} &0 \\   (\cos x)/2 &   0 & 1/ 10  \end{bmatrix} .
\]
It is easily checked that  $E^s_\omega\{0\}\times \DD^2$ is invariant and uniformly contracted.  We will construct $E^{cu}_\omega(p)$ as a countable  intersection of  centre-unstable cones. For $p\in M$ define
\[
\C^{cu}(p)=\{ (v_1, v_2, v_3)\in T_pM\mid v_1\in T\SS^1, (v_2, v_3)\in \RR^2 \textnormal{ and }  |v_1|\ge 2(v_2^2+v_3^2)^{1/2}\}.
\]
The first step is to  show that $Dg_\omega \C^{cu}(p)\subset \C^{cu}(g_\omega(p))$.  For $(v_1, v_2, v_3)\in  \C^{cu}$ let  $(\bar v_1, \bar v_2, \bar v_3)=Dg_\omega (p)(v_1, v_2, v_3)$. Using the fact $A\sin x+B\cos x\le \sqrt{A^2+B^2}$ and the definition of $\C^{cu}$ we have 
\[
\begin{aligned}
 \bar v_2^2+\bar v_3^2 &= (-\frac 12v_1\sin x+\frac {1}{10}v_2)^2+(\frac 12v_1\cos x+\frac {1}{10}v_3)^2\le \\
& \frac14v_1^2+\frac1{100}(v_2^2+v_3^2)+\frac1{10}|v_1|(v_2^2+v_3^2)^{1/2}\le\\
&  \frac14v_1^2+\frac1{400}v_1^2+\frac1{40}v_1^2< \bar v_1^2
  \end{aligned}
\]
since $T'_\alpha \ge 1$ on $M$. In addition for any $(v_1, v_2, v_3)\in \C^{cu}(p)$ we have 
\begin{equation}\label{eq:expcon}
\|Dg_\omega(p)(v_1, v_2, v_3)\|^2\ge |T'_{\omega}(x) v_1|^2 \ge v_1^2\ge \frac 45(v_1^2+v_2^2+ v_3^2). 
\end{equation}

Define  \(E^{cu}_\omega(p)=\bigcap_{k=0}^\infty Dg_{\sigma^{-k}\omega}^k(g_{\sigma^{-k}\omega}^{-k}(p))\C^{cu}(g_{\sigma^{-k}\omega}^{-k}(p))
\). The second step is to show that  $E^{cu}_\omega(p)$ is a line. By the previous argument, the finite intersections 
\[
\C^{(n)}=\bigcap_{k=0}^n Dg_{\sigma^{-k}\omega}^k(g_{\sigma^{-k}\omega}^{-k}(p))\C^{cu}(g_{\sigma^{-k}\omega}^{-k}(p))= Dg_{\sigma^{-n}\omega}^n(g_{\sigma^{-n}\omega}^{-n}(p))\C^{cu}(g_{\sigma^{-n}\omega}^{-n}(p))
\] 
form a nested sequence. It is sufficient to prove that  the angle between any two non-zero vectors $\C^{(n)}$ converges to zero as $n$ goes to infinity. For any $(v^{(1)}_1, v^{(1)}_2, v^{(1)}_3), (w^{(1)}_1, w^{(1)}_2, w^{(1)}_3)\in \C^{(1)}$ we have 
\[
\begin{aligned}
\left|\frac{v^{(1)}_2}{v^{(1)}_1}-\frac{ w^{(1)}_2}{w^{(1)}_1}\right|=&\left|(-\frac 12v_1^{(0)}\sin x+\frac {1}{10}v_2^{(0)})/(v^{(0)}_1T_\omega'(x))-(-\frac 12w_1^{(0)}\sin x+\frac {1}{10}w_2^{(0)})/(w^{(0)}_1T_\omega'(x))\right|\\
&\le \frac{1}{10}\left|\frac{v^{(0)}_2}{v^{(0)}_1}-\frac{ w^{(0)}_2}{w^{(0)}_1}\right|.
\end{aligned}\]
Similarly
\[
\left|\frac{v^{(1)}_3}{v^{(1)}_1}-\frac{ w^{(1)}_3}{w^{(1)}_1}\right|\le \frac{1}{10}\left|\frac{v^{(0)}_3}{v^{(0)}_1}-\frac{ w^{(0)}_3}{w^{(0)}_1}\right|.
\]
By induction, we obtain  
\[\left|\frac{v^{(n)}_i}{v^{(n)}_1}-\frac{ w^{(n)}_i}{w^{(n)}_1}\right|\le C 10^{-n},  i=2, 3\text{  for any }(v^{(n)}_1, v^{(n)}_2, v^{(n)}_3), (w^{(n)}_1, w^{(n)}_2, w^{(n)}_3)\in \C^{(n)}.
\]
This shows that the angle between $(v^{(n)}_1, v^{(n)}_2, v^{(n)}_3)$ and $(w^{(n)}_1, w^{(n)}_2, w^{(n)}_3)$ converges to zero. This implies the existence of a continuous splitting. Moreover, this splitting is continuous in $\omega$ since $g_\omega$ is smooth in $\omega$.

We now outline the construction of a local unstable manifold, using the classic graph transformation idea. Let $p_0=(0,\frac{5}{9},0)$ and notice that $g_\omega(p_0)=p_0$ for any $\omega\in\Omega$. Let $\gamma:J_{p_0}\to \mathbb D^2$ denote a $C^1$ function  defined on small interval $J_{p_0}\subset \mathbb S^1$ around $p_0\in { K_\omega}$. Let $L(p_0)=\{(x, { \gamma}(x))\mid x\in J\}$ denote its graph.  Suppose that $T_qL(p_0)\subset \C^{cu}(p_0)$ for all $q\in L(p_0)$. Define a sequence of push-forwards restricted to a sufficiently small ball $B_\eps(p_0)$
\[
L_n(p_0)={ g^n_{\omega}}(L({ g^{-n}_{\sigma^{-n}\omega}}(p_0)))\cap B_\eps(p_0)=\{{ g^n_{\omega}}(x, {\gamma}(x))\mid x\in J_{{ g^{-n}_{\sigma^{-n}_\omega}}p_0}\}\cap B_\eps(p_0).
\]
Let $\gamma$ and $\gamma'$ be $C^1$ functions as above and $L(p_0,\gamma), L(p_0,\gamma')$ be the corresponding graphs. Then using the formula of $g_\omega$ in \eqref{map}, we have
$$\|g_\omega(x,\gamma(x))-g_\omega(x,\gamma'(x))\|=\frac{1}{10}\|\gamma(x)-\gamma'(x)\|.$$
Therefore, under iteration of $g_\omega$, $\gamma$ converges, pointwise, to a limiting curve, call it  $W^{u}_{\omega,\eps}(p_0)$.
Now, notice $T_{g^n_\omega(q)}L_n(p_0)\subset \C^{(n)}$ and by the strict contraction of the cone field, the angle between $T_{g^n_\omega(q)}L_n(p_0)$ and $E^{cu}_\omega$ converges to zero as $n\to\infty$. Therefore, $E^{cu}_\omega$ is the tangent space for $W^{u}_{\omega,\eps}(p_0)$, which is measurable in $\omega$. Thus, $W^{u}_{\omega,\eps}(p_0)$ is contained in $K_\omega$ and is the local unstable manifold at $p_0$.
Now the global centre-unstable manifold at $p_0$ is defined as 
 \[
W^{u}_{\omega}(p_0)= \bigcup_{k=0}^\infty g_{\sigma^{-k}\omega}^kW^{u}_{\sigma^{-k}\omega,\eps}(g_{\sigma^{-k}\omega}^{-k}(p_0)),
 \]
 which winds around $K_\omega$. Obviously, the stable manifold at $p_0$ is $\{p_0\}\times\mathbb D^2$.  We will now construct a set  $\Lambda_\omega$ with a hyperbolic product structure for almost every every $\omega\in \Omega$.  We let $\pi: M\to\mathbb S^1$ be a projection.  Let $\gamma^u_\omega(p_0)\subset W^u_\omega(p_0)$ such that $\pi_\omega =\pi|\gamma^u_\omega(p_0): \gamma^u_\omega(p_0)\to (0, 1)$ induces a $C^1$ diffeomorphism. For each $\omega$ we define a random sequence of pre-images of $\frac{1}{2}$ as follows. 
Let  $x^\pm_1(\omega)=\frac{1}{2}$,  and
\begin{equation}\label{eq:x_n}
x_n^\pm(\omega) =\left(T_\omega^\pm\right)^{-1}x_{n-1}(\sigma\omega) \,\, \text{for} \,\, n\ge 2,
\end{equation}  
where $T_\omega^+=T_\omega|[\frac12,1)$ and $T_\omega^-=T_\omega|(0,\frac12)$. Set $I_n(\omega)^-=(x_n^-(\omega), x_{n-1}^-(\omega))$ and $I_n(\omega)^+=( x_{n-1}^+(\omega),x_n^+(\omega))$. Now  we start defining continuous family of $C^1$ unstable manifolds inductively. Let $\Gamma_0^\omega=\{\gamma^u_\omega(p_0)\}$ and let $K_{\omega, k}^\pm=\{p\in M\mid \pi(p)\in I_k(\omega)^\pm\}$.  For $n\ge 1$ let 
\[
\Gamma_n^\omega=\bigcup_{k\ge 1}\left\{g_{\sigma^{-k}\omega}^k(K_{\sigma^{-k}\omega, k}^\pm\cap\gamma_{\sigma^{-k}\omega})\mid \gamma_{\sigma^{-k}\omega}\in \Gamma_{n-1}^{\sigma^{-k}\omega}\right\}, \text { and }\Lambda_\omega=\overline {\bigcup_{n\ge 0}\bigcup_{\gamma_n\in \Gamma_n^{\omega} }\gamma_n}.
\]
It follows that $\pi_\omega:\gamma_\omega\to (0, 1)$ is a diffeomorphism for any $\gamma_\omega\in\Gamma_n^{\omega}$.  By construction  for any $q\in \Lambda_\omega$ 
there is a sequence $\gamma_{n_k}^\omega\in\Gamma_n^\omega$ and points $q_{k}\in\gamma_{n_k}$ converging to $q$, as $k\to \infty$.  Now,  as subsection \ref{s.product} we argue that  $\gamma_{n_k}^\omega$  converge to an unstable manifold $\gamma_\omega$. Define $\Gamma_\omega^u$ to be set of $\gamma_{n}^\omega$  and the limiting manifolds $\gamma_\omega$. Notice that $\Lambda_\omega=\cup_{\gamma^\omega\in \Gamma^u_\omega}\gamma^\omega$.  Finally, set $\Gamma_\omega^s=\{\{x\}\times \mathbb D^2\mid x\in \mathbb S^1\}$. 
Thus, $\Lambda_\omega$ admits a hyperbolic product structure.

We now show the random systems satisfies   {(P$_0$)}-{(P$_4$)} for a suitable partition of $\Lambda_\omega$.   We define 
$s$-subsets as $\Lambda_{n, \omega}^\pm=\{\{x\}\times \gamma^s(\omega)\mid x\in I_n^\pm(\omega)\}$. Obviously,  $\Lambda_{n, \omega}^\pm$ is a $\text{mod } 0$ partition of $\Lambda_\omega$  and $g^{n+1}_\omega(\Lambda_{n, \omega}^\pm)$ is a $u$-subset, since $T_\omega^n(I_n(\omega)^-)=(\frac 12, 1)$, $T_\omega^n(I_n(\omega)^+)=(0,\frac 12)$ and consequently $T_\omega^{n+1}(I_n(\omega)^\pm)=(0,1)$. 
Properties { (P$_0$)}-{(P$_2$)} are then automatically satisfied. To prove uniform expansion in {(P$_3$)}  we first note that  $(T^{-}_\alpha)'(x)=1+(\alpha+1)(2x)^\alpha$, and $(T^{-}_\alpha)^{-1}(\frac12)>\frac14$  for all $\alpha\in(0, 1)$. Therefore,  $x_2^-(\omega)>\frac14$ for all $\omega\in\Omega$.  Thus 
\begin{equation}
(T_\omega^{-})'(x)\ge 1+2^{-\omega_0}(\omega_0+1) \text{ for all } x\in [x_2^-(\omega), \frac12].   
\end{equation}
It is easily seen from there that $(T^-_\omega(x))'\ge 2$ for all $x\in [x_2^-(\omega), \frac12]$. An analogous estimate holds for the right branch and therefore we have  $(T_\omega^{R_\omega})'(x)\ge 2$, since before making a return every orbit visits $(x_2^-(\omega), \frac12)$ or $(\frac12, x_2^+(\omega))$.  Finally,  for any $(v_1, v_2, v_3)\in \C^{cu}(p)$ by \eqref{eq:expcon} we have
\begin{equation}
\|Dg_\omega^{R_\omega}(p)(v_1, v_2, v_3)\|^2 \ge |(T_\omega^{R_\omega})' (x)v_1|^2 \ge \frac 85(v_1^2+v_2^2+ v_3^2). 
\end{equation}
Now for each $\gamma_\omega\in \Gamma^u_\omega$, the projection $\pi: M\to\mathbb S^1$ induces a $C^1$ diffeomorphism, $\pi_{\gamma_\omega}:\gamma_\omega\to (0,1)$. Note that the derivatives of $\pi_{\gamma_\omega}$ and $\pi^{-1}_{\gamma_\omega}$ are $\log$ Lipschitz uniformly in $\omega$ because the tangent space on $\gamma_\omega$ is in the cone filed, which is independent of $\omega$.
Observe that
$$g_\omega^{R_\omega} |_{\gamma_\omega\cap\Lambda_{n,\omega}}=\pi^{-1}_{g_\omega^{R_\omega}(\gamma_\omega\cap\Lambda_n,\omega)}\circ T_\omega^{R_\omega}\circ\pi_\omega|_{\gamma_\omega\cap\Lambda_n,\omega}.$$
 Therefore, bounded distortion in {(P$_3$)} follows from the bounded distortion of $T^{R_\omega}_\omega$, which was proved in \cite{BBD14, BBR19, R18}. In this situation {(P$_4$)} is obvious, since the projection along stable leaves is just the orthogonal projection to the $x$-axis. Aperiodicity is easy and can be   verified in the same way as in \cite{BBR19}. Further the tail assumptions, including the uniform tail assumption, of Theorem \ref{polyrates} are verified in the same way as in \cite{BBR19}
 and we obtain
 \begin{equation}
	\begin{cases} \label{apptail}
	\Leb_{\gamma_\omega}\{R_\omega>n\}\le C n^{-1/\alpha_0}\log n, \quad \text{whenever} \quad n\ge n_1(\omega),\\
	P\{n_1>n\} \le C e^{-\tau n^c};
	\end{cases}
		\end{equation} 	
moreover, $\int\Leb_{\gamma_\omega}\{R_\omega=n\}dP(\omega)\le C (\log n)^{\frac{1}{\alpha_0}+1}n^{-\frac{1}{\alpha_0}-1}$. Therefore, by Theorem \ref{polyrates} we have
$$|\mathcal C_n(\varphi, \psi,\mu_\omega)|\le  C_{\varphi, \psi}\max\left\{C_\omega n^{1-1/\alpha_0+\eps},  \delta_{\sigma^{[n/4]}\omega,[n/4]} \right\}$$ 
and $P\{C_\omega>n\}\le Ce^{b'n^{v'}}$ for some $b'>0$ and $c', \theta'\in(0, 1]$. We now obtain an estimate on $\delta_{\sigma^{[n/4]}\omega,[n/4]}$. Notice that
\[
\delta_{\sigma^k\omega, k}=\sup_{A\in \mathcal Q_{2k}^\omega}\diam(\pi_{\sigma^k\omega}(F^k_\omega(A)))\le\max\left\{10^{-k},
\sup_{{\bar A}\in \bar{\mathcal Q}_{2k}^\omega}\diam(\bar\pi_{\sigma^k\omega}(\bar F^k_\omega(\bar A)))\right\},
\]
and $\bar B=\bar F^k_\omega(\bar A)\in\bar{\mathcal Q}_{k}^{\sigma^k\omega}$. We consider two cases. First, if $\hat R_{\sigma^k\omega}|\bar B>k$, then $\pi_{\sigma^k\omega}\bar B\subset\{R_{\sigma^k\omega}>k\}$. Thus, $\diam(\bar\pi_{\sigma^k\omega}\bar B)\le C k^{-1/\alpha_0}\log k $ for $k>n_1(\sigma^k\omega)$. Second if $\hat R_{\sigma^k\omega}|\bar B\le k$, then there exists $N$ such that  $S_{N-1}|\bar B< k\le S_{N}|\bar B$
and $m\le N$ such that $S_m-S_{m-1}\ge \frac{k}{N}$. Let $U=f^{S_{m-1}}_\omega(\bar B)$. Observe that 
$U\subset\bar Q\in \bar{\mathcal Q}_{\omega'}$, where $\omega'=\sigma^{k+S_{m-1}}$, and $\hat R_{\omega'}|\bar Q>\frac{k}{N}$. By {(P$_3$)} we have $|\bar B|\le D\theta^{N-1}|\bar Q|$. Thus, if there exists $c>0$ such that $N\sim k^c$ then  $|\bar B|\le D\theta^{k^c}$. On the other hand, if there exists $0<c<1$ such that $N< k^c$, then using \eqref{apptail} we obtain $|\bar B|\le D\theta^{N-1}N^{1/\alpha_0}k^{-1/\alpha_0}\log k\le Ck^{-1/\alpha_0}\log k$, for any $k^{1-c}> n_1(\omega')$. Finally, define
\[
n_2(\omega)=\min\left\{n\mid \forall k>n, n_1(\sigma^j\omega)<k^{1-c}, k\le j< 2k\right\},
\]
and notice that
$$P(n_2>n)\le \sum_{k\ge n}\sum_{j=k+1}^{2k-1}P(n_1(\sigma^j\omega)\ge k^{1-c})\le Ce^{-\tau'n^{c'}},$$ for some $\tau'>0$. Now define 
$\bar C_\omega=\max\{C_\omega,n_2(\omega)^{1/\alpha_0}\}$. Then the theorem holds with $\bar C_\omega$.

\end{document}